\tikzset{commutative diagrams/.cd,every label/.append style = {font = \normalsize}}
\numberwithin{equation}{section}
\newtheorem{thm}{Theorem}
\numberwithin{thm}{section}
\newtheorem{cor}[thm]{Corollary}
\newtheorem{lem}[thm]{Lemma}
\newtheorem{prob}[thm]{Problem}
\newtheorem{prop}[thm]{Proposition}
\theoremstyle{definition}
\newtheorem{defn}[thm]{Definition}
\newtheorem{eg_no_qed}[thm]{Example}
\newenvironment{eg}[1][]{\begin{eg_no_qed}[#1]\pushQED{\qed}}{\popQED\end{eg_no_qed}}
\newtheorem{rmk}[thm]{Remark}
\theoremstyle{remark}
\newtheorem*{claimpf_no_qed}{Proof of Claim}
\crefname{thm}{Theorem}{Theorems}
\crefname{conj}{Conjecture}{Conjectures}
\crefname{cor}{Corollary}{Corollaries}
\crefname{lem}{Lemma}{Lemmas}
\crefname{prob}{Problem}{Problems}
\crefname{prop}{Proposition}{Propositions}
\crefname{defn}{Definition}{Definitions}
\crefname{eg}{Example}{Examples}
\crefname{eg_no_qed}{Example}{Examples}
\crefname{rmk}{Remark}{Remarks}
\DeclareMathOperator{\Aut}{Aut}
\newcommand{\End}{\mathfrak{gl}}
\newcommand{\Fl}{\textnormal{Fl}}
\newcommand{\GL}{\textnormal{GL}}
\newcommand{\gl}{\mathfrak{gl}}
\newcommand{\Gr}{\textnormal{Gr}}
\DeclareMathOperator{\id}{id}
\DeclareMathOperator{\rank}{rank}
\DeclareMathOperator{\soc}{soc}
\DeclareMathOperator{\SSYT}{SSYT}
\newcommand{\Stab}{\textnormal{Stab}}
\newcommand{\arrowsum}{\mathfrak{m}}
\newcommand{\bigspn}[1]{\left\langle {#1}\right\rangle}
\newcommand{\block}[2]{#1[#2]}
\newcommand{\Bor}{B}
\newcommand{\Boxes}{V}
\DeclareMathOperator{\colsum}{\mathsf{colsum}}
\newcommand{\dimcomp}[2]{\mathsf{d}(#1,#2)}
\newcommand{\dec}[1]{V_{#1}}
\newcommand{\decnil}[2]{V^{\sharp}_{#1,#2}}
\newcommand{\double}[1]{{#1}^{\sharp}}
\newcommand{\emptytab}{\varnothing}
\newcommand{\evac}{\mathsf{evac}}
\newcommand{\field}{\Bbbk}
\newcommand{\flags}[1]{\mathsf{Flags}(#1)}
\newcommand{\growth}[1]{\Gamma(#1)}
\newcommand{\indec}[2]{V_{#1,#2}}
\DeclareMathOperator{\JF}{\mathsf{JF}{\hspace*{-0.2pt}\raisebox{4pt}{$\scriptstyle\mathsf{T}$}}}
\newcommand{\Mats}[2]{\mathsf{Mat}_{#1,#2}}
\newcommand{\mat}[3]{\mathsf{mat}(#1;#2,#3)}
\newcommand{\ncon}[1]{\tilde{\mathsf{n}}(#1)}
\newcommand{\Nil}{\mathfrak{n}}
\newcommand{\nil}{N}
\newcommand{\pa}[1]{\field{#1}}
\newcommand{\Par}{P}
\newcommand{\perm}[1]{\mathsf{w}_{#1}}
\newcommand{\permdot}[1]{\mathring{\mathsf{w}}_{#1}}
\newcommand{\ppa}[1]{\Pi(#1)}
\newcommand{\ppareps}[2][]{\mathcal{V}_{#2}^{#1}}
\newcommand{\pF}[1]{\mathbb{F}_{#1}}
\newcommand{\PFl}{\textnormal{Fl}}
\renewcommand{\Pr}{\mathbb{P}}
\newcommand{\pr}[2]{\mathsf{p}_{#1}(#2)}
\newcommand{\prdisplay}[2]{\mathsf{p}_{#1}\hspace*{-2pt}\left(#2\right)}
\newcommand{\prdual}[2]{\tilde{\mathsf{p}}_{#2}(#1)}
\newcommand{\prdualdisplay}[2]{\tilde{\mathsf{p}}_{\hspace*{1pt}#2\hspace*{2pt}}\hspace*{-2pt}\Big(#1\Big)}
\newcommand{\prdualnoarg}{\tilde{\mathsf{p}}}
\newcommand{\prnoarg}{\mathsf{p}}
\newcommand{\Ptab}[1]{\mathsf{P}(#1)}
\newcommand{\qbinom}[2]{\begin{bmatrix}#1 \\ #2\end{bmatrix}_q}
\newcommand{\qbinomdisplay}[2]{\begin{bmatrix}#1 \\[4pt] #2\end{bmatrix}_q}
\newcommand{\qbinomsmall}[2]{\scalebox{0.7}{$\begin{bmatrix}{#1} \\ {#2}\end{bmatrix}$}_q}
\newcommand{\qF}{\mathbb{F}_q}
\newcommand{\qfac}[1]{[#1]_q!}
\newcommand{\qn}[1]{[#1]_q}
\newcommand{\qinvbinomsmall}[2]{\scalebox{0.7}{$\begin{bmatrix}{#1} \\ {#2}\end{bmatrix}$}_{1/q}}
\newcommand{\qinvF}{\mathbb{F}_{1/q}}
\newcommand{\qinvn}[1]{[#1]_{1/q}}
\newcommand{\Qtab}[1]{\mathsf{Q}(#1)}
\newcommand{\qv}[1]{\mathcal{M}({#1})}
\newcommand{\qvpoint}[2]{U^{\sharp}_{#1,#2}}
\newcommand{\qwt}{\operatorname{wt}_q}
\newcommand{\rect}[2]{\mathsf{Rect}(#1,#2)}
\newcommand{\relpos}[1]{\xrightarrow{\makebox[12pt]{\scriptsize{$#1$}}}}
\newcommand{\rev}[1]{\overline{#1}}
\DeclareMathOperator{\rowsum}{\mathsf{rowsum}}
\newcommand{\RPP}[2]{\mathsf{RPP}(#1,#2)}
\newcommand{\RPPbig}[2]{\mathsf{RPP}\!\left(#1,#2\right)}
\newcommand{\RPPs}[2]{\mathcal{R}_{#1,#2}}
\newcommand{\socf}[2]{\soc^{#2}(#1)}
\newcommand{\socRPP}[1]{\mathsf{RPP}(#1)}
\newcommand{\source}[1]{s(#1)}
\newcommand{\spn}[1]{\langle #1\rangle}
\DeclareMathOperator{\Sym}{\mathfrak{S}}
\newcommand{\Tabs}[2]{\mathcal{T}_{#1,#2}}
\newcommand{\target}[1]{t(#1)}
\newcommand{\transpose}[1]{\mathchoice
{{#1}{\hspace*{-0.2pt}\raisebox{4pt}{$\scriptstyle\mathsf{T}$}\hspace*{0.5pt}}}
{{#1}{\hspace*{-0.2pt}\raisebox{4pt}{$\scriptstyle\mathsf{T}$}\hspace*{0.5pt}}}
{{#1}{\hspace*{-0.2pt}\raisebox{3pt}{$\scriptscriptstyle\mathsf{T}$}\hspace*{0.5pt}}}
{{#1}{\hspace*{-0.2pt}\raisebox{3pt}{$\scriptscriptstyle\mathsf{T}$}\hspace*{0.5pt}}}
}
\newcommand{\triples}[2][]{\Upsilon_{#2}^{#1}}
\DeclareMathOperator{\type}{\mathsf{type}}
\newcommand{\Unit}[1]{E^{#1}}
\newcommand{\unit}[1]{\mathsf{e}_{#1}}
\newcommand{\wtf}{\psi}
\newcommand{\wtfdual}{\tilde{\psi}}
\newcommand{\ydiagramsmall}[1]{\ytableausetup{smalltableaux}\, \ydiagram{#1} \, \ytableausetup{nosmalltableaux}}
\newcommand{\ytableausmall}[1]{\ytableausetup{smalltableaux}\, \begin{ytableau}#1\end{ytableau}\, \ytableausetup{nosmalltableaux}}
\title{\lowercase{$q$}-Whittaker functions, finite fields, and Jordan forms}
\author{Steven N. Karp}
\address{Department of Mathematics, University of Notre Dame}
\email{\href{mailto:skarp2@nd.edu}{skarp2@nd.edu}}
\author{Hugh Thomas}
\address{LACIM, Universit\'{e} du Qu\'{e}bec \`{a} Montr\'{e}al}
\email{\href{mailto:thomas.hugh_r@uqam.ca}{thomas.hugh\_r@uqam.ca}}
\subjclass[2020]{05E05, 60C05, 14M15, 15B33, 16G20, 05A30}
\thanks{S.N.K.\ was partially supported by an NSERC postdoctoral fellowship. H.T.\ was partially supported by an NSERC Discovery Grant and the Canada Research Chairs program.}
\begin{document}

\begin{abstract}
The $q$-Whittaker function $W_\lambda(\mathbf{x};q)$ associated to a partition $\lambda$ is a $q$-analogue of the Schur function $s_\lambda(\mathbf{x})$, and is defined as the $t=0$ specialization of the Macdonald polynomial $P_\lambda(\mathbf{x};q,t)$. We show combinatorially how to expand $W_\lambda(\mathbf{x};q)$ in terms of partial flags compatible with a nilpotent endomorphism over the finite field of size $1/q$. This yields an expression analogous to a well-known formula for the Hall--Littlewood functions. We show that considering pairs of partial flags and taking Jordan forms leads to a probabilistic bijection between nonnegative-integer matrices and pairs of semistandard tableaux of the same shape, proving the Cauchy identity for $q$-Whittaker functions. We call our probabilistic bijection the {\itshape $q$-Burge correspondence}, and prove that in the limit as $q\to 0$, we recover a description of the classical Burge correspondence (also known as column RSK) due to Rosso (2012). A key step in the proof is the enumeration of an arbitrary double coset of $\GL_n$ modulo two parabolic subgroups, which we find to be of independent interest. As an application, we use the $q$-Burge correspondence to count isomorphism classes of certain modules over the preprojective algebra of a type $A$ quiver (i.e.\ a path), refined according to their socle filtrations. This develops a connection between the combinatorics of symmetric functions and the representation theory of preprojective algebras.
\end{abstract}

\maketitle
\setcounter{tocdepth}{1}
\tableofcontents

\section{Introduction}\label{sec_introduction}

\noindent The {\itshape Cauchy identity} for the Schur functions $s_\lambda(\mathbf{x})$
\begin{align}\label{cauchy_equation_intro}
\prod_{i,j \ge 1}\frac{1}{1-x_iy_j} = \sum_{\lambda}s_\lambda(\mathbf{x})s_\lambda(\mathbf{y})
\end{align}
plays an important role in the theories of symmetric functions \cite{macdonald95,stanley99} and Schur processes \cite{okounkov01,okounkov_reshetikhin03}. A bijective proof of \eqref{cauchy_equation_intro} is provided by the celebrated {\itshape Robinson--Schensted--Knuth correspondence} \cite{knuth70}, or its column-insertion variant, the {\itshape Burge correspondence} \cite{burge74} (which will be more relevant for us). Namely, we expand the left-hand side of \eqref{cauchy_equation_intro} as a sum over nonnegative-integer matrices, and expand the right-hand side as a sum over pairs of semistandard tableaux of the same shape, and these correspondences give bijections (for each coefficient) between such matrices and pairs of tableaux. For example, the coefficient of $x_1x_2y_1y_2$ on each side of \eqref{cauchy_equation_intro} is $2$, and the Burge correspondence acts by
\begin{align}\label{burge_equation_intro}
\scalebox{0.8}{$\begin{bmatrix}1 & 0 \\ 0 & 1\end{bmatrix}$} \mapsto \left(\ytableausmall{1 \\ 2},\ytableausmall{1 \\ 2}\right), \qquad \scalebox{0.8}{$\begin{bmatrix}0 & 1 \\ 1 & 0\end{bmatrix}$} \mapsto \left(\ytableausmall{1 & 2},\ytableausmall{1 & 2}\right).
\end{align}

In this paper we consider the following $q$-analogue of \eqref{cauchy_equation_intro}, where the Schur function $s_\lambda(\mathbf{x})$ is replaced with the {\itshape $q$-Whittaker function} $W_\lambda(\mathbf{x};q)$, which is the $t=0$ specialization of the Macdonald polynomial $P_\lambda(\mathbf{x};q,t)$:
\begin{thm}[{Macdonald \cite[VI.(4.13)]{macdonald95}}]\label{q-cauchy_intro}
We have the Cauchy identity for $q$-Whittaker functions
\begin{align}\label{q-cauchy_equation_intro}
\prod_{\substack{i,j \ge 1,\\ d \ge 0\hphantom{,}}}\frac{1}{1-x_iy_jq^d} = \sum_\lambda \frac{(1-q)^{-\lambda_1}}{\prod_{i\ge 1}\qfac{\lambda_i - \lambda_{i+1}}}W_\lambda(\mathbf{x};q)W_\lambda(\mathbf{y};q),
\end{align}
where the sum on the right-hand side is over all partitions $\lambda$. Equivalently, let $\alpha = (\alpha_1, \dots, \alpha_k)$ and $\beta = (\beta_1, \dots, \beta_l)$ be weak compositions of $n$. Taking the coefficient of $x_1^{\alpha_1}\cdots x_k^{\alpha_k}y_1^{\beta_1}\cdots y_l^{\beta_l}$ on each side of \eqref{q-cauchy_equation_intro}, we obtain
\begin{align}\label{q-cauchy_coefficient_intro}
\sum_{M\in\Mats{\alpha}{\beta}}\frac{(1-q)^{-n}}{\prod_{1 \le i \le k,\hspace*{1pt} 1 \le j \le l}\qfac{M_{i,j}}} = \sum_{\substack{\lambda\vdash n,\\ T\in\SSYT(\lambda,\alpha),\\ T'\in\SSYT(\lambda,\beta)}}\frac{(1-q)^{-\lambda_1}}{\prod_{i\ge 1}\qfac{\lambda_i - \lambda_{i+1}}}\qwt(T)\qwt(T'),
\end{align}
where the sum on the left-hand side is over all $k\times l$ nonnegative-integer matrices $M$ with row sums $\alpha$ and column sums $\beta$, and the sum on the right-hand side is over all pairs $(T,T')$ of semistandard tableaux of the same shape with contents $\alpha$ and $\beta$, respectively.
\end{thm}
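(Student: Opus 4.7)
The plan is to prove the coefficient form \eqref{q-cauchy_coefficient_intro}, which is equivalent to the product form \eqref{q-cauchy_equation_intro}. The strategy is to reinterpret both sides as weighted counts of structures over the finite field $\qinvF$ attached to a nilpotent endomorphism, and to match them by tracking Jordan forms.

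First, I would establish a finite-field formula for $W_\lambda$ analogous to the classical one for Hall--Littlewood functions: for a nilpotent endomorphism $N$ of $\qinvF^n$ of Jordan type $\lambda$, express $W_\lambda(x_1,\dots,x_k;q)$, suitably renormalized, as a generating function over partial flags $0 = V_0 \subseteq V_1 \subseteq \cdots \subseteq V_k = \qinvF^n$ with $NV_i \subseteq V_{i-1}$, weighted by $x_1^{\alpha_1}\cdots x_k^{\alpha_k}$ where $\alpha_i = \dim(V_i/V_{i-1})$. Refining this sum by the Jordan types of $N|_{V_i}$ should partition the flags into classes indexed by semistandard tableaux $T$ of shape $\lambda$ and content $\alpha$, each class contributing a suitable multiple of $\qwt(T)$ and so recovering the $T$-term of the right-hand side.

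Second, I would reinterpret the left-hand side of \eqref{q-cauchy_coefficient_intro} as a count of pairs of partial flags $(V_\bullet, V'_\bullet)$ of types $\alpha$ and $\beta$ in $\qinvF^n$: the $\GL_n(\qinvF)$-orbit of such a pair is recorded by the dimension matrix $M \in \Mats{\alpha}{\beta}$ with entries $M_{i,j} = \dim((V_i \cap V'_j)/((V_{i-1}\cap V'_j) + (V_i \cap V'_{j-1})))$, and the number of pairs with a given $M$ is the size of the corresponding double coset in $P_\alpha\backslash \GL_n(\qinvF) / P_\beta$ for the parabolics preserving the two standard flags of types $\alpha$ and $\beta$. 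Computing this double coset count -- the ingredient of independent interest highlighted in the abstract -- is what I expect to be the main obstacle of the proof: parabolic double cosets have no clean closed form in general, but here they must combine, after dividing by $|\GL_n(\qinvF)|$, to produce the factor $(1-q)^{-n}/\prod_{i,j}\qfac{M_{i,j}}$.

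Finally, the two sides are matched by filtering pairs of flags by a common stabilizing nilpotent $N$. A pair $(V_\bullet, V'_\bullet)$ compatible with $N$ of Jordan type $\lambda$ yields, by step one applied to each flag, a pair $(T,T')$ of semistandard tableaux of shape $\lambda$ and contents $\alpha,\beta$. Summing over conjugacy classes of $N$ and dividing by the stabilizer size $|\Stab_{\GL_n(\qinvF)}(N)|$ should yield the factor $(1-q)^{-\lambda_1}/\prod_i \qfac{\lambda_i-\lambda_{i+1}}$ on the right-hand side of \eqref{q-cauchy_coefficient_intro}, and equality of the two counts then follows by combining the flag formula of step one with the double-coset computation of step two.
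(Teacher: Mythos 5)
Your overall strategy is the same as the paper's (count triples $(F,F',\nil)$ over $\qinvF$ two ways, organized by relative position on one side and by pair of Jordan-type tableaux on the other), and your step one is essentially the paper's \cref{whittaker_expansion}. However, there is a genuine quantitative gap in step two. You claim the left-hand-side weight $\wtf(M) = (1-q)^{-n}/\prod_{i,j}\qfac{M_{i,j}}$ is recovered by taking the double-coset size $|Y_M|$ (equivalently, the number of pairs of flags in relative position $M$) and dividing by $|\GL_n(\qinvF)|$. That computation does not come out right: by \cref{double_coset_decomposition}, $|Y_M|/|\GL_n|$ carries the extra $M$-dependent factor $q^{\sum_{i<i',\,j<j'}M_{i,j}M_{i',j'}}$ (the dimension of the relevant $\Par_\alpha$-orbit cell), which is not a constant times $1/\prod_{i,j}\qfac{M_{i,j}}$. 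So the pair count alone does not decompose as the LHS of \eqref{q-cauchy_coefficient_intro}, and introducing $\nil$ only in step three (to reorganize by Jordan type) cannot repair this, since by then both sides would already be counting different things.

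The missing ingredient is that $\nil$ must enter the count of \emph{both} sides from the outset: for a pair $(F,F')$ in relative position $M$, the nilpotent endomorphisms strictly compatible with both flags form an affine space $\Nil_M$ of dimension $\sum_{i<i',\,j<j'}M_{i,j}M_{i',j'}$ (\cref{compatible_matrices_pair}), which is exactly the complementary dimension needed to cancel the $q$-power above. Only the triple count $|\triples{M}| = |\PFl_\alpha|\cdot|X_M|\cdot|\Nil_M|$ collapses to $q^{-n^2+n+\ncon{\alpha}+\ncon{\beta}}\qbinom{n}{M}$, and it is \emph{this} quantity, divided by $|\GL_n|$, that equals $q^{n+\ncon{\alpha}+\ncon{\beta}}\hspace*{1pt}\wtf(M)$ with a prefactor independent of $M$ (\cref{matrices_count}). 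Once you fold $\nil$ into step two, your step three correctly handles the right-hand side via \cref{jordan_stabilizer} and step one, and the double count closes exactly as in the paper's \cref{reversibility}.
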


(We refer to \cref{sec_notation} for the notation used in \cref{q-cauchy_intro}, and to \cref{lhs_equivalence} for how to obtain the left-hand side of \eqref{q-cauchy_coefficient_intro} from \eqref{q-cauchy_equation_intro}.) The $q$-Whittaker functions have received much attention recently in large part due to their appearance in the theory of Macdonald processes \cite{borodin_corwin14}. Indeed, \eqref{q-cauchy_equation_intro} may be interpreted as a partition function identity for $q$-Whittaker processes.

\cref{q-cauchy_intro} is a special case of the Cauchy identity for Macdonald polynomials, and Macdonald's original proof of \eqref{q-cauchy_equation_intro} is algebraic. Our goal is to give a geometric and combinatorial proof of the equivalent identity \eqref{q-cauchy_coefficient_intro}. In the case $q=0$, the summands on each side of \eqref{q-cauchy_coefficient_intro} are all $1$, and a bijection between the index sets of each side is provided by the Burge correspondence. However, for general $q$, there is no weight-preserving bijection between the index sets of each side of \eqref{q-cauchy_coefficient_intro}. For example, the coefficient of $x_1x_2y_1y_2$ on each side of \eqref{q-cauchy_equation_intro} is $2(1-q)^{-2}$:\setlength{\tabcolsep}{3pt}
$$
\begin{tabular}{ccccccc}
$(1-q)^{-2}$ & $+$ & $(1-q)^{-2}$ & $=$ & $(1-q)^{-1}$ & $+$ & $(1-q)^{-2}(1+q).$ \\[8pt]
\scalebox{0.8}{$\begin{bmatrix}1 & 0 \\ 0 & 1\end{bmatrix}$} & & \scalebox{0.8}{$\begin{bmatrix}0 & 1 \\ 1 & 0\end{bmatrix}$} & & $\left(\ytableausmall{1 \\ 2},\ytableausmall{1 \\ 2}\right)$ & & $\left(\ytableausmall{1 & 2},\ytableausmall{1 & 2}\right)$
\end{tabular}
$$
\setlength{\tabcolsep}{6pt}We see that for $q\neq 0$, the weights on the left-hand side are different from the weights on the right-hand side. Nevertheless, we can define a (weight-preserving) {\itshape probabilistic bijection}, for example, one which acts by
\begin{align}\label{eg_n=2_equation}
\scalebox{0.8}{$\begin{bmatrix}1 & 0 \\ 0 & 1\end{bmatrix}$} \mapsto \begin{cases}
\left(\ytableausmall{1 \\ 2},\ytableausmall{1 \\ 2}\right), & \text{with probability $1-q$}; \\[6pt]
\left(\ytableausmall{1 & 2},\ytableausmall{1 & 2}\right), & \text{with probability $q$},
\end{cases} \qquad \scalebox{0.8}{$\begin{bmatrix}0 & 1 \\ 1 & 0\end{bmatrix}$} \mapsto \left(\ytableausmall{1 & 2},\ytableausmall{1 & 2}\right).
\end{align}
That is, given a nonnegative-integer matrix $M$, we map it to each pair of semistandard tableaux $(T,T')$ with some probability $\pr{M}{T,T'}$ (which is a function of $q$), such that all probabilities of the form $\pr{M}{\cdot}$ sum to $1$. We also require that weights are preserved, i.e., the weight of each pair $(T,T')$ is equal to the sum over $M$ of $\pr{M}{T,T'}$ times the weight of $M$. This implies that the desired identity \eqref{q-cauchy_coefficient_intro} holds. (See \cref{sec_probabilistic_bijections} for the formal definition of a probabilistic bijection.) The inverse probabilistic bijection of \eqref{eg_n=2_equation} is given as follows:
\begin{align}\label{eg_n=2_equation_dual}
\left(\ytableausmall{1 \\ 2},\ytableausmall{1 \\ 2}\right) \mapsto \scalebox{0.8}{$\begin{bmatrix}1 & 0 \\ 0 & 1\end{bmatrix}$}, \qquad \left(\ytableausmall{1 & 2},\ytableausmall{1 & 2}\right) \mapsto \begin{cases}
\scalebox{0.8}{\renewcommand\arraystretch{1}$\begin{bmatrix}1 & 0 \\ 0 & 1\end{bmatrix}$}, & \text{with probability $\frac{q}{1+q}$}; \\[10pt]
\scalebox{0.8}{\renewcommand\arraystretch{1}$\begin{bmatrix}0 & 1 \\ 1 & 0\end{bmatrix}$}, & \text{with probability $\frac{1}{1+q}$}.
\end{cases}
\end{align}

Two different probabilistic bijections proving \eqref{q-cauchy_equation_intro} were given by Matveev and Petrov \cite{matveev_petrov17} using randomized insertion algorithms; see \cref{other_bijections}. We prove \eqref{q-cauchy_equation_intro} by introducing a new probabilistic bijection which we call the {\itshape $q$-Burge correspondence}, which is defined when $1/q$ is a prime power via the geometry of nilpotent endomorphisms and partial flags over the finite field $\qinvF$. In the limit $q\to 0$ (which essentially replaces $\qinvF$ by an infinite field), we recover a geometric description of the classical Burge correspondence due to Rosso \cite{rosso12}.

In order to state our result, we introduce some terminology. If $\nil$ is a nilpotent endomorphism of $\field^n$, its {\itshape (conjugated) Jordan form partition} is the conjugate partition of the sequence of sizes of the Jordan blocks of $\nil$. We say that a partial flag of subspaces
$$
F: 0 = F_0 \subseteq F_1 \subseteq \cdots \subseteq F_k = \field^n
$$
is {\itshape strictly compatible} with $\nil$ if
$$
\nil(F_i) \subseteq F_{i-1} \quad \text{ for all } 1 \le i \le k.
$$
Restricting $\nil$ to $F_0, \dots, F_k$ gives a sequence of nilpotent endomorphisms whose (conjugated) Jordan form partitions define a semistandard tableau $\JF(\nil;F)$ of size $n$ on the alphabet $\{1, \dots, k\}$. Also, if $F = (F_0, \dots, F_k)$ and $F' = (F'_0, \dots, F'_l)$ are two partial flags in $\field^n$, we define the {\itshape relative position of $(F,F')$} as the $k\times l$ nonnegative-integer matrix $M$ satisfying
$$
\dim(F_i\cap F'_j) = \sum_{1\le i'\le i,\hspace*{2pt} 1\le j'\le j}M_{i',j'} \quad \text{ for all } 0 \le i \le k \text{ and } 0 \le j \le l.
$$
We can now present the $q$-Burge correspondence (see \cref{probabilities_interpretation}, \cref{reversibility}, \cref{transpose_symmetry}, and \cref{burge_limit}):
\begin{thm}\label{cauchy_intro}
Set $\field := \qinvF$. Let $M$ be a $k\times l$ nonnegative-integer matrix with row sums $\alpha$ and column sums $\beta$ whose entries sum to $n$, and let $(T,T')$ be a pair of semistandard tableaux of the same shape of size $n$ with contents $\alpha$ and $\beta$, respectively.
\begin{enumerate}[label=(\roman*), leftmargin=*]
\item\label{cauchy_intro_bijection} Given a pair of partial flags $(F,F')$ in $\field^n$ with relative position $M$, we define $\pr{M}{T,T'}$ to be the probability that a uniformly random nilpotent endomorphism $\nil$ of $\field^n$ which is strictly compatible with both $F$ and $F'$ satisfies
\begin{align}\label{flag_tableau_equation}
\JF(\nil;F) = T \quad \text{ and } \quad \JF(\nil;F') = T'.
\end{align}
(This definition does not depend on the choice of $(F,F')$.) Then the map $\prnoarg$ defines a probabilistic bijection proving \eqref{q-cauchy_coefficient_intro} whenever $1/q$ is a prime power. (This is sufficient to prove \cref{q-cauchy_intro} for all $q$; see \cref{remark_infinitely_many}.)
\item\label{cauchy_intro_bijection_dual} Given a nilpotent endomorphism $\nil$ of $\field^n$ whose (conjugated) Jordan form partition is the shape of both $T$ and $T'$, we define $\prdual{M}{T,T'}$ to be the probability that a uniformly random pair of partial flags $(F,F')$ satisfying \eqref{flag_tableau_equation} has relative position $M$. (This definition does not depend on the choice of $\nil$.) Then the map $\prdualnoarg$ defines the inverse of the probabilistic bijection in part \ref{cauchy_intro_bijection}, and therefore proves \eqref{q-cauchy_coefficient_intro} whenever $1/q$ is a prime power.
\item\label{cauchy_intro_symmetry} Transposing the matrix corresponds to swapping the tableaux:
$$
\pr{M}{T,T'} = \pr{\transpose{M}}{T',T} \quad \text{ and } \quad \prdual{M}{T,T'} = \prdual{\transpose{M}}{T',T}.
$$
\item\label{cauchy_intro_specialization} As $q\to 0$, the probabilities $\pr{M}{T,T'}$ and $\prdual{M}{T,T'}$ both converge to $1$ if the classical Burge correspondence sends $M$ to $(T,T')$, and both converge to $0$ otherwise.
\end{enumerate}
\end{thm}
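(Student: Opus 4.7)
The plan is to prove the theorem by a double-counting argument on triples $(\nil, F, F')$ where $\nil$ is nilpotent and strictly compatible with both partial flags $F, F'$. First, for well-definedness in part \ref{cauchy_intro_bijection}, I would use the fact that $\GL_n(\field)$ acts transitively on pairs of partial flags of fixed signatures and fixed relative position $M$ (i.e., parabolic double cosets of $\GL_n$ are classified by nonnegative-integer matrices). Since strict compatibility and the Jordan-type conditions $\JF(\nil;F) = T$, $\JF(\nil;F') = T'$ are $\GL_n$-equivariant, the count defining $\pr{M}{T,T'}$ is independent of the choice of $(F, F')$. Analogously, $\GL_n(\field)$ acts transitively on nilpotent endomorphisms of $\field^n$ of fixed Jordan type, so $\prdual{M}{T,T'}$ is independent of the choice of $\nil$, establishing well-definedness in part \ref{cauchy_intro_bijection_dual}.

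For the probabilistic bijection claim and the inversion, I would count the set of triples $(\nil, F, F')$ such that $(F, F')$ has relative position $M$ and $\JF(\nil;F) = T$, $\JF(\nil;F') = T'$, in two ways. Counting by $(F, F')$ first gives the product of the number of pairs with relative position $M$, the quantity $\pr{M}{T,T'}$, and the number of $\nil$ strictly compatible with both flags. Counting by $\nil$ first gives the product of the number of nilpotents of Jordan type equal to the common shape of $T$ and $T'$, the quantity $\prdual{M}{T,T'}$, and the number of pairs $(F, F')$ strictly compatible with $\nil$ satisfying \eqref{flag_tableau_equation}. Equating these expressions and summing over $(T, T')$ or over $M$, combined with the earlier results in the paper that identify the component counts with the $q$-weighted quantities in \eqref{q-cauchy_coefficient_intro}---namely the formula for $W_\lambda(\mathbf{x};q)$ via flags strictly compatible with a nilpotent endomorphism, the classical enumeration of nilpotents of a given Jordan type, and the parabolic double coset enumeration announced in the abstract---yields both part \ref{cauchy_intro_bijection} and the fact that $\prdualnoarg$ is inverse to $\prnoarg$ in part \ref{cauchy_intro_bijection_dual}. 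The main obstacle is the parabolic double coset enumeration: obtaining a formula that produces the precise factor $(1-q)^{-n}/\prod_{i,j}\qfac{M_{i,j}}$ appearing in \eqref{q-cauchy_coefficient_intro} requires identifying the free rank data in an arbitrary double coset and recognizing the resulting expression as a product of $q$-factorials.

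Part \ref{cauchy_intro_symmetry} is immediate from the definitions: swapping $F$ and $F'$ takes $(M, T, T')$ to $(\transpose{M}, T', T)$, while the strict-compatibility and Jordan-type conditions are symmetric in the two flags, so all relevant counts are preserved. For part \ref{cauchy_intro_specialization}, taking $q \to 0$ makes $|\field| = 1/q \to \infty$, and the probabilities concentrate on generic triples in the variety being counted. Rosso's construction \cite{rosso12} gives the classical Burge correspondence in precisely this language: for a triple $(\nil, F, F')$ in generic position with $(F, F')$ of relative position $M$ and $\nil$ strictly compatible with both, the pair $(\JF(\nil;F), \JF(\nil;F'))$ equals the Burge image of $M$. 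Consequently, in the limit, $\pr{M}{T,T'} \to 1$ when $(T, T')$ is the Burge image of $M$ and $\to 0$ otherwise, and the same holds for $\prdual{M}{T,T'}$ by the symmetric argument.
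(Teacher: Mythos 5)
Your proposal for parts (i), (ii), and (iii) follows the paper's proof quite closely. The double-counting of triples $(F,F',\nil)$ with $F\relpos{M}F'$, $\JF(\nil;F)=T$, $\JF(\nil;F')=T'$ is exactly the paper's strategy (\cref{matrices_count} and \cref{tableaux_count}), the well-definedness arguments via transitivity of $\GL_n$ on double cosets and on nilpotents of a given Jordan type match \cref{probabilities_interpretation}, and you correctly isolate the three enumerative inputs: the flag-variety formula for $W_\lambda$ (\cref{whittaker_expansion}), the count of nilpotents of fixed Jordan type (\cref{jordan_form_count}), and the parabolic double coset count (\cref{alternative_orbit_decomposition_q}/\cref{double_coset_decomposition}). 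Part (iii) is likewise the paper's argument via \cref{relative_position_invariance}\ref{relative_position_invariance_inverse}.

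Part (iv) is where there is a genuine gap, and it is not a small one. You invoke ``Rosso's construction'' for the statement that a generic triple $(\nil,F,F')$ with $F\relpos{M}F'$ yields $(\JF(\nil;F),\JF(\nil;F'))=(\Ptab{M},\Qtab{M})$, but that is not what Rosso proves. Rosso's theorem (\cref{classical_burge_dual}) fixes $\nil$ and says a \emph{generic pair of flags} in $\overline{\PFl_\alpha^{\nil,\Ptab{M}}}\times\overline{\PFl_\beta^{\nil,\Qtab{M}}}$ has relative position $M$. What is needed for $\pr{M}{T,T'}\to 1$ is the \emph{dual} statement---fix $(F,F')$ in relative position $M$ and let $\nil$ vary---and passing from one to the other (two irreducible varieties, each generically contained in the other) is precisely what the paper establishes as \cref{classical_burge_rank}. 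Your ``symmetric argument'' for $\prdual$ papers over the fact that the two genericity statements are a priori different; the paper instead derives the $\prdual$ limit from the $\pr$ limit via the reversibility identity \eqref{reversibility_equation}. Second, and independently, Rosso's theorem and the Spaltenstein machinery behind it are stated over an algebraically closed field, whereas \cref{cauchy_intro}\ref{cauchy_intro_specialization} is about limits of probabilities over the finite fields $\qinvF$. Transferring the genericity statement to all finite fields simultaneously requires controlling which polynomials cut out the non-generic locus and showing they remain nonzero modulo every prime; the paper does this via Poljak's theorem (\cref{combinatorial_rank}), which furnishes minors with a $\pm 1$ coefficient, together with \cref{nonzero_polynomials}. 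Your one-line appeal to ``probabilities concentrate on generic triples'' does not by itself bridge either of these gaps.
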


\begin{eg}\label{eg_cauchy_intro}
Let us show that $\prnoarg$ as defined in \cref{cauchy_intro}\ref{cauchy_intro_bijection} leads to \eqref{eg_n=2_equation}. For $M = \scalebox{0.8}{$\begin{bmatrix}1 & 0 \\ 0 & 1\end{bmatrix}$}$, we can take $F = F'$ to be the flag
\begin{align}\label{eg_cauchy_intro_flag}
F_0 = 0 \subseteq F_1 = \langle\unit{1}\rangle \subseteq F_2 = \langle\unit{1},\unit{2}\rangle.
\end{align}
We may write an arbitrary nilpotent endomorphism $\nil$ which is strictly compatible with $F = F'$ as
$$
\nil = \begin{bmatrix}0 & a \\ 0 & 0\end{bmatrix}, \quad \text{ where } a\in\qinvF.
$$
  If $a\neq 0$ (which occurs with probability $1-q$), then in accordance with \eqref{flag_tableau_equation} we obtain the pair of tableaux $\left(\ytableausmall{1 \\ 2},\ytableausmall{1 \\ 2}\right)$, while if $a=0$ (which occurs with probability $q$), we obtain the pair of tableaux $\left(\ytableausmall{1 & 2},\ytableausmall{1 & 2}\right)$:
$$
\prdisplay{\scalebox{0.8}{$\begin{bmatrix}1 & 0 \\ 0 & 1\end{bmatrix}$}}{\ytableausmall{1 \\ 2},\ytableausmall{1 \\ 2}} = 1-q, \qquad \prdisplay{\scalebox{0.8}{$\begin{bmatrix}1 & 0 \\ 0 & 1\end{bmatrix}$}}{\ytableausmall{1 & 2},\ytableausmall{1 & 2}} = q.
$$

For $M = \scalebox{0.8}{$\begin{bmatrix}0 & 1 \\ 1 & 0\end{bmatrix}$}$, we must take $F \neq F'$, and the only nilpotent endomorphism $\nil$ strictly compatible with both $F$ and $F'$ is $\nil = 0$. Then we obtain the pair of tableaux $\left(\ytableausmall{1 & 2},\ytableausmall{1 & 2}\right)$ (with probability $1$):
$$
\prdisplay{\scalebox{0.8}{$\begin{bmatrix}0 & 1 \\ 1 & 0\end{bmatrix}$}}{\ytableausmall{1 \\ 2},\ytableausmall{1 \\ 2}} = 0, \qquad \prdisplay{\scalebox{0.8}{$\begin{bmatrix}0 & 1 \\ 1 & 0\end{bmatrix}$}}{\ytableausmall{1 & 2},\ytableausmall{1 & 2}} = 1.
$$

Similarly, let us show that $\prdualnoarg$ as defined in \cref{cauchy_intro}\ref{cauchy_intro_bijection_dual} leads to \eqref{eg_n=2_equation_dual}. For $\left(\ytableausmall{1 \\ 2},\ytableausmall{1 \\ 2}\right)$, we can take $\nil = \scalebox{0.8}{$\begin{bmatrix}0 & 1 \\ 0 & 0\end{bmatrix}$}$. Then the only pair $(F,F')$ satisfying \eqref{flag_tableau_equation} is obtained when $F = F'$ is the flag \eqref{eg_cauchy_intro_flag}, in which case the relative position of $(F,F')$ is $\scalebox{0.8}{$\begin{bmatrix}1 & 0 \\ 0 & 1\end{bmatrix}$}$ (with probability $1$):
$$
\prdualdisplay{\scalebox{0.8}{$\begin{bmatrix}1 & 0 \\ 0 & 1\end{bmatrix}$}}{\ytableausmall{1 \\ 2},\ytableausmall{1 \\ 2}} = 1, \qquad \prdualdisplay{\scalebox{0.8}{$\begin{bmatrix}0 & 1 \\ 1 & 0\end{bmatrix}$}}{\ytableausmall{1 \\ 2},\ytableausmall{1 \\ 2}} = 0.
$$

For $\left(\ytableausmall{1 & 2},\ytableausmall{1 & 2}\right)$, we must take $\nil=0$, and every pair $(F,F')$ satisfies \eqref{flag_tableau_equation}. If $F=F'$ (which occurs with probability $\frac{q}{1+q}$), then the relative position of $(F,F')$ is $\scalebox{0.8}{$\begin{bmatrix}1 & 0 \\ 0 & 1\end{bmatrix}$}$, while if $F\neq F'$ (which occurs with probability $\frac{1}{1+q}$), the relative position of $(F,F')$ is \scalebox{0.8}{$\begin{bmatrix}0 & 1 \\ 1 & 0\end{bmatrix}$}:
$$
\prdualdisplay{\scalebox{0.8}{$\begin{bmatrix}1 & 0 \\ 0 & 1\end{bmatrix}$}}{\ytableausmall{1 & 2},\ytableausmall{1 & 2}} = \frac{q}{1+q}, \qquad \prdualdisplay{\scalebox{0.8}{$\begin{bmatrix}0 & 1 \\ 1 & 0\end{bmatrix}$}}{\ytableausmall{1 & 2},\ytableausmall{1 & 2}} = \frac{1}{1+q}.
$$
Note that in the limit $q\to 0$, we recover the classical Burge correspondence \eqref{burge_equation_intro}.
\end{eg}

As in \cref{eg_cauchy_intro}, when calculating $\pr{M}{\cdot}$ in general, we can always choose the pair of partial flags $(F,F')$ in \cref{cauchy_intro}\ref{cauchy_intro_bijection} so that the nilpotent endomorphisms $\nil$ which are strictly compatible with both $F$ and $F'$ have the following form: each entry is either zero or an arbitrary element of $\qinvF$ (see \cref{compatible_matrices_pair}).

We anticipate that $\pr{M}{T,T'}$ is given by a polynomial in $q$ (so that $\prdual{M}{T,T'}$ is given by a rational function in $q$), which we present as an open problem (see \cref{probabilities_problem}). In this case, it would be desirable to have a combinatorial interpretation of $\pr{M}{T,T'}$. We also remark that the combinatorics of our $q$-Burge correspondence is different from that of the insertion-based probabilistic bijections of Matveev and Petrov \cite{matveev_petrov17}, because it does not admit {\itshape local growth rules}; see \cref{sec_growth_diagrams}.

We highlight two of the key results we use to prove parts \ref{cauchy_intro_bijection} and \ref{cauchy_intro_bijection_dual} of \cref{cauchy_intro}. The first is a formula for the monomial expansion of $W_\lambda(\mathbf{x};q)$ (see \cref{whittaker_expansion}):
\begin{thm}\label{whittaker_expansion_intro}
Let $\nil$ be a nilpotent endomorphism of $\qinvF^n$ whose (conjugated) Jordan form partition is $\lambda$. Then for any weak composition $(\alpha_1, \dots, \alpha_k)$ of $n$, the coefficient of $x_1^{\alpha_1}\cdots x_k^{\alpha_k}$ in $W_\lambda(\mathbf{x};q)$ equals $q^{\sum_{i\ge 1}\binom{\lambda_i}{2} - \sum_{i\ge 1}\binom{\alpha_i}{2}}$ times the number of partial flags
$$
0 = F_0 \subseteq F_1 \subseteq \cdots \subseteq F_k = \qinvF^n, \quad \text{ where } \dim(F_i) = \alpha_1 + \cdots + \alpha_i \text{ for all } 0 \le i \le k,
$$
which are strictly compatible with $\nil$.
\end{thm}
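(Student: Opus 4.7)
The plan is to prove the formula by induction on $k$, grouping the strictly-compatible partial flags according to their penultimate subspace $F_{k-1}$ and matching the resulting enumeration with the Pieri-type branching rule for the $q$-Whittaker functions. The base case $k=1$ is immediate: strict compatibility of $0 \subseteq F_1 = \qinvF^n$ forces $\nil = 0$, i.e.\ conjugated Jordan form $\lambda = (n)$, and both sides of the claimed identity equal $1$ in this case (and both equal $0$ otherwise, since $W_\lambda(x_1;q)$ vanishes when $\lambda$ has more than one part).

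For the inductive step, observe that $\nil(F_k) \subseteq F_{k-1}$ with $F_k = \qinvF^n$ forces $\Im(\nil) \subseteq F_{k-1}$, so $F_{k-1}$ is automatically $\nil$-stable. Let $\mu$ denote the conjugated Jordan form of $\nil|_{F_{k-1}}$. The subflag $0 = F_0 \subseteq \cdots \subseteq F_{k-1}$ is strictly compatible with $\nil|_{F_{k-1}}$, and by the induction hypothesis applied inside $F_{k-1}$, the number of such subflags of prescribed dimensions equals an explicit power of $q$ times $[x_1^{\alpha_1}\cdots x_{k-1}^{\alpha_{k-1}}]\,W_\mu(\mathbf{x};q)$. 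On the algebraic side, I would invoke the Pieri-type branching rule for $q$-Whittaker functions, obtained as the $t=0$ specialization of the corresponding Macdonald formula,
\begin{align*}
W_\lambda(x_1, \ldots, x_k;q) \;=\; \sum_{\mu}\, \psi_{\lambda/\mu}(q)\, x_k^{|\lambda|-|\mu|}\, W_\mu(x_1, \ldots, x_{k-1};q),
\end{align*}
where the sum is over $\mu \subseteq \lambda$ with $\lambda/\mu$ a horizontal strip (in the conjugated convention used here) and $\psi_{\lambda/\mu}(q)$ is an explicit product of $q$-binomial coefficients. Matching the flag enumeration grouped by $\mu$ with this branching rule reduces the theorem to a single ``one-step'' identity expressing the number of admissible $F_{k-1}$ of each Jordan type $\mu$ as a prescribed power of $q$ times $\psi_{\lambda/\mu}(q)$.

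The main obstacle is this one-step subspace count. Concretely, one must enumerate $W \subseteq \qinvF^n$ with $\Im(\nil) \subseteq W$, $\dim W = n - \alpha_k$, and $\nil|_W$ of conjugated Jordan form $\mu$. Because $W$ contains $\Im(\nil)$, the induced map on $V/W$ is zero, so $V/W$ has Jordan form $(1^{\alpha_k})$, and the admissible $\mu$ are exactly those for which $\lambda/\mu$ is a horizontal strip of size $\alpha_k$ in the conjugated sense. Choosing a basis of $V$ adapted to the Jordan decomposition of $\nil$ and parametrizing $W/\Im(\nil)$ through its interaction with the filtration of $V$ by powers of $\nil$, the count should factor over the Jordan ``levels'' $i\ge 1$ as a product of $\qinvF$-Gaussian binomials involving the differences $\lambda_i - \lambda_{i+1}$ and $\mu_i - \mu_{i+1}$, together with an overall $q$-power recording how the levels interact. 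Verifying that this factorization agrees with $\psi_{\lambda/\mu}(q)$ up to the prescribed $q$-power, using standard $q$-binomial manipulations, is the technical crux of the argument. The strategy mirrors the classical flag formula for the Hall--Littlewood functions alluded to in the introduction, the essential difference being that the condition $\nil(F_i) \subseteq F_{i-1}$ (rather than mere $\nil$-invariance $\nil(F_i) \subseteq F_i$) forces the more refined subspace enumeration appropriate to $W_\lambda$.
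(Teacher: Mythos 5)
Your proposal is correct and follows essentially the same path as the paper: induction on $k$, grouping flags by $F_{k-1}$, and reducing to a one-step count of subspaces $W\supseteq\Im(\nil)$ of prescribed conjugated Jordan type $\mu$, matched against the branching coefficient. The paper performs the induction at the refined, tableau-by-tableau level (their Theorem \ref{whittaker_expansion}\ref{whittaker_expansion_tableau}, which fixes the full $\JF(\nil;F)=T$ rather than just the dimension vector), so that the branching coefficient appears intrinsically as the ratio $\qwt(T)/\qwt(T')$ instead of being imported as an external Pieri rule; and it settles the one-step count you call the ``technical crux'' via the box model $(\Boxes_\lambda,\nil_\lambda)$, reducing it to a relative-position enumeration (\cref{whittaker_induction_step}, built on \cref{intersection_dimension_count}) rather than an ad hoc basis-adapted parametrization — but these are presentational choices, not a different argument.
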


We give a recursive combinatorial proof of \cref{whittaker_expansion_intro}. As Sean Griffin explained to us, \cref{whittaker_expansion_intro} also follows from known results in the theory of Springer fibers, which are proved using perverse sheaves; see \cref{remark_perverse}. We note that there is a well-known formula for the {\itshape Hall--Littlewood functions} which is similar to \cref{whittaker_expansion_intro}, where one instead enumerates partial flags which are {\itshape weakly} compatible with $\nil$; see \cref{remark_hall-littlewood}. In addition to \cref{whittaker_expansion_intro}, we prove a dual version (see \cref{whittaker_expansion_dual}), where instead of fixing $\nil$ and enumerating strictly compatible partial flags $F$, we fix $F$ and enumerate $\nil$.

\begin{eg}\label{eg_whittaker_expansion_intro}
Let us find the coefficient of $x_1^2x_2^2$ in $W_{(3,1)}(\mathbf{x};q)$, by taking $\lambda := (3,1)$ and $\alpha := (2,2)$ in \cref{whittaker_expansion_intro}. We define $\nil$ to be the nilpotent endomorphism of $\qinvF^4$ given by
$$
\nil := \begin{bmatrix}
0 & 0 & 0 & 1 \\
0 & 0 & 0 & 0 \\
0 & 0 & 0 & 0 \\
0 & 0 & 0 & 0
\end{bmatrix},
$$
which has Jordan blocks of sizes $\transpose{\lambda} = (2,1,1)$.
We must enumerate partial flags
$$
0 = F_0 \subseteq F_1 \subseteq F_2 = \qinvF^4, \quad \text{ where } \dim(F_1) = 2,
$$
which are strictly compatible with $\nil$. The condition $\nil(F_2)\subseteq F_1$ means that $F_1$ contains $\unit{1}$, and the condition $\nil(F_1)\subseteq F_0$ means that $F_1 \subseteq \langle \unit{1}, \unit{2}, \unit{3}\rangle$. Therefore $F_1$ is the direct sum of $\langle \unit{1}\rangle$ and an arbitrary $1$-dimensional subspace of $\langle \unit{2}, \unit{3}\rangle$, of which there are $\frac{1}{q} + 1$ in total. Therefore the coefficient of $x_1^2x_2^2$ in $W_{(3,1)}(\mathbf{x};q)$ equals
\begin{gather*}
q^{\binom{3}{2} + \binom{1}{2} - \binom{2}{2} - \binom{2}{2}}(\textstyle\frac{1}{q} + 1) = 1+q.\qedhere
\end{gather*}

\end{eg}

The second key result is an explicit enumeration of each orbit of a standard parabolic subgroup (i.e.\ a group of invertible block upper-triangular matrices) acting on a partial flag variety. It can be equivalently stated as a result about double cosets of $\GL_n$ modulo two standard parabolic subgroups (see \cref{double_coset_rank}):
\begin{thm}\label{double_coset_enumeration_intro}
Let $M$ be a $k\times l$ nonnegative-integer matrix with row sums $\alpha$ and column sums $\beta$, whose entries sum to $n$. Then the size of the double coset of $\Par_\alpha\backslash\GL_n/\Par_\beta$ labeled by $M$ (in the sense of \cref{defn_double_coset}) over $\qinvF$ is
\begin{gather*}
q^{-n^2 + \sum_{1 \le i < i' \le k,\hspace*{1pt} 1 \le j < j' \le l}M_{i,j}M_{i',j'}}(1-q)^n\frac{\prod_{i=1}^k\qfac{\alpha_i}\prod_{j=1}^l\qfac{\beta_j}}{\prod_{1 \le i \le k,\hspace*{1pt} 1 \le j \le l}\qfac{M_{i,j}}}.
\end{gather*}

\end{thm}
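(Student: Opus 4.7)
The plan is to apply the identity
$$
|\Par_\alpha g_M \Par_\beta| \;=\; \frac{|\Par_\alpha|\cdot|\Par_\beta|}{|\Par_\alpha \cap g_M \Par_\beta g_M^{-1}|}
$$
for a well-chosen representative $g_M$ of the double coset, and to compute each of the three factors separately. For the parabolic factors, I would use the Levi decomposition: $\Par_\alpha$ has Levi $\prod_i \GL_{\alpha_i}$ and unipotent radical of dimension $\sum_{i<i'}\alpha_i\alpha_{i'}$. Combined with $|\GL_m(\qinvF)| = q^{-m^2}(1-q)^m\qfac{m}$, this gives $|\Par_\alpha| = q^{-n^2 + \sum_{i<i'}\alpha_i\alpha_{i'}}(1-q)^n\prod_i \qfac{\alpha_i}$, and analogously for $|\Par_\beta|$.

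For $g_M$, I would label the standard basis of $\qinvF^n$ by triples $(i,j,r)$ with $1 \le i \le k$, $1 \le j \le l$, and $1 \le r \le M_{ij}$, ordered so that the standard $\alpha$-flag is spanned by those triples whose first coordinate is bounded by a given value, while $g_M F_\beta$ is spanned by those whose second coordinate is bounded. Checking the dimensions $\dim(F_\alpha^i \cap g_M F_\beta^j)$ confirms that this representative indeed corresponds to the double coset labelled by $M$. In this basis, a matrix $h$ lies in $\Par_\alpha \cap g_M \Par_\beta g_M^{-1}$ if and only if its $((i,j,r),(i',j',r'))$-entry vanishes whenever $i > i'$ or $j > j'$. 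Hence this subgroup is block upper-triangular with respect to the product partial order on pairs $(i,j)$: its diagonal blocks are $\GL_{M_{ij}}$, and its unipotent radical has dimension $D := \sum_{i<i',\,j<j'}M_{ij}M_{i'j'} + \sum_i\sum_{j<j'}M_{ij}M_{ij'} + \sum_j\sum_{i<i'}M_{ij}M_{i'j}$. Its cardinality is therefore $q^{-D}\prod_{i,j}|\GL_{M_{ij}}(\qinvF)|$.

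Finally, I would divide and simplify. The factors of $(1-q)^n$ and the $q$-factorials combine exactly as claimed. The only remaining work is the $q$-exponent, which I would handle by writing $\alpha_i = \sum_j M_{ij}$ and $\beta_j = \sum_i M_{ij}$, expanding $\sum_{i<i'}\alpha_i\alpha_{i'}$ and $\sum_{j<j'}\beta_j\beta_{j'}$ in terms of $M$, and using the nine-case decomposition of $n^2 = (\sum_{i,j}M_{ij})^2$ according to the signs of $i-i'$ and $j-j'$. After cancellation, the surviving term is exactly $-n^2 + \sum_{i<i',\,j<j'}M_{ij}M_{i'j'}$, matching the claim. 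The main obstacle is this bookkeeping, which is conceptually routine but demands some care to organize by the nine cases.
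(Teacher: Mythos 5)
Your proposal is correct, and it takes a genuinely different route from the paper. The paper (\cref{double_coset_decomposition}) uses the free right action of $\Par_\beta$ on $\GL_n$ to get $Y_M \cong X_M \times \Par_\beta$, where $X_M$ is the $\Par_\alpha$-orbit in $\PFl_\beta$ labelled by $M$, and then computes $|X_M|$ via a column-by-column inductive decomposition of that orbit as an iterated affine bundle over products of smaller partial flag varieties (\cref{alternative_orbit_decomposition}). You instead apply the orbit-stabilizer theorem directly to the $\Par_\alpha\times\Par_\beta$-action on $\GL_n$, compute $|\Par_\alpha|$ and $|\Par_\beta|$ via the Levi decomposition, and identify the stabilizer $\Par_\alpha\cap g_M\Par_\beta g_M^{-1}$ (with $g_M = \permdot{M}$) as the ``poset parabolic'' of matrices supported on the product partial order on the $(i,j)$-pairs, with Levi factor $\prod_{i,j}\GL_{M_{i,j}}$ and unipotent radical of dimension $D$. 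I checked your exponent bookkeeping: setting $A := \sum_{i<i',\,j<j'}M_{ij}M_{i'j'}$, $B := \sum_{i<i',\,j>j'}M_{ij}M_{i'j'}$, $C := \sum_j\sum_{i<i'}M_{ij}M_{i'j}$, $E := \sum_i\sum_{j<j'}M_{ij}M_{ij'}$, and $F := \sum_{i,j}M_{ij}^2$, the nine-case expansion gives $n^2 = 2A+2B+2C+2E+F$, while $e_2(\alpha) = A+B+C$, $e_2(\beta) = A+B+E$, $D = A+C+E$, so that $e_2(\alpha)+e_2(\beta)+D+F = 3A+2B+2C+2E+F = n^2 + A$, exactly the identity needed. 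The two arguments are of course computing the same invariant — the intersection $\Par_\alpha\cap g_M\Par_\beta g_M^{-1}$ is the stabilizer in $\Par_\alpha$ of the flag $\Unit{\perm{M}}\in X_M$, so the paper's orbit count is the same quotient reorganized — but the paper's route yields the finer structural statement that $X_M$ (and hence $Y_M$) decomposes as a product of affine spaces, Grassmannians, and flag varieties, whereas yours is shorter and more self-contained, resting on the standard description of intersections of standard parabolics as block-triangular groups for a partial order rather than a total one.
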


Our proof of parts \ref{cauchy_intro_bijection} and \ref{cauchy_intro_bijection_dual} of \cref{cauchy_intro} is ultimately a double-counting of triples $(F,F',\nil)$ satisfying \eqref{flag_tableau_equation}, using \cref{whittaker_expansion_intro,double_coset_enumeration_intro}.

Finally, we apply the $q$-Burge correspondence to study socle filtrations of modules over the preprojective algebra of type $A$. Namely, let $Q(k,l)$ denote the quiver\vspace*{4pt}
$$
\hspace*{4pt}\begin{tikzpicture}[baseline=(current bounding box.center)]
\tikzstyle{out1}=[inner sep=0,minimum size=1.2mm,circle,draw=black,fill=black,semithick]
\pgfmathsetmacro{\s}{1.80};
\node[out1](v1)at(0*\s,0)[label={[below=3pt]\vphantom{$k-3$}$k-1$}]{};
\node[out1](v2)at(1*\s,0)[label={[below=3pt]\vphantom{$k-3$}$k-2$}]{};
\node[out1](v3)at(2*\s,0)[label={[below=3pt]\vphantom{$k-3$}$k-3$}]{};
\node[out1](v4)at(3*\s,0)[label={[below=3pt]\vphantom{$k-3$}$1$}]{};
\node[out1](v5)at(4*\s,0)[label={[below=3pt]\vphantom{$k-3$}$0$}]{};
\node[out1](v6)at(5*\s,0)[label={[below=3pt]\vphantom{$k-3$}$-1$}]{};
\node[out1](v7)at(6*\s,0)[label={[below=3pt]\vphantom{$k-3$}$-l+3$}]{};
\node[out1](v8)at(7*\s,0)[label={[below=3pt]\vphantom{$k-3$}$-l+2$}]{};
\node[out1](v9)at(8*\s,0)[label={[below=3pt]\vphantom{$k-3$}$-l+1$}]{};
\path[-latex',thick](v1)edge(v2) (v2)edge(v3) (v4)edge(v5) (v9)edge(v8) (v8)edge(v7) (v6)edge(v5);
\node[inner sep=0]at(2.5*\s,0){$\cdots$};
\node[inner sep=0]at(5.5*\s,0){$\cdots$};
\end{tikzpicture}\hspace*{4pt}
$$
of type $A_{k+l-1}$. Up to isomorphism, the finite-dimensional representations of $Q(k,l)$ such that the linear map associated to each arrow is injective are indexed by $k\times l$ nonnegative-integer matrices $M$.

A module over the preprojective algebra $\ppa{Q(k,l)}$ may be regarded as a representation of $Q(k,l)$ with an additional linear map associated to the reverse of each arrow, such that at every vertex $i$, the compositions of the maps associated to the two paths of length two from $i$ to itself are equal, up to a fixed sign. We consider such finite-dimensional modules where the linear map associated to each arrow of $Q(k,l)$ is injective. We show that up to isomorphism, every such module is given by a triple $(F,F',\nil)$ (which is not necessarily unique), where $(F,F')$ is a pair of partial flags in $\field^n$, and $\nil$ is a nilpotent endomorphism of $\field^n$ strictly compatible with both $F$ and $F'$:
$$
\hspace*{4pt}\begin{tikzpicture}[baseline=(current bounding box.center)]
\tikzstyle{out1}=[inner sep=0,minimum size=1.2mm,circle,draw=black,fill=black,semithick]
\pgfmathsetmacro{\s}{1.80};
\node[out1](v1)at(0*\s,0)[label={[above=-2pt]\vphantom{$F'_{l-1}$}$F_1$}]{};
\node[out1](v2)at(1*\s,0)[label={[above=-2pt]\vphantom{$F'_{l-1}$}$F_2$}]{};
\node[out1](v3)at(2*\s,0)[label={[above=-2pt]\vphantom{$F'_{l-1}$}$F_3$}]{};
\node[out1](v4)at(3*\s,0)[label={[above=-2pt]\vphantom{$F'_{l-1}$}$F_{k-1}$}]{};
\node[out1](v5)at(4*\s,0)[label={[above=-2pt]\vphantom{$F'_{l-1}$}$\field^n$}]{};
\node[out1](v6)at(5*\s,0)[label={[above=-2pt]\vphantom{$F'_{l-1}$}$F'_{l-1}$}]{};
\node[out1](v7)at(6*\s,0)[label={[above=-2pt]\vphantom{$F'_{l-1}$}$F'_3$}]{};
\node[out1](v8)at(7*\s,0)[label={[above=-2pt]\vphantom{$F'_{l-1}$}$F'_2$}]{};
\node[out1](v9)at(8*\s,0)[label={[above=-2pt]\vphantom{$F'_{l-1}$}$F'_1$}]{};
\path[-latex',thick](v1)edge[bend left=15] node[above=-1pt]{$\id$}(v2) (v2)edge[bend left=15] node[below=-1pt]{$\nil$}(v1) (v2)edge[bend left=15] node[above=-1pt]{$\id$}(v3) (v3)edge[bend left=15] node[below=-1pt]{$\nil$}(v2) (v4)edge[bend left=15] node[above=-1pt]{$\id$}(v5) (v5)edge[bend left=15] node[below=-1pt]{$\nil$}(v4) (v9)edge[bend right=15] node[above=-1pt]{$\id$}(v8) (v8)edge[bend right=15] node[below=-1pt]{$-\nil$}(v9) (v8)edge[bend right=15] node[above=-1pt]{$\id$}(v7) (v7)edge[bend right=15] node[below=-1pt]{$-\nil$}(v8) (v6)edge[bend right=15] node[above=-1pt]{$\id$}(v5) (v5)edge[bend right=15] node[below=-1pt]{$-\nil$}(v6);
\node[inner sep=0]at(2.5*\s,0){$\cdots$};
\node[inner sep=0]at(5.5*\s,0){$\cdots$};
\end{tikzpicture}\hspace*{4pt}.
$$
We further show that the socle filtration of such a module (up to dimension) is equivalent to the data of the pair of tableaux $(\JF(\nil;F), \JF(\nil;F'))$ associated to $(F,F',\nil)$.

We then use the $q$-Burge correspondence to enumerate such $\ppa{Q(k,l)}$-modules. Since there are generally infinitely-many such modules, we count isomorphism classes $[V]$ of modules, each weighted by $\frac{1}{|\!\Aut(V)|}$. This is well-known in the literature as the natural way to enumerate objects in a category (cf.\ \cite{behrend93,bozec_schiffmann_vasserot20}). We can see this from the orbit-stabilizer theorem: if a group $G$ acts on a finite set $X$, then
$$
\frac{|X|}{|G|} = \sum_{[x]}\frac{1}{|\Stab(x)|},
$$
where the sum is over all equivalence classes $[x]$ of $X$ under the action of $G$, and $\Stab(x)$ is the stabilizer of $x$. If $X$ is a category, then the left-hand side above is generally ill-defined, but we can still define the right-hand side as a sum over all isomorphism classes $[x]$ of $X$, where we replace $\Stab(x)$ with the automorphism group $\Aut(x)$; this is known as the {\itshape orbifold volume} of $X$. For the modules we are considering, we obtain the following formula (see \cref{preprojective_enumeration}\ref{preprojective_enumeration_term}):
\begin{thm}\label{preprojective_enumeration_intro}
Set $\field := \qinvF$. Let $M$ be a $k\times l$ nonnegative-integer matrix with row sums $\alpha$ and column sums $\beta$ whose entries sum to $n$, and let $(T,T')$ be a pair of semistandard tableaux of the same shape $\lambda$ of size $n$ with contents $\alpha$ and $\beta$, respectively. Then
\begin{multline*}
\sum_{[V]}\frac{1}{|\!\Aut(V)|} = \frac{q^{n+\sum_{i\ge 1}\binom{\alpha_i}{2}+\sum_{i\ge 1}\binom{\beta_i}{2}}(1-q)^{-n}}{\prod_{1 \le i \le k,\hspace*{1pt} 1 \le j \le l}\qfac{M_{i,j}}}\hspace*{1pt}\pr{M}{T,T'} \\
= \frac{q^{n+\sum_{i\ge 1}\binom{\alpha_i}{2}+\sum_{i\ge 1}\binom{\beta_i}{2}}(1-q)^{-\lambda_1}}{\prod_{i\ge 1}\qfac{\lambda_i - \lambda_{i+1}}}\qwt(T)\qwt(T')\hspace*{2pt}\prdual{M}{T,T'},
\end{multline*}
where the sum is over all isomorphism classes $[V]$ of $\ppa{Q(k,l)}$-modules such that the linear map associated to each arrow of $Q(k,l)$ is injective, such that $V$ is indexed by $M$ as a representation of $Q(k,l)$ and has socle filtration corresponding to $(T,T')$.
\end{thm}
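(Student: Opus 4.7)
The plan is to interpret $\sum_{[V]} 1/|\!\Aut(V)|$ as the groupoid cardinality of a set of triples $(F, F', \nil)$, count that set using \cref{cauchy_intro} and \cref{double_coset_enumeration_intro}, and read off the two expressions in the theorem.

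By the description of $\ppa{Q(k,l)}$-modules in terms of triples given just above the theorem, each isomorphism class $[V]$ contributing to the sum corresponds bijectively to a $\GL_n(\field)$-orbit (under simultaneous change of basis) on the set $X(M,T,T')$ of triples $(F, F', \nil)$ with $(F,F')$ of type $(\alpha,\beta)$ and relative position $M$, with $\nil$ strictly compatible with both $F$ and $F'$, and with $\JF(\nil;F) = T$, $\JF(\nil;F') = T'$. Moreover $|\!\Aut(V)|$ is the stabilizer size of any chosen representative, so orbit-stabilizer yields the groupoid identity
$$\sum_{[V]} \frac{1}{|\!\Aut(V)|} = \frac{|X(M,T,T')|}{|\GL_n(\field)|}.$$

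I would then count $|X(M,T,T')|$ by choosing the pair of flags first. The pairs $(F,F')$ of type $(\alpha,\beta)$ with relative position $M$ form a single $\GL_n(\field)$-orbit of cardinality $|\GL_n(\field)|\cdot|P_\alpha\hspace*{1pt} g_M\hspace*{1pt} P_\beta|/(|P_\alpha(\field)|\cdot|P_\beta(\field)|)$, into which one substitutes the double coset size supplied by \cref{double_coset_enumeration_intro}. For any fixed such pair, the nilpotent endomorphisms strictly compatible with both $F$ and $F'$ form an affine subspace of $\End(\field^n)$ whose cardinality is an explicit power of $1/q$, read off from the normal-form description of pairs of compatible matrices (cf.\ \cref{compatible_matrices_pair}). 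By the definition of $\pr{M}{T,T'}$ in \cref{cauchy_intro}\ref{cauchy_intro_bijection}, the fraction of these $\nil$ that additionally satisfies $\JF(\nil;F)=T$ and $\JF(\nil;F')=T'$ is $\pr{M}{T,T'}$. Multiplying the three counts, dividing by $|\GL_n(\field)|$, and collecting powers of $q$, $(1-q)$, and $q$-factorials yields the first displayed expression. The equality with the second expression is then automatic from the weight-preservation property of the probabilistic bijection of \cref{cauchy_intro}\ref{cauchy_intro_bijection_dual}, which says precisely that
$$\frac{(1-q)^{-n}}{\prod_{i,j}\qfac{M_{i,j}}}\pr{M}{T,T'} = \frac{(1-q)^{-\lambda_1}}{\prod_i \qfac{\lambda_i - \lambda_{i+1}}}\qwt(T)\qwt(T')\prdual{M}{T,T'}.$$

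The main obstacle is the bookkeeping of $q$-exponents. The double coset formula contributes $q^{-n^2 + \sum_{i<i',\hspace*{1pt} j<j'} M_{i,j}M_{i',j'}}(1-q)^n$; the inverse parabolic sizes contribute powers of $q$ involving $\sum_{i<j}\alpha_i\alpha_j$ and $\sum_{j<j'}\beta_j\beta_{j'}$ together with $q$-factorials of the block sizes; $|\GL_n(\field)|^{-1}$ contributes $q^{n^2}(1-q)^{-n}/\qfac{n}$; and the count of compatible $\nil$ contributes a further positive power of $1/q$ keyed to the number of free entries in the compatible space. One must verify that all of these collapse to the compact exponent $n + \ncon{\alpha} + \ncon{\beta}$ of the theorem, leaving only the $\qfac{M_{i,j}}$ factors in the denominator. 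This is a routine but delicate $q$-identity manipulation, and I expect it to be the only real technical step.
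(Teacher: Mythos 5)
Your proposal is correct and follows essentially the same route as the paper. The groupoid-cardinality identity $\sum_{[V]} 1/|\!\Aut(V)| = |X(M,T,T')|/|\GL_n|$ is exactly Lemma~\ref{module_to_triple_count}; your subsequent counting of pairs of flags in relative position $M$ and of the affine space of compatible $\nil$, together with the identification of $\pr{M}{T,T'}$ as the conditional fraction, is precisely what the paper packages as Lemma~\ref{matrices_count} and \eqref{reversibility_equation} (the proof of which itself rests on the double coset/orbit decomposition of Corollary~\ref{alternative_orbit_decomposition_q}, i.e.\ the content of Theorem~\ref{double_coset_enumeration_intro}); and your appeal to weight-preservation for the second equality is exactly the remaining equality in \eqref{reversibility_equation}. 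The paper simply cites $|\GL_n| = q^{-n^2}(1-q)^n\qfac{n}$ from Proposition~\ref{q_enumeration}\ref{q_enumeration_GL} and collapses the $q$-exponent bookkeeping into the proof of Lemma~\ref{matrices_count} via \cref{n_formula} and \eqref{beta_sum_matrix}, so the ``routine but delicate $q$-identity manipulation'' you anticipate is already done there.
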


By summing the identity in \cref{preprojective_enumeration_intro} over all $M$ or over all $(T,T')$, we obtain less refined formulas; see \cref{preprojective_enumeration}. Our work develops a connection between the combinatorics of symmetric functions and the representation theory of preprojective algebras, which we expect to be part of a more general phenomenon worthy of further study; see \cref{dynkin_remark}. We also apply \cref{whittaker_expansion_intro} to enumerate the points of certain Nakajima quiver varieties of type $A$; see \cref{quiver_variety_enumeration}.

We note that the enumeration of quiver representations over finite fields dates back at least to work of Kac \cite{kac83}. He showed with the help of Richard Stanley that for every quiver $Q$, the number of isomorphism classes of absolutely indecomposable representations of $Q$ over $\qF$ is a polynomial in $q$, which Hausel, Letellier, and Rodriguez-Villegas \cite{hausel_letellier_rodriguez-villegas13} showed has nonnegative coefficients. We refer to the survey of Schiffmann \cite{schiffmann18} for an overview of the literature on Kac polynomials. We also mention that Reineke \cite{reineke06} showed that point counts over finite fields of moduli spaces of stable quiver representations are given by polynomials.

For enumerative results specifically about modules over the preprojective algebra $\Pi(Q)$ of a quiver $Q$, we refer to the detailed survey in Vernet's Ph.D.\ thesis \cite[Introduction]{vernet}. In particular, Crawley-Boevey and Van den Bergh \cite{crawley-boevey_van_den_bergh04} enumerated modules over a deformation of $\Pi(Q)$, and Mozgovoy \cite[Theorem 5.1]{mozgovoy} proved a generating-function identity relating counts of modules over $\Pi(Q)$ to the Kac polynomial of $Q$. A key novelty in our setting is that we are enumerating modules over $\Pi(Q)$ refined according to their socle filtration. It would be interesting to explore connections between our work and the papers cited above.

\subsection*{Outline} In \cref{sec_notation,sec_jordan_background}, we give background on partitions, tableaux, $q$-Whittaker functions, and nilpotent endomorphisms. In \cref{sec_pairs_partial} we introduce partial flags and relative position, and enumerate each orbit of a partial flag variety under the action of a standard parabolic subgroup. We also discuss applications to the study of double cosets of $\GL_n$ and of the symmetric group $\Sym_n$. In \cref{sec_nilpotent}, we prove a formula for the $q$-Whittaker function $W_\lambda(\mathbf{x};q)$ in terms of nilpotent endomorphisms and partial flags. In \cref{sec_q_burge}, we introduce our $q$-Burge correspondence, and show that it gives a probabilistic bijection proving the Cauchy identity for $q$-Whittaker functions. In \cref{sec_combinatorics}, we recall the classical Burge correspondence, and show that it is the $q\to 0$ limit of the $q$-Burge correspondence. We also discuss the combinatorial aspects of the $q$-Burge correspondence, including polynomiality and growth diagrams. In \cref{sec_quiver}, we introduce quiver representations and the preprojective algebra, and apply the $q$-Burge correspondence to enumerate isomorphism classes of $\ppa{Q(k,l)}$-modules. In \cref{sec_nakajima}, we similarly enumerate the points of certain Nakajima quiver varieties of type $A$.

\subsection*{Acknowledgments}
This work arose as part of a working group at LaCIM during the academic year 2019--2020. We thank the other participants of the working group, namely Fran\c{c}ois Bergeron, Benjamin Dequ\^{e}ne, Gabriel Frieden, Christophe Reutenauer, Franco Saliola, and Florian Schreier-Aigner, for many enlightening discussions. We are especially grateful to Gabriel Frieden for many helpful conversations and for providing the example presented in \cref{sec_growth_diagrams}. We thank Leonid Petrov for valuable input at many stages of this project. We also thank Dan Betea, Alexei Borodin, Sean Griffin, Joel Kamnitzer, Joel Lewis, and Alejandro Morales for valuable feedback, and an anonymous referee for several suggestions which improved the exposition.

\addtocontents{toc}{\protect\setcounter{tocdepth}{2}}
\section{Notation and background}\label{sec_notation}

\noindent In this section, we introduce some notation and background material. For further details, we refer to \cite{macdonald95,stanley12,stanley99}. We let $\mathbb{N}$ denote the set of nonnegative integers, and for $n\in\mathbb{N}$, we define $[n] := \{1, 2, \dots, n\}$. The symbol $\cong$ means ``is in bijection with''.

\subsection{Vector spaces}\label{sec_notation_vector}
Throughout the paper, vector spaces are defined over a fixed field $\field$. We will often specialize $\field$ to be $\qinvF$, the finite field with $1/q$ elements, where $1/q$ is a prime power. All vector spaces are assumed to be finite-dimensional.
\begin{defn}\label{defn_GL_gl}
Let $V$ be a vector space over $\field$. For $S\subseteq V$, we let $\spn{S}$ denote the linear span of $S$. Let $\GL(V)$ denote the group of automorphisms of $V$, and let $\End(V)$ denote the algebra of linear endomorphisms of $V$. We let $\GL(V)$ act on $\gl(V)$ by conjugation. In general, if $\GL(V)$ acts on the set $X$, we define the {\itshape stabilizer} of $x\in X$ by
$$
\Stab_{\GL(V)}(x) := \{g\in\GL(V) : g\cdot x = x\}.
$$
For $n\in\mathbb{N}$, we let $\GL_n$ and $\gl_n$ denote, respectively, $\GL(\field^n)$ and $\End(\field^n)$. We let $I_n\in\GL_n$ denote the $n\times n$ identity matrix. For $i\in [n]$, we let $\unit{i}\in\field^n$ denote the $i$th unit vector, whose $i$th entry is $1$ and whose other entries are $0$. 
\end{defn}

\subsection{Weak compositions and partitions}\label{sec_notation_partition}
We introduce weak compositions and partitions, and consider some associated functions.
\begin{defn}\label{defn_composition}
A {\itshape weak composition} is a finite sequence $\alpha = (\alpha_1, \dots, \alpha_l)$ of elements of $\mathbb{N}$, modulo trailing zeros (i.e.\ we consider $\alpha$ and $(\alpha_1, \dots, \alpha_l, 0)$ to be the same weak composition). For convenience, we set $\alpha_i := 0$ for $i > l$. The {\itshape size} of $\alpha$ is $\alpha_1 + \cdots + \alpha_l$, denoted $|\alpha|$. If $|\alpha| = n$, we write $\alpha\models n$. If $\alpha_1 \ge \cdots \ge \alpha_l$, we call $\alpha$ a {\itshape partition}, and write $\alpha\vdash n$.
\end{defn}

\begin{defn}\label{defn_young_diagram}
Given a partition $\lambda$, we define its {\itshape Young diagram} to be the diagram of left-justified boxes with $\lambda_i$ boxes in row $i$, for all $i\ge 1$; see \cref{eg_young_diagram}. Given another partition $\mu$, we write $\mu\subseteq\lambda$ if the Young diagram of $\mu$ is contained in the Young diagram of $\lambda$, or equivalently, if $\mu_i \le \lambda_i$ for all $i\ge 1$. In this case, we call $\lambda/\mu$ a {\itshape skew partition}, whose Young diagram is defined to be the set of boxes in the Young diagram of $\lambda$ which are not contained in the Young diagram of $\mu$. We call $\lambda/\mu$ a {\itshape horizontal strip} if its Young diagram contains at most one box in every column, or equivalently, if $\lambda_{i+1} \le \mu_i$ for all $i\ge 1$.

We define the {\itshape south border} of $\lambda$ to be the set of boxes $b$ in its Young diagram such that there are no boxes below $b$ in the same column. Note that $\lambda/\mu$ is a horizontal strip if and only if its Young diagram is contained in the south border of $\lambda$.
\end{defn}

\begin{eg}\label{eg_young_diagram}
Let $\lambda := (5,5,3,2)$ and $\mu := (4,4,1,1)$. Then $\mu\subseteq\lambda$, and the Young diagrams of $\lambda$, $\mu$, and $\lambda/\mu$ are, respectively,
$$
\hspace*{4pt}\,\ydiagram{5,5,3,2}\,\hspace*{4pt}, \quad \hspace*{4pt}\,\ydiagram{4,4,1,1}\,\hspace*{4pt}, \quad\text{ and }\quad \hspace*{4pt}\,\begin{ytableau}
\none & \none & \none & \none & ~ \\
\none & \none & \none & \none & ~ \\
\none & ~ & ~ \\
\none & ~
\end{ytableau}\,\hspace*{4pt}.
$$
The south border of $\lambda = (5,5,3,2)$ consists of the shaded boxes below:
\begin{gather*}
\hspace*{4pt}\,\begin{ytableau}
{} & & & & \\
 & & & *({black!20!}) & *({black!20!}) \\
 & & *({black!20!}) \\
*({black!20!}) & *({black!20!})
\end{ytableau}\,\hspace*{4pt}.\qedhere
\end{gather*}

\end{eg}

\begin{defn}\label{defn_conjugate}
Given a partition $\lambda$, we define its {\itshape conjugate $\transpose{\lambda}$} to be the partition whose Young diagram is the transpose of the Young diagram of $\lambda$. That is, $(\transpose{\lambda})_i$ is the number of boxes in column $i$ of the Young diagram of $\lambda$, for all $i\ge 1$.
\end{defn}

\begin{eg}\label{eg_conjugate}
Let $\lambda := (5,5,3,2)$. Then its conjugate partition is $\transpose{\lambda} = (4,4,3,2,2)$.
\end{eg}

\begin{defn}\label{defn_n}
Let $\alpha = (\alpha_1, \dots, \alpha_l)$ be a weak composition. We define
\begin{gather*}
\ncon{\alpha} := \sum_{i=1}^l\binom{\alpha_i}{2} \quad \text{ and } \quad e_2(\alpha) := \sum_{1 \le i < j \le l}\alpha_i\alpha_j.
\end{gather*}

\end{defn}

\begin{eg}\label{eg_n}
Let $\lambda := (5,5,3,2)$. Then
\begin{gather*}
\ncon{\lambda} = 10 + 10 + 3 + 1 = 24 \quad \text{ and } \quad e_2(\lambda) = 25 + 15 + 10 + 15 + 10 + 6 = 81.\qedhere
\end{gather*}

\end{eg}

We remark that when $\lambda$ is a partition, the quantity $\ncon{\transpose{\lambda}}$ is what is denoted $n(\lambda)$ by Macdonald \cite[I.(1.6)]{macdonald95}. We now prove two identities which will be useful later.
\begin{lem}\label{n_formula}
For $\alpha\models n$, we have $\ncon{\alpha} + e_2(\alpha) = \binom{n}{2}$.
\end{lem}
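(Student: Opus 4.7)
The plan is to prove the identity by a direct algebraic manipulation, or equivalently by a short combinatorial double-counting argument; the result is essentially the polynomial identity $\binom{a+b}{2} = \binom{a}{2} + \binom{b}{2} + ab$ extended to $l$ summands.

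First I would expand $\binom{n}{2} = \binom{\alpha_1 + \cdots + \alpha_l}{2}$ using the relation $\binom{m}{2} = \tfrac{1}{2}(m^2 - m)$. Applying this to both sides of the desired equation, the identity $\ncon{\alpha} + e_2(\alpha) = \binom{n}{2}$ reduces to
\begin{equation*}
\frac{1}{2}\sum_{i=1}^l (\alpha_i^2 - \alpha_i) + \sum_{1 \le i < j \le l}\alpha_i \alpha_j = \frac{1}{2}\left( \Big(\sum_{i=1}^l \alpha_i\Big)^{\!2} - \sum_{i=1}^l \alpha_i\right),
\end{equation*}
which after clearing denominators and cancelling $\sum_i \alpha_i = n$ on both sides is precisely the multinomial expansion $(\sum_i \alpha_i)^2 = \sum_i \alpha_i^2 + 2\sum_{i<j}\alpha_i\alpha_j$.

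Alternatively, and perhaps more cleanly, I would give a bijective argument. Partition the set $[n]$ into consecutive blocks $B_1, \dots, B_l$ of sizes $\alpha_1, \dots, \alpha_l$. The number of $2$-element subsets of $[n]$ is $\binom{n}{2}$. Each such subset either lies entirely within some block $B_i$, contributing $\sum_i \binom{\alpha_i}{2} = \ncon{\alpha}$ choices, or straddles two distinct blocks $B_i$ and $B_j$ with $i < j$, contributing $\sum_{i<j}\alpha_i\alpha_j = e_2(\alpha)$ choices. Summing these two disjoint cases gives $\ncon{\alpha} + e_2(\alpha) = \binom{n}{2}$.

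There is no serious obstacle here; the statement is a one-line consequence of the square of the sum identity. I would present the bijective version since it is the most transparent and matches the combinatorial flavor of the paper.
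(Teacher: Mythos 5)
Your first (algebraic) argument is exactly the paper's proof: expand $\binom{n}{2} = \frac{(\alpha_1 + \cdots + \alpha_l)(\alpha_1 + \cdots + \alpha_l - 1)}{2}$ and collect terms. The bijective version you offer (classifying $2$-subsets of $[n]$ by whether they lie in one block or straddle two) is a correct and pleasant alternative, but the paper simply does the expansion.
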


\begin{proof}
This follows by writing $\binom{n}{2}$ as $\frac{(\alpha_1 + \cdots + \alpha_l)(\alpha_1 + \cdots + \alpha_l-1)}{2}$, and expanding.
\end{proof}

\begin{lem}\label{maximum_degree_recursion}
Let $\lambda/\mu$ be a skew partition. Then
\begin{gather*}
\ncon{\lambda} - \ncon{\mu} - \binom{|\lambda| - |\mu|}{2} =  \sum_{1 \le i \le j}(\lambda_i - \mu_i)(\mu_j - \lambda_{j+1}).
\end{gather*}

\end{lem}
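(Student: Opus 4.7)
The plan is to reduce both sides to a common expression in the differences $d_i := \lambda_i - \mu_i$ and the parts $\mu_j$. Since $\mu \subseteq \lambda$, each $d_i$ is a nonnegative integer, and $|\lambda| - |\mu| = \sum_i d_i$, so \cref{n_formula} applies to the weak composition $d = (d_1, d_2, \dots)$ and gives $\ncon{d} + e_2(d) = \binom{|\lambda|-|\mu|}{2}$. This will be the only nontrivial input beyond elementary algebra.

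For the left-hand side, the plan is to write $\binom{\lambda_i}{2} - \binom{\mu_i}{2}$ in terms of $d_i$ and $\mu_i$ via $\lambda_i = \mu_i + d_i$. A direct expansion gives
$$\binom{\lambda_i}{2} - \binom{\mu_i}{2} = d_i \mu_i + \binom{d_i}{2},$$
so summing over $i$ yields $\ncon{\lambda} - \ncon{\mu} = \sum_i d_i\mu_i + \ncon{d}$. Combining with the identity from \cref{n_formula} above, the left-hand side of the statement equals $\sum_i d_i\mu_i - e_2(d)$.

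For the right-hand side, the plan is to split the factor $\mu_j - \lambda_{j+1}$ as $(\mu_j - \mu_{j+1}) - d_{j+1}$. The contribution from $-d_{j+1}$ is $\sum_{1 \le i \le j} d_i d_{j+1}$, which on reindexing $k = j+1$ becomes $\sum_{i<k} d_i d_k = e_2(d)$. The contribution from $(\mu_j - \mu_{j+1})$ is $\sum_{j \ge 1}(\mu_j - \mu_{j+1})(d_1 + \cdots + d_j)$, which I will rewrite by Abel summation: since $\mu$ has finitely many nonzero parts, telescoping gives $\sum_{j \ge 1}\mu_j d_j$. Thus the right-hand side also equals $\sum_j d_j \mu_j - e_2(d)$, matching the left-hand side.

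The proof is essentially a bookkeeping exercise and I do not anticipate any real obstacle; the only mildly delicate point will be the Abel summation step, where I need to note that $\mu_j = 0$ for $j$ sufficiently large so that the telescoping is valid without boundary terms. The whole argument fits in a few lines once \cref{n_formula} is applied to the composition $d$.
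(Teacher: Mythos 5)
Your proof is correct and takes essentially the same approach as the paper's: both apply \cref{n_formula} to the difference composition $d = (\lambda_1 - \mu_1, \lambda_2 - \mu_2, \dots)$ and both invoke the identity $\binom{\lambda_i}{2} - \binom{\mu_i}{2} - \binom{d_i}{2} = d_i\mu_i$ to reduce everything to $\sum_i d_i\mu_i - e_2(d)$. The only cosmetic difference is in matching this to the right-hand side: you split $\mu_j - \lambda_{j+1} = (\mu_j - \mu_{j+1}) - d_{j+1}$ and Abel-sum over $j$, whereas the paper's terse ``simplifies'' implicitly fixes $i$ and telescopes the inner sum $\sum_{j\ge i}(\mu_j - \lambda_{j+1}) = \mu_i - \sum_{j > i}d_j$; both yield the same identity.
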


\begin{proof}
Define the weak composition $\alpha := (\lambda_1 - \mu_1, \lambda_2 - \mu_2, \dots)$. Then by \cref{n_formula}, the left-hand side above equals $\ncon{\lambda} - \ncon{\mu} - \ncon{\alpha} - e_2(\alpha)$. We can expand this as
$$
\sum_{i\ge 1}\Bigg(\binom{\lambda_i}{2} - \binom{\mu_i}{2} - \binom{\lambda_i - \mu_i}{2} - \sum_{j\ge i+1}(\lambda_i - \mu_i)(\lambda_j - \mu_j)\Bigg).
$$
This simplifies to the right-hand side, using the fact that $\binom{\lambda_i}{2} - \binom{\mu_i}{2} - \binom{\lambda_i - \mu_i}{2} = (\lambda_i - \mu_i)\mu_i$.
\end{proof}

\begin{rmk}\label{identities_remark}
We note that \cref{n_formula,maximum_degree_recursion} hold more generally when the entries of $\alpha$, $\lambda$, and $\mu$ are regarded as indeterminates, because the associated identities only involve polynomials in the entries.
\end{rmk}

\subsection{Tableaux}\label{sec_notation_tableaux}
We introduce certain fillings of Young diagrams.

\begin{defn}\label{defn_tableau}
Let $\lambda\vdash n$. A {\itshape tableau $T$ of shape $\lambda$} is a filling of the boxes of the Young diagram of $\lambda$ with entries in $\mathbb{Z}_{>0}$. We call $n$ the {\itshape size} of $T$. We say that $\alpha\models n$ is the {\itshape content} of $T$ if the number of $i$'s in $T$ is $\alpha_i$, for all $i\ge 1$. We say that $T$ is a {\itshape semistandard Young tableau} (or just {\itshape semistandard}) if its entries weakly increase along rows (from left to right) and strictly increase along columns (from top to bottom). In this case, for $i\in\mathbb{N}$, we let $T^{(i)}$ denote the partition formed by the boxes containing the entries $1, \dots, i$ in $T$. Note that the skew partition $T^{(i)}/T^{(i-1)}$ is a horizontal strip for all $i\ge 1$. We let $\SSYT(\lambda)$ denote the set of all semistandard tableaux of shape $\lambda$, and for $\alpha\models n$, we let $\SSYT(\lambda,\alpha)$ denote the subset of $\SSYT(\lambda)$ of all tableaux with content $\alpha$. We call a tableau $T$ of size $n$ {\itshape standard} if $T$ is semistandard and has content $(1, \dots, 1)$, i.e., the entries of $T$ are $1, \dots, n$, with each appearing exactly once.
\end{defn}

\begin{eg}\label{eg_tableau}
Consider the tableaux
$$
T = \hspace*{4pt}\begin{ytableau}
2 & 4 & 4 & 5 & 7 \\
3 & 5 & 8 & 8 & 9 \\
7 & 7 & 9 \\
8 & 8
\end{ytableau}\hspace*{4pt} \quad \text{ and } \quad T' = \hspace*{4pt}\begin{ytableau}
2 & 4 & 5 & 5 & 7 \\
3 & 4 & 8 & 8 & 9 \\
7 & 7 & 9 \\
8 & 8
\end{ytableau}\hspace*{4pt},
$$
both of shape $(5,5,3,2)$ and content $(0,1,1,2,2,0,3,4,2)$. We can verify that $T$ is semistandard. Note that $T'$ is not semistandard, because its second column is not strictly increasing.
\end{eg}

\subsection{\texorpdfstring{$q$}{q}-analogues}\label{sec_notation_q}
We recall several $q$-analogues which will appear throughout the paper. Here $q$ is a formal variable, which we will often specialize to be a real number such that $1/q$ is a prime power.
\begin{defn}\label{defn_q-analogue}
Given $n\in\mathbb{N}$, we define the {\itshape $q$-number} and {\itshape $q$-factorial}
$$
\qn{n} := 1 + q + \cdots + q^{n-1} = \frac{1-q^n}{1-q} \quad \text{ and } \quad \qfac{n} := \qn{n}\qn{n-1}\cdots\qn{1}.
$$
For a weak composition $\alpha = (\alpha_1, \dots, \alpha_l)\models n$, we define the {\itshape $q$-multinomial coefficient}
$$
\qbinom{n}{\alpha} := \frac{\qfac{n}}{\qfac{\alpha_1}\qfac{\alpha_2}\cdots\qfac{\alpha_l}}.
$$
In the case that $\alpha = (k, n-k)$ for some $0 \le k \le n$, we obtain the {\itshape $q$-binomial coefficient}
$$
\qbinom{n}{k} := \frac{\qfac{n}}{\qfac{k}\qfac{n-k}}.
$$
We also define
$$
\qn{\infty} := (1-q)^{-1} \quad \text{ and } \quad \qbinom{\infty}{k} := \frac{(1-q)^{-k}}{\qfac{k}} \text{ for all } k\in\mathbb{N},
$$
which equal, respectively, $\lim_{n\to\infty}\qn{n}$ and $\lim_{n\to\infty}\qbinomsmall{n}{k}$, whenever $q\in\mathbb{C}$ with $|q| < 1$.

Note that when $q=1$, we have $\qn{n} = n$, $\qfac{n} = n!$, $\qbinomsmall{n}{\alpha} = \binom{n}{\alpha}$, and $\qbinomsmall{n}{k} = \binom{n}{k}$. On the other hand, when $q=0$, all the $q$-analogues above specialize to $1$, except for $\qn{0} = 0$.
\end{defn}

\begin{eg}\label{eg_q-analogue}
We have $\qn{3} = 1 + q + q^2$, $\qfac{3} = 1 + 2q + 2q^2 + q^3$, and
\begin{gather*}
\qbinom{4}{2} = \qbinom{4}{(2,2)} =  1 + q + 2q^2 + q^3 + q^4.\qedhere
\end{gather*}

\end{eg}

\begin{rmk}\label{remark_infinitely_many}
Throughout the paper, we will consider various identities of rational functions in $q$ (such as \eqref{q-cauchy_coefficient_intro}). To establish such an identity, it suffices to prove it whenever $1/q$ is a prime power, because every nonzero rational function has only finitely many zeros.
\end{rmk}

\subsection{\texorpdfstring{$q$}{q}-Whittaker functions}\label{sec_notation_whittaker}
We introduce the $q$-Whittaker symmetric functions $W_\lambda(\mathbf{x};q)$. They are $q$-analogues of the Schur symmetric functions $s_\lambda(\mathbf{x})$, and the $t=0$ specialization of the Macdonald polynomial $P_\lambda(\mathbf{x};q,t)$. See \cite{bergeron} for a survey of $q$-Whittaker functions. Throughout the paper, $\mathbf{x} = (x_1, x_2, \dots)$ denotes a family of commuting variables indexed by $\mathbb{Z}_{>0}$. For a weak composition $\alpha = (\alpha_1, \dots, \alpha_l)$, we define $\mathbf{x}^\alpha := x_1^{\alpha_1}x_2^{\alpha_2}\cdots x_l^{\alpha_l}$, and we let $[\mathbf{x}^\alpha]$ denote the operator which extracts the coefficient of $\mathbf{x}^\alpha$ from a formal power series in $\mathbf{x}$.
\begin{defn}[{cf.\ \cite[VI.(7.13')]{macdonald95}, \cite[Section 3.1]{aigner_frieden}}]\label{defn_whittaker}
Let $\lambda\vdash n$. Given $T\in\SSYT(\lambda)$, we define the {\itshape q-weight of $T$} as
$$
\qwt(T) := \prod_{i,j\ge 1}\qbinomdisplay{T^{(j)}_i - T^{(j)}_{i+1}}{T^{(j)}_i - T^{(j-1)}_i} \in \mathbb{N}[q].
$$
We define the {\itshape $q$-Whittaker function} indexed by $\lambda$ to be the formal power series in $\mathbf{x}$
$$
W_\lambda(\mathbf{x};q) := \sum_{T\in\SSYT(\lambda)}\qwt(T)\mathbf{x}^T,
$$
where $\mathbf{x}^T := \prod_{i\ge 1}x_i^{\#\text{$i$'s in $T$}}$. It is symmetric in the variables $\mathbf{x}$ \cite[Section VI.4]{macdonald95}.
\end{defn}

\begin{eg}\label{eg_whittaker_full}
\setlength{\tabcolsep}{3pt}Let $\lambda := (2,2)$. Then $W_\lambda(\mathbf{x};q)$ equals
\begin{gather*}
\begin{tabular}{cccccccccc}
$\displaystyle\sum_{1 \le i_1 < i_2}\hspace*{-4pt}$ & $x_{i_1}^2x_{i_2}^2$ & $+$ & $\displaystyle\sum_{1 \le i_1 < i_2 < i_3}\hspace*{-2pt}\Big(\hspace*{-6pt}$ & $(1+q)x_{i_1}^2x_{i_2}x_{i_3}$ & $+$ & $(1+q)x_{i_1}x_{i_2}^2x_{i_3}$ & $+$ & $(1+q)x_{i_1}x_{i_2}x_{i_3}^2$ & $\hspace*{-4pt}\Big)$ \\[14pt]
& $\ytableausmall{i_1 & i_1 \\ i_2 & i_2}$ & & & $\ytableausmall{i_1 & i_1 \\ i_2 & i_3}$ & & $\ytableausmall{i_1 & i_2 \\ i_2 & i_3}$ & & $\ytableausmall{i_1 & i_2 \\ i_3 & i_3}$
\end{tabular}\hspace*{36pt} \\[12pt]
\hspace*{108pt}\begin{tabular}{cccccc}
$+$ & $\displaystyle\sum_{1 \le i_1 < i_2 < i_3 < i_4}\hspace*{-2pt}\Big(\hspace*{-6pt}$ & $(1+q)^2x_{i_1}x_{i_2}x_{i_3}x_{i_4}$ & $+$ & $(1+q)x_{i_1}x_{i_2}x_{i_3}x_{i_4}$ & $\hspace*{-4pt}\Big).$ \\[14pt]
& & $\ytableausmall{i_1 & i_2 \\ i_3 & i_4}$ & & $\ytableausmall{i_1 & i_3 \\ i_2 & i_4}$ & 
\end{tabular}\qedhere
\end{gather*}
\setlength{\tabcolsep}{6pt}
\end{eg}

\begin{eg}\label{eg_whittaker}
Let us calculate the coefficient of $x_1^2x_2x_3^3$ in $W_\lambda(\mathbf{x};q)$, where $\lambda := (4,2)$. We have
$$
\qwt\Big(\ytableausmall{1 & 1 & 2 & 3 \\ 3 & 3}\Big) = \qbinom{3}{1}\qbinom{2}{1} = 1 + 2q + 2q^2 + q^3, \quad \qwt\Big(\ytableausmall{1 & 1 & 3 & 3 \\ 2 & 3}\Big) = \qbinom{2}{1} = 1 + q.
$$
Therefore
\begin{gather*}
[x_1^2x_2x_3^3]W_\lambda(\mathbf{x};q) = 2 + 3q + 2q^2 + q^3.\qedhere
\end{gather*}

\end{eg}

\begin{rmk}\label{macdonald_remark}
Given a partition $\lambda$, let $P_\lambda(\mathbf{x};q,t)$ and $\widetilde{H}_\lambda(\mathbf{x};q,t)$ denote the {\itshape Macdonald polynomial} and {\itshape modified Macdonald polynomial}, respectively, and let $\omega$ denote the standard involution on symmetric functions. Then we have \cite[Section 3]{bergeron}
$$
W_\lambda(\mathbf{x};q) = P_\lambda(\mathbf{x};q,0) = q^{\ncon{\lambda}}\omega(\widetilde{H}_\lambda(\mathbf{x};1/q,0)),
$$
where $\ncon{\lambda}$ is the degree of $W_\lambda(\mathbf{x};q)$ as a polynomial in $q$. In particular, we have the following specializations of $W_\lambda(\mathbf{x};q)$ \cite[(3.2)]{bergeron}:
$$
W_\lambda(\mathbf{x};0) = s_\lambda(\mathbf{x}), \quad W_\lambda(\mathbf{x};1) = e_{\transpose{\lambda}}(\mathbf{x}),
$$
where $s_\lambda(\mathbf{x})$ is the {\itshape Schur symmetric function} indexed by $\lambda$, and $e_{\transpose{\lambda}}(\mathbf{x})$ is the {\itshape elementary symmetric function} indexed by $\transpose{\lambda}$. The $q$-Whittaker functions are so named because they specialize to the {\itshape $\gl_n$-Whittaker functions} in the limit $q \to 1$ with a certain specialization of the variables $\mathbf{x}$; see \cite[Section 3]{gerasimov_lebedev_oblezin12} and \cite[Theorem 4.7]{borodin_corwin14}.
\end{rmk}

\begin{rmk}\label{remark_dual}
We mention that there exist the {\itshape dual $q$-Whittaker functions}, defined by
$$
\widehat{W}_\lambda(\mathbf{x};q) := \frac{(1-q)^{-\lambda_1}}{\prod_{i\ge 1}\qfac{\lambda_i - \lambda_{i+1}}}W_\lambda(\mathbf{x};q) \quad \text{ for all partitions } \lambda.
$$
The dual $q$-Whittaker functions are dual to the $q$-Whittaker functions with respect to a natural inner product \cite[Section VI]{macdonald95}, and we can write the right-hand side of the Cauchy identity \eqref{q-cauchy_equation_intro} as
$$
\sum_\lambda W_\lambda(\mathbf{x};q)\widehat{W}_\lambda(\mathbf{y};q).
$$
We will not use the dual $q$-Whittaker functions in the rest of the paper, and prefer to instead write out the term $\frac{(1-q)^{-\lambda_1}}{\prod_{i\ge 1}\qfac{\lambda_i - \lambda_{i+1}}}$ wherever necessary. However, we find it helpful to keep in mind that there is often a dual perspective to several topics we will discuss. For example, we have the following dual description of the $q$-weights of \cref{defn_whittaker}.
\end{rmk}

\begin{prop}[{Macdonald \cite[VI.(7.13)]{macdonald95}; cf.\ \cite[Section 3.1]{aigner_frieden}}]\label{whittaker_weight_dual}
Let $T\in\SSYT(\lambda)$. Then
$$
\frac{(1-q)^{-\lambda_1}}{\prod_{i\ge 1}\qfac{\lambda_i - \lambda_{i+1}}}\qwt(T) = \prod_{i,j\ge 1}\qbinomdisplay{T^{(j-1)}_{i-1} - T^{(j-1)}_i}{T^{(j)}_i - T^{(j-1)}_i},
$$
where when $i=1$, we take $T^{(j-1)}_{i-1} - T^{(j-1)}_i$ to be $\infty$.
\end{prop}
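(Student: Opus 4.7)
The plan is to verify the identity by direct manipulation of the $q$-factorials, working one horizontal strip $T^{(j)}/T^{(j-1)}$ at a time and then telescoping in $j$. Both sides are products whose factors are indexed by pairs $(i,j)$, with the same ``$k$'' entry $T^{(j)}_i - T^{(j-1)}_i$ in the $q$-binomial coefficients, so the natural first step is to expand each $q$-binomial using $\qbinomsmall{n}{k} = \qfac{n}/(\qfac{k}\qfac{n-k})$ and compare factor by factor.

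Concretely, fix $j\ge 1$ and set $\mu := T^{(j-1)}$, $\lambda := T^{(j)}$ (so $\lambda/\mu$ is a horizontal strip). After cancelling the common factor $\qfac{\lambda_i - \mu_i}$ in every denominator, I would show that
\[
\frac{\prod_{i\ge 1}\qbinomsmall{\mu_{i-1} - \mu_i}{\lambda_i - \mu_i}}{\prod_{i\ge 1}\qbinomsmall{\lambda_i - \lambda_{i+1}}{\lambda_i - \mu_i}}
= \prod_{i\ge 1}\frac{\qfac{\mu_{i-1}-\mu_i}\,\qfac{\mu_i - \lambda_{i+1}}}{\qfac{\lambda_i - \lambda_{i+1}}\,\qfac{\mu_{i-1}-\lambda_i}}.
\]
Then I would split off the $i=1$ term, treating the convention $\mu_0 = \infty$ via the limit $\qfac{\infty-a}/\qfac{\infty-b} = (1-q)^{-(b-a)}$ for $a\le b$ (which is what the definition $\qn{\infty} = (1-q)^{-1}$ packages for us), and re-index the tail product by $i'=i-1$. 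After this bookkeeping the ratio collapses to
\[
(1-q)^{-(\lambda_1-\mu_1)}\prod_{i\ge 1}\frac{\qfac{\mu_i - \mu_{i+1}}}{\qfac{\lambda_i - \lambda_{i+1}}},
\]
a pleasant form in which each factor depends only on a single partition ($\mu$ or $\lambda$) rather than on the strip.

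The final step is to take the product of this per-$j$ expression over all $j\ge 1$. The exponents $-(T^{(j)}_1 - T^{(j-1)}_1)$ telescope in $j$ to $-\lambda_1$, yielding $(1-q)^{-\lambda_1}$. The products $\prod_i \qfac{\mu_i-\mu_{i+1}}/\qfac{\lambda_i-\lambda_{i+1}}$ telescope in $j$ as well: for each fixed $i$, the numerator at level $j$ cancels the denominator at level $j-1$, leaving only the ``boundary'' contributions $\qfac{T^{(0)}_i-T^{(0)}_{i+1}} = \qfac{0} = 1$ at the bottom and $\qfac{\lambda_i-\lambda_{i+1}}$ in the denominator at the top. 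Multiplying these together reproduces exactly the prefactor $(1-q)^{-\lambda_1}/\prod_{i\ge 1}\qfac{\lambda_i-\lambda_{i+1}}$, which finishes the proof.

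The main obstacle is not algebraic depth but bookkeeping: one must carefully handle the boundary conventions (the $\infty$ in the first row and the use of $T^{(0)} = \emptyset$ and $T^{(\infty)} = \lambda$), make sure that both the index shift in $i$ and the telescoping in $j$ combine correctly, and verify that all of the products over an a priori infinite range are in fact finite (only finitely many factors differ from $1$ because $T^{(j)}$ stabilizes to $\lambda$ for $j$ large and has bounded number of rows). Once the per-strip ratio is put in the form displayed above, everything falls into place.
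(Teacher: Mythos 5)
Your proof is correct, and supplies a direct verification of an identity that the paper itself cites from Macdonald VI.(7.13) without proof. The per-strip computation is right: writing $\mu := T^{(j-1)}$ and $\nu := T^{(j)}$ (please avoid re-using $\lambda$, which already denotes the shape of $T$), expanding each $q$-binomial via $\qbinomsmall{n}{k} = \qfac{n}/(\qfac{k}\qfac{n-k})$ and cancelling the common $\qfac{\nu_i - \mu_i}$ gives your displayed factorial ratio, and after re-indexing the $\qfac{\mu_{i-1}-\cdot}$ factors by $i\mapsto i-1$ and absorbing the $(1-q)^{-(\nu_1-\mu_1)}$ contribution at $i=1$, the per-$j$ ratio indeed collapses to $(1-q)^{-(\nu_1-\mu_1)}\prod_{i\ge 1}\qfac{\mu_i-\mu_{i+1}}/\qfac{\nu_i-\nu_{i+1}}$. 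The $j$-telescope then reproduces the prefactor exactly, since $\sum_{j\ge 1}(T^{(j)}_1-T^{(j-1)}_1)=\lambda_1$ and the $\qfac{\cdot}$ factors at each fixed $i$ cancel down to $1/\qfac{\lambda_i-\lambda_{i+1}}$; the stabilization $T^{(j)}=\lambda$ for $j$ large makes all products finite, as you observe. Since the paper gives no proof of this statement (Macdonald establishes the general $(q,t)$ version via his Pieri machinery and then specializes), your self-contained calculation at $t=0$ is a more elementary route and worth recording. One small cleanup: the $i=1$ boundary need not be handled as a limit $\qfac{\infty-a}/\qfac{\infty-b}\to(1-q)^{-(b-a)}$; it is cleaner to invoke the paper's definition $\qbinomsmall{\infty}{k} := (1-q)^{-k}/\qfac{k}$ from \cref{defn_q-analogue} directly, which yields the $i=1$ factor $(1-q)^{-(\nu_1-\mu_1)}\qfac{\mu_1-\nu_2}/\qfac{\nu_1-\nu_2}$ outright with no limiting argument.
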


\begin{rmk}\label{weight_standard}
We point out that for a standard tableau $T$, the formulas for $\qwt(T)$ in \cref{defn_whittaker,whittaker_weight_dual} simplify as follows. Let $\lambda\vdash n$ be the shape of $T$, and for $1 \le i \le n$, let $r_i$ index the row of $T$ containing the entry $i$. Then
$$
\qwt(T) = \prod_{i=1}^n\qn{T^{(i)}_{r_i} - T^{(i)}_{r_i+1}} \quad \text{ and } \quad \frac{(1-q)^{-\lambda_1}}{\prod_{i\ge 1}\qfac{\lambda_i - \lambda_{i+1}}}\qwt(T) = \prod_{i=1}^n\qn{T^{(i-1)}_{r_i-1} - T^{(i-1)}_{r_i}},
$$
where when $r_i=1$ in the second product, we take $T^{(i-1)}_{r_i-1} - T^{(i-1)}_{r_i}$ to be $\infty$.
\end{rmk}

\section{Nilpotent endomorphisms and Jordan forms}\label{sec_jordan_background}

\noindent In this section, we provide some background on nilpotent endomorphisms and their Jordan forms. While this material is well-known, we will provide proofs of the results we need, both for completeness and for illustrating our combinatorial approach to studying nilpotent endomorphisms.
\begin{defn}\label{defn_JF}
For $n\in\mathbb{N}$, define the {\itshape Jordan block}
$$
J_n := \begin{bmatrix}
0 & 1 & 0 & \cdots \\
0 & 0 & 1 & \cdots \\
0 & 0 & 0 & \cdots \\
\vdots & \vdots & \vdots & \ddots
\end{bmatrix}  \in \gl_n,
$$
and for a partition $\lambda = (\lambda_1, \dots, \lambda_k)$, define the block-diagonal matrix
$$
J_\lambda := \begin{bmatrix}
J_{\lambda_1} & 0 & \cdots & 0 \\
0 & J_{\lambda_2} & \cdots & 0 \\
\vdots & \vdots & \ddots & \vdots \\
0 & 0 & \cdots & J_{\lambda_k}
\end{bmatrix} \in \gl_{|\lambda|}.
$$

Let $V$ be a (finite-dimensional) vector space. We say that an endomorphism $\nil\in\End(V)$ is {\itshape nilpotent} if $\nil^m = 0$ for some $m\in\mathbb{N}$. In this case, there exists a unique $\lambda\vdash\dim(V)$ such that $\nil$ is represented by the matrix $J_\lambda$ with respect to some basis of $V$ (see e.g.\ \cite[Section VII.7]{gantmacher59}). We call $\transpose{\lambda}$ the {\itshape (conjugated) Jordan form partition} of $\nil$, denoted $\JF(\nil)$.
\end{defn}

\begin{eg}
We have
\begin{gather*}
\JF\hspace*{-2pt}\left(\scalebox{0.8}{$\begin{bmatrix}0 & 0 & 0 \\ 0 & 0 & 0 \\ 0 & 0 & 0\end{bmatrix}$}\right) = \ydiagramsmall{3},\quad
\JF\hspace*{-2pt}\left(\scalebox{0.8}{$\begin{bmatrix}1 & -1 & 0 \\ 1 & -1 & 0 \\ 1 & -1 & 0\end{bmatrix}$}\right) = \ydiagramsmall{2,1},\quad
\JF\hspace*{-2pt}\left(\scalebox{0.8}{$\begin{bmatrix}0 & 1 & 0 \\ 0 & 0 & 0 \\ 1 & 0 & 0\end{bmatrix}$}\right) = \ydiagramsmall{1,1,1}.\hspace*{4pt}\qedhere
\end{gather*}

\end{eg}

We will need the following fact about the Jordan form:
\begin{lem}[{\cite[Section VII.7]{gantmacher59}}]\label{transitive_action}
Let $\lambda\vdash n$. Then $\GL_n$ acts transitively on the set of nilpotent endomorphisms $\nil\in\gl_n$ with $\JF(\nil) = \lambda$.
\end{lem}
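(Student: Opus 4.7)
The plan is to reduce the statement to the existence of Jordan normal form that is already cited in \cref{defn_JF}. That result says: for any nilpotent $\nil\in\gl_n$ with $\JF(\nil) = \lambda$, there is a basis $B_\nil = (v_1, \dots, v_n)$ of $\field^n$ with respect to which $\nil$ is represented by $J_{\transpose{\lambda}}$. Equivalently, writing $g_\nil\in\GL_n$ for the invertible matrix whose $i$th column is $v_i$, one has $g_\nil^{-1}\nil\hspace*{1pt}g_\nil = J_{\transpose{\lambda}}$, i.e.\ $\nil$ is conjugate to $J_{\transpose{\lambda}}$.

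Given two nilpotent endomorphisms $\nil_1, \nil_2\in\gl_n$ with $\JF(\nil_1) = \JF(\nil_2) = \lambda$, I would choose such change-of-basis matrices $g_1, g_2\in\GL_n$, so that
\[
g_1^{-1}\nil_1 g_1 = J_{\transpose{\lambda}} = g_2^{-1}\nil_2 g_2,
\]
and then set $g := g_2 g_1^{-1}\in\GL_n$. A direct computation gives $g\hspace*{1pt}\nil_1\hspace*{1pt}g^{-1} = \nil_2$, showing that $\nil_1$ and $\nil_2$ lie in the same $\GL_n$-orbit under the conjugation action introduced in \cref{defn_GL_gl}. This exhibits the transitivity of the action on the set of nilpotents with Jordan form partition $\lambda$.

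There is no genuine obstacle: the work is entirely absorbed by the cited existence theorem from \cite[Section VII.7]{gantmacher59}. The only point to verify is the bookkeeping that turns the existence of a Jordan basis into an actual element of $\GL_n$ implementing the conjugation, which is the standard translation between linear maps and matrices. (One could alternatively phrase the argument intrinsically: the existence of a Jordan basis means that any such $\nil$ is conjugate, as an endomorphism of $\field^n$, to the endomorphism given by $J_{\transpose{\lambda}}$, and conjugacy is an equivalence relation.)
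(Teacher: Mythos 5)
Your proof is correct. The paper itself gives no proof of this lemma, citing it to Gantmacher; your argument is precisely the standard one that the citation is pointing to, and it uses exactly the ingredient the paper already invokes in \cref{defn_JF} (existence of a Jordan basis for a nilpotent endomorphism). Reducing transitivity to ``every nilpotent with $\JF(\nil)=\lambda$ is conjugate to the fixed representative $J_{\transpose{\lambda}}$'' and then composing the two conjugations is complete and has no gaps.
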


We will find it convenient to think of a nilpotent endomorphism as acting on the boxes of the Young diagram of a partition.
\begin{defn}\label{defn_boxes}
Given $\lambda\vdash n$, let $\Boxes_\lambda$ be the $n$-dimensional vector space with a basis formed by the boxes of the Young diagram of $\lambda$. Let $\nil_\lambda\in\End(\Boxes_\lambda)$ denote the endomorphism which maps each box $b$ to the box immediately above it (if $b$ is not in the first row), or to zero (if $b$ is in the first row). Then $\nil_\lambda$ is nilpotent, and $\JF(\nil_\lambda) = \lambda$.
\end{defn}

\begin{eg}
Let $\lambda := (5,5,3,2)$. Then the action of $\nil_\lambda$ on $\Boxes_\lambda$ is given by the following picture:
\begin{gather*}
\quad\begin{tikzpicture}[baseline=(current bounding box.center)]
\pgfmathsetmacro{\u}{0.922};
\useasboundingbox(0,0.9*\u)rectangle(5*\u,-4.1*\u);
\coordinate (vepsilon)at(0,0.075*\u);
\node[inner sep=0]at(0,0){\scalebox{1.6}{\begin{ytableau}
\none \\
\none \\
\none \\
\none \\
\none & \none & \none & \none & \none & & & & & \\
\none & \none & \none & \none & \none & & & & & \\
\none & \none & \none & \none & \none & & & \\
\none & \none & \none & \none & \none & &
\end{ytableau}}};
\foreach \y in {1,...,4}{
\path[thick,-latex']($(1*\u,-\y*\u)+(-0.5*\u,0.5*\u)$)edge($(1*\u,-\y*\u+\u)+(-0.5*\u,0.5*\u)-(vepsilon)$);}
\foreach \y in {1,...,4}{
\path[thick,-latex']($(2*\u,-\y*\u)+(-0.5*\u,0.5*\u)$)edge($(2*\u,-\y*\u+\u)+(-0.5*\u,0.5*\u)-(vepsilon)$);}
\foreach \y in {1,...,3}{
\path[thick,-latex']($(3*\u,-\y*\u)+(-0.5*\u,0.5*\u)$)edge($(3*\u,-\y*\u+\u)+(-0.5*\u,0.5*\u)-(vepsilon)$);}
\foreach \y in {1,...,2}{
\path[thick,-latex']($(4*\u,-\y*\u)+(-0.5*\u,0.5*\u)$)edge($(4*\u,-\y*\u+\u)+(-0.5*\u,0.5*\u)-(vepsilon)$);}
\foreach \y in {1,...,2}{
\path[thick,-latex']($(5*\u,-\y*\u)+(-0.5*\u,0.5*\u)$)edge($(5*\u,-\y*\u+\u)+(-0.5*\u,0.5*\u)-(vepsilon)$);}
\foreach \x in {1,...,5}{
\node[inner sep=0]at($(\x*\u-0.5*\u,0.68*\u)$){$0$};}
\end{tikzpicture}\quad.\qedhere
\end{gather*}

\end{eg}

Note that if $\nil\in\End(V)$ is nilpotent and $\JF(\nil) = \lambda$, then there exists an isomorphism $V\to \Boxes_\lambda$ which takes $\nil$ to $\nil_\lambda$. Therefore in studying nilpotent endomorphisms with conjugated Jordan form $\lambda$, it suffices to work with $\nil_\lambda$. We now prove several properties about nilpotent endomorphisms and their Jordan forms.
\begin{lem}\label{matrix_to_partition}
Let $\nil\in\End(V)$ be nilpotent, and set $\lambda := \JF(\nil)$. Then
$$
\lambda_1 + \cdots + \lambda_i = \dim(\ker(\nil^i)) = \dim(V) - \rank(\nil^i) \quad \text{ for all } i \in \mathbb{N}.
$$
Therefore
\begin{gather*}
\lambda_i = \dim(\ker(\nil^i)/\ker(\nil^{i-1})) = \dim(\nil^{i-1}(V)/\nil^i(V)) \quad \text{ for all } i \ge 1.
\end{gather*}

\end{lem}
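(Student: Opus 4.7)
The plan is to reduce to the combinatorial model $\nil_\lambda$ acting on $\Boxes_\lambda$, as noted in the paragraph immediately preceding the lemma: since there is an isomorphism $V \to \Boxes_\lambda$ carrying $\nil$ to $\nil_\lambda$, the dimensions of $\ker(\nil^i)$ and of $\nil^i(V)$ are equal to those of $\ker(\nil_\lambda^i)$ and $\nil_\lambda^i(\Boxes_\lambda)$, respectively. So it suffices to check the identities for $\nil_\lambda$.

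First I would compute $\ker(\nil_\lambda^i)$ directly from the definition. By construction, $\nil_\lambda$ sends each basis box $b$ to the box immediately above it (or to $0$ if $b$ is in the first row), so $\nil_\lambda^i$ sends $b$ to the box $i$ rows above it (or to $0$ if that box does not exist). In particular, $\nil_\lambda$ permutes the basis of boxes up to zero, so $\ker(\nil_\lambda^i)$ is spanned by exactly those boxes lying in rows $1, \dots, i$ of the Young diagram of $\lambda$. Their number is $\lambda_1 + \cdots + \lambda_i$, yielding $\dim(\ker(\nil^i)) = \lambda_1 + \cdots + \lambda_i$. The equality $\dim(\ker(\nil^i)) = \dim(V) - \rank(\nil^i)$ is then immediate from rank-nullity.

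For the ``Therefore'' statement, both identities follow by telescoping. Since $\ker(\nil^{i-1}) \subseteq \ker(\nil^i)$, we get
$$
\dim(\ker(\nil^i)/\ker(\nil^{i-1})) = (\lambda_1 + \cdots + \lambda_i) - (\lambda_1 + \cdots + \lambda_{i-1}) = \lambda_i,
$$
and since $\nil^i(V) \subseteq \nil^{i-1}(V)$, we similarly get
$$
\dim(\nil^{i-1}(V)/\nil^i(V)) = \rank(\nil^{i-1}) - \rank(\nil^i) = \lambda_i
$$
using the second form of the first identity.

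There is no real obstacle here; the only conceptual step is the reduction to $\nil_\lambda$, after which everything is read off from the combinatorial picture in which $\nil_\lambda$ simply shifts boxes upward.
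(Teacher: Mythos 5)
Your proof is correct and takes the same approach as the paper: reduce to the model $\nil_\lambda$ on $\Boxes_\lambda$, observe that the boxes in rows $1,\dots,i$ form a basis of $\ker(\nil_\lambda^i)$, and deduce the remaining identities by rank-nullity and telescoping. You simply spell out a few of the steps the paper leaves implicit.
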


\begin{proof}
We may assume that $V = \Boxes_\lambda$ and $\nil = \nil_\lambda$. Then for $i\in\mathbb{N}$, the boxes in rows $1, \dots, i$ of $\lambda$ form a basis for $\ker(\nil^i)$, so $\lambda_1 + \cdots + \lambda_i = \dim(\ker(\nil^i))$. The remaining results follow.
\end{proof}

In particular, if $\nil\in\End(V)$ is nilpotent, then $\nil^{\dim(V)} = 0$.
\begin{lem}\label{flag_to_tableau}
Let $\nil\in\End(V)$ be nilpotent, and let $W$ be a subspace of $V$ such that $\nil(V)\subseteq W$. Let $\lambda := \JF(\nil)$ and $\mu := \JF(\nil|_W)$. Then $\mu\subseteq\lambda$ and $\lambda/\mu$ is a horizontal strip.
\end{lem}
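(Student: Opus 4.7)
The plan is to compare the dimensions of successive images $\nil^i(V)$ and $\nil^i(W)$, using \cref{matrix_to_partition} to translate these into the parts of $\lambda$ and $\mu$. First, note that $\nil(W)\subseteq\nil(V)\subseteq W$, so $W$ is $\nil$-stable and the restriction $\nil|_W$ is well-defined, with $(\nil|_W)^i(W)=\nil^i(W)$. Write $V_i:=\nil^i(V)$ and $W_i:=\nil^i(W)$ for $i\ge 0$; by \cref{matrix_to_partition},
\[
\lambda_i=\dim(V_{i-1}/V_i)\quad\text{and}\quad\mu_i=\dim(W_{i-1}/W_i)\quad\text{for all }i\ge 1.
\]

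Next I would verify the interleaving chain
\[
V_0\supseteq W_0\supseteq V_1\supseteq W_1\supseteq V_2\supseteq W_2\supseteq\cdots.
\]
The inclusions $W_{i}\subseteq V_{i}$ follow from $W\subseteq V$ by applying $\nil^i$, and the inclusions $V_{i+1}\subseteq W_i$ follow from the hypothesis $V_1\subseteq W_0$ by applying $\nil^i$. Now define the nonnegative integers
\[
a_i:=\dim(V_{i-1}/W_{i-1}),\qquad b_i:=\dim(W_{i-1}/V_i),
\]
so that the chain yields $\lambda_i=a_i+b_i$ and $\mu_i=b_i+a_{i+1}$. In particular, $\mu\subseteq\lambda$ will follow from $a_{i+1}\le a_i$, and the horizontal strip condition $\lambda_{i+1}\le\mu_i$ will follow from $b_{i+1}\le b_i$.

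The main (and only) technical step is to verify these two inequalities, which are parallel. Since $\nil$ carries $V_{i-1}$ onto $V_i$ and $W_{i-1}$ onto $W_i$, it induces a surjective linear map $V_{i-1}/W_{i-1}\twoheadrightarrow V_i/W_i$, giving $a_{i+1}\le a_i$. Likewise, $\nil$ induces a map $W_{i-1}/V_i\to W_i/V_{i+1}$ (well-defined because $\nil(V_i)=V_{i+1}\subseteq W_i$), which is surjective because $\nil(W_{i-1})=W_i$; this gives $b_{i+1}\le b_i$. Combining, $\mu_i-\lambda_i=a_{i+1}-a_i\le 0$ and $\lambda_{i+1}-\mu_i=b_{i+1}-b_i\le 0$, which are exactly the two conditions defining a horizontal strip. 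I do not anticipate a serious obstacle: the argument is a short diagram chase once the interleaving chain is in place, and no appeal beyond \cref{matrix_to_partition} and basic linear algebra is needed.
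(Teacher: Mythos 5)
Your proof is correct, and it is a genuinely different (and arguably cleaner) route than the paper's. The paper works with kernels, $\lambda_i = \dim(\ker(\nil^i)/\ker(\nil^{i-1}))$, and proves the two inequalities by two unrelated arguments: for $\mu_i \le \lambda_i$ it lifts a basis of $\ker((\nil|_W)^i)/\ker((\nil|_W)^{i-1})$ and checks linear independence in the ambient quotient, while for $\lambda_{i+1} \le \mu_i$ it switches to the concrete model $V = \Boxes_\lambda$, $\nil = \nil_\lambda$, and argues with boxes in the Young diagram. You instead work with images, $\lambda_i = \dim(\nil^{i-1}(V)/\nil^{i}(V))$, set up the interleaving chain $V_0 \supseteq W_0 \supseteq V_1 \supseteq W_1 \supseteq \cdots$ (which exists precisely because $\nil(V)\subseteq W \subseteq V$), and deduce both inequalities from the same mechanism, namely that $\nil$ induces surjections on the successive quotient gaps. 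This is coordinate-free, symmetric between the two inequalities, and avoids the box model entirely; the paper's version has the pedagogical advantage of rehearsing the $\Boxes_\lambda$ picture, which it reuses several times later (e.g.\ in \cref{jordan_stabilizer} and \cref{whittaker_induction_step}), but on its own your argument is shorter. One small thing worth stating explicitly when you write it up: the equality $\nil(W_{i-1}) = W_i$ used for surjectivity of the second quotient map relies on $W$ being $\nil$-stable, which you did establish at the start ($\nil(W)\subseteq \nil(V)\subseteq W$); it is good to point to that when invoking it.
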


\begin{proof}
We must show that
$$
\lambda_i \ge \mu_i \ge \lambda_{i+1} \quad \text{ for all } i \ge 1.
$$
We will use the descriptions of $\lambda$ and $\mu$ given in \cref{matrix_to_partition}.

First we show that $\lambda_i \ge \mu_i$. Let $B\subseteq W$ be a set of $\mu_i$ vectors which form a basis of $\ker((\nil|_W)^i)/\ker((\nil|_W)^{i-1})$. Then $\spn{B}\cap\ker((\nil|_W)^{i-1}) = 0$. But $\spn{B}\subseteq W$, so $\spn{B}\cap\ker(\nil^{i-1}) = 0$. Therefore $B$ is linearly independent in $\ker(\nil^i)/\ker(\nil^{i-1})$, so $\lambda_i \ge \mu_i$.

Now we show that $\mu_i\ge\lambda_{i+1}$. We may assume that $V = \Boxes_\lambda$ and $\nil = \nil_\lambda$. Let $A$ be the set of boxes in row $i$ of the Young diagram of $\lambda$ not in the south border. We have $|A| = \lambda_{i+1}$ and $A\subseteq\nil(V)\subseteq W$. Since the set of all boxes in row $i$ forms a basis of $\ker(\nil^i)/\ker(\nil^{i-1})$, we see that $A$ is linearly independent in $\ker((\nil|_W)^i)/\ker((\nil|_W)^{i-1})$. Therefore $\lambda_{i+1} \le \mu_i$.
\end{proof}

We recall that the symbol $\cong$ means ``is in bijection with''.
\begin{prop}[{cf.\ \cite[Section II.1]{macdonald95}, \cite[Theorem 1.10.7]{stanley12}}]\label{jordan_stabilizer}
Let $\lambda\vdash n$, and let $\nil\in\gl_n$ be nilpotent with $\JF(\nil) = \lambda$. Then
\begin{gather*}
\Stab_{\GL_n}\hspace*{-1pt}(\nil) = \{g\in\GL_n : g\nil g^{-1} = \nil\} \cong \prod_{i\ge 1}\field^{(\lambda_1 + \cdots + \lambda_{i-1} + \lambda_{i+1})(\lambda_i - \lambda_{i+1})} \times \GL_{\lambda_i - \lambda_{i+1}}.
\end{gather*}

\end{prop}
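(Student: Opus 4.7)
The plan is to realize $\Stab_{\GL_n}(N)$ as the unit group of the centralizer algebra of $N$ and extract its structure via Krull--Schmidt and the Jacobson radical.

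By \cref{transitive_action}, I may take $N = N_\lambda$ acting on $V_\lambda$, so that $\Stab_{\GL_n}(N) = Z^\times$, where $Z := \{g \in \gl_n : gN_\lambda = N_\lambda g\}$ is the centralizer. Viewing $V_\lambda$ as a $\field[x]$-module with $x$ acting as $N_\lambda$, the algebra $Z$ is precisely the algebra of $\field[x]$-module endomorphisms of $V_\lambda$. The column structure of the Young diagram of $\lambda$ (each column of length $\ell$ spans a cyclic $N_\lambda$-submodule of dimension $\ell$) yields the module decomposition
\[
V_\lambda \;\cong\; \bigoplus_{i \ge 1}\,\bigl(\field[x]/x^i\bigr)^{\oplus m_i},\qquad m_i := \lambda_i - \lambda_{i+1},
\]
since there are exactly $m_i$ columns of length $i$. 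Each indecomposable summand $\field[x]/x^i$ has local endomorphism ring $\field[x]/x^i$ with residue field $\field$, so by the structure theory of endomorphism rings of finite-length modules, the Jacobson radical $J(Z)$ is a nilpotent two-sided ideal with semisimple quotient $Z/J(Z) \cong \prod_{i \ge 1} M_{m_i}(\field)$.

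Because $J(Z)$ is nilpotent, $g \in Z$ is invertible if and only if its image in $Z/J(Z)$ lies in $\prod_{i \ge 1}\GL_{m_i}(\field)$. Choosing any $\field$-linear section of the projection $Z \to Z/J(Z)$ produces a $\field$-linear bijection $J(Z) \oplus Z/J(Z) \to Z$; restricting to invertibles and identifying the $\field$-vector space $J(Z)$ with $\field^{\dim J(Z)}$ gives
\[
\Stab_{\GL_n}(N) \;=\; Z^\times \;\cong\; \field^{\dim J(Z)} \,\times\, \prod_{i \ge 1}\GL_{m_i}(\field).
\]

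To match the stated formula I compute $\dim J(Z)$. Using $\Hom_{\field[x]}(\field[x]/x^i,\field[x]/x^j) \cong \field[x]/x^{\min(i,j)}$, I have $\dim Z = \sum_{i,j}m_i m_j\min(i,j)$; swapping the order of summation via $\min(i,j) = \#\{k \ge 1 : i \ge k \text{ and } j \ge k\}$ and using the telescoping identity $\sum_{i \ge k}m_i = \lambda_k$ gives $\dim Z = \sum_k \lambda_k^2$. Similarly $\dim(Z/J(Z)) = \sum_i m_i^2$, so
\[
\dim J(Z) \;=\; \sum_i \lambda_i^2 \,-\, \sum_i m_i^2 \;=\; \sum_{i \ge 1}\bigl(\lambda_1 + \cdots + \lambda_{i-1} + \lambda_{i+1}\bigr)\bigl(\lambda_i - \lambda_{i+1}\bigr)
\]
by direct expansion (or equivalently, Abel summation of $\sum_i (\lambda_1 + \cdots + \lambda_i)(\lambda_i - \lambda_{i+1}) = \sum_i \lambda_i^2$). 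Distributing $\field^{\dim J(Z)}$ across $i$ as $\prod_{i \ge 1}\field^{(\lambda_1 + \cdots + \lambda_{i-1} + \lambda_{i+1})(\lambda_i - \lambda_{i+1})}$ matches the proposition. The only nontrivial ingredient is the structural identification of $Z/J(Z)$, which is standard for finite-length modules over a PID; everything else is dimension bookkeeping, with the verification that $J(Z)$ really is nilpotent (and hence that units in $Z$ correspond to units in the quotient) being the main conceptual point.
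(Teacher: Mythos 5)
Your proof is correct but proceeds by a genuinely different route. The paper normalizes $\nil$ to the box endomorphism $\nil_\lambda$ on $\Boxes_\lambda$ and parametrizes the centralizer directly: an element $g$ commutes with $\nil_\lambda$ precisely when, for each $i$, the images of the south-border boxes $B_i$ in row $i$ form a basis of $\spn{A_i}/\spn{A_i\setminus B_i}$, and the product structure in the statement is read off from free choices in $\GL(\spn{B_i})$ plus affine corrections from $A_i\setminus B_i$. You instead view $V_\lambda$ as a $\field[x]$-module, decompose it as $\bigoplus_i(\field[x]/x^i)^{\oplus m_i}$ with $m_i=\lambda_i-\lambda_{i+1}$, and pass through the endomorphism algebra $Z$, invoking the standard facts that $J(Z)$ is nilpotent, $Z/J(Z)\cong\prod_iM_{m_i}(\field)$, and units lift through a nilpotent radical. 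Your dimension bookkeeping checks out: $\dim Z=\sum_k\lambda_k^2$ via $\min(i,j)=\#\{k: i,j\ge k\}$ and telescoping, $\dim(Z/J(Z))=\sum_im_i^2$, and the Abel-summation identity $\sum_is_i m_i=\sum_i\lambda_i^2$ (with $s_i=\lambda_1+\cdots+\lambda_i$) gives $\sum_i(s_{i-1}+\lambda_{i+1})m_i=\sum_i s_im_i-\sum_im_i^2=\sum_i\lambda_i^2-\sum_im_i^2$, matching the exponents. The paper's approach is more elementary and self-contained, and yields the factored $\prod_i\field^{(\cdots)m_i}\times\GL_{m_i}$ form directly from the step-by-step choice of labels; yours is more conceptual and transparently identifies the reductive part as $\prod_i\GL_{m_i}$, at the cost of invoking structure theory for endomorphism rings of finite-length modules. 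Since the paper's $\cong$ only means ``is in bijection with,'' your linear-section argument for splitting off $J(Z)$ suffices; you correctly do not claim the bijection respects the group structure, which the paper's parametrization also does not claim.
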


\begin{proof}
We may assume that $V = \Boxes_\lambda$ and $\nil = \nil_\lambda$. Note that an arbitrary $g\in\End(V)$ is uniquely determined by where it sends each box of the Young diagram of $\lambda$. We depict this by labeling each box $b$ by $g(b)$. Then $g\nil = \nil g$ if and only if $\nil$ acts on the labeled diagram just as in \cref{defn_boxes}; that is, for each box $b$, $\nil$ maps $g(b)$ to $g(b')$ (if $b$ is not in the first row and $b'$ is above $b$), or to zero (if $b$ is in the first row): 
$$
\quad\begin{tikzpicture}[baseline=(current bounding box.center)]
\pgfmathsetmacro{\u}{0.922};
\useasboundingbox(0,0*\u)rectangle(1*\u,-2*\u);
\node[inner sep=0]at(0,0){\scalebox{1.6}{\begin{ytableau}
\none \\
\none \\
\none & \\
\none &
\end{ytableau}}};
\path[thick,-latex']($(0.5*\u,-1.25*\u)$)edge($(0.5*\u,-0.7*\u)$);
\node[inner sep=0]at($(0.5*\u,-1.5*\u)$){$g(b)$};
\node[inner sep=0]at($(0.5*\u,-0.5*\u)$){$g(b')$};
\end{tikzpicture}\qquad,\qquad
\begin{tikzpicture}[baseline=(current bounding box.center)]
\pgfmathsetmacro{\u}{0.922};
\useasboundingbox(0,-2*\u)rectangle(1*\u,-0*\u);
\node[inner sep=0]at(0,0){\scalebox{1.6}{\begin{ytableau}
\none \\
\none \\
\none \\
\none &
\end{ytableau}}};
\path[thick,-latex']($(0.5*\u,-1.25*\u)$)edge($(0.5*\u,-0.7*\u)$);
\node[inner sep=0]at($(0.5*\u,-1.5*\u)$){$g(b)$};
\node[inner sep=0]at($(0.5*\u,-0.5*\u)$){$0$};
\end{tikzpicture}\quad.
$$
It suffices to retain only the labels in the south border of the Young diagram of $\lambda$; the labels of the other boxes are uniquely determined by applying powers of $\nil$. For $i\ge 1$, let $A_i$ be the set of boxes in rows $1, \dots, i$, and let $B_i$ be the set of boxes in row $i$ in the south border. Note that $|A_i| = \lambda_1 + \cdots + \lambda_i$ and $|B_i| = \lambda_i - \lambda_{i+1}$. Then $g\in\Stab_{\GL_n}\hspace*{-1pt}(\nil)$ if and only if for all $i\ge 1$, the labels of the boxes $B_i$ form a basis of $\spn{A_i}/\spn{A_i\setminus B_i}$. We obtain these labels precisely by applying an arbitrary element of $\GL(\spn{B_i})$ to the boxes $B_i$, and then adding to each box in $B_i$ an arbitrary linear combination of $A_i\setminus B_i$.
\end{proof}

\section{Pairs of partial flags and relative position}\label{sec_pairs_partial}

\noindent In this section, we introduce partial flag varieties, and study the action of a parabolic subgroup $\Par_\alpha$ of $\GL_n$ on a partial flag variety $\PFl_\beta$. The orbits turn out to correspond to double cosets $\Par_\alpha\backslash\GL_n/\Par_\beta$, as well as to the notion of relative position between a pair of partial flags. The associated combinatorics is that of the double cosets $\Sym_\alpha\backslash\Sym_n/\Sym_\beta$ of the symmetric group $\Sym_n$ modulo two parabolic subgroups, or equivalently, nonnegative-integer matrices with prescribed row and column sums (also known as contingency tables). For background we refer to \cite{brion05} for partial flag varieties, to \cite[Section IV.2.5]{bourbaki68} for double cosets of $\GL_n$, and to \cite{diaconis_gangolli95,billey_konvalinka_petersen_slofstra_tenner18} and the references therein for double cosets of $\Sym_n$.

\subsection{Partial flag varieties}\label{sec_partial_flags}
We introduce parabolic subgroups of $\GL_n$ and partial flag varieties.
\begin{defn}\label{defn_composition_block}
Let $\alpha = (\alpha_1, \dots, \alpha_k) \models n$. For $1 \le i \le k$, we define the {\itshape $i$th block} of $\alpha$ as the interval of integers
\begin{gather*}
\block{\alpha}{i} := \{\alpha_1 + \cdots + \alpha_{i-1} + 1, \dots, \alpha_1 + \cdots + \alpha_i\}.
\end{gather*}

\end{defn}

\begin{eg}\label{eg_composition_block}
Let $\alpha := (3,2)\models 5$. Then the blocks of $\alpha$ are $\block{\alpha}{1} = \{1,2,3\}$ and $\block{\alpha}{2} = \{4,5\}$.
\end{eg}

\begin{defn}\label{defn_lie}
Let $\alpha = (\alpha_1, \dots, \alpha_k)\models n$. We let $\Par_\alpha$ denote the subgroup of $\GL_n$ of matrices such that every entry with row index in $\block{\alpha}{i}$ and column index in $\block{\alpha}{j}$ is zero, for all $1 \le j < i \le k$. That is, $\Par_\alpha$ consists of all invertible block-upper-triangular matrices with diagonal blocks of sizes $\alpha_1, \dots, \alpha_k$. We also let $\Nil_\alpha$ denote the subspace of $\gl_n$ of matrices such that every entry with row index in $\block{\alpha}{i}$ and column index in $\block{\alpha}{j}$ is zero, for all $1 \le j \le i \le k$. That is, $\Nil_\alpha$ consists of all block-upper-triangular matrices with diagonal blocks of sizes $\block{\alpha}{1}, \dots, \block{\alpha}{k}$, where all diagonal blocks are zero.

In the case that $\alpha = (1, \dots, 1)$, we denote $\Par_\alpha$ and $\Nil_\alpha$ by $\Bor_n$ and $\Nil_n$, respectively. That is, $\Bor_n$ is the group of all $n\times n$ invertible upper-triangular matrices, and $\Nil_n$ is the algebra of all $n\times n$ strictly upper-triangular matrices.
\end{defn}

\begin{eg}\label{eg_lie}
Let $\alpha := (3,2)\models 5$. Then
$$
\Par_\alpha = \left\{g = \begin{bmatrix}
\ast & \ast & \ast & \ast & \ast \\
\ast & \ast & \ast & \ast & \ast \\
\ast & \ast & \ast & \ast & \ast \\
0 & 0 & 0 & \ast & \ast \\
0 & 0 & 0 & \ast & \ast
\end{bmatrix} : g \text{ is invertible}\right\} \quad \text{ and } \quad \Nil_\alpha = \left\{\begin{bmatrix}
0 & 0 & 0 & \ast & \ast \\
0 & 0 & 0 & \ast & \ast \\
0 & 0 & 0 & \ast & \ast \\
0 & 0 & 0 & 0 & 0 \\
0 & 0 & 0 & 0 & 0
\end{bmatrix}\right\},
$$
where $\ast$ denotes an arbitrary element of $\field$.
\end{eg}

\begin{defn}\label{defn_partial_flag}
Let $\alpha = (\alpha_1, \dots, \alpha_k)\models n$. Define the {\itshape partial flag variety} $\PFl_\alpha$ as the set of tuples $F = (F_0, F_1, \dots, F_k)$ of subspaces of $\field^n$, such that
$$
F_0 \subseteq F_1 \subseteq \cdots \subseteq F_k \quad \text{ and } \quad \dim(F_i) = \alpha_1 + \cdots + \alpha_i \text{ for all } 0 \le i \le k.
$$
We say that $F$ is {\itshape represented} by the matrix $g\in\GL_n$ if for all $0 \le i \le k$, the subspace $F_i$ is spanned by the first $\alpha_1 + \cdots + \alpha_i$ columns of $g$. Note that $\GL_n$ acts transitively on $\PFl_\alpha$ by left multiplication, and the stabilizer of the partial flag represented by $I_n$ is $\Par_\alpha$. Therefore we may regard $\PFl_\alpha$ as the quotient $\GL_n/\Par_\alpha$.

We mention two special cases of $\PFl_\alpha$ which are of particular interest. When $\alpha = (1, \dots, 1)$, we obtain the {\itshape complete flag variety}, denoted $\Fl_n$. When $\alpha = (k,n-k)$ for some $0 \le k \le n$, we obtain the {\itshape Grassmannian} of all $k$-dimensional subspaces of $\field^n$, denoted $\Gr_{k,n}$.
\end{defn}

\begin{eg}\label{eg_partial_flag}
Let $g := \scalebox{0.8}{$\begin{bmatrix}2 & 5 & -1 \\ -3 & 3 & 4 \\ 1 & -4 & -2\end{bmatrix}$}\in\GL_3$. Then $g$ represents the flag $F = (F_0, F_1, F_2, F_3)\in\Fl_{(1,1,1)} = \Fl_3$, where
$$
F_0 = 0, \quad F_1 = \bigspn{\scalebox{0.8}{$\begin{bmatrix}2 \\ -3 \\ 1\end{bmatrix}$}}, \quad F_2 = \bigspn{\scalebox{0.8}{$\begin{bmatrix}2 \\ -3 \\ 1\end{bmatrix}$}, \scalebox{0.8}{$\begin{bmatrix}5 \\ 3 \\ -4\end{bmatrix}$}}, \quad F_3 = \bigspn{\scalebox{0.8}{$\begin{bmatrix}2 \\ -3 \\ 1\end{bmatrix}$}, \scalebox{0.8}{$\begin{bmatrix}5 \\ 3 \\ -4\end{bmatrix}$}, \scalebox{0.8}{$\begin{bmatrix}-1 \\ 4 \\ -2\end{bmatrix}$}} = \field^3.
$$
Also, $g$ represents the flag $F' = (F'_0,F'_1,F'_2)\in\Fl_{(2,1)}$, where
$$
F'_0 = 0, \quad F'_1 = \bigspn{\scalebox{0.8}{$\begin{bmatrix}2 \\ -3 \\ 1\end{bmatrix}$}, \scalebox{0.8}{$\begin{bmatrix}5 \\ 3 \\ -4\end{bmatrix}$}}, \quad F'_2 = \bigspn{\scalebox{0.8}{$\begin{bmatrix}2 \\ -3 \\ 1\end{bmatrix}$}, \scalebox{0.8}{$\begin{bmatrix}5 \\ 3 \\ -4\end{bmatrix}$}, \scalebox{0.8}{$\begin{bmatrix}-1 \\ 4 \\ -2\end{bmatrix}$}} = \field^3.
$$
If we regard $\Fl_{(2,1)}$ as the Grassmannian $\Gr_{2,3}$ of all $2$-dimensional subspaces of $\field^3$, then $g$ represents the subspace $F'_1$.
\end{eg}

We recall the enumeration over finite fields of various spaces introduced so far. We point out that $\qinvn{n} = q^{1-n}\qn{n}$ for all $n\in\mathbb{N}$.
\begin{prop}[{\cite[Propositions 1.10.1 and 1.7.2]{stanley12}; cf.\ \cite[Section 1.3]{kirillov95}}]\label{q_enumeration}
Set $\field := \qinvF$.
\begin{enumerate}[label=(\roman*), leftmargin=*, itemsep=2pt]
\item\label{q_enumeration_GL} For $n\in\mathbb{N}$, we have $|\GL_n| = \prod_{i=1}^n(q^{-n} - q^{-i+1}) = q^{-n^2}(1-q)^n\qfac{n}$.\vspace*{4pt}
\item\label{q_enumeration_Gr} For $0 \le k \le n$, we have $|\Gr_{k,n}| = \qinvbinomsmall{n}{k} = q^{-k(n-k)}\qbinomsmall{n}{k}$.
\item\label{q_enumeration_matrices} For $0 \le k \le \min(m,n)$, the number of $m\times n$ matrices of rank $k$ is
$$
\frac{q^{-k(m+n-k)}(1-q)^k\qfac{m}\qfac{n}}{\qfac{k}\qfac{m-k}\qfac{n-k}}.
$$
\item\label{q_enumeration_P} For $\alpha\models n$, we have $|\PFl_\alpha| = \qinvbinomsmall{n}{\alpha} = q^{-e_2(\alpha)}\qbinomsmall{n}{\alpha}$.
\end{enumerate}

\end{prop}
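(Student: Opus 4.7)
The plan is to establish part \ref{q_enumeration_GL} directly by a column-by-column enumeration of invertible matrices, and then derive parts \ref{q_enumeration_Gr} and \ref{q_enumeration_P} from the transitive action of $\GL_n$ on a partial flag variety with stabilizer $\Par_\alpha$ (as noted in \cref{defn_partial_flag}), and part \ref{q_enumeration_matrices} by parametrizing a rank-$k$ matrix via its image together with an induced surjection.

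For part \ref{q_enumeration_GL}, I would build an element of $\GL_n$ one column at a time. Once the first $i-1$ columns are fixed and are linearly independent, they span a subspace of $\qinvF^n$ of size $(1/q)^{i-1}$, so the $i$-th column may be chosen to be any vector outside this subspace, yielding $q^{-n} - q^{-i+1}$ choices. Taking the product over $i$, pulling out $q^{-n}$ from each factor, and reindexing the remaining terms rewrites this as $q^{-n^2}\prod_{j=1}^n(1-q^j) = q^{-n^2}(1-q)^n\qfac{n}$.

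For part \ref{q_enumeration_P} (which includes part \ref{q_enumeration_Gr} as the case $\alpha = (k,n-k)$), I would use $|\PFl_\alpha| = |\GL_n|/|\Par_\alpha|$. An element of $\Par_\alpha$ is determined by an invertible diagonal block in $\GL_{\alpha_i}$ for each $i$, together with $e_2(\alpha)$ arbitrary entries strictly above the block diagonal, so $|\Par_\alpha| = (1/q)^{e_2(\alpha)}\prod_i |\GL_{\alpha_i}|$. Dividing the formulas obtained from part \ref{q_enumeration_GL} and simplifying the resulting power of $q$ via the identity $\sum_i\alpha_i^2 + 2e_2(\alpha) = n^2$ (a direct consequence of \cref{n_formula}) produces $q^{-e_2(\alpha)}\qbinom{n}{\alpha}$.

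For part \ref{q_enumeration_matrices}, I would parameterize a rank-$k$ matrix $M\in\Mats{m}{n}$, regarded as a linear map $\qinvF^m\to\qinvF^n$, by the pair consisting of its image $W\subseteq\qinvF^n$ (a $k$-dimensional subspace, counted by part \ref{q_enumeration_Gr}) and, after choosing a basis of $W$, the surjection $\qinvF^m\twoheadrightarrow W$ (equivalently, a $k\times m$ matrix of rank $k$, i.e.\ an ordered tuple of $k$ linearly independent vectors in $\qinvF^m$, counted by the same column-by-column argument as in part \ref{q_enumeration_GL}). Multiplying these two counts and collecting $q$-exponents gives the stated formula. The argument is conceptually routine; the only subtle point is consistently converting powers of the field size $1/q$ into powers of $q$ (via $\qinvn{n} = q^{1-n}\qn{n}$), so the main risk is a bookkeeping slip in tracking $q$-exponents rather than any substantive obstacle.
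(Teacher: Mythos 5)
Your proposal is correct, and it takes a slightly different route from the paper's proof in two places. For parts \ref{q_enumeration_GL} and \ref{q_enumeration_Gr} the paper simply cites Stanley; your column-by-column count of $\GL_n$ is precisely the argument behind that citation, so this is a matter of spelling things out rather than a different approach. For part \ref{q_enumeration_P} the paper inducts on the number of parts of $\alpha$ using the fibration $\PFl_\alpha \cong \Gr_{\alpha_1,n}\times\PFl_{(\alpha_2,\dots,\alpha_k)}$ to first establish $|\PFl_\alpha| = \qinvbinomsmall{n}{\alpha}$ and then converts via \cref{n_formula}; you instead compute $|\Par_\alpha| = (1/q)^{e_2(\alpha)}\prod_i|\GL_{\alpha_i}|$ from the block structure and divide $|\GL_n|/|\Par_\alpha|$ directly. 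Both are standard; the orbit--stabilizer route has the small advantage of making part \ref{q_enumeration_Gr} literally a special case rather than an ingredient, while the paper's inductive fibration reuses part \ref{q_enumeration_Gr} and keeps the $\qinvbinomsmall{n}{\alpha}$ intermediate form explicit. For part \ref{q_enumeration_matrices} the two decompositions coincide (your ``image'' as a map on row vectors is the paper's ``row span''), and the only difference is that the paper counts $m\times k$ rank-$k$ matrices as $|\Gr_{k,m}|\cdot|\GL_k|$ via the free $\GL_k$-action, whereas you count the equivalent set of $k$ linearly independent vectors directly by the column-by-column argument; these give the same product. All exponent bookkeeping in your sketch checks out, in particular $-k(n-k)-mk = -k(m+n-k)$ and $\sum_i\alpha_i^2 + 2e_2(\alpha) = n^2$.
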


\begin{proof}
Part \ref{q_enumeration_GL} follows from \cite[Proposition 1.10.1]{stanley12}, and part \ref{q_enumeration_Gr} follows from \cite[Proposition 1.7.2]{stanley12}. For part \ref{q_enumeration_matrices}, note that the row span of an $m\times n$ matrix is an arbitrary subspace of $\field^n$ of rank at most $m$, of which there are $|\Gr_{k,n}|$ subspaces of rank $k$. Therefore the number of $m\times n$ matrices of rank $k$ is equal to $|\Gr_{k,n}|$ times the number of $m\times k$ matrices of rank $k$. Now observe that $\GL_k$ acts freely by right multiplication on the set of all $m\times k$ matrices of rank $k$, and the quotient can be identified with $\Gr_{k,m}$, so the number of such matrices is $|\Gr_{k,m}|\cdot |\GL_k|$. The product $|\Gr_{k,n}|\cdot |\Gr_{k,m}|\cdot |\GL_k|$ gives the desired expression. For part \ref{q_enumeration_P}, the first equality follows from part \ref{q_enumeration_Gr}, using the fact that if $\alpha = (\alpha_1, \dots, \alpha_k)$, we have
\begin{align}\label{partial_flag_induction_equation}
\PFl_\alpha \cong \Gr_{\alpha_1, n} \times \PFl_{(\alpha_2, \dots, \alpha_k)}.
\end{align}
The second equality follows by \cref{n_formula}.
\end{proof}

\subsection{Schubert decomposition}\label{sec_schubert_decomposition}
We recall the Schubert decomposition of a partial flag variety. We refer to \cite{brion05} for further background on the Schubert decomposition of flag varieties of $\GL_n$, and to \cite{bjorner_brenti05} for background on the symmetric group and its parabolic subgroups. We begin by introducing the symmetric group $\Sym_n$.
\begin{defn}\label{defn_inversion}
Given $n\in\mathbb{N}$, we let $\Sym_n$ denote the group of permutations of $[n]$, with identity $\id$. Let $w\in\Sym_n$. An {\itshape inversion} of $w$ is a pair $(i,j) \in [n]\times [n]$ such that $i < j$ and $w(i) > w(j)$. We define the {\itshape length} $\ell(w)$ of $w$ as the number of inversions of $w$.
\end{defn}

\begin{eg}\label{eg_inversion}
Let $w := 23514\in\Sym_5$. The inversions of $w$ are $(1,4)$, $(2,4)$, $(3,4)$, and $(3,5)$, so $\ell(w) = 4$.
\end{eg}

\begin{defn}[{\cite[Section 2.4]{bjorner_brenti05}}]\label{defn_parabolic_quotient}
Given $\alpha = (\alpha_1, \dots, \alpha_k) \models n$, we let $\Sym_\alpha$ denote the {\itshape parabolic subgroup} of $\Sym_n$ of all permutations which setwise preserve $\block{\alpha}{i}$ for all $1 \le i \le k$. Note that $\Sym_\alpha \cong \Sym_{\alpha_1} \times \cdots \times \Sym_{\alpha_k}$. Each coset of $\Sym_n/\Sym_\alpha$ contains a unique element of minimal length; we denote the set of minimal-length coset representatives by $\Sym^\alpha$. For $w\in\Sym_n$, we have $w\in\Sym^\alpha$ if and only if for all $1 \le i \le k$, we have
\begin{gather*}
w(a) < w(b) \quad \text{ for all } a < b \text{ in } \block{\alpha}{i}.
\end{gather*}

\end{defn}

\begin{eg}\label{eg_parabolic_quotient}
Let $\alpha := (3,2)\models 5$. Then for $w\in\Sym_5$, we have $w\in\Sym^\alpha$ if and only if
$$
w(1) < w(2) < w(3) \quad \text{ and } \quad w(4) < w(5).
$$
The permutation $52341$ is not in $\Sym^\alpha$; its minimal-length coset representative modulo the right action of $\Sym_\alpha$ is $23514\in\Sym^\alpha$.
\end{eg}

We now recall the Schubert decomposition of a partial flag variety.
\begin{defn}\label{defn_schubert}
Given $w\in\Sym_n$, we define the {\itshape permutation matrix} $\mathring{w}\in\GL_n$ by $\mathring{w}_{i,j} := \delta_{i,w(j)}$ for all $1 \le i,j \le n$. Equivalently, $\mathring{w}\unit{j} = \unit{w(j)}$ for all $1 \le j \le n$. We obtain in this way a group representation of $\Sym_n$; that is, for all $v,w\in\Sym_n$, the permutation matrix of $vw$ is $\mathring{v}\mathring{w}$.

Given $\alpha = (\alpha_1, \dots, \alpha_k)\models n$ and $w\in\Sym_n$, let $\Unit{w}\in\PFl_\alpha$ denote the flag represented by $\mathring{w}$. That is,
$$
\Unit{w}_i = \spn{\unit{w(1)}, \dots, \unit{w(\alpha_1 + \cdots + \alpha_i)}} \quad \text{ for all } 0 \le i \le k.
$$
Note that $\Unit{w} = \mathring{w}\Unit{\id}$. Let $X_w := \Bor_n\Unit{w}\subseteq\PFl_\alpha$ be the {\itshape Schubert cell} associated to $w$. Note that $\Unit{w}$ and $X_w$ only depend on $w$ modulo the right action of $\Sym_\alpha$, so it suffices to restrict to $w\in\Sym^\alpha$. We emphasize that the definitions of $\Unit{w}$ and $X_w$ depend on $\alpha$.
\end{defn}

\begin{prop}[Schubert decomposition {\cite[Section 1.2]{brion05}}]\label{schubert_decomposition}
Let $\alpha\models n$.
\begin{enumerate}[label=(\roman*), leftmargin=*]
\item\label{schubert_decomposition_matrix} For $w\in\Sym^\alpha$ and $F\in X_w$, there exists a unique $g\in\GL_n$ representing $F$ of the following form: $g$ is obtained from $\mathring{w}$ by replacing each $0$ which does not lie below a $1$ in the same column, nor to the right of a $1$ in the same row, by an element of $\field$. In particular, we have $X_w\cong\field^{\ell(w)}$.
\item\label{schubert_decomposition_decomposition} We have the decomposition into $\Bor_n$-orbits
\begin{gather*}
\PFl_\alpha = \bigsqcup_{w\in\Sym^\alpha}X_w.
\end{gather*}
\end{enumerate}

\end{prop}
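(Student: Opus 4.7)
My plan is to prove both parts in tandem. The starting point is the classical Bruhat decomposition $\GL_n = \bigsqcup_{w\in\Sym_n}\Bor_n\mathring{w}\Bor_n$, which on quotienting by $\Par_\alpha$ on the right gives $\PFl_\alpha = \bigcup_{w\in\Sym_n}X_w$. Since $\mathring{v}\in\Par_\alpha$ for every $v\in\Sym_\alpha$, the orbits $X_w$ and $X_{wv}$ coincide, so the union may be restricted to $w\in\Sym^\alpha$.

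For the disjointness claim in part (ii), I would use the $\Bor_n$-invariant statistic $d_{i,j}(F) := \dim(F_i\cap E_j)$, where $E_j := \spn{\unit{1},\ldots,\unit{j}}$. Its second finite difference is a $0$-$1$ matrix with exactly one $1$ in each column; evaluated on $F=\Unit{w}$, the $1$ in column $j$ sits in the row corresponding to the block of $\alpha$ containing $w^{-1}(j)$. Because $w\in\Sym^\alpha$ is increasing on each block of $\alpha$, this data recovers $w$ uniquely, so distinct $w,w'\in\Sym^\alpha$ give disjoint orbits $X_w,X_{w'}$.

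For part (i), given $F\in X_w$, write $F=b\Unit{w}$ with $b\in\Bor_n$, so $b\mathring{w}$ represents $F$; a direct calculation gives $(b\mathring{w})_{i,j}=b_{i,w(j)}$, which is automatically nonzero at each pivot position $(w(j),j)$ and zero below each pivot in the same column (since $b$ is upper-triangular). To reach the canonical form, I would process columns left-to-right using right-multiplication by $\Par_\alpha$: first scale column $j$ so its pivot becomes $1$, then for each entry $(i,j)$ with $i<w(j)$ and $w^{-1}(i)<j$, subtract an appropriate multiple of column $w^{-1}(i)$ to clear it. The inequality $w^{-1}(i)<j$ together with $w\in\Sym^\alpha$ forces $w^{-1}(i)$ to lie in a block no later than that of $j$, so this column operation is permitted inside $\Par_\alpha$. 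The free entries in the resulting matrix lie at positions $(i,j)$ with $i<w(j)$ and $j<w^{-1}(i)$; setting $k:=w^{-1}(i)$ identifies these with inversions $(j,k)$ of $w$, yielding $X_w\cong\field^{\ell(w)}$.

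The most delicate step will be uniqueness of the canonical form. If $g_1=g_2 p$ for two canonical-form matrices and some $p\in\Par_\alpha$, I would argue column-by-column from left to right that the $j$th column of $p$ equals $\unit{j}$: the pivot and forbidden-zero entries of $g_1$ in column $j$ together determine the entries of $p(\cdot,j)$, given the pivot structure of $g_2$ already established in earlier columns. The assumption $w\in\Sym^\alpha$ is essential here---without strict increase of $w$ within each block, two pivots inside a single block could be exchanged by column operations from $\Par_\alpha$, violating uniqueness of the canonical form.
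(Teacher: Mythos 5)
The paper does not actually prove this proposition; it is stated with a citation to Brion's survey, so there is no internal proof to compare against. Your argument is the standard one and is essentially correct: the Bruhat decomposition of $\GL_n$ together with right $\Par_\alpha$-invariance of the Schubert cells gives the covering; the $\Bor_n$-invariant statistic $d_{i,j}(F)=\dim(F_i\cap E_j)$ (via its second difference) recovers $w\in\Sym^\alpha$ from any $F\in X_w$, giving disjointness; and column reduction by $\Par_\alpha$ produces the canonical form whose free entries biject with inversions of $w$.

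A few points could be tightened. In the reduction to canonical form, the order of clearing within a fixed column $j$ matters: one should clear the entries $(i,j)$ from $i=w(j)-1$ downward to $i=1$, since subtracting a multiple of column $w^{-1}(i)$ (whose pivot sits in row $i$) can disturb rows strictly above $i$ but none at or below; clearing top-down would spoil previously cleared entries. Also, the permissibility of these column operations in $\Par_\alpha$ follows from $w^{-1}(i)<j$ alone, since every strictly upper-triangular elementary matrix lies in $\Par_\alpha$; the appeal to $w\in\Sym^\alpha$ at that point in your sketch is superfluous. Where $w\in\Sym^\alpha$ genuinely enters is in the uniqueness argument, and the mechanism is a little more precise than ``pivots could be exchanged'': carrying out the column-by-column (or, more precisely, row-by-row for each fixed column of $p$) elimination one finds that the only entries $p_{t,j}$ not forced to vanish by the canonical-form zeros of $g_1$ and $g_2$ are those with $t>j$ and $w(t)<w(j)$ (the free positions of column $j$); these must instead be killed by the block-upper-triangularity of $p\in\Par_\alpha$, which applies exactly because $w$ increasing on each block forces such $t$ into a strictly later block than $j$. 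Without the $\Sym^\alpha$ hypothesis this last step fails and uniqueness indeed breaks down, as you observe.
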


\begin{eg}\label{eg_schubert}
Let $\alpha := (3,2)\models 5$ and $w := 23514\in\Sym^\alpha$. Then
\begin{gather*}
\mathring{w} = \begin{bmatrix}
0 & 0 & 0 & 1 & 0 \\
1 & 0 & 0 & 0 & 0 \\
0 & 1 & 0 & 0 & 0 \\
0 & 0 & 0 & 0 & 1 \\
0 & 0 & 1 & 0 & 0
\end{bmatrix}, \quad \text{ and } \quad
X_w = \left\{\begin{bmatrix}
a & b & c & 1 & 0 \\
1 & 0 & 0 & 0 & 0 \\
0 & 1 & 0 & 0 & 0 \\
0 & 0 & d & 0 & 1 \\
0 & 0 & 1 & 0 & 0
\end{bmatrix} : a,b,c,d\in\field\right\}\subseteq\PFl_\alpha.\qedhere
\end{gather*}

\end{eg}

\subsection{Parabolic subgroup decomposition}\label{sec_parabolic_decomposition}
We study the orbits of a parabolic subgroup $\Par_\alpha$ acting on a partial flag variety $\PFl_\beta$. We will label the orbits by nonnegative-integer matrices, and express each orbit as a disjoint union of Schubert cells of $\PFl_\beta$.

\begin{defn}\label{defn_nonnegative_integer_matrix}
Let $M\in\mathbb{N}^{k\times l}$ be a $k\times l$ matrix with entries in $\mathbb{N}$. Let $\rowsum(M)$ denote the weak composition $\alpha = (\alpha_1, \dots, \alpha_k)$ of row sums of $M$, i.e.,
$$
\alpha_i := \sum_{j=1}^l M_{i,j} \quad \text{ for } 1 \le i \le k.
$$
Similarly, let $\colsum(M)$ denote the weak composition $\beta = (\beta_1, \dots, \beta_l)$ of column sums of $M$, i.e.,
$$
\beta_j := \sum_{i=1}^k M_{i,j} \quad \text{ for } 1 \le j \le l.
$$
We call $(\alpha, \beta)$ the {\itshape type} of $M$, denoted $\type(M)$. We let $\Mats{\alpha}{\beta}$ denote the set of all nonnegative-integer matrices of type $(\alpha,\beta)$.
\end{defn}

\begin{eg}\label{eg_nonnegative_integer_matrix}
Let $M := \scalebox{0.8}{$\begin{bmatrix}1 & 1 \\ 2 & 1\end{bmatrix}$}$. Then $\rowsum(M) = (2,3)$ and $\colsum(M) = (3,2)$, so $\type(M) = ((2,3), (3,2))$ and $M\in\Mats{(2,3)}{(3,2)}$.
\end{eg}

\begin{defn}\label{defn_permutation_to_matrix}
Let $\alpha = (\alpha_1, \dots, \alpha_k), \beta = (\beta_1, \dots, \beta_l)\models n$. Given $w\in\Sym_n$, define the matrix $\mat{w}{\alpha}{\beta}\in\mathbb{N}^{k\times l}$ by
$$
\mat{w}{\alpha}{\beta}_{i,j} := \sum_{a\in\block{\alpha}{i},\hspace*{2pt} b\in\block{\beta}{j}}\mathring{w}_{a,b}\quad \text{ for all } 1 \le i \le k \text{ and } 1 \le j \le l.
$$
Note that $\mat{w}{\alpha}{\beta}$ has type $(\alpha,\beta)$.
\end{defn}

\begin{eg}\label{eg_permutation_to_matrix}
Let $\alpha := (2,3)$, $\beta := (3,2)$, and $w := 23514\in\Sym_5$. Then using \cref{eg_schubert}, we find $\mat{w}{\alpha}{\beta} = \scalebox{0.8}{$\begin{bmatrix}1 & 1 \\ 2 & 1\end{bmatrix}$}$.
\end{eg}

\begin{lem}\label{permutation_to_matrix_transpose}
Let $\alpha, \beta \models n$ and $w\in\Sym_n$. Then $\mat{w}{\alpha}{\beta} = \transpose{\mat{w^{-1}}{\beta}{\alpha}}$.
\end{lem}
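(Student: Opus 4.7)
The plan is to reduce the claimed matrix identity to the elementary fact that permutation matrices satisfy $\mathring{w^{-1}} = \transpose{\mathring{w}}$, and then unwind the definitions entry by entry. First I would verify this permutation-matrix identity directly from \cref{defn_schubert}: we have $\mathring{w}_{a,b} = \delta_{a, w(b)}$, which equals $1$ exactly when $b = w^{-1}(a)$, i.e.\ when $\mathring{w^{-1}}_{b,a} = 1$. Hence $\mathring{w^{-1}}_{b,a} = \mathring{w}_{a,b}$ for all $a,b\in[n]$.

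Next I would compare the $(i,j)$-entries of the two matrices in the statement. On one side, by definition,
$$
\mat{w}{\alpha}{\beta}_{i,j} = \sum_{a\in\block{\alpha}{i},\ b\in\block{\beta}{j}} \mathring{w}_{a,b}.
$$
On the other side, for the transposed matrix we compute
$$
\transpose{\mat{w^{-1}}{\beta}{\alpha}}_{i,j} = \mat{w^{-1}}{\beta}{\alpha}_{j,i} = \sum_{b\in\block{\beta}{j},\ a\in\block{\alpha}{i}} \mathring{w^{-1}}_{b,a}.
$$
Applying the equality $\mathring{w^{-1}}_{b,a} = \mathring{w}_{a,b}$ term by term, the two sums become identical, proving the lemma.

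There is no real obstacle here: the whole argument is a direct manipulation of the definitions from \cref{defn_schubert,defn_permutation_to_matrix}, and the only nontrivial ingredient is the well-known observation that inverting a permutation corresponds to transposing its matrix. If one wanted a more conceptual statement, one could note that the two sums both count pairs $(a,b)\in\block{\alpha}{i}\times\block{\beta}{j}$ with $a = w(b)$; but this combinatorial reformulation is not necessary for the proof.
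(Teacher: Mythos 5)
Your proof is correct and takes the same approach as the paper: both reduce the claim to the identity $\mathring{w^{-1}} = \transpose{\mathring{w}}$ and then compare entries. You simply spell out the entry-by-entry verification that the paper leaves implicit.
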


\begin{proof}
This follows from the fact that the permutation matrix of $w^{-1}$ is $\transpose{\mathring{w}}$.
\end{proof}

Note that for $\alpha,\beta\models n$, the function $w\mapsto\mat{w}{\alpha}{\beta}$ is constant on the double cosets of $\Sym_\alpha\backslash\Sym_n/\Sym_\beta$. In fact, we have the following classification result:
\begin{prop}[{\cite[Section 3]{diaconis_gangolli95}}]\label{double_cosets}
Let $\alpha,\beta\models n$. Then the map
$$
\Mats{\alpha}{\beta} \to \Sym_\alpha\backslash\Sym_n/\Sym_\beta, \quad M \mapsto \{w\in\Sym_n : \mat{w}{\alpha}{\beta} = M\}
$$
is a bijection from nonnegative-integer matrices of type $(\alpha,\beta)$ to the set of double cosets of $\Sym_n$ modulo the left action of $\Sym_\alpha$ and the right action of $\Sym_\beta$.
\end{prop}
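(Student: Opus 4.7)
The plan is to verify that the map is well-defined on double cosets, then establish bijectivity by exhibiting an explicit preimage (surjectivity) together with a standard alignment argument (injectivity). The entire argument is combinatorial, with the only thing to track carefully being the direction of the action.

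First I would reformulate the defining sum: since $\mathring{w}_{a,b} = \delta_{a,w(b)}$, we have
\[
\mat{w}{\alpha}{\beta}_{i,j} = |\{b\in\block{\beta}{j} : w(b)\in\block{\alpha}{i}\}|,
\]
i.e., $\mat{w}{\alpha}{\beta}_{i,j}$ records how $w$ distributes the block $\block{\beta}{j}$ among the blocks $\block{\alpha}{i}$. Well-definedness then becomes transparent: if $w' = uwv$ with $u\in\Sym_\alpha$ and $v\in\Sym_\beta$, then $v$ preserves each $\block{\beta}{j}$ setwise and $u$ preserves each $\block{\alpha}{i}$ setwise, so the count of $b\in\block{\beta}{j}$ with $w'(b)\in\block{\alpha}{i}$ agrees with the count for $w$. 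Hence $w\mapsto\mat{w}{\alpha}{\beta}$ descends to $\Sym_\alpha\backslash\Sym_n/\Sym_\beta$.

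For surjectivity I would give an explicit construction. Given $M\in\Mats{\alpha}{\beta}$, split each $\block{\beta}{j}$ into consecutive sub-intervals $B_{1,j},\dots,B_{k,j}$ of sizes $M_{1,j},\dots,M_{k,j}$ (this is possible because $\sum_i M_{i,j} = \beta_j$), and likewise split each $\block{\alpha}{i}$ into consecutive sub-intervals $A_{i,1},\dots,A_{i,l}$ of sizes $M_{i,1},\dots,M_{i,l}$. Let $w$ be the order-preserving bijection $B_{i,j}\to A_{i,j}$ on each piece. The row- and column-sum conditions guarantee that $\{B_{i,j}\}$ and $\{A_{i,j}\}$ each partition $[n]$, so $w\in\Sym_n$ is well-defined; by construction $\mat{w}{\alpha}{\beta} = M$.

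For injectivity, suppose $\mat{w}{\alpha}{\beta} = \mat{w'}{\alpha}{\beta} = M$. For each fixed $i$, the families $(w(\block{\beta}{j})\cap\block{\alpha}{i})_{j=1}^l$ and $(w'(\block{\beta}{j})\cap\block{\alpha}{i})_{j=1}^l$ are two ordered partitions of $\block{\alpha}{i}$ into pieces of the common sizes $(M_{i,1},\dots,M_{i,l})$, so there exists a permutation $u_i$ of $\block{\alpha}{i}$ mapping the first to the second; assembling these gives $u\in\Sym_\alpha$ with $u(w(\block{\beta}{j})) = w'(\block{\beta}{j})$ for all $j$. Then $(w')^{-1}uw$ preserves each $\block{\beta}{j}$ setwise, hence lies in $\Sym_\beta$; calling it $v^{-1}$ gives $w' = uwv^{-1}$, so $w,w'$ lie in the same double coset. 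I do not anticipate a genuine obstacle here; the argument is a tidy accounting exercise, and the only subtlety is keeping straight the convention $\mathring{w}\unit{j} = \unit{w(j)}$ when converting between the matrix entries $\mathring{w}_{a,b}$ and the set-theoretic action of $w$.
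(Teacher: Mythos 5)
Your proof is correct. The paper itself gives no proof of \cref{double_cosets}---it simply cites \cite{diaconis_gangolli95}---so there is no in-text argument to compare against; your argument is the standard one, and the explicit representative you construct for surjectivity coincides exactly with the canonical double-coset representative $\perm{M}$ that the paper introduces afterward in \cref{defn_canonical_perm}. One small naming slip at the very end of the injectivity step: having set $v^{-1} := (w')^{-1}uw\in\Sym_\beta$, the correct conclusion is $w' = uwv$, not $w' = uwv^{-1}$; this is immaterial since either expression exhibits $w'\in\Sym_\alpha w\Sym_\beta$.
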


In order to make certain constructions more concrete, we introduce canonical double coset representatives.
\begin{defn}[{cf.\ \cite[Proof of Theorem 7.1]{garsia_stanton84}, \cite[Proof of Theorem 3.1]{diaconis_gangolli95}}]\label{defn_canonical_perm}
Let $\alpha,\beta\models n$, and let $M\in\mathbb{N}^{k\times l}$ have type $(\alpha,\beta)$. Let $\perm{M}\in\Sym_n$ denote the permutation whose permutation matrix $\permdot{M}\in\GL_n$ is constructed from the $n\times n$ zero matrix as follows. For all $1 \le i \le k$ and $1 \le j \le l$, we place a copy of $I_{M_{i,j}}$ in the contiguous $M_{i,j}\times M_{i,j}$ submatrix whose lower-right entry is indexed by
$$
(\alpha_1 + \cdots + \alpha_{i-1} + M_{i,1} + \cdots + M_{i,j}, \beta_1 + \cdots + \beta_{j-1} + M_{1,j} + \cdots + M_{i,j}).
$$
Equivalently, we read all the entries $M_{i,j}$ of $M$, in any order subject to the condition that we read $M_{i,j}$ before both $M_{i+1,j}$ and $M_{i,j+1}$; when we read $M_{i,j}$, we place a contiguous copy of $I_{M_{i,j}}$ somewhere in rows $\block{\alpha}{i}$ and in columns $\block{\beta}{j}$, as northwest as possible so as not to overlap with any row or column with a $1$ already in it. Note that $\perm{M}$ is the minimal-length representative of the double coset of $\Sym_\alpha\backslash\Sym_n/\Sym_\beta$ corresponding to $M$ as in \cref{double_cosets}.
\end{defn}

\begin{eg}\label{eg_canonical_perm}
Let $M := \scalebox{0.8}{$\begin{bmatrix}1 & 1 \\ 2 & 1\end{bmatrix}$}$. Then
\begin{gather*}
\permdot{M} = \begin{bmatrix}
1 & 0 & 0 & 0 & 0 \\
0 & 0 & 0 & 1 & 0 \\
0 & 1 & 0 & 0 & 0 \\
0 & 0 & 1 & 0 & 0 \\
0 & 0 & 0 & 0 & 1
\end{bmatrix}, \quad \text{ and so } \perm{M} = 13425.\qedhere
\end{gather*}

\end{eg}

We now use nonnegative-integer matrices of type $(\alpha,\beta)$ to index the $\Par_\alpha$-orbits of $\PFl_\beta$.
\begin{defn}\label{defn_orbits}
Given $M\in\Mats{\alpha}{\beta}$, we define
$$
X_M := \bigsqcup_{w\in\Sym^\beta,\hspace*{2pt} \mat{w}{\alpha}{\beta} = M}X_w \subseteq \PFl_\beta.
$$
\end{defn}

\begin{eg}\label{eg_orbits}
As in \cref{eg_permutation_to_matrix}, let $\alpha := (2,3)$ and $\beta := (3,2)$, and let $M := \scalebox{0.8}{$\begin{bmatrix}1 & 1 \\ 2 & 1\end{bmatrix}$}\in\Mats{\alpha}{\beta}$. Then
$$
X_M = X_{13425} \sqcup X_{13524} \sqcup X_{14523} \sqcup X_{23415} \sqcup X_{23514} \sqcup X_{24513} \subseteq\PFl_\beta.
$$
In particular, by \cref{schubert_decomposition}\ref{schubert_decomposition_matrix}, we have
\begin{gather*}
X_M \cong \field^2 \sqcup \field^3 \sqcup \field^4 \sqcup \field^3 \sqcup \field^4 \sqcup \field^5.\qedhere
\end{gather*}

\end{eg}

Using \cref{double_cosets}, we obtain that the sets $X_M$ are the orbits of $\Par_\alpha$ acting on $\PFl_\beta$:
\begin{prop}[{\cite[Section IV.2.5]{bourbaki68}}]\label{matrix_to_orbit}
Let $\alpha,\beta\models n$. Then the map
$$
\Mats{\alpha}{\beta} \to \Par_\alpha\backslash\PFl_\beta, \quad M \mapsto X_M
$$
is a bijection from nonnegative-integer matrices of type $(\alpha,\beta)$ to the $\Par_\alpha$-orbits of $\PFl_\beta$. In particular, we have the decomposition into $\Par_\alpha$-orbits
\begin{align}\label{orbit_decomposition_equation}
\PFl_\beta = \bigsqcup_{M\in\Mats{\alpha}{\beta}}X_M.
\end{align}

\end{prop}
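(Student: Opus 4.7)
The plan is to reduce to \cref{double_cosets} at the level of the symmetric group, using the Schubert decomposition (\cref{schubert_decomposition}) together with a Bruhat-type decomposition of $\Par_\alpha$. Identifying $\PFl_\beta\cong\GL_n/\Par_\beta$, the $\Par_\alpha$-orbits on $\PFl_\beta$ correspond to the double cosets $\Par_\alpha\backslash\GL_n/\Par_\beta$, and I would produce the bijection with $\Mats{\alpha}{\beta}$ by bootstrapping from the analogous symmetric-group bijection.

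First I would establish \eqref{orbit_decomposition_equation} as a purely set-theoretic identity. By \cref{schubert_decomposition}\ref{schubert_decomposition_decomposition} we have $\PFl_\beta=\bigsqcup_{w\in\Sym^\beta}X_w$, and by \cref{double_cosets} every $M\in\Mats{\alpha}{\beta}$ is the value of $\mat{w}{\alpha}{\beta}$ for at least one $w\in\Sym^\beta$ (for instance, the minimal-length representative $\perm{M}$ of the corresponding double coset). Grouping the Schubert cells $X_w$ according to the value of $\mat{w}{\alpha}{\beta}$ and applying \cref{defn_orbits} yields \eqref{orbit_decomposition_equation} directly.

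Next I would verify that each $X_M$ is precisely one $\Par_\alpha$-orbit. Since $\Bor_n\subseteq\Par_\alpha$ and each $X_w=\Bor_n\Unit{w}$ is a single $\Bor_n$-orbit, every $\Par_\alpha$-orbit is automatically a union of Schubert cells, and it remains to identify which cells merge. The key ingredient I would invoke is the Bruhat decomposition of the parabolic itself,
\[
\Par_\alpha \;=\; \bigsqcup_{v\in\Sym_\alpha}\Bor_n\mathring{v}\Bor_n,
\]
which holds because $\Par_\alpha$ is a reductive group with Borel $\Bor_n$ and Weyl group $\Sym_\alpha$ (alternatively, it can be derived directly from the block-upper-triangular structure of $\Par_\alpha$ by Gaussian elimination within each diagonal block). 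Granting this, $\Par_\alpha$-invariance of $X_M$ reduces to checking that $\mathring{v}\Unit{w}\in X_M$ for every $v\in\Sym_\alpha$ and every $w\in\Sym^\beta$ with $\mat{w}{\alpha}{\beta}=M$. But $\mathring{v}\Unit{w}=\Unit{vw}$, and a direct computation from \cref{defn_permutation_to_matrix} shows that left multiplication by $v\in\Sym_\alpha$ only permutes row indices within each block $\block{\alpha}{i}$, so $\mat{vw}{\alpha}{\beta}=\mat{w}{\alpha}{\beta}=M$; passing to the minimal $\Sym^\beta$-representative of $vw$ (under which both $\Unit{\cdot}$ and $\mat{\cdot}{\alpha}{\beta}$ are invariant) puts $\Unit{vw}$ inside some Schubert cell contributing to $X_M$. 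Transitivity then follows immediately from \cref{double_cosets}: if $X_{w_1},X_{w_2}\subseteq X_M$, then $w_1$ and $w_2$ lie in a common $\Sym_\alpha$-$\Sym_\beta$ double coset, so $w_2=vw_1\sigma$ for some $v\in\Sym_\alpha$ and $\sigma\in\Sym_\beta$, and since $\mathring{\sigma}\Unit{\id}=\Unit{\id}$ we obtain $\Unit{w_2}=\mathring{v}\Unit{w_1}\in\Par_\alpha\cdot X_{w_1}$.

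The only non-routine ingredient is the Bruhat decomposition $\Par_\alpha = \bigsqcup_{v\in\Sym_\alpha}\Bor_n\mathring{v}\Bor_n$; everything else is a straightforward unwinding of the definitions combined with \cref{schubert_decomposition,double_cosets}. This decomposition is classical (and is implicit in the reference \cite[Section IV.2.5]{bourbaki68} cited in the statement), so I do not expect a serious obstacle.
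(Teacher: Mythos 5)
The set-theoretic decomposition $\PFl_\beta=\bigsqcup_M X_M$ and the ``transitivity'' half of your argument (if $\mat{w_1}{\alpha}{\beta}=\mat{w_2}{\alpha}{\beta}$ then $\Unit{w_1}$ and $\Unit{w_2}$, hence $X_{w_1}$ and $X_{w_2}$, lie in a common $\Par_\alpha$-orbit) are both fine. The gap is in the $\Par_\alpha$-invariance step. You assert that, granting the Bruhat decomposition of $\Par_\alpha$, the invariance $\Par_\alpha X_M\subseteq X_M$ ``reduces to checking that $\mathring{v}\Unit{w}\in X_M$ for every $v\in\Sym_\alpha$.'' That reduction is not valid: invariance under $\mathring{v}$ requires $\mathring{v}X_w\subseteq X_M$ for each cell $X_w\subseteq X_M$, and $\mathring{v}X_w=\mathring{v}\Bor_n\Unit{w}=(\mathring{v}\Bor_n\mathring{v}^{-1})\Unit{vw}$, which is an orbit of the \emph{conjugate} Borel $\mathring{v}\Bor_n\mathring{v}^{-1}$, not of $\Bor_n$. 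Knowing that the single point $\Unit{vw}$ lands in $X_M$ does not tell you where the rest of $\mathring{v}X_w$ goes; a priori it could spill into cells with a different $M$.

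What is genuinely needed here, on top of the Bruhat decomposition $\Par_\alpha=\bigsqcup_{v\in\Sym_\alpha}\Bor_n\mathring v\Bor_n$, is the BN-pair multiplication rule: for a simple reflection $s\in\Sym_\alpha$, one has $\Bor_n\mathring{s}\Bor_n\mathring{w}\Bor_n\subseteq \Bor_n\mathring{sw}\Bor_n\cup\Bor_n\mathring{w}\Bor_n$, so that $\mathring{s}X_w\subseteq X_{(sw)'}\cup X_w$ (where $(sw)'$ is the minimal $\Sym^\beta$-representative). Since $s\in\Sym_\alpha$, both target cells have the same $\mat{\cdot}{\alpha}{\beta}$-value, and induction on the length of $v$ then gives $\mathring{v}X_w\subseteq X_M$, closing the gap. (Alternatively, one can sidestep BN-pair considerations entirely by noting that the function $F\mapsto\dim(\Unit{\id}_i\cap F_j)$ is manifestly $\Par_\alpha$-invariant --- since $\Par_\alpha$ stabilizes $\Unit{\id}\in\PFl_\alpha$ --- and that it determines $M$ on each cell $X_w$; this is essentially the direction the paper takes up later in \cref{relative_position_properties}\ref{relative_position_properties3} and \cref{double_coset_rank}, and is the content of the cited \cite[Section~IV.2.5]{bourbaki68}.) As written, your proof omits the key multiplicativity input and therefore does not establish the invariance of $X_M$.
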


\begin{rmk}\label{orbit_symmetry}
Note that the $\Par_\alpha$-orbits of $\PFl_\beta$ correspond to the double cosets $\Par_\alpha\backslash\GL_n/\Par_\beta$, which we will discuss further in \cref{sec_double_cosets}. They also correspond to the orbits of $\GL_n$ acting diagonally on the direct product $\PFl_\alpha \times \PFl_\beta$, since $\GL_n$ acts transitively on $\PFl_\alpha$ and the stabilizer of $\Unit{\id}\in\PFl_\alpha$ is $\Par_\alpha$.

While we prefer to work inside $\PFl_\beta$, ultimately $\alpha$ and $\beta$ play interchangeable roles in our analysis, as we make explicit in \cref{double_coset_symmetry}. However, we caution that in spite of \cref{permutation_to_matrix_transpose}, in general we have $X_M \ncong X_{\transpose{M}}$; indeed, $X_M\times\Par_\beta \cong X_{\transpose{M}}\times\Par_\alpha$ (cf.\ \cref{double_coset_decomposition}). For example, if $M = \scalebox{0.8}{$\begin{bmatrix}0 & 1 \\ 0 & 1\end{bmatrix}$}$, then $X_M$ is a point and $X_{\transpose{M}}\cong\Gr_{1,2}$.
\end{rmk}

\begin{prop}\label{alternative_orbit_decomposition}
Let $M\in\mathbb{N}^{k\times l}$ have type $(\alpha,\beta)$. Then
\begin{gather*}
X_M \cong \prod_{1 \le i' < i \le k,\hspace*{2pt} 1 \le j < j' \le l}\field^{M_{i,j}M_{i',j'}}\times\prod_{i=1}^k\PFl_{(M_{i,1},\dots,M_{i,l})}.
\end{gather*}

\end{prop}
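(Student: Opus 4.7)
The plan is to apply the Schubert decomposition $X_M = \bigsqcup_{w\in\Sym^\beta,\hspace*{1pt}\mat{w}{\alpha}{\beta}=M}X_w$ from \cref{defn_orbits} together with parabolic double coset theory in $\Sym_n$ and the Schubert decomposition of each smaller partial flag variety $\PFl_{(M_{i,1},\dots,M_{i,l})}$. First I would compute $\ell(\perm{M})$ directly from the block description of $\permdot{M}$ in \cref{defn_canonical_perm}: an inversion of $\perm{M}$ corresponds to a pair of $1$'s in $\permdot{M}$ at positions $(r_1,c_1),(r_2,c_2)$ with $c_1<c_2$ and $r_1>r_2$. A case analysis by which $(i,j)$-block contains each $1$ shows that the only contributing pairs are those whose blocks $(i_1,j_1)$ and $(i_2,j_2)$ satisfy $i_1>i_2$ and $j_1<j_2$ (within any single row-block or column-block the $1$'s are in monotone position, and within the identity block they are too). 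Each such pair of blocks contributes $M_{i_1,j_1}M_{i_2,j_2}$ inversions, giving $\ell(\perm{M}) = \sum_{i'<i,\hspace*{1pt} j<j'}M_{i,j}M_{i',j'}$, which matches the desired exponent of $\field$.

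Next I would parametrize $\{w\in\Sym^\beta : \mat{w}{\alpha}{\beta}=M\}$ via the bijection $(u_1,\dots,u_k)\mapsto u\perm{M}$, where $u:=(u_1,\dots,u_k)$ is viewed inside $\Sym_\alpha=\prod_i\Sym_{\alpha_i}$ and each $u_i\in\Sym^{(M_{i,1},\dots,M_{i,l})}$. By \cref{double_cosets}, $\{w:\mat{w}{\alpha}{\beta}=M\}$ is the double coset $\Sym_\alpha\perm{M}\Sym_\beta$. Deodhar's parabolic double coset decomposition writes every element of this double coset uniquely as $u\perm{M}v$ with $u$ minimal in its coset of $\Sym_\alpha/(\Sym_\alpha\cap\perm{M}\Sym_\beta\perm{M}^{-1})$ and $v\in\Sym_\beta$, with length additivity $\ell(u\perm{M}v)=\ell(u)+\ell(\perm{M})+\ell(v)$. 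The stabilizer $\Sym_\alpha\cap\perm{M}\Sym_\beta\perm{M}^{-1}$ is precisely the subgroup preserving each $(i,j)$-row-subblock of $\block{\alpha}{i}$ cut out by $\perm{M}$, hence equals $\prod_i\Sym_{(M_{i,1},\dots,M_{i,l})}$; so its shortest coset reps in $\Sym_\alpha$ are $\prod_i\Sym^{(M_{i,1},\dots,M_{i,l})}$. Restricting to $w\in\Sym^\beta$ forces $v=\id$, yielding the desired bijection with $\ell(u\perm{M})=\ell(\perm{M})+\sum_i\ell(u_i)$ (noting that $\ell(u)=\sum_i\ell(u_i)$ since $u$ permutes each block $\block{\alpha}{i}$ separately).

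Finally, by \cref{schubert_decomposition}\ref{schubert_decomposition_matrix} each Schubert cell is $X_{u\perm{M}}\cong\field^{\ell(u\perm{M})}$, so combining the previous two steps gives
\begin{align*}
X_M &\cong \bigsqcup_{(u_1,\dots,u_k)}\field^{\ell(\perm{M})+\sum_i\ell(u_i)} \\
&\cong \field^{\ell(\perm{M})}\times\prod_{i=1}^k\biggl(\bigsqcup_{u_i\in\Sym^{(M_{i,1},\dots,M_{i,l})}}\field^{\ell(u_i)}\biggr),
\end{align*}
where the last identification distributes disjoint union over Cartesian product. Applying \cref{schubert_decomposition}\ref{schubert_decomposition_decomposition} to each $\PFl_{(M_{i,1},\dots,M_{i,l})}$ identifies the inner disjoint union with $\PFl_{(M_{i,1},\dots,M_{i,l})}$, giving the desired bijection. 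The main obstacle is Step 2: verifying the length-additivity formula $\ell(u\perm{M})=\ell(u)+\ell(\perm{M})$ and explicitly identifying the double-coset stabilizer as a product of smaller symmetric groups. These facts are standard in the Coxeter-group literature but require careful setup; the key point is that $\perm{M}$ is the minimum of its double coset and hence lies in both $\Sym^\beta$ and the analogous ``left'' quotient, which makes the length-additivity automatic.
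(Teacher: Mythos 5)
Your proof is correct, but it takes a genuinely different route from the paper's. The paper argues geometrically and inductively on $l$: it represents each $F \in X_M$ by the unique matrix from \cref{schubert_decomposition}\ref{schubert_decomposition_matrix}, peels off the first column block of $\beta$, and tracks the free entries and the choice of a subspace in $\Gr_{M_{i,1},\alpha_i}$ for each $i$; the decomposition then follows from \eqref{partial_flag_induction_equation}. You instead work entirely in $\Sym_n$, combining three Coxeter-theoretic facts: the explicit inversion count $\ell(\perm{M}) = \sum_{i'<i,\,j<j'} M_{i,j}M_{i',j'}$ (which your case analysis establishes correctly), the identification of $\Sym_\alpha \cap \perm{M}\Sym_\beta\perm{M}^{-1}$ with $\prod_i \Sym_{(M_{i,1},\dots,M_{i,l})}$, and the parabolic double coset decomposition with length additivity $\ell(u\perm{M}v) = \ell(u) + \ell(\perm{M}) + \ell(v)$. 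That last ingredient is the substantive input: it is a standard but nontrivial theorem (usually attributed to Kilmoyer for the parabolicity of $W_I \cap dW_Jd^{-1}$ and Howlett for length additivity, and discussed in \cite{billey_konvalinka_petersen_slofstra_tenner18}, which the paper already cites for background). The restriction $v=\id$ when $w \in \Sym^\beta$ and the distribution of disjoint union over the product index set both check out. The trade-off: the paper's route is self-contained, requiring only the explicit Schubert cell parametrization already set up; yours is shorter and more conceptual once the double-coset machinery is in hand, but imports a theorem the paper never states. You should cite the Kilmoyer--Howlett theorem precisely rather than leaving it as ``standard in the Coxeter-group literature''---that is the one place a careful reader would want the gap closed.
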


\begin{proof}
We proceed by induction on $l$, where the base case $l=0$ is clear. Suppose that $l\ge 1$ and that the result holds for $l-1$. Let $\beta' := (\beta_2, \dots, \beta_l)$, and let $M'\in\mathbb{N}^{k\times (l-1)}$ denote the matrix obtained from $M$ by deleting the first column. Let us represent each $F\in\PFl_\beta$ by the unique matrix $g\in\GL_n$ as in \cref{schubert_decomposition}\ref{schubert_decomposition_matrix}. We call the entries of $g$ coming from the $1$'s of the corresponding permutation matrix the {\itshape pivot 1's}. Let $g'$ be obtained from $g$ by deleting columns $\block{\beta}{1}$ and the rows of the pivot $1$'s in columns $\block{\beta}{1}$. Note that $g'\in\GL_{n-\beta_1}$ is the unique representative of some partial flag $F'\in\PFl_{\beta'}$ as in \cref{schubert_decomposition}\ref{schubert_decomposition_matrix}. Then $F\in X_M$ if and only if $F'\in X_{M'}$ and for all $1 \le i \le k$, there are exactly $M_{i,1}$ pivot $1$'s in rows $\block{\alpha}{i}$ and columns $\block{\beta}{1}$ of $g$. That is, when we restrict $g$ to rows $\block{\alpha}{i}$ and columns $\block{\beta}{1}$, we must see the following:
\begin{itemize}[leftmargin=24pt, itemsep=2pt]
\item the first $M_{1,1} + \cdots + M_{i-1,1}$ columns are zero;
\item the next $M_{i,1}$ columns contain $M_{i,1}$ pivot $1$'s, zeros below and to the right of these pivot $1$'s, and arbitrary entries elsewhere, so that these columns span an arbitrary element of $\Gr_{M_{i,1},\alpha_i}$; and
\item the last $M_{i+1,1} + \cdots + M_{k,1}$ columns contain arbitrary entries, except in the rows of the pivot $1$'s, where the entries are zeros.
\end{itemize}
Also, note that every entry which is deleted from $g$ to form $g'$ whose column index is not in $\block{\beta}{1}$ is zero. Therefore
$$
X_M \cong \prod_{i=1}^k\big(\field^{(M_{i+1,1} + \cdots + M_{k,1})(\alpha_i - M_{i,1})}\times\Gr_{M_{i,1},\alpha_i}\big) \times X_{M'}.
$$
We can check that applying the induction hypothesis to $X_{M'}$ gives the desired expression for $X_M$, using \eqref{partial_flag_induction_equation}.
\end{proof}

\begin{eg}\label{eg_alternative_orbit_decomposition}
Let $M := \scalebox{0.8}{$\begin{bmatrix}1 & 1 \\ 2 & 1\end{bmatrix}$}$. Then by \cref{alternative_orbit_decomposition}, we have
$$
X_M \cong \field^2 \times \PFl_{(1,1)} \times \PFl_{(2,1)}.
$$
We can verify that this agrees with \cref{eg_orbits}, by decomposing $\PFl_{(1,1)}$ and $\PFl_{(2,1)}$ according to \cref{schubert_decomposition}\ref{schubert_decomposition_decomposition}.
\end{eg}

\begin{cor}\label{alternative_orbit_decomposition_q}
Set $\field := \qinvF$. Let $M\in\mathbb{N}^{k\times l}$ have type $(\alpha,\beta)$. Then
\begin{gather*}
|X_M| = q^{-\sum_{1 \le i' \le i \le k,\hspace*{1pt} 1 \le j < j' \le l}M_{i,j}M_{i',j'}}\frac{\prod_{i=1}^k\qfac{\alpha_i}}{\prod_{1 \le i \le k,\hspace*{1pt} 1 \le j \le l}\qfac{M_{i,j}}}.
\end{gather*}

\end{cor}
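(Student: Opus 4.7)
The plan is to derive the enumeration directly from the structural decomposition of $X_M$ given in \cref{alternative_orbit_decomposition}, by specializing $\field = \qinvF$ and multiplying the sizes of the factors. Since the isomorphism in \cref{alternative_orbit_decomposition} is bijective (not just as varieties), taking cardinalities is immediate once the sizes of the factors are known.

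First I would handle the affine factors: each copy of $\field = \qinvF$ contributes $q^{-1}$ elements, so the product $\prod_{1\le i'<i\le k,\, 1\le j<j'\le l}\field^{M_{i,j}M_{i',j'}}$ contributes $q^{-E}$ where
\[
E = \sum_{1\le i'<i\le k,\,\, 1\le j<j'\le l} M_{i,j}M_{i',j'}.
\]
Next I would count each partial flag variety $\PFl_{(M_{i,1},\dots,M_{i,l})}$ inside $\qinvF^{\alpha_i}$ using \cref{q_enumeration}\ref{q_enumeration_P}, which gives
\[
\bigl|\PFl_{(M_{i,1},\dots,M_{i,l})}\bigr| \;=\; q^{-e_2(M_{i,1},\dots,M_{i,l})}\,\qbinom{\alpha_i}{(M_{i,1},\dots,M_{i,l})} \;=\; q^{-\sum_{1\le j<j'\le l}M_{i,j}M_{i,j'}}\,\frac{\qfac{\alpha_i}}{\prod_{j=1}^l\qfac{M_{i,j}}}.
\]
Taking the product over $i = 1,\dots,k$ yields an additional factor of $q^{-E'}$, where $E' = \sum_{i=1}^k\sum_{1\le j<j'\le l} M_{i,j}M_{i,j'}$, multiplied by $\tfrac{\prod_i\qfac{\alpha_i}}{\prod_{i,j}\qfac{M_{i,j}}}$.

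The final step is to combine the exponents of $q$. The sum $E$ ranges over pairs $(i',i)$ with $i'<i$ (and $j<j'$), while $E'$ is the contribution from the diagonal $i'=i$ (again with $j<j'$). Together they give
\[
E + E' \;=\; \sum_{1\le i'\le i\le k,\,\, 1\le j<j'\le l} M_{i,j}M_{i',j'},
\]
which matches exactly the exponent appearing in the statement. Multiplying everything together yields the claimed formula for $|X_M|$.

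There is no real obstacle here: once \cref{alternative_orbit_decomposition} and \cref{q_enumeration}\ref{q_enumeration_P} are in hand, the proof is purely a bookkeeping exercise of merging the exponent from the affine factors ($i'<i$) with the exponent contributed by the $q$-multinomials on the rows ($i'=i$). The only thing to be careful about is that the affine index set in \cref{alternative_orbit_decomposition} uses the convention $i'<i,\, j<j'$ (note the asymmetry between the row and column indices), so when one merges it with the row contributions one must verify that the joint index set is $i'\le i,\, j<j'$ and not some other variant.
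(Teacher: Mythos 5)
Your proof is correct and is exactly the paper's approach: specialize the decomposition of \cref{alternative_orbit_decomposition} to $\field = \qinvF$, apply \cref{q_enumeration}\ref{q_enumeration_P} to each flag factor, and merge the exponents. The paper's proof is one line, with a parenthetical remark noting precisely the index bookkeeping point you flag at the end ("we allow $i' = i$ in the first index set").
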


\begin{proof}
This follows from \cref{alternative_orbit_decomposition,q_enumeration}\ref{q_enumeration_P}. (Note that we allow $i' = i$ in the first index set.)
\end{proof}

\subsection{Relative position}\label{sec_relative_position}
We use the decomposition \eqref{orbit_decomposition_equation} to define the notion of relative position between two partial flags. By \cref{relative_position_properties}, our definition coincides with that of Rosso \cite[(1.1)]{rosso12}.

\begin{defn}\label{defn_relative_position}
Let $\alpha,\beta\models n$, and let $F\in\PFl_\alpha$ and $F'\in\PFl_\beta$. Take $g$ and $g'$ in $\GL_n$ representing $F$ and $F'$, respectively. The {\itshape relative position of $(F,F')$} is the (unique) $M\in\Mats{\alpha}{\beta}$ such that $X_M$ contains $g^{-1}g'$ (regarded as an element of $\PFl_\beta$). (Note that $M$ does not depend on the choice of $g$ and $g'$, by \cref{matrix_to_orbit}.) We write $F\relpos{M}F'$.
\end{defn}

\begin{lem}\label{relative_position_invariance}
Let $\alpha,\beta\models n$, let $M\in\Mats{\alpha}{\beta}$, and let $F,\tilde{F}\in\PFl_\alpha$ and $F',\tilde{F'}\in\PFl_\beta$.
\begin{enumerate}[label=(\roman*), leftmargin=*]
\item\label{relative_position_invariance_schubert}
Let $g\in\GL_n$ represent $F$. Then $F\relpos{M}F'$ if and only if $F'\in gX_M$.
\item\label{relative_position_invariance_action}
The pairs $(F,F')$ and $(\tilde{F},\tilde{F}')$ have the same relative position if and only if there exists $h\in\GL_n$ such that $hF = \tilde{F}$ and $hF' = \tilde{F}'$.
\item\label{relative_position_invariance_inverse} We have $F\relpos{M}F'$ if and only if $F'\relpos{\transpose{M}}F$.
\end{enumerate}
\end{lem}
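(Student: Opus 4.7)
The plan is to unpack the definition of relative position into the language of double cosets of $\GL_n$ modulo $(\Par_\alpha, \Par_\beta)$, and then derive all three parts from the transitivity of the $\GL_n$-action on $\PFl_\alpha$ together with the inversion symmetry of double cosets (\cref{permutation_to_matrix_transpose}).

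For part~\ref{relative_position_invariance_schubert}, I would simply unwind definitions. Let $g'\in\GL_n$ represent $F'$. Then $g^{-1}g'\in\GL_n$ represents the flag $g^{-1}F'\in\PFl_\beta$, since left multiplication by $g^{-1}$ on $\GL_n$ descends to left multiplication on $\GL_n/\Par_\beta=\PFl_\beta$. By \cref{defn_relative_position}, $F\relpos{M}F'$ is exactly the statement $g^{-1}F'\in X_M$, equivalently $F'\in gX_M$.

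For part~\ref{relative_position_invariance_action}, the backward direction is immediate: if $h$ satisfies the stated conditions, take representatives $g,g'$ of $F,F'$, so that $hg,hg'$ represent $\tilde F,\tilde F'$; then $(hg)^{-1}(hg')=g^{-1}g'$, so the relative positions coincide. For the forward direction, I would use that $\GL_n$ acts transitively on $\PFl_\alpha$ to pick $h_0\in\GL_n$ with $h_0F=\tilde F$; then $(h_0F,h_0F')$ has the same relative position $M$ as $(F,F')$ (again by the identity $(h_0g)^{-1}(h_0g')=g^{-1}g'$), so by part~\ref{relative_position_invariance_schubert} both $h_0F'$ and $\tilde F'$ lie in $\tilde gX_M$, where $\tilde g=h_0g$ represents $\tilde F$. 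But $\tilde gX_M=(\tilde g\Par_\alpha\tilde g^{-1})\cdot\tilde gF'_0=\Stab_{\GL_n}(\tilde F)\cdot\tilde gF'_0$ (for any $F'_0\in X_M$) is a single orbit of the stabilizer of $\tilde F$, so some $s\in\Stab_{\GL_n}(\tilde F)$ sends $h_0F'$ to $\tilde F'$; then $h:=sh_0$ works.

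For part~\ref{relative_position_invariance_inverse}, the cleanest route is through double cosets. The preimage of the double coset $\Par_\alpha\permdot{M}\Par_\beta$ under $\GL_n\to\PFl_\beta$ is exactly the $\Par_\alpha$-orbit of $\permdot{M}\Par_\beta$, namely $X_M$. Thus $F\relpos{M}F'$ is equivalent to $g^{-1}g'\in\Par_\alpha\permdot{M}\Par_\beta$. Taking inverses gives $(g')^{-1}g\in\Par_\beta\permdot{M}^{-1}\Par_\alpha$, and by \cref{permutation_to_matrix_transpose} we have $\mat{\perm{M}^{-1}}{\beta}{\alpha}=\transpose{\mat{\perm{M}}{\alpha}{\beta}}=\transpose{M}$, so this is the double coset labeled by $\transpose{M}$ in $\Par_\beta\backslash\GL_n/\Par_\alpha$ (via \cref{double_cosets} applied with the roles of $\alpha$ and $\beta$ swapped). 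Hence $F'\relpos{\transpose{M}}F$.

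None of the three parts presents a serious obstacle; the only subtlety is keeping track of which side each parabolic acts on after inversion in part~\ref{relative_position_invariance_inverse}, but this is automatic once one recognizes the relative position as the class in $\Par_\alpha\backslash\GL_n/\Par_\beta$ and that inversion is a canonical bijection of double cosets that transposes the matrix label.
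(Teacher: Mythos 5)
Your proposal is correct and takes essentially the same route as the paper: part~\ref{relative_position_invariance_schubert} by unwinding \cref{defn_relative_position}, part~\ref{relative_position_invariance_action} via the orbit-stabilizer correspondence between $\Par_\alpha$-orbits of $\PFl_\beta$ and diagonal $\GL_n$-orbits of $\PFl_\alpha\times\PFl_\beta$ (the paper cites \cref{matrix_to_orbit} and \cref{orbit_symmetry}, you spell out the stabilizer argument explicitly), and part~\ref{relative_position_invariance_inverse} by passing to double cosets and invoking \cref{permutation_to_matrix_transpose} together with \cref{double_cosets}. The only difference is that your write-up expands the citations the paper compresses into a one-line proof.
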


\begin{proof}
Part \ref{relative_position_invariance_schubert} follows from \cref{defn_relative_position}. Part \ref{relative_position_invariance_action} follows from \cref{matrix_to_orbit,orbit_symmetry}. Part \ref{relative_position_invariance_inverse} follows from \cref{double_cosets,permutation_to_matrix_transpose}.
\end{proof}

\begin{prop}\label{relative_position_properties}
Let $\alpha,\beta\models n$, let $M\in\mathbb{N}^{k\times l}$ have type $(\alpha,\beta)$, and let $F\in\PFl_\alpha$ and $F'\in\PFl_\beta$. Then the following statements are equivalent.
\begin{enumerate}[label=(\roman*), leftmargin=*]
\item\label{relative_position_properties1} The relative position of $(F,F')$ is $M$.
\item\label{relative_position_properties2} There exist $V_{i,j}\in\Gr_{M_{i,j},n}$ for $1 \le i \le k$ and $1 \le j \le l$ such that
$$
F_i = \spn{V_{i',j'} : 1 \le i' \le i,\hspace*{2pt} 1 \le j' \le l} \quad \text{ and } \quad F'_j = \spn{V_{i',j'} : 1 \le i' \le k,\hspace*{2pt} 1 \le j' \le j}
$$
for all $0 \le i \le k$ and $0 \le j \le l$. (In particular, $\field^n = \bigoplus_{1 \le i \le k,\hspace*{1pt} 1 \le j \le l}V_{i,j}$.)
\item\label{relative_position_properties3} For all $0 \le i \le k$ and $0 \le j \le l$, we have
\begin{gather*}
\dim(F_i\cap F'_j) = \sum_{1\le i'\le i,\hspace*{2pt} 1\le j'\le j}M_{i',j'}.
\end{gather*}
\end{enumerate}

\end{prop}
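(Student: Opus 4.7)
The plan is to prove the chain (i) $\Rightarrow$ (ii) $\Rightarrow$ (iii) $\Rightarrow$ (i). The main leverage comes from \cref{relative_position_invariance}\ref{relative_position_invariance_action}: all three conditions are invariant under the diagonal action of $\GL_n$ on $\PFl_\alpha\times\PFl_\beta$, so for (i) $\Rightarrow$ (ii) it is enough to verify the statement on one canonical pair per orbit.

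\textbf{Canonical representative and (i) $\Rightarrow$ (ii).} I will choose the canonical pair $F = \Unit{\id}\in\PFl_\alpha$ (the flag represented by $I_n$) and $F' = \permdot{M}\Unit{\id}\in\PFl_\beta$ (the flag represented by the canonical permutation matrix of \cref{defn_canonical_perm}). Using \cref{relative_position_invariance}\ref{relative_position_invariance_schubert}, this pair has relative position $M$. By the construction of $\permdot{M}$, the columns of $\permdot{M}$ indexed by $\block{\beta}{j}$ are exactly the unit vectors $\unit{a}$ with $a\in\block{\alpha}{i}$ for various $i$, grouped into blocks of size $M_{i,j}$. Letting $V_{i,j}$ be the span of those $M_{i,j}$ unit vectors (for each $1\le i\le k$, $1\le j\le l$), I get subspaces of dimension $M_{i,j}$ satisfying $\field^n = \bigoplus_{i,j}V_{i,j}$, and the required formulas for $F_i$ and $F'_j$ hold by construction. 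For an arbitrary pair with relative position $M$, I apply \cref{relative_position_invariance}\ref{relative_position_invariance_action} to transport the $V_{i,j}$'s via the appropriate $h\in\GL_n$; since the defining conditions in (ii) are $\GL_n$-equivariant, the transported subspaces still witness (ii).

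\textbf{(ii) $\Rightarrow$ (iii).} I will prove the stronger statement that $F_i\cap F'_j = \bigoplus_{i'\le i,\, j'\le j}V_{i',j'}$. The inclusion $\supseteq$ is immediate from the formulas in (ii). For $\subseteq$, given $v\in F_i\cap F'_j$, expand uniquely $v = \sum_{i',j'}v_{i',j'}$ with $v_{i',j'}\in V_{i',j'}$ using $\field^n = \bigoplus_{i',j'}V_{i',j'}$. Since $v\in F_i = \bigoplus_{i'\le i,\, j'}V_{i',j'}$, uniqueness of the decomposition forces $v_{i',j'}=0$ whenever $i'>i$; similarly $v\in F'_j$ forces $v_{i',j'}=0$ whenever $j'>j$. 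Taking dimensions then yields (iii).

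\textbf{(iii) $\Rightarrow$ (i).} The pair $(F,F')$ has some relative position, say $M'\in\Mats{\alpha}{\beta}$, by \cref{matrix_to_orbit}. By (i) $\Rightarrow$ (ii) $\Rightarrow$ (iii) applied to $M'$, we know
\[
\dim(F_i\cap F'_j) = \sum_{1\le i'\le i,\, 1\le j'\le j}M'_{i',j'} \quad \text{for all } 0\le i\le k,\ 0\le j\le l.
\]
Combined with the hypothesis (iii) for $M$, inclusion--exclusion (i.e.\ a discrete mixed partial difference) gives
\[
M_{i,j} = \dim(F_i\cap F'_j) - \dim(F_{i-1}\cap F'_j) - \dim(F_i\cap F'_{j-1}) + \dim(F_{i-1}\cap F'_{j-1}) = M'_{i,j}
\]
for every $i,j$, so $M = M'$ and (i) holds.

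The only step requiring genuine thought is (i) $\Rightarrow$ (ii); once the canonical representative $\permdot{M}\Unit{\id}$ is in hand, the $V_{i,j}$ are essentially read off from the block decomposition of $\permdot{M}$, and the two remaining implications are routine once the direct sum identity in (ii) is exploited.
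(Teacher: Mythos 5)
Your proof is correct and follows essentially the same route as the paper's. The only cosmetic difference is in the step (i) $\Rightarrow$ (ii): the paper parametrizes an arbitrary pair directly via a Schubert-cell representative $gh\mathring{w}$ (with $h\in\Bor_n$ and $\mat{w}{\alpha}{\beta}=M$) and reads off the $V_{i,j}$ from the columns of $gh$, whereas you first specialize to the canonical pair $(\Unit{\id},\Unit{\perm{M}})$, build the $V_{i,j}$ from blocks of unit vectors determined by $\permdot{M}$, and then transport to a general pair via the $\GL_n$-equivariance of \cref{relative_position_invariance}\ref{relative_position_invariance_action}; this is the same idea with the conjugation performed at a different moment. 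Your treatments of (ii) $\Rightarrow$ (iii) and (iii) $\Rightarrow$ (i) match the paper's, with the direct-sum uniqueness argument and the mixed-difference recovery of $M$ spelled out a bit more explicitly.
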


\begin{proof}
\ref{relative_position_properties1} $\Rightarrow$ \ref{relative_position_properties2}: Suppose that $F\relpos{M}F'$, and take $g\in\GL_n$ which represents $F$. By \cref{relative_position_invariance}\ref{relative_position_invariance_schubert}, we have $F'\in gX_M$, and so $F'$ is represented by $gh\mathring{w}$ for some $h\in\Bor_n$ and $w\in\Sym_n$ with $\mat{w}{\alpha}{\beta} = M$. Let $v_1, \dots, v_n\in\field^n$ denote the columns of $gh$, which also represents $F$. Then we may take
$$
V_{i,j} := \spn{v_{i'} : i'\in\block{\alpha}{i},\hspace*{2pt} w^{-1}(i')\in\block{\beta}{j}} \quad \text{ for }1 \le i \le k \text{ and } 1 \le j \le l.
$$

\ref{relative_position_properties2} $\Rightarrow$ \ref{relative_position_properties3}: Suppose that $V_{i,j}\in\Gr_{M_{i,j},n}$ are as in part \ref{relative_position_properties2}. Then for all $0 \le i \le k$ and $0 \le j \le l$, we have
$$
F_i\cap F'_j = \spn{V_{i',j'} : 1 \le i' \le i,\hspace*{2pt} 1 \le j' \le j}.
$$
Taking dimensions gives the result.

\ref{relative_position_properties3} $\Rightarrow$ \ref{relative_position_properties1}: Suppose that \ref{relative_position_properties3} holds. Let $M'\in\mathbb{N}^{k\times l}$ be the relative position of $(F,F')$. Then since \ref{relative_position_properties1} $\Rightarrow$ \ref{relative_position_properties3}, we have
$$
\sum_{1\le i'\le i,\hspace*{2pt} 1\le j'\le j}M_{i',j'} = \sum_{1\le i'\le i,\hspace*{2pt} 1\le j'\le j}M'_{i',j'} \quad \text{ for all } 0 \le i \le k \text{ and } 0 \le j \le l.
$$
Therefore $M = M'$.
\end{proof}

\subsection{Parabolic double cosets of \texorpdfstring{$\GL_n$}{GL(n)}}\label{sec_double_cosets}
We now use the results of this section to index and enumerate the double cosets of $\GL_n$ modulo two standard parabolic subgroups. This leads to a $q$-analogue of the enumeration of the double cosets of $\Sym_n$ modulo two parabolic subgroups. While we will not make further use of the results of this subsection, we state them since we find them to be of independent interest. Also, this discussion will make manifest that $\alpha$ and $\beta$ play interchangeable roles.
\begin{defn}\label{defn_double_coset}
Let $\alpha,\beta\models n$. Given a nonnegative-integer matrix $M$ of type $(\alpha,\beta)$, let $Y_M\subseteq\GL_n$ denote the double coset of $\Par_\alpha\backslash\GL_n/\Par_\beta$ generated by $\mathring{w}$, for any $w\in\Sym_n$ such that $\mat{w}{\alpha}{\beta} = M$. We may take $\permdot{M}$ as a canonical double coset representative. By \cref{double_cosets,matrix_to_orbit}, the definition of $Y_M$ does not depend on the choice of $w$, and the distinct double cosets of $\Par_\alpha\backslash\GL_n/\Par_\beta$ are precisely $Y_M$ for $M\in\mathbb{N}^{k\times l}$ of type $(\alpha,\beta)$.
\end{defn}

\begin{eg}\label{eg_double_coset}
Let $\alpha := (2,3)$ and $\beta := (3,2)$. Then there are precisely $3$ double cosets of $\Par_\alpha\backslash\GL_5/\Par_\beta$, namely, $Y_M$ for
$$
M = \begin{bmatrix}2 & 0 \\ 1 & 2\end{bmatrix},\hspace*{2pt} \begin{bmatrix}1 & 1 \\ 2 & 1\end{bmatrix},\hspace*{2pt} \begin{bmatrix}0 & 2 \\ 3 & 0\end{bmatrix}.
$$
They are each generated by $\permdot{M}$, where
$$
\perm{M} = 12345,\hspace*{2pt} 13425,\hspace*{2pt} 34512 \in\Sym_5,
$$
respectively.
\end{eg}

\begin{rmk}\label{double_coset_symmetry}
Let $\alpha,\beta\models n$. By \cref{permutation_to_matrix_transpose}, the involution $g\mapsto g^{-1}$ on $\GL_n$ induces a bijection $Y_M \mapsto Y_{\transpose{M}}$ from the set of double cosets of $\Par_\alpha\backslash\GL_n/\Par_\beta$ to the set of double cosets of $\Par_\beta\backslash\GL_n/\Par_\alpha$. Similarly, the involution $w \mapsto w^{-1}$ on $\Sym_n$ induces a bijection from the set of double cosets of $\Sym_\alpha\backslash\Sym_n/\Sym_\beta$ to the set of double cosets of $\Sym_\beta\backslash\Sym_n/\Sym_\alpha$.
\end{rmk}

We have the following two descriptions of the double coset $Y_M$. Below, for a matrix $g$, we let $g_{I,J}$ denote the submatrix of $g$ using rows $I$ and columns $J$.
\begin{prop}\label{double_coset_rank}
Let $\alpha,\beta\models n$, and let $M\in\mathbb{N}^{k\times l}$ have type $(\alpha,\beta)$. Then $Y_M$ is the set of $g\in\GL_n$ such that
\begin{gather*}
\rank(g_{\block{\alpha}{i+1} \cup \cdots \cup \block{\alpha}{k},\hspace*{1pt}\block{\beta}{1}\cup\cdots\cup\block{\beta}{j}}) = \sum_{i < i' \le k,\hspace*{2pt} 1\le j'\le j}M_{i',j'} \quad \text{ for all } 0 \le i \le k \text{ and } 0 \le j \le l.
\end{gather*}

\end{prop}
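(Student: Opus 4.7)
The plan is to translate the statement into the language of relative position, via the identification of $Y_M$ with the $\Par_\alpha$-orbit $X_M\subseteq\PFl_\beta$.

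First I would observe that if $g\in\GL_n$ is regarded as a representative of the partial flag $F'\in\PFl_\beta$ (whose $j$th subspace $F'_j$ is the span of the first $\beta_1+\cdots+\beta_j$ columns of $g$), then
$$
g\in Y_M = \Par_\alpha\hspace*{1pt}\permdot{M}\hspace*{1pt}\Par_\beta \iff F'\in \Par_\alpha\hspace*{1pt}\Unit{\perm{M}} = X_M
$$
by \cref{defn_double_coset} and \cref{defn_orbits}. Since $\Unit{\id}\in\PFl_\alpha$ is stabilized by $\Par_\alpha$, this is in turn equivalent to saying that the relative position of $(\Unit{\id}, F')$ is $M$, by \cref{relative_position_invariance}\ref{relative_position_invariance_schubert}.

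Next I would apply the equivalence \ref{relative_position_properties1} $\Leftrightarrow$ \ref{relative_position_properties3} of \cref{relative_position_properties} to restate this as
$$
\dim(\Unit{\id}_i\cap F'_j) = \sum_{1\le i'\le i,\hspace*{2pt} 1\le j'\le j}M_{i',j'} \quad \text{for all } 0\le i\le k \text{ and } 0\le j\le l.
$$
The final step is a linear-algebra computation. By construction, $\Unit{\id}_i = \spn{\unit{1},\dots,\unit{\alpha_1+\cdots+\alpha_i}}$, so the quotient $\field^n/\Unit{\id}_i$ is naturally identified with the coordinates indexed by $\block{\alpha}{i+1}\cup\cdots\cup\block{\alpha}{k}$. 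Under this identification, the image of $F'_j$ is the column span of the submatrix $g_{\block{\alpha}{i+1}\cup\cdots\cup\block{\alpha}{k},\hspace*{1pt}\block{\beta}{1}\cup\cdots\cup\block{\beta}{j}}$. Hence
$$
\rank\bigl(g_{\block{\alpha}{i+1}\cup\cdots\cup\block{\alpha}{k},\hspace*{1pt}\block{\beta}{1}\cup\cdots\cup\block{\beta}{j}}\bigr) = \dim(F'_j) - \dim(\Unit{\id}_i\cap F'_j).
$$
Since $\dim(F'_j) = \beta_1+\cdots+\beta_j = \sum_{1\le i'\le k,\hspace*{1pt} 1\le j'\le j}M_{i',j'}$, subtracting yields the claimed rank formula $\sum_{i<i'\le k,\hspace*{1pt} 1\le j'\le j}M_{i',j'}$, and the equivalence runs in both directions.

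There is no significant obstacle here: once one sets up the dictionary between double cosets, $\Par_\alpha$-orbits on $\PFl_\beta$, and relative position, the rank conditions drop out of the identification of $\field^n/\Unit{\id}_i$ with the complementary block of coordinates. The only mild subtlety is keeping the two pieces of bookkeeping straight, namely that $F$ is the standard flag in $\PFl_\alpha$ (so the row-index condition uses the $\alpha$-blocks) while $F'$ is the flag in $\PFl_\beta$ determined by $g$ (so the column-index condition uses the $\beta$-blocks).
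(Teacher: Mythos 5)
Your proposal is correct and follows essentially the same route as the paper: translate $g\in Y_M$ into $F'\in X_M$ for the flag $F'$ represented by $g$, invoke \cref{relative_position_invariance}\ref{relative_position_invariance_schubert} to restate this as $\Unit{\id}\relpos{M}F'$, apply the characterization in \cref{relative_position_properties}\ref{relative_position_properties3}, and then convert the intersection-dimension condition into a rank condition via the rank--nullity theorem applied to the projection onto the complementary coordinate block. The paper phrases the last step in terms of the kernel of the forgetful projection rather than the image as you do, but these are two sides of the same rank--nullity computation.
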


\begin{proof}
Given $g\in\GL_n$, let $F'\in\PFl_\beta$ denote the partial flag represented by $g$. Then $g\in Y_M$ if and only if $F'\in X_M$. By \cref{relative_position_invariance}\ref{relative_position_invariance_schubert}, we have $F'\in X_M$ if and only if $\Unit{\id}\relpos{M}F'$. Finally, by \cref{relative_position_properties}, we have $\Unit{\id}\relpos{M}F'$ if and only if
$$
\dim(\spn{\unit{1}, \dots, \unit{\alpha_1 + \cdots + \alpha_i}}\cap F'_j) = \sum_{1\le i'\le i,\hspace*{2pt} 1\le j'\le j}M_{i',j'} \quad \text{ for all } 0 \le i \le k \text{ and } 0 \le j \le l.
$$
Note that $\spn{\unit{1}, \dots, \unit{\alpha_1 + \cdots + \alpha_i}}\cap F'_j$ is the kernel of the projection map $F'_j \to \spn{\unit{\alpha_1 + \cdots + \alpha_i + 1}, \dots, \unit{n}}$ which forgets the first $\alpha_1 + \cdots + \alpha_i$ coordinates. Therefore
$$
\dim(\spn{\unit{1}, \dots, \unit{\alpha_1 + \cdots + \alpha_i}}\cap F'_j) = \beta_1 + \cdots + \beta_j - \rank(g_{\block{\alpha}{i+1} \cup \cdots \cup \block{\alpha}{k},\hspace*{1pt}\block{\beta}{1}\cup\cdots\cup\block{\beta}{j}}),
$$
and the result follows.
\end{proof}

\begin{prop}\label{double_coset_decomposition}
Let $\alpha,\beta\models n$, and let $M\in\mathbb{N}^{k\times l}$ have type $(\alpha,\beta)$. Then
$$
Y_M \cong X_M \times \Par_\beta.
$$
If $\field = \qinvF$, then
\begin{align}\label{double_coset_decomposition_q}
|Y_M| = q^{-n^2 + \sum_{1 \le i < i' \le k,\hspace*{1pt} 1 \le j < j' \le l}M_{i,j}M_{i',j'}}(1-q)^n\frac{\prod_{i=1}^k\qfac{\alpha_i}\prod_{j=1}^l\qfac{\beta_j}}{\prod_{1 \le i \le k,\hspace*{1pt} 1 \le j \le l}\qfac{M_{i,j}}}.
\end{align}

\end{prop}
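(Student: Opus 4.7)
The plan is to deduce the proposition from \cref{alternative_orbit_decomposition_q} by exploiting the fact that the quotient map $\GL_n\to\PFl_\beta=\GL_n/\Par_\beta$ sending $g$ to the partial flag it represents is a principal $\Par_\beta$-bundle. Under this map, the preimage of $X_M\subseteq\PFl_\beta$ is precisely $Y_M$: indeed, by \cref{matrix_to_orbit}, $X_M$ is the $\Par_\alpha$-orbit of $\Unit{\perm{M}}\in\PFl_\beta$, so its preimage in $\GL_n$ is the set $\Par_\alpha\permdot{M}\Par_\beta=Y_M$. Since each fiber of $\GL_n\to\PFl_\beta$ is a coset of $\Par_\beta$ and a section can be chosen (e.g.\ by picking the unique Schubert-cell representative of \cref{schubert_decomposition}\ref{schubert_decomposition_matrix} for each flag), we obtain the bijection $Y_M\cong X_M\times\Par_\beta$.

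Second, I would compute $|\Par_\beta|$ over $\qinvF$. The group $\Par_\beta$ is the semidirect product of its unipotent radical (a vector space of dimension $e_2(\beta)$ over $\field$) and its Levi factor $\prod_{j=1}^l\GL_{\beta_j}$. Thus
\begin{align*}
|\Par_\beta|=q^{-e_2(\beta)}\prod_{j=1}^l\bigl(q^{-\beta_j^2}(1-q)^{\beta_j}\qfac{\beta_j}\bigr)
=q^{-n^2+e_2(\beta)}(1-q)^n\prod_{j=1}^l\qfac{\beta_j},
\end{align*}
using \cref{q_enumeration}\ref{q_enumeration_GL} for each factor, together with the identity $n^2=\sum_j\beta_j^2+2e_2(\beta)$.

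Third, I would multiply this by the formula for $|X_M|$ from \cref{alternative_orbit_decomposition_q} and match exponents. The only nontrivial check is that
\begin{align*}
e_2(\beta)-\sum_{1\le i'\le i\le k,\,1\le j<j'\le l}M_{i,j}M_{i',j'}=\sum_{1\le i<i'\le k,\,1\le j<j'\le l}M_{i,j}M_{i',j'}.
\end{align*}
This follows by substituting $\beta_j=\sum_iM_{i,j}$ and expanding:
\begin{align*}
e_2(\beta)=\sum_{1\le j<j'\le l}\sum_{1\le i,i'\le k}M_{i,j}M_{i',j'},
\end{align*}
and then splitting the inner sum according to whether $i'<i$, $i'=i$, or $i'>i$ (the first two cases together recover the subtracted sum, and the third case gives the claimed right-hand side).

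Combining the three steps yields the announced formula \eqref{double_coset_decomposition_q} for $|Y_M|$. The only substantive step is the bookkeeping in step three; the geometry of step one and the semidirect product computation in step two are essentially standard. I expect no real obstacle, though care is needed with the $q\mapsto 1/q$ conversion (e.g.\ remembering that $|\qinvF|^d=q^{-d}$), which is why I prefer to compute $|\Par_\beta|$ directly rather than invoke an off-the-shelf formula.
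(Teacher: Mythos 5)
Your proposal is correct and takes essentially the same route as the paper: the paper also establishes $Y_M\cong X_M\times\Par_\beta$ from the free $\Par_\beta$-action on $\GL_n$, multiplies $|X_M|$ by $|\Par_\beta|$ (computed there as $|\GL_n|/|\PFl_\beta|$, which is equivalent to your semidirect-product computation), and simplifies via the same identity $e_2(\beta)=\sum_{i<i'}\sum_{j<j'}M_{i,j}M_{i',j'}+\sum_{i'\le i}\sum_{j<j'}M_{i,j}M_{i',j'}$, which appears verbatim as equation \eqref{beta_sum_matrix}. The only cosmetic difference is that you compute $|\Par_\beta|$ directly via its Levi decomposition rather than as a quotient, but the two computations are trivially equivalent.
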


\begin{proof}
Since $\PFl_\beta = \GL_n/\Par_\beta$, and $\Par_\beta$ acts freely on $\GL_n$, we have $Y_M \cong X_M \times \Par_\beta$. Now set $\field := \qinvF$. By \cref{alternative_orbit_decomposition_q,q_enumeration}, we have
$$
|Y_M| = \frac{|X_M|\cdot|\GL_n|}{|\PFl_\beta|} = q^{-n^2 + e_2(\beta) - \sum_{1 \le i' \le i \le k,\hspace*{1pt} 1 \le j < j' \le l}M_{i,j}M_{i',j'}}(1-q)^n\frac{\prod_{i=1}^k\qfac{\alpha_i}\prod_{j=1}^l\qfac{\beta_j}}{\prod_{1 \le i \le k,\hspace*{1pt} 1 \le j \le l}\qfac{M_{i,j}}},
$$
where $e_2(\beta)$ was defined in \cref{defn_n}. This simplifies to the desired formula, using the fact that
\begin{equation}\label{beta_sum_matrix}
e_2(\beta) = \sum_{\substack{1 \le i,i' \le k, \\ 1 \le j < j' \le l}}M_{i,j}M_{i',j'} = \sum_{\substack{1 \le i < i' \le k, \\ 1 \le j < j' \le l}}M_{i,j}M_{i',j'} + \sum_{\substack{1 \le i' \le i \le k, \\ 1 \le j < j' \le l}}M_{i,j}M_{i',j'}.\qedhere
\end{equation}

\end{proof}

\begin{rmk}\label{double_coset_permutations}
Note that by dividing the right-hand side of \eqref{double_coset_decomposition_q} by $(1-q)^n$ and setting $q=1$, we obtain
\begin{align}\label{contingency_probability}
\frac{\prod_{i=1}^k\alpha_i!\prod_{j=1}^l\beta_j!}{\prod_{1 \le i \le k,\hspace*{1pt} 1 \le j \le l}M_{i,j}!}.
\end{align}
This is equal to the size of the double coset of $\Sym_\alpha\backslash\Sym_n/\Sym_\beta$ corresponding to $M$ as in \cref{double_cosets}. We can verify this directly, or by an argument similar to the proof of \cref{alternative_orbit_decomposition}; also see the discussion of contingency tables in \cref{probabilistic_interpretation}. For example, if $M = \scalebox{0.8}{$\begin{bmatrix}1 & 1 \\ 2 & 1\end{bmatrix}$}$, then the corresponding double coset of $\Sym_{(2,3)}\backslash\Sym_5/\Sym_{(3,2)}$ has size $\frac{2!\hspace*{1pt}3!\hspace*{1pt}3!\hspace*{1pt}2!}{1!\hspace*{1pt}1!\hspace*{1pt}2!\hspace*{1pt}1!} = 72$.
\end{rmk}

\section{\texorpdfstring{$q$}{q}-Whittaker functions via partial flags}\label{sec_nilpotent}

\noindent In this section we present two characterizations of the $q$-Whittaker function $W_\lambda(\mathbf{x};q)$ in terms of nilpotent endomorphisms acting on partial flags (\cref{whittaker_expansion,whittaker_expansion_dual}). We give some background on (strict) compatibility in \cref{sec_compatibility}, and then present our two characterizations in \cref{sec_expansion,sec_expansion_dual}.

\subsection{Compatibility of nilpotent endomorphisms and partial flags}\label{sec_compatibility}
In this subsection, we discuss two notion of compatibility between nilpotent endomorphisms and partial flags.
\begin{defn}\label{defn_compatible}
Let $\alpha = (\alpha_1, \dots, \alpha_k)\models n$, $F\in\PFl_\alpha$, and $\nil\in\gl_n$. We say that $F$ and $\nil$ are {\itshape strictly compatible} if
$$
\nil(F_i) \subseteq F_{i-1} \quad \text{ for all } 1 \le i \le k.
$$
In this case $\nil$ is nilpotent. By \cref{flag_to_tableau}, we may define a semistandard tableau $T$ of shape $\JF(\nil)$ and content $\alpha$ such that $T^{(i)} = \JF(\nil|_{F_i})$ for $1 \le i \le k$. We denote $T$ by $\JF(\nil;F)$.
\end{defn}

\begin{eg}\label{eg_compatible}
Let $\alpha := (2,1,3)$, and let
$$
\nil := \begin{bmatrix}
0 & 0 & 1 & 6 & -4 & 2 \\
0 & 0 & 3 & -2 & 7 & 5 \\
0 & 0 & 0 & 0 & 0 & 0 \\
0 & 0 & 0 & 0 & 0 & 0 \\
0 & 0 & 0 & 0 & 0 & 0 \\
0 & 0 & 0 & 0 & 0 & 0
\end{bmatrix}\in\gl_6.
$$
Then $\nil$ is strictly compatible with $\Unit{\id}\in\PFl_\alpha$. Let us calculate $T := \JF(\nil;\Unit{\id})$. We have
$$
T^{(1)} = \JF\hspace*{-2pt}\Big(\scalebox{0.8}{$\begin{bmatrix}0 & 0 \\ 0 & 0\end{bmatrix}$}\Big) = \ydiagramsmall{2}, \quad T^{(2)} = \JF\hspace*{-2pt}\left(\scalebox{0.8}{$\begin{bmatrix}0 & 0 & 1 \\ 0 & 0 & 3 \\ 0 & 0 & 0\end{bmatrix}$}\right) = \ydiagramsmall{2,1}, \quad T^{(3)} = \JF(\nil) = \ydiagramsmall{4,2},
$$
so
\begin{gather*}
T = \hspace*{4pt}\begin{ytableau}
1 & 1 & 3 & 3 \\
2 & 3
\end{ytableau}\hspace*{4pt}.\qedhere
\end{gather*}

\end{eg}

\begin{rmk}\label{weakly_vs_strictly_compatible}
Following \cref{defn_compatible}, for $\nil$ nilpotent, we say that $F$ and $\nil$ are {\itshape weakly compatible} if 
$$
\nil(F_i) \subseteq F_i \quad \text{ for all } 0 \le i \le k.
$$
Note that if $F$ and $\nil$ are strictly compatible, then they are also weakly compatible. The two notions coincide for the complete flag variety $\Fl_n$, but they differ in general. Both notions lead to natural generalizations of {\itshape Springer fibers} from $\Fl_n$ to $\PFl_\alpha$ \cite{spaltenstein76,steinberg76}; see \cite[Example 1]{fresse16} for further discussion. Weak compatibility also arises in a characterization of {\itshape Hall--Littlewood functions}, which we discuss in \cref{remark_hall-littlewood}. However, weak compatibility is not suitable for our purposes, because it does not produce semistandard tableaux. Namely, for a weakly compatible pair $F$ and $\nil$, the tableau $T$ defined in \cref{defn_compatible} is weakly increasing along rows and columns, but not necessarily strictly increasing along columns. (Such a tableau $T$ is a {\itshape reverse plane partition}; we will encounter reverse plane partitions in a different context in \cref{sec_quiver}.)

For example, consider the case $\alpha := (n)$, and let $\lambda := \JF(\nil)$. Then there is a unique partial flag $F\in\PFl_\alpha$, and it is weakly compatible with $\nil$. The associated tableau $T$ is the filling of the Young diagram of $\lambda$ with all $1$'s. However, this tableau is not semistandard unless $\lambda = (n)$, or equivalently, $\nil = 0$ (which is also the only case when $F$ and $\nil$ are strictly compatible).
\end{rmk}

We observe that \cref{defn_compatible} is compatible with the action of $\GL_n$:
\begin{lem}\label{compatible_matrices_adjoint}
Let $\alpha\models n$, $F\in\PFl_\alpha$, $\nil\in\gl_n$, and $g\in\GL_n$. Then $\nil$ is strictly compatible with $F$ if and only if $g\nil g^{-1}$ is strictly compatible with $gF$. In this case, $\JF(\nil;F) = \JF(g\nil g^{-1};gF)$.
\end{lem}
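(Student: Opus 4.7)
The proof is a direct unwinding of the definitions, so the plan is simply to verify both claims mechanically using the naturality of the constructions with respect to $\GL_n$.

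For the first assertion, I would begin by observing that for each $0 \le i \le k$, one has the identity $(g\nil g^{-1})(gF_i) = g(\nil(F_i))$. Since $g\in\GL_n$ is invertible, the inclusion $g(\nil(F_i))\subseteq gF_{i-1}$ is equivalent to $\nil(F_i)\subseteq F_{i-1}$. Applying this for all $1 \le i \le k$ yields the equivalence of strict compatibility for $(F,\nil)$ and for $(gF, g\nil g^{-1})$. The ``nilpotent'' requirement is preserved automatically since conjugation by $g$ preserves the property $\nil^m = 0$.

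For the second assertion, by \cref{defn_compatible} it suffices to show that for each $1 \le i \le k$,
$$
\JF(\nil|_{F_i}) = \JF\bigl((g\nil g^{-1})|_{gF_i}\bigr),
$$
since these common partitions form the successive shapes $T^{(i)}$ on both sides. The restriction $g|_{F_i}\colon F_i \to gF_i$ is a linear isomorphism (with inverse $g^{-1}|_{gF_i}$), and by the first part, both $\nil|_{F_i}$ and $(g\nil g^{-1})|_{gF_i}$ are well-defined endomorphisms of $F_i$ and $gF_i$, respectively. A direct check gives
$$
(g\nil g^{-1})|_{gF_i} = g|_{F_i}\circ\nil|_{F_i}\circ (g|_{F_i})^{-1},
$$
so the two endomorphisms are conjugate via a linear isomorphism. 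The conjugated Jordan form partition is a conjugation invariant (this follows, for example, from \cref{matrix_to_partition}, since $\dim\ker((g|_{F_i}\circ \nil|_{F_i}\circ (g|_{F_i})^{-1})^m) = \dim\ker(\nil|_{F_i}^m)$ for every $m$), and so the two Jordan forms coincide.

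This is a routine verification and I do not expect any genuine obstacle; the only thing to be careful about is to record the equivalence of strict compatibility \emph{before} asserting the equality of tableaux, so that both sides of $\JF(\nil;F) = \JF(g\nil g^{-1};gF)$ are in fact defined.
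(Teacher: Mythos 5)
Your proof is correct and matches the paper's approach: the paper simply states that the lemma ``follows from the definitions,'' and your argument is exactly the routine verification that this comment refers to. The care you take to establish the first assertion before the second (so that both sides of the tableau equality are defined) is appropriate but not a departure from the intended argument.
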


\begin{proof}
This follows from the definitions.
\end{proof}

We have the following characterization of strictly compatible pairs, where we recall that the subspace $\Nil_\alpha$ of $\gl_n$ was defined in \cref{defn_lie}:
\begin{prop}\label{compatible_matrices}
Let $F\in\PFl_\alpha$ be represented by $g\in\GL_n$. Then the set of nilpotent endomorphisms strictly compatible with $F$ is $g\Nil_\alpha g^{-1}$.
\end{prop}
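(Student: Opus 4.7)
The plan is to reduce to the case $g = I_n$ via \cref{compatible_matrices_adjoint}, and then verify the statement directly by unpacking the definitions of $\Unit{\id}$ and $\Nil_\alpha$.

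First I would observe that the flag $\Unit{\id}\in\PFl_\alpha$ represented by $I_n$ satisfies $\Unit{\id}_i = \spn{\unit{1}, \dots, \unit{\alpha_1 + \cdots + \alpha_i}}$ for all $0 \le i \le k$. For any $N\in\gl_n$, being strictly compatible with $\Unit{\id}$ is equivalent to requiring $N\unit{j} \in \Unit{\id}_{i-1}$ for every $1 \le i \le k$ and every $j\in\block{\alpha}{i}$. Reading off coordinates, this says that every entry of $N$ whose column index lies in $\block{\alpha}{i}$ and whose row index lies in $\block{\alpha}{i'}$ for some $i' \ge i$ must vanish. That is exactly the defining condition of $\Nil_\alpha$ in \cref{defn_lie}. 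Moreover, every $M\in\Nil_\alpha$ is block-strictly-upper-triangular and hence nilpotent, so the set of \emph{nilpotent} endomorphisms strictly compatible with $\Unit{\id}$ is precisely $\Nil_\alpha$.

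For the general case, I would write $F = g\Unit{\id}$ (which holds because $g$ represents $F$) and apply \cref{compatible_matrices_adjoint} with the group element $g^{-1}$: a nilpotent $\nil$ is strictly compatible with $F$ if and only if $g^{-1}\nil g$ is strictly compatible with $g^{-1}F = \Unit{\id}$, which by the previous paragraph happens if and only if $g^{-1}\nil g \in \Nil_\alpha$, i.e.\ $\nil \in g\Nil_\alpha g^{-1}$. Conversely, any element of $g\Nil_\alpha g^{-1}$ is conjugate to an element of $\Nil_\alpha$ and so is nilpotent. This gives equality of the two sets.

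The statement is essentially a translation of definitions, so there is no real obstacle. The one thing to be mindful of is indexing: strict compatibility $\nil(F_i) \subseteq F_{i-1}$ must correspond precisely to the blocks \emph{on and strictly below} the block-diagonal vanishing in the $\alpha$-block decomposition (the diagonal blocks themselves must be zero, since $N\block{\alpha}{i}$ is required to land inside the span of the first $i-1$ blocks, not the first $i$), which is exactly what \cref{defn_lie} specifies for $\Nil_\alpha$.
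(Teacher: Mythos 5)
Your proof is correct and follows essentially the same route as the paper: reduce to $F = \Unit{\id}$ via \cref{compatible_matrices_adjoint} and observe that strict compatibility with $\Unit{\id}$ is by definition exactly membership in $\Nil_\alpha$. The paper states this in one sentence; you have simply spelled out the coordinate check, which is accurate.
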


\begin{proof}
This follows from \cref{compatible_matrices_adjoint}, since $\Nil_\alpha$ is the set of nilpotent endomorphisms strictly compatible with $\Unit{\id} \in \PFl_\alpha$.
\end{proof}

\subsection{Expansion via partial flags}\label{sec_expansion}
In this subsection, we present an expansion of $W_\lambda(\mathbf{x};q)$ in terms of nilpotent endomorphisms and partial flags. To state it, we recall that $\ncon{\cdot}$ was defined in \cref{defn_n}, and $\qwt(\cdot)$ was defined in \cref{defn_whittaker}.
\begin{thm}\label{whittaker_expansion}
Set $\field := \qinvF$. Let $\lambda\vdash n$, and let $\nil\in\gl_n$ be nilpotent with $\JF(\nil) = \lambda$.
\begin{enumerate}[label=(\roman*), leftmargin=*]
\item\label{whittaker_expansion_tableau} For $\alpha\models n$ and $T\in\SSYT(\lambda,\alpha)$, we have
$$
\qwt(T) = q^{\ncon{\lambda} - \ncon{\alpha}}|\{F\in\PFl_\alpha : F \text{ is strictly compatible with $\nil$, and } \JF(\nil;F) = T\}|.
$$
\item\label{whittaker_expansion_coefficient} For $\alpha\models n$, we have
$$
[\mathbf{x}^\alpha]W_\lambda(\mathbf{x};q) = q^{\ncon{\lambda} - \ncon{\alpha}}|\{F\in\PFl_\alpha : F \text{ is strictly compatible with } \nil\}|.
$$
\item\label{whittaker_expansion_formula}
We have
$$
W_\lambda(\mathbf{x};q) = q^{\ncon{\lambda}}\sum_F \prod_{i\ge 1}q^{-\binom{\dim(F_i/F_{i-1})}{2}}x_i^{\dim(F_i/F_{i-1})},
$$
where the sum is over all partial flags $F$ of $\field^n$ strictly compatible with $\nil$.
\end{enumerate}
\end{thm}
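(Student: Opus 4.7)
The strategy is to prove (i) by induction on the length $k$ of $\alpha$, then read off (ii) and (iii) as straightforward consequences.

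For (i), any flag $F$ strictly compatible with $\nil$ with $\JF(\nil;F) = T$ has $F_k = \field^n$, and each $F_{j-1}$ is a subspace of $F_j$ containing $\nil(F_j)$ with $\JF(\nil|_{F_{j-1}}) = T^{(j-1)}$. By \cref{transitive_action} together with \cref{compatible_matrices_adjoint}, the number of ways to extend any such $F_{j-1}$ downward to a full compatible flag depends only on $\JF(\nil|_{F_{j-1}})$, which is what makes the induction go through. In the inductive step I peel off the choice of $F_{k-1}$: the valid $F_{k-1}$ are counted by \cref{whittaker_induction_step}, applied to $\nil$ on $F_k = \field^n$ with $\lambda = T^{(k)}$ and $\mu = T^{(k-1)}$, and the induction hypothesis, applied to $\nil|_{F_{k-1}}$ together with the restriction of $T$ to entries at most $k-1$ (which has shape $T^{(k-1)}$ and content $(\alpha_1, \dots, \alpha_{k-1})$), counts the completions $(F_0, \dots, F_{k-1})$.

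The main computation is to reconcile the $q$-powers. Over $\qinvF$, the number of valid $F_{k-1}$ coming out of \cref{whittaker_induction_step} is
$$\prod_{a < b} q^{-(\lambda_a - \mu_a)(\mu_b - \lambda_{b+1})} \prod_{a \ge 1} \qinvbinom{\lambda_a - \lambda_{a+1}}{\mu_a - \lambda_{a+1}}.$$
Using $\qinvbinom{n}{k} = q^{-k(n-k)}\qbinom{n}{k}$ and $\qbinom{n}{k} = \qbinom{n}{n-k}$ rewrites this as
$$q^{-\sum_{a \le b}(\lambda_a - \mu_a)(\mu_b - \lambda_{b+1})} \prod_{a \ge 1} \qbinom{\lambda_a - \lambda_{a+1}}{\lambda_a - \mu_a},$$
and by \cref{maximum_degree_recursion} the exponent simplifies to $-(\ncon{\lambda} - \ncon{\mu} - \binom{\alpha_k}{2})$. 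Multiplying by the inductive factor $q^{\ncon{\mu} - \ncon{(\alpha_1, \dots, \alpha_{k-1})}}\qwt(T|_{\text{entries} < k})$ telescopes the $\ncon{\cdot}$ contributions to $q^{\ncon{\lambda} - \ncon{\alpha}}$, and the newly extracted $q$-binomials become exactly the $j = k$ row of the product defining $\qwt(T)$. This proves (i).

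Part (ii) is immediate by summing (i) over $T \in \SSYT(\lambda, \alpha)$, since by definition $[\mathbf{x}^\alpha]W_\lambda(\mathbf{x};q) = \sum_{T\in\SSYT(\lambda,\alpha)} \qwt(T)$. Part (iii) is then a reformulation obtained by summing (ii) over all $\alpha \models n$ with sufficiently many parts, since for $F \in \PFl_\alpha$ one has $\ncon{\alpha} = \sum_i \binom{\dim(F_i/F_{i-1})}{2}$. I expect the main obstacle to be the $q$-exponent bookkeeping in the inductive step: the seemingly ad hoc mixed exponent produced by \cref{whittaker_induction_step} has to reorganize into the clean $q^{\ncon{\lambda} - \ncon{\alpha}}$ prefactor, and this works out only because \cref{maximum_degree_recursion} is precisely calibrated to rewrite $\sum_{a \le b}(\lambda_a - \mu_a)(\mu_b - \lambda_{b+1})$ in terms of the $\ncon{\cdot}$ function, which is what allows the telescoping.
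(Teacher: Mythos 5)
Your argument is correct and matches the paper's proof: the same induction on the number of parts of $\alpha$, peeling off the choice of $F_{k-1}$ via \cref{whittaker_induction_step}, converting $1/q$-binomials to $q$-binomials, and collapsing the exponent with \cref{maximum_degree_recursion}. Note one sign slip in the write-up: by the inductive hypothesis the number of ways to complete a given $F_{k-1}$ to a full flag is $q^{-\ncon{\mu}+\ncon{\beta}}\qwt(T')$, not $q^{\ncon{\mu}-\ncon{\beta}}\qwt(T')$, so the exponents telescope to $q^{-(\ncon{\lambda}-\ncon{\alpha})}$ as the prefactor of $\qwt(T)$ in the flag count, which the $q^{\ncon{\lambda}-\ncon{\alpha}}$ in the statement then cancels.
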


We consider an example and make several remarks about \cref{whittaker_expansion}, before proving it in the remainder of this subsection.
\begin{eg}\label{eg_whittaker_expansion}
Set $\field := \qinvF$, and let $\lambda := (4,2)$ and $\alpha := (2,1,3)$. Let us use \cref{whittaker_expansion}\ref{whittaker_expansion_coefficient} (and \cref{q_enumeration}\ref{q_enumeration_Gr}) to find the coefficient of $\mathbf{x}^\alpha = x_1^2x_2x_3^3$ in $W_\lambda(\mathbf{x};q)$ (cf. \cref{eg_whittaker}). We set $\nil := \nil_\lambda$, which acts as in the following picture:
$$
\quad\begin{tikzpicture}[baseline=(current bounding box.center)]
\pgfmathsetmacro{\u}{0.922};
\useasboundingbox(0,0.7*\u)rectangle(4*\u,-2.2*\u);
\coordinate (vepsilon)at(0,0.2*\u);
\coordinate (vepsilonup)at(0,0.25*\u);
\node[inner sep=0]at(0,0){\scalebox{1.6}{\begin{ytableau}
\none \\
\none \\
\none & \none & \none & \none & & & & \\
\none & \none & \none & \none & &
\end{ytableau}}};
\foreach \y in {1,...,2}{
\path[thick,-latex']($(1*\u,-\y*\u)+(-0.5*\u,0.5*\u)+(vepsilonup)$)edge($(1*\u,-\y*\u+\u)+(-0.5*\u,0.5*\u)-(vepsilon)$);}
\foreach \y in {1,...,2}{
\path[thick,-latex']($(2*\u,-\y*\u)+(-0.5*\u,0.5*\u)+(vepsilonup)$)edge($(2*\u,-\y*\u+\u)+(-0.5*\u,0.5*\u)-(vepsilon)$);}
\foreach \y in {1}{
\path[thick,-latex']($(3*\u,-\y*\u)+(-0.5*\u,0.5*\u)+(vepsilonup)$)edge($(3*\u,-\y*\u+\u)+(-0.5*\u,0.5*\u)-(vepsilon)$);}
\foreach \y in {1}{
\path[thick,-latex']($(4*\u,-\y*\u)+(-0.5*\u,0.5*\u)+(vepsilonup)$)edge($(4*\u,-\y*\u+\u)+(-0.5*\u,0.5*\u)-(vepsilon)$);}
\foreach \x in {1,...,4}{
\node[inner sep=0]at($(\x*\u-0.5*\u,0.5*\u)$){$0$};}
\node[inner sep=0]at($(0.5*\u,-0.5*\u)$){$\unit{1}$};
\node[inner sep=0]at($(1.5*\u,-0.5*\u)$){$\unit{2}$};
\node[inner sep=0]at($(2.5*\u,-0.5*\u)$){$\unit{3}$};
\node[inner sep=0]at($(3.5*\u,-0.5*\u)$){$\unit{4}$};
\node[inner sep=0]at($(0.5*\u,-1.5*\u)$){$\unit{5}$};
\node[inner sep=0]at($(1.5*\u,-1.5*\u)$){$\unit{6}$};
\end{tikzpicture}\quad.
$$

Let us enumerate all $F = (0,F_1,F_2,\field^6)\in\PFl_\alpha$ strictly compatible with $\nil$. Note that in particular, $\spn{\unit{1},\unit{2}}\subseteq F_2$. We consider two cases, depending on whether $F_2$ is contained in $\spn{\unit{1},\unit{2},\unit{3},\unit{4}}$.

{\itshape Case 1: $F_2 \subseteq \spn{\unit{1},\unit{2},\unit{3},\unit{4}}$.} In this case, $F_2/\spn{\unit{1},\unit{2}}$ is an arbitrary $1$-dimensional subspace of $\spn{\unit{1},\unit{2},\unit{3},\unit{4}}/\spn{\unit{1},\unit{2}}$, so there are $|\Gr_{1,2}| = q^{-1} + 1$ choices for $F_2$. Then $\nil(F_2) = 0$, so $F_1$ is an arbitrary $2$-dimensional subspace of $F_2$, for which there are $|\Gr_{2,3}| = q^{-2} + q^{-1} + 1$ choices. The total number of partial flags $F$ in this case is
$$
(q^{-1} + 1)(q^{-2} + q^{-1} + 1) = q^{-3} + 2q^{-2} + 2q^{-1} + 1.
$$

{\itshape Case 2: $F_2 \nsubseteq \spn{\unit{1},\unit{2},\unit{3},\unit{4}}$.} In this case, we can write $F_2 = \spn{\unit{1},\unit{2},x+y}$, where $x$ spans a $1$-dimensional subspace of $\spn{\unit{5},\unit{6}}$ (for which there are $|\Gr_{1,2}| = q^{-1} + 1$ choices), and $y$ is any vector in $\spn{\unit{3},\unit{4}}$ (for which there are $q^{-2}$ choices). Since $\nil(F_1) = 0$, we must have $F_1 = \spn{\unit{1},\unit{2}}$. The total number of partial flags $F$ in this case is
$$
(q^{-1} + 1)q^{-2} = q^{-3} + q^{-2}.
$$

In conclusion,
$$
|\{F\in\PFl_\alpha : F \text{ is strictly compatible with } \nil\}| = 2q^{-3} + 3q^{-2} + 2q^{-1} + 1.
$$
Multiplying by $q^{\ncon{\lambda} - \ncon{\alpha}} = q^3$, we obtain
$$
[x_1^2x_2x_3^3]W_\lambda(\mathbf{x};q) = 2 + 3q + 2q^2 + q^3,
$$
in agreement with \cref{eg_whittaker}.
\end{eg}

\begin{rmk}\label{remark_hall-littlewood}
We mention that the description of $W_\lambda(\mathbf{x};q)$ in \cref{whittaker_expansion}\ref{whittaker_expansion_coefficient} resembles a well-known description of the {\itshape Hall--Littlewood functions} \cite[Chapter II]{macdonald95}, using the notion of weak compatibility rather than strict compatibility (cf.\ \cref{weakly_vs_strictly_compatible}). We present this description using the {\itshape modified Hall--Littlewood functions}, following \cite[Section 1]{kirillov00} (cf.\ \cite[Section 3]{haiman03}). Namely, given $\lambda\vdash n$, we define
$$
H_\lambda(\mathbf{x};q) := q^{\ncon{\transpose{\lambda}}}\widetilde{H}_\lambda(\mathbf{x};0,1/q), \quad \text{ i.e.}, \quad H_{\transpose{\lambda}}(\mathbf{x};q) = q^{\ncon{\lambda}}\widetilde{H}_\lambda(\mathbf{x};1/q,0) = \omega(W_\lambda(\mathbf{x};q)),
$$
where $\widetilde{H}_\lambda$ is the modified Macdonald polynomial discussed in \cref{macdonald_remark}. Then letting $\field = \qinvF$ and $\nil\in\gl_n$ be nilpotent with $\JF(\nil) = \lambda$, we have \cite[Section 1.4]{kirillov00}
\begin{align}\label{hall-littlewood_formula}
[\mathbf{x}^\alpha]H_{\transpose{\lambda}}(\mathbf{x};q) = q^{\ncon{\lambda}}|\{F\in\PFl_\alpha : F \text{ is weakly compatible with } \nil\}| \quad \text{ for all } \alpha\models n.
\end{align}
Despite the superficial similarity between \eqref{hall-littlewood_formula} and \cref{whittaker_expansion}\ref{whittaker_expansion_coefficient}, we do not know how to deduce either formula from the other.

We also note that the left-hand sides of \cref{whittaker_expansion}\ref{whittaker_expansion_coefficient} and \eqref{hall-littlewood_formula} can be written in terms of the standard inner product $\langle\cdot,\cdot\rangle$ on symmetric functions. Namely, we have
\begin{gather*}
[\mathbf{x}^\alpha]W_\lambda(\mathbf{x},q) = \langle W_\lambda(\mathbf{x},q), h_\alpha\rangle \quad \text{ and } \quad [\mathbf{x}^\alpha]H_{\transpose{\lambda}}(\mathbf{x};q) = \langle W_\lambda(\mathbf{x},q), e_\alpha\rangle.
\end{gather*}

\end{rmk}

\begin{rmk}\label{remark_perverse}
The result \cref{whittaker_expansion}\ref{whittaker_expansion_coefficient} also follows from known results on Springer fibers; we briefly sketch the details.\footnote{We thank Sean Griffin for informing us of this and for supplying the argument used here.} Let $\nil\in\gl_n$ be nilpotent with $\JF(\nil) = \lambda$, and let $\alpha\models n$. Define the varieties over $\field$
\begin{align*}
\mathcal{X}^\lambda &:= \{F\in\Fl_n : F \text{ is strictly compatible with } \nil\}, \\
\mathcal{X}^\lambda_\alpha &:= \{F\in\Fl_\alpha : F \text{ is strictly compatible with } \nil\}.
\end{align*}
Then \cref{whittaker_expansion}\ref{whittaker_expansion_coefficient} is equivalent to the identity
\begin{align}\label{perverse_equation}
|\mathcal{X}^\lambda_\alpha(\qinvF)| = q^{\ncon{\alpha}}[\mathbf{x}^\alpha]\omega(\widetilde{H}_\lambda(\mathbf{x}; 1/q, 0)).
\end{align}

Springer \cite{springer78} defined an action of $\Sym_n$ on the cohomology $H^*(\mathcal{X}^\lambda; \mathbb{Q})$ of $\mathcal{X}^\lambda$. (We caution that our conventions follow \cite{borho_macpherson83} in defining the $\Sym_n$-action to be Springer's action multiplied by the sign character.) Using perverse sheaves, Borho and MacPherson \cite[Corollary 3.4(b)]{borho_macpherson83} (cf.\ \cite[Theorem 1.2]{brundan_ostrik11}) showed that
$$
H^i(\mathcal{X}^\lambda_\alpha; \mathbb{Q}) \cong H^{i + 2\ncon{\alpha}}(\mathcal{X}^\lambda; \mathbb{Q})^\text{$\Sym_\alpha$-anti} \quad \text{ for all } i\in\mathbb{Z},
$$
where $\cdot^\text{$\Sym_\alpha$-anti}$ takes $\Sym_\alpha$-anti-invariants. On the other hand, Brundan and Ostrik \cite[Theorem 2.4]{brundan_ostrik11} showed that $\mathcal{X}^\lambda_\alpha$ has an affine paving, so
$$
|\mathcal{X}^\lambda_\alpha(\qinvF)| = \sum_{i\ge 0}\dim(H^{2i}(\mathcal{X}^\lambda_\alpha; \mathbb{Q}))q^{-i} = q^{\ncon{\alpha}}\sum_{j\ge 0}\dim(H^{2j}(\mathcal{X}^\lambda; \mathbb{Q})^\text{$\Sym_\alpha$-anti})q^{-j}.
$$
Finally, by work of Hotta and Springer \cite{hotta_springer77} (cf.\ \cite[Theorem 3.4.19]{haiman03}), the Frobenius characteristic map sends $H^*(\mathcal{X}^\lambda; \mathbb{Q})$ (with grading parameter $q^{-1/2}$) to $\widetilde{H}_\lambda(\mathbf{x}; 1/q, 0)$. This gives \eqref{perverse_equation}, where we apply $[\mathbf{x}^\alpha]\omega$ since we are taking $\Sym_\alpha$-anti-invariants. A similar argument can be used to prove \eqref{hall-littlewood_formula}, using \cite[Corollary 3.4(a)]{borho_macpherson83}.
\end{rmk}

\begin{rmk}\label{symmetry_remark}
Recall that $W_\lambda(\mathbf{x};q)$ is symmetric in $\mathbf{x}$. By \cref{whittaker_expansion}\ref{whittaker_expansion_coefficient}, this is equivalent to the fact that
\begin{align}\label{symmetry_quantity}
|\{F\in\PFl_\alpha : F \text{ is strictly compatible with } \nil\}|
\end{align}
is invariant under permuting the entries of the weak composition $\alpha\models n$. We can see this directly, as follows. It suffices to show that \eqref{symmetry_quantity} is invariant under switching two consecutive entries of $\alpha$, which reduces to the case that $\alpha = (k,n-k)$, and we switch $k$ and $n-k$. This case follows from the fact that
$$
|\{V\in\Gr_{k,n} : \nil(\field^n) \subseteq V \subseteq \ker(\nil)\}| \cong |\{W\in\Gr_{n-k,n} : \nil(\field^n) \subseteq W \subseteq \ker(\nil)\}|,
$$
which is a consequence of the rank-nullity theorem.
\end{rmk}

In order to prove \cref{whittaker_expansion}, we will need the following enumerative statement, which follows from the results of \cref{sec_pairs_partial}:
\begin{lem}\label{intersection_dimension_count}
Let $V_1, \dots, V_k$ be vector spaces of dimensions $\alpha_1, \dots, \alpha_k$, and set $V := V_1 \oplus \cdots \oplus V_k$. For $1 \le i \le k$, take $\gamma_i\in\mathbb{N}$ with $0 \le \gamma_i \le \alpha_i$. Let $\mathcal{W}$ denote the set of subspaces $W\subseteq V$ such that
$$
\dim((V_1\oplus\cdots\oplus V_i) \cap W) = \gamma_1 + \cdots + \gamma_i \quad \text{ for all } 1 \le i \le k.
$$
Then
\begin{gather*}
\mathcal{W} \cong \prod_{1 \le i < j \le k}\field^{(\alpha_i-\gamma_i)\gamma_j} \times \prod_{i=1}^k\Gr_{\gamma_i,\alpha_i}.
\end{gather*}

\end{lem}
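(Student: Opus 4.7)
The plan is to realize $\mathcal{W}$ as a single parabolic orbit on a Grassmannian and then invoke \cref{alternative_orbit_decomposition}. First I would fix an isomorphism $V\xrightarrow{\sim}\field^n$ (where $n := |\alpha|$) sending $V_1 \oplus \cdots \oplus V_i$ onto $\spn{\unit{1}, \dots, \unit{\alpha_1 + \cdots + \alpha_i}}$ for each $i$. Under this identification, the ambient flag $0 \subseteq V_1 \subseteq V_1\oplus V_2 \subseteq \cdots \subseteq V$ becomes $\Unit{\id}\in\PFl_\alpha$, and the question reduces to computing the set of subspaces $W\subseteq\field^n$ of dimension $|\gamma| := \gamma_1 + \cdots + \gamma_k$ satisfying the prescribed intersection conditions.

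Next, set $\beta := (|\gamma|, n - |\gamma|)$, so that the Grassmannian of $|\gamma|$-dimensional subspaces of $\field^n$ is $\PFl_\beta$, and define the $k \times 2$ matrix $M$ by $M_{i,1} := \gamma_i$ and $M_{i,2} := \alpha_i - \gamma_i$; this $M$ is of type $(\alpha,\beta)$. By \cref{relative_position_properties} applied with $j\in\{0,1\}$, a subspace $W\in\PFl_\beta$ lies in $\mathcal{W}$ if and only if the relative position of $(\Unit{\id}, W)$ equals $M$, which by \cref{relative_position_invariance}\ref{relative_position_invariance_schubert} and \cref{matrix_to_orbit} is equivalent to $W\in X_M$. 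Hence $\mathcal{W} = X_M$.

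Finally, I would apply \cref{alternative_orbit_decomposition} to $X_M$. With $l=2$, the condition $1 \le j < j' \le l$ forces $(j,j') = (1,2)$, so the $\field^{M_{i,j}M_{i',j'}}$-factors collapse to $\field^{M_{i,1}M_{i',2}} = \field^{\gamma_i(\alpha_{i'} - \gamma_{i'})}$ over pairs $i' < i$; renaming indices $(i',i) \mapsto (i,j)$ turns this into $\prod_{1\le i<j\le k}\field^{(\alpha_i - \gamma_i)\gamma_j}$. The remaining factor is $\prod_{i=1}^k \PFl_{(M_{i,1}, M_{i,2})} = \prod_{i=1}^k \PFl_{(\gamma_i, \alpha_i - \gamma_i)} = \prod_{i=1}^k \Gr_{\gamma_i,\alpha_i}$, giving the claimed decomposition.

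The only substantive step is the identification $\mathcal{W} = X_M$, which is essentially tautological given the machinery set up in \cref{sec_pairs_partial}; I expect no genuine obstacle, since both the indexing of $\Par_\alpha$-orbits by matrices and the product formula for each orbit are already in place.
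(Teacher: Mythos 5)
Your proposal is correct and takes essentially the same route as the paper: identify $V$ with $\field^n$ so the ambient flag becomes $\Unit{\id}\in\PFl_\alpha$, package $W$ as a two-step flag in $\PFl_\beta$ with $\beta = (|\gamma|, n-|\gamma|)$, recognize $\mathcal{W}$ as the orbit $X_M$ for the $k\times 2$ matrix $M$ with columns $(\gamma_i)$ and $(\alpha_i-\gamma_i)$ via \cref{relative_position_properties} and \cref{relative_position_invariance}\ref{relative_position_invariance_schubert}, and conclude with \cref{alternative_orbit_decomposition}. The index bookkeeping in your final step is also correct.
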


\begin{proof}
Let $n := \alpha_1 + \cdots + \alpha_k$. We may assume that $V_i = \field^{\block{\alpha}{i}}$ for $1 \le i \le k$. Then for the partial flag $\Unit{\id}\in\PFl_\alpha$, we have $\Unit{\id}_i = V_1 \oplus\cdots\oplus V_i$ for all $0 \le i \le k$. Now define $M\in\mathbb{N}^{k\times 2}$ by
$$
M_{i,1} := \gamma_i \hspace*{4pt}\text{ and }\hspace*{4pt} M_{i,2} := \alpha_i - \gamma_i \quad \text{ for } 1 \le i \le k.
$$
Note that $M$ has type $(\alpha,\beta)$, where $\beta := (\gamma_1 + \cdots + \gamma_k, n - (\gamma_1 + \cdots + \gamma_k))\models n$. Given an arbitrary subspace $W\subseteq V$ with $\dim(W) = \gamma_1 + \cdots + \gamma_k$, let $F'\in\PFl_\beta$ denote the partial flag $(0,W,V)$. By \cref{relative_position_properties}, we have $W\in\mathcal{W}$ if and only if $\Unit{\id}\relpos{M}F'$. This is in turn equivalent to $F' \in X_M$, by taking $g = I_n$ (which represents $\Unit{\id}$) in \cref{relative_position_invariance}\ref{relative_position_invariance_schubert}. Hence $\mathcal{W}\cong X_M$, and the result follows from \cref{alternative_orbit_decomposition}.
\end{proof}

The following result is the key to the induction step in the proof of \cref{whittaker_expansion}:
\begin{lem}\label{whittaker_induction_step}
Let $\lambda/\mu$ be a horizontal strip, and let $\nil\in\gl_n$ be nilpotent with $\JF(\nil) = \lambda$. Then
\begin{gather*}
\{V\subseteq\field^n : \nil(\field^n)\subseteq V \text{ and } \JF(\nil|_V) = \mu\} \cong \prod_{1 \le i < j}\field^{(\lambda_i-\mu_i)(\mu_j-\lambda_{j+1})} \times \prod_{i\ge 1}\Gr_{\mu_i-\lambda_{i+1},\lambda_i-\lambda_{i+1}}.
\end{gather*}

\end{lem}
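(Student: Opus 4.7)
The plan is to reduce the problem to \cref{intersection_dimension_count}. Set $U := \nil(\field^n)$ and $K_i := \ker(\nil^i)$; by \cref{matrix_to_partition}, $\dim K_i = \lambda_1 + \cdots + \lambda_i$ and $\dim U = n - \lambda_1$. Applying the same lemma to $\nil|_V$, the condition $\JF(\nil|_V) = \mu$ is equivalent to $\dim(V \cap K_i) = \mu_1 + \cdots + \mu_i$ for all $i \ge 0$. Since the assignment $V \mapsto V/U$ is a bijection between subspaces of $\field^n$ containing $U$ and subspaces of $\field^n/U$, I will work inside the quotient $\field^n/U$ (of dimension $\lambda_1$), equipped with the filtration by the images of $K_i$.

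Next I would translate the intersection-dimension conditions into the quotient. The inclusion $U \subseteq V$ gives $V \cap (K_i + U) = (V \cap K_i) + U$ (if $v = k+u$ with $k \in K_i$ and $u \in U \subseteq V$, then $k = v - u \in V \cap K_i$), so
$$
\dim\bigl((V/U) \cap (K_i + U)/U\bigr) = \dim(V \cap K_i) - \dim(K_i \cap U) = (\mu_1 + \cdots + \mu_i) - (\lambda_2 + \cdots + \lambda_{i+1}),
$$
where $\dim(K_i \cap U) = \lambda_2 + \cdots + \lambda_{i+1}$ is read off by reducing to $\nil = \nil_\lambda$ acting on $\Boxes_\lambda$ via \cref{transitive_action}: $K_i \cap \nil_\lambda(\Boxes_\lambda)$ is spanned by the non-south-border boxes in rows $1, \dots, i$, of which there are $\lambda_2 + \cdots + \lambda_{i+1}$. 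Similarly, $\dim\bigl((K_i + U)/U\bigr) = \lambda_1 - \lambda_{i+1}$.

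Setting $\alpha_i := \lambda_i - \lambda_{i+1}$ and $\gamma_i := \mu_i - \lambda_{i+1}$, the horizontal strip hypothesis yields $0 \le \gamma_i \le \alpha_i$, and the two dimensions above become $\alpha_1 + \cdots + \alpha_i$ and $\gamma_1 + \cdots + \gamma_i$, respectively. Choose any direct sum decomposition $\field^n/U = V_1 \oplus \cdots \oplus V_k$ (where $k = \ell(\lambda)$) adapted to the filtration, so that $V_1 \oplus \cdots \oplus V_i = (K_i + U)/U$ and hence $\dim V_i = \alpha_i$. Then $V \mapsto V/U$ identifies the set in the statement with the set $\mathcal{W}$ of \cref{intersection_dimension_count} for these parameters, and that lemma produces the bijection
$$
\mathcal{W} \cong \prod_{1 \le i < j \le k}\field^{(\alpha_i - \gamma_i)\gamma_j} \times \prod_{i=1}^k\Gr_{\gamma_i,\alpha_i};
$$
substituting $\alpha_i - \gamma_i = \lambda_i - \mu_i$ and $\gamma_j = \mu_j - \lambda_{j+1}$ recovers the stated formula. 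The whole argument is a sequence of dimension counts; the only mildly subtle step is the set-theoretic identity $V \cap (K_i + U) = (V \cap K_i) + U$ above, and I do not anticipate any serious obstacle.
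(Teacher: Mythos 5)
Your proof is correct, and it is essentially the same argument as the paper's. The paper works with $\nil_\lambda$ acting on $\Boxes_\lambda$, sets $A$ to be the non--south-border boxes (so $\spn{A} = \nil_\lambda(\Boxes_\lambda) = U$), and decomposes $V$ as $\spn{A}\oplus W$ where $W$ lies in the explicit complement $\spn{\bigsqcup_i B_i}$; it then applies \cref{intersection_dimension_count} to $W$. You instead pass to the quotient $\field^n/U$ and apply \cref{intersection_dimension_count} to $V/U$. Choosing a complement to $U$ and working in the quotient $\field^n/U$ are interchangeable, and both routes set $\alpha_i = \lambda_i - \lambda_{i+1}$, $\gamma_i = \mu_i - \lambda_{i+1}$ and land on the same application of \cref{intersection_dimension_count}. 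Your verification of the set identity $V \cap (K_i + U) = (V\cap K_i) + U$, of $\dim(K_i\cap U) = \lambda_2 + \cdots + \lambda_{i+1}$, and of the horizontal-strip inequalities $0 \le \gamma_i \le \alpha_i$ are all accurate.
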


\begin{proof}
We may assume that $V = \Boxes_\lambda$ and $\nil = \nil_\lambda$ from \cref{defn_boxes}. Let $A$ denote the set of boxes of the Young diagram of $\lambda$ not in the south border. Also, for $i\ge 1$, let $B_i$ denote the set of boxes in row $i$ in the south border; note that $|B_i| = \lambda_i - \lambda_{i+1}$. Let $V$ denote an arbitrary subspace of $\field^n$ such that $\nil(\field^n)\subseteq V$ and $\JF(\nil|_V) = \mu$. Then by \cref{matrix_to_partition}, $V$ is the direct sum of $\spn{A}$ and an arbitrary subspace $W$ of $\spn{\bigsqcup_{i\ge 1}B_i}$ satisfying
$$
\dim(\spn{B_1 \sqcup \cdots \sqcup B_i} \cap W) = \sum_{j=1}^i(\mu_j - \lambda_{j+1}) \quad \text{ for all } i\ge 1.
$$
Such $W$ are enumerated by \cref{intersection_dimension_count}.
\end{proof}

\begin{proof}[Proof of \cref{whittaker_expansion}]
We prove part \ref{whittaker_expansion_tableau}, whence part \ref{whittaker_expansion_coefficient} follows by summing over all $T\in\SSYT(\lambda,\alpha)$, and part \ref{whittaker_expansion_formula} follows by summing over all $\alpha\models n$. Write $\alpha = (\alpha_1, \dots, \alpha_k)$. We proceed by induction on $k$ (for all $n\in\mathbb{N}$ and $\lambda\vdash n$), where the base case $k=0$ is clear. Suppose that $k\ge 1$ and that part \ref{whittaker_expansion_tableau} holds for $k-1$. Let $\lambda\vdash n$ and $T\in\SSYT(\lambda,\alpha)$. Set $\mu := T^{(k-1)}$ and $\beta := (\alpha_1, \dots, \alpha_{k-1})$, and let $T'\in\SSYT(\mu,\beta)$ be obtained from $T$ by deleting all the entries $k$. Then
$$
\ncon{\alpha} = \ncon{\beta} + \binom{\alpha_k}{2}  \quad \text{ and } \quad \qwt(T) = \qwt(T')\prod_{i=1}^{k-1}\qbinom{\lambda_i - \lambda_{i+1}}{\lambda_i - \mu_i}.
$$

Let us enumerate all $F\in\PFl_\alpha$ such that $F$ is strictly compatible with $\nil$ and $\JF(\nil;F) = T$. The number of $F_{k-1}\subseteq\field^n$ such that $\nil(\field^n)\subseteq F_{k-1}$ and $\JF(\nil|_{F_{k-1}}) = \mu$ is
$$
q^{-\sum_{1 \le i \le j}(\lambda_i - \mu_i)(\mu_j - \lambda_{j+1})}\prod_{i=1}^{k-1}\qbinom{\lambda_i - \lambda_{i+1}}{\mu_i - \lambda_{i+1}} = q^{-\ncon{\lambda}+\ncon{\mu}+\binom{\alpha_k}{2}}\prod_{i=1}^{k-1}\qbinom{\lambda_i - \lambda_{i+1}}{\lambda_i - \mu_i},
$$
by \cref{whittaker_induction_step}, \cref{q_enumeration}\ref{q_enumeration_Gr}, and \cref{maximum_degree_recursion}. By the induction hypothesis, the number of ways to complete $F_{k-1}$ to $F$ is $$
q^{-\ncon{\mu}+\ncon{\beta}}\qwt(T').
$$
Therefore the number of such $F$ is
$$
q^{-\ncon{\lambda}+\ncon{\beta}+\binom{\alpha_k}{2}}\qwt(T')\prod_{i=1}^{k-1}\qbinom{\lambda_i - \lambda_{i+1}}{\lambda_i - \mu_i} = q^{-\ncon{\lambda}+\ncon{\alpha}}\qwt(T),
$$
as desired.
\end{proof}

\subsection{Dual expansion via nilpotent endomorphisms}\label{sec_expansion_dual}
We now state a dual formulation of \cref{whittaker_expansion}, where instead of fixing a nilpotent endomorphism $\nil$ and counting partial flags $F$, we fix $F$ and count $\nil$. For the sake of concreteness, we take $F = \Unit{\id}\in\PFl_\alpha$, so that by \cref{compatible_matrices}, the set of nilpotent endomorphisms strictly compatible with $F$ is $\Nil_\alpha$. Our result will naturally arise as a consequence of more general results in \cref{sec_q_burge} (see \cref{probabilities_diagonal}), so we defer the proof to \cref{sec_polynomiality}.
\begin{thm}\label{whittaker_expansion_dual}
Set $\field := \qinvF$. Let $\lambda\vdash n$ and $\alpha\models n$.
\begin{enumerate}[label=(\roman*), leftmargin=*]
\item\label{whittaker_expansion_dual_tableau} For $T\in\SSYT(\lambda,\alpha)$, we have
$$
\qwt(T) = \frac{q^{\binom{n}{2} - \ncon{\lambda}}(1-q)^{-n+\lambda_1}\prod_{i\ge 1}\qfac{\lambda_i - \lambda_{i+1}}}{\prod_{i\ge 1}\qfac{\alpha_i}}|\{\nil\in\Nil_\alpha : \JF(\nil;\Unit{\id}) = T\}|.
$$
\item\label{whittaker_expansion_dual_coefficient} We have
\begin{gather*}
[\mathbf{x}^\alpha]W_\lambda(\mathbf{x};q) = \frac{q^{\binom{n}{2} - \ncon{\lambda}}(1-q)^{-n+\lambda_1}\prod_{i\ge 1}\qfac{\lambda_i - \lambda_{i+1}}}{\prod_{i\ge 1}\qfac{\alpha_i}}|\{\nil\in\Nil_\alpha : \JF(\nil) = \lambda\}|.
\end{gather*}
\end{enumerate}

\end{thm}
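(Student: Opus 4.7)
The plan is to deduce part \ref{whittaker_expansion_dual_tableau} by a double count, after which part \ref{whittaker_expansion_dual_coefficient} follows by summing over $T\in\SSYT(\lambda,\alpha)$: since the shape of $\JF(\nil;\Unit{\id})$ equals $\JF(\nil)$, the fibers $\{\nil\in\Nil_\alpha:\JF(\nil;\Unit{\id})=T\}$ partition $\{\nil\in\Nil_\alpha:\JF(\nil)=\lambda\}$, and the left-hand side sums to $[\mathbf{x}^\alpha]W_\lambda(\mathbf{x};q)$ by \cref{defn_whittaker}.

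For part \ref{whittaker_expansion_dual_tableau}, I would double count the set
$$
S:=\{(F,\nil):F\in\PFl_\alpha,\ \nil\in\gl_n \text{ nilpotent and strictly compatible with } F,\ \JF(\nil;F)=T\}.
$$
Any $(F,\nil)\in S$ automatically has $\JF(\nil)=\lambda$, the shape of $T$. Counting pairs by first fixing $\nil$ and varying $F$, \cref{whittaker_expansion}\ref{whittaker_expansion_tableau} provides $q^{-\ncon{\lambda}+\ncon{\alpha}}\qwt(T)$ choices of $F$, while \cref{transitive_action} combined with orbit--stabilizer, \cref{jordan_stabilizer}, and \cref{q_enumeration}\ref{q_enumeration_GL} counts the admissible $\nil$ as $|\GL_n|/|\Stab_{\GL_n}(\nil)|$. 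Counting by first fixing $F$ and varying $\nil$, \cref{compatible_matrices} and \cref{compatible_matrices_adjoint} show that for any $g\in\GL_n$ representing $F$, conjugation by $g^{-1}$ is a bijection between the $\nil$ strictly compatible with $F$ satisfying $\JF(\nil;F)=T$ and the $\nil'\in\Nil_\alpha$ satisfying $\JF(\nil';\Unit{\id})=T$. Together with \cref{q_enumeration}\ref{q_enumeration_P} this gives
$$
|S|=|\PFl_\alpha|\cdot|\{\nil\in\Nil_\alpha:\JF(\nil;\Unit{\id})=T\}|.
$$

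Equating the two expressions for $|S|$ and solving reduces the claim to a $q$-arithmetic identity. Substituting $|\GL_m|=q^{-m^2}(1-q)^m\qfac{m}$ into \cref{jordan_stabilizer} and collapsing the exponent via the Abel-summation identity
$$
\sum_{i\ge 1}(\lambda_1+\cdots+\lambda_i)(\lambda_i-\lambda_{i+1})=\sum_{i\ge 1}\lambda_i^{\,2}=2\ncon{\lambda}+n
$$
yields $|\Stab_{\GL_n}(\nil)|=q^{-2\ncon{\lambda}-n}(1-q)^{\lambda_1}\prod_{i\ge 1}\qfac{\lambda_i-\lambda_{i+1}}$. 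Combining this with $|\GL_n|$ and $|\PFl_\alpha|=q^{-e_2(\alpha)}\qbinomsmall{n}{\alpha}$, and applying $e_2(\alpha)+\ncon{\alpha}=\binom{n}{2}$ from \cref{n_formula}, telescopes the $q$-exponent to $\binom{n}{2}-\ncon{\lambda}$ and the power of $(1-q)$ to $-n+\lambda_1$, matching the claimed prefactor.

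The main obstacle is the bookkeeping: the several $q$- and $(1-q)$-exponents arising from $|\GL_n|$, the stabilizer, $|\PFl_\alpha|$, and the factor $q^{\ncon{\lambda}-\ncon{\alpha}}$ in \cref{whittaker_expansion}\ref{whittaker_expansion_tableau} must be aligned and collapsed correctly using the Abel-summation identity above together with \cref{n_formula}, and this is where a miscalculation is most likely. No combinatorial or geometric input beyond the results of \cref{sec_pairs_partial} and \cref{sec_nilpotent} should be required.
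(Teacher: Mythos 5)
Your double count of $S=\{(F,\nil):F\in\PFl_\alpha,\ \nil\text{ nilpotent and strictly compatible with }F,\ \JF(\nil;F)=T\}$ is correct, and the $q$-arithmetic telescoping checks out. The paper takes a different route: it defers this theorem to \cref{sec_polynomiality} and derives part \ref{whittaker_expansion_dual_tableau} from \cref{probabilities_diagonal} for the diagonal matrix $M$ of type $(\alpha,\alpha)$, together with \cref{forward_probabilities_concrete} and $|\Nil_\alpha|=q^{-\binom{n}{2}+\ncon{\alpha}}$; \cref{probabilities_diagonal} in turn computes $\prdual{M}{T,T}$ directly from \cref{whittaker_expansion}\ref{whittaker_expansion_tableau} and then converts it to $\pr{M}{T,T}$ via the reversibility equation \eqref{reversibility_equation}. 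Unwound, the paper is double-counting triples $(F,F',\nil)$, while you count pairs $(F,\nil)$. The underlying computational inputs are the same in both approaches — \cref{whittaker_expansion}\ref{whittaker_expansion_tableau}, the stabilizer count from \cref{jordan_stabilizer} and \cref{q_enumeration}\ref{q_enumeration_GL} (packaged in the paper as \cref{jordan_form_count}), \cref{q_enumeration}\ref{q_enumeration_P}, \cref{compatible_matrices}, \cref{compatible_matrices_adjoint}, and \cref{n_formula}. Your version is shorter, logically self-contained within \cref{sec_jordan_background,sec_pairs_partial,sec_nilpotent}, and makes the ``fix $\nil$, count $F$'' versus ``fix $F$, count $\nil$'' duality between \cref{whittaker_expansion} and \cref{whittaker_expansion_dual} transparent; the paper's placement buys coherence with the probabilistic-bijection framework, where the result drops out essentially for free once \cref{reversibility} is in hand.
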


\begin{eg}\label{eg_whittaker_expansion_dual}
Set $\field := \qinvF$, and let $\lambda := (4,2)$ and $\alpha := (2,1,3)$, as in \cref{eg_whittaker_expansion}. Let us use \cref{whittaker_expansion_dual}\ref{whittaker_expansion_dual_coefficient} to find the coefficient of $\mathbf{x}^\alpha = x_1^2x_2x_3^3$ in $W_\lambda(\mathbf{x};q)$. While this calculation is ultimately equivalent to the one in \cref{eg_whittaker_expansion} by the orbit-stabilizer theorem, it proceeds by a rather different route.

We can write an arbitrary element $\nil\in\Nil_\alpha$ as
$$
\nil = \begin{bmatrix}
0 & 0 & a & b & c & d \\
0 & 0 & e & f & g & h \\
0 & 0 & 0 & i & j & k \\
0 & 0 & 0 & 0 & 0 & 0 \\
0 & 0 & 0 & 0 & 0 & 0 \\
0 & 0 & 0 & 0 & 0 & 0
\end{bmatrix}, \quad \text{ where } a,b,c,d,e,f,g,h,i,j,k\in\qinvF.
$$
We enumerate all $\nil$ such that $\JF(\nil) = \lambda$. By \cref{matrix_to_partition}, this is equivalent to $\rank(\nil) = 2$ and $\nil^2 = 0$. Note that
$$
\nil^2 = \begin{bmatrix}
0 & 0 & 0 & ai & aj & ak \\
0 & 0 & 0 & ei & ej & ek \\
0 & 0 & 0 & 0 & 0 & 0 \\
0 & 0 & 0 & 0 & 0 & 0 \\
0 & 0 & 0 & 0 & 0 & 0 \\
0 & 0 & 0 & 0 & 0 & 0
\end{bmatrix}.
$$
We consider two cases, depending on whether $a$ and $e$ are both zero.

{\itshape Case 1: $a\neq 0$ or $e\neq 0$.} In this case, since $\nil^2 = 0$, we have $i = j = k = 0$. Then $\scalebox{0.8}{$\begin{bmatrix}a & b & c & d \\ e & f & g & h\end{bmatrix}$}$ is an arbitrary $2\times 4$ matrix of rank $2$ whose first column is nonzero. The number of such matrices is the number of $2\times 4$ matrices of rank $2$ minus the number of $2\times 3$ matrices of rank $2$, which by \cref{q_enumeration}\ref{q_enumeration_matrices} is
$$
q^{-8}(1-q)^2\qn{4}\qn{3} - q^{-6}(1-q)^2\qn{3}\qn{2} = q^{-8}(1-q)^2(1 + q)(1 + q + q^2).
$$

{\itshape Case 2: $a=e=0$.} In this case, we have $\nil^2 = 0$, and so $\scalebox{0.8}{$\begin{bmatrix}b & c & d \\ f & g & h \\ i & j & k\end{bmatrix}$}$ is an arbitrary $3\times 3$ matrix of rank $2$. By \cref{q_enumeration}\ref{q_enumeration_matrices}, the number of such matrices is
$$
q^{-8}(1-q)^2\qn{3}\qn{3}\qn{2} = q^{-8}(1-q)^2(1+q)(1+q+q^2)^2.
$$

In conclusion,
$$
|\{\nil\in\Nil_\alpha : \JF(\nil) = \lambda\}| = q^{-8}(1-q)^2(1 + q)(1 + q + q^2)(2 + q + q^2).
$$
Multiplying by
$$
\frac{q^{\binom{6}{2} - \ncon{\lambda}}(1-q)^{-6+\lambda_1}\prod_{i\ge 1}\qfac{\lambda_i - \lambda_{i+1}}}{\prod_{i\ge 1}\qfac{\alpha_i}} = \frac{q^8(1-q)^{-2}}{1 + q + q^2},
$$
we obtain
\begin{gather*}
[x_1^2x_2x_3^3]W_\lambda(\mathbf{x};q) = (1 + q)(2 + q + q^2) = 2 + 3q + 2q^2 + q^3,
\end{gather*}
in agreement with \cref{eg_whittaker_expansion,eg_whittaker}.
\end{eg}

\begin{rmk}\label{whittaker_dual_remark}
We mention that in the case $\alpha := (1, \dots, 1)$, whence $\Nil_\alpha = \Nil_n$ is the set of $n\times n$ strictly upper-triangular matrices, \cref{whittaker_expansion_dual}\ref{whittaker_expansion_dual_tableau} recovers a known result in the literature. Namely, 
set $\field := \qinvF$. Then \cref{whittaker_expansion_dual}\ref{whittaker_expansion_dual_tableau} concerns standard tableaux $T$ of shape $\lambda\vdash n$, and states that
$$
|\{\nil\in\Nil_n : \JF(\nil;\Unit{\id}) = T\}| = q^{-\binom{n}{2} + \ncon{\lambda}}(1-q)^n\prod_{i=1}^n\qn{T^{(i-1)}_{r_i-1} - T^{(i-1)}_{r_i}},
$$
where we have used the second description of $\qwt(T)$ from \cref{weight_standard} (recall that $r_i$ indexes the row of $T$ containing the entry $i$). We can rewrite this as
$$
|\{\nil\in\Nil_n : \JF(\nil;\Unit{\id}) = T\}| = q^{-\binom{n}{2}}\prod_{i=1}^n(q^{T^{(i-1)}_{r_i}} - q^{T^{(i-1)}_{r_i-1}}),
$$
where when $r_i = 1$, we take $q^{T^{(i-1)}_{r_i-1}}$ to be zero. This formula is implicit in work of Kirillov \cite[Theorem 2.3.2]{kirillov95}, and was stated more explicitly by Borodin \cite[Section 2]{borodin97} (cf.\ \cite{borodin95}). It was connected by Fulman \cite{fulman00} to the Hall--Littlewood functions, and its combinatorics was further studied by Yip \cite{yip18}.
\end{rmk}

\section{The \texorpdfstring{$q$}{q}-Burge correspondence}\label{sec_q_burge}

\noindent In this section we give a probabilistically bijective proof of the Cauchy identity for $q$-Whittaker functions (\cref{q-cauchy_intro}), using the algebra of nilpotent endomorphisms acting on partial flags. We begin by reviewing the notion of a probabilistic bijection in \cref{sec_probabilistic_bijections}. We define our probabilistic bijection in \cref{sec_probabilities}, and show that it proves the Cauchy identity in \cref{sec_double_counting}.

\subsection{Background on probabilistic bijections}\label{sec_probabilistic_bijections}
The following probabilistic generalization of the notion of a bijection, called a {\itshape bijectivization}, was employed by Matveev and Petrov \cite{matveev_petrov17} and by Bufetov and Matveev \cite{bufetov_matveev18}, and introduced more formally by Bufetov and Petrov \cite[Section 2]{bufetov_petrov19}. We prefer to call it a {\itshape probabilistic bijection}, following Aigner and Frieden \cite[Section 4.1]{aigner_frieden}.
\begin{defn}[{\cite[Definition 2.1]{bufetov_petrov19}, \cite[Definition 4.1.1]{aigner_frieden}}]\label{defn_probabilistic_bijection}
Let $K$ be a field (which we will later take to be $\mathbb{Q}$ or $\mathbb{Q}(q)$). A {\itshape weighted set} is a pair $(A,\wtf)$, where $A$ is a set and $\wtf$ is a function from $A$ to $K$. Let $(A,\wtf)$ and $(B,\wtfdual)$ be weighted finite sets, and let $p : A\times B\to K$ and $\tilde{p} : B\times A \to K$ be functions. We say that $(p,\tilde{p})$ give a {\itshape probabilistic bijection} from $(A,\wtf)$ to $(B,\wtfdual)$ if the following three conditions are satisfied:
\begin{enumerate}[label={(B\arabic*)}, leftmargin=42pt, itemsep=2pt]
\item\label{defn_probabilistic_bijection_forward} for all $a\in A$, we have $\sum_{b\in B}p(a,b) = 1$;
\item\label{defn_probabilistic_bijection_backward} for all $b\in B$, we have $\sum_{a\in A}\tilde{p}(b,a) = 1$; and
\item\label{defn_probabilistic_bijection_reversibility} for all $a\in A$ and $b\in B$, we have $\wtf(a)p(a,b) = \wtfdual(b)\tilde{p}(b,a)$.
\end{enumerate}
We call $p(a,b)$ the {\itshape forward probabilities}, $\tilde{p}(b,a)$ the {\itshape backward probabilities}, and condition \ref{defn_probabilistic_bijection_reversibility} {\itshape reversibility}. (While we do not require $p$ and $\tilde{p}$ to take real values in the interval $[0,1]$, we find it convenient to think of them as defining probabilities.) By summing the reversibility condition over all $a\in A$ and $b\in B$, we obtain
\begin{align}\label{probabilistic_bijection_sum}
\sum_{a\in A}\wtf(a) = \sum_{b\in B}\wtfdual(b).
\end{align}
Note that when $\wtf$ and $\wtfdual$ are identically $1$, and both $p$ and $\tilde{p}$ are required to take values in $\{0,1\}$, we recover the notion of a bijection.
\end{defn}

\begin{rmk}\label{backward_probabilities_remark}
While we will find it convenient to work with both the forward probabilities $p$ and the backward probabilities $\tilde{p}$, we point out that there is an alternative way to define the notion of a probabilistic bijection only in terms of $p$ (without $\tilde{p}$). Namely, we adopt the setup of \cref{defn_probabilistic_bijection}, and assume that $\wtfdual$ is never zero. Then requiring conditions \ref{defn_probabilistic_bijection_forward}, \ref{defn_probabilistic_bijection_backward}, and \ref{defn_probabilistic_bijection_reversibility} is equivalent to requiring condition \ref{defn_probabilistic_bijection_forward} and the following condition \ref{defn_probabilistic_bijection_extra}:
\begin{enumerate}[label={(B\arabic*)}, leftmargin=42pt, itemsep=2pt]\setcounter{enumi}{3}
\item\label{defn_probabilistic_bijection_extra} for all $b\in B$, we have $\sum_{a\in A}\wtf(a)p(a,b) = \wtfdual(b)$.
\end{enumerate}
To see this, note that conditions \ref{defn_probabilistic_bijection_backward} and \ref{defn_probabilistic_bijection_reversibility} imply condition \ref{defn_probabilistic_bijection_extra}, by summing over all $a\in A$. Conversely, suppose that conditions \ref{defn_probabilistic_bijection_forward} and \ref{defn_probabilistic_bijection_extra} hold. We define $\tilde{p} : B\times A\to K$ so that condition \ref{defn_probabilistic_bijection_reversibility} holds, i.e.,
$$
\tilde{p}(b,a) := \frac{\wtf(a)p(a,b)}{\wtfdual(b)} \quad \text{ for all } a\in A \text{ and } b\in B.
$$
Then condition \ref{defn_probabilistic_bijection_extra} implies condition \ref{defn_probabilistic_bijection_backward}.
\end{rmk}

\subsection{The \texorpdfstring{$q$}{q}-Burge probabilities}\label{sec_probabilities}
We introduce the forward and backward probabilities of the $q$-Burge correspondence.
\begin{defn}\label{defn_triples}
Let $\alpha,\beta\models n$. We let $\triples{\alpha,\beta}$ denote the set of triples $(F,F',\nil)$, where $F\in\PFl_\alpha$, $F'\in\PFl_\beta$, and $\nil\in\gl_n$ is a nilpotent endomorphism strictly compatible with both $F$ and $F'$. Given $M\in\Mats{\alpha}{\beta}$, we let $\triples{M}$ denote the subset of $\triples{\alpha,\beta}$ of all $(F,F',\nil)$ such that $F\relpos{M}F'$.

Now let
$$
\Tabs{\alpha}{\beta} := \bigsqcup_{\lambda\vdash n}\SSYT(\lambda,\alpha)\times\SSYT(\lambda,\beta)
$$
denote the set of all pairs of semistandard tableaux of the same shape with contents $\alpha$ and $\beta$, respectively. Given $(T,T')\in\Tabs{\alpha}{\beta}$, we let $\triples[T,T']{\alpha,\beta}$ and $\triples[T,T']{M}$ denote the subsets of $\triples{\alpha,\beta}$ and $\triples{M}$, respectively, of all $(F,F',\nil)$ such that $\JF(\nil;F) = T$ and $\JF(\nil;F') = T'$.
\end{defn}

\begin{eg}\label{eg_triples}
Let $M := \scalebox{0.8}{$\begin{bmatrix}0 & 1 \\ 1 & 0\end{bmatrix}$} \in \Mats{\alpha}{\beta}$, where $\alpha, \beta := (1,1)$. Then $\triples[T,T']{M}$ equals
$$
\{(F,F',\nil) : F,F'\in\Fl_2, F \neq F', \text{ and } \nil = 0\in\gl_2\}
$$
if $T = T' = \ytableausmall{1 & 2}$, and is empty otherwise.
\end{eg}

\begin{defn}\label{defn_weights}
Let $\alpha = (\alpha_1, \dots, \alpha_k), \beta = (\beta_1, \dots, \beta_l) \models n$. We define weight functions $\wtf$ on $\Mats{\alpha}{\beta}$ and $\wtfdual$ on $\Tabs{\alpha}{\beta}$ over the field $\mathbb{Q}(q)$ as follows:
$$
\wtf(M) := \frac{(1-q)^{-n}}{\prod_{1 \le i \le k,\hspace*{1pt} 1 \le j \le l}\qfac{M_{i,j}}} \quad \text{ and } \quad \wtfdual(T,T') := \frac{(1-q)^{-\lambda_1}}{\prod_{i\ge 1}\qfac{\lambda_i - \lambda_{i+1}}}\qwt(T)\qwt(T').
$$
We will also regard these weight functions as being defined over $\mathbb{Q}$, by specializing $q$ to any element of $\mathbb{Q}$ other than $\pm 1$.
\end{defn}

The following result shows that the weights $\wtf(M)$ (which appear explicitly in \eqref{q-cauchy_coefficient_intro}) are obtained from the Cauchy identity for $q$-Whittaker functions \eqref{q-cauchy_equation_intro}:
\begin{lem}[{cf.\ \cite[Section 3.2]{aigner_frieden}}]\label{lhs_equivalence}
For all $\alpha = (\alpha_1, \dots, \alpha_k), \beta = (\beta_1, \dots, \beta_l)\models n$, we have
\begin{gather*}
[\mathbf{x}^\alpha\mathbf{y}^\beta]\prod_{\substack{i,j \ge 1,\\ d \ge 0\hphantom{,}}}\frac{1}{1-x_iy_jq^d} = \sum_{M\in\Mats{\alpha}{\beta}}\frac{(1-q)^{-n}}{\prod_{1 \le i \le k,\hspace*{1pt} 1 \le j \le l}\qfac{M_{i,j}}}.
\end{gather*}

\end{lem}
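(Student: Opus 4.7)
The plan is to expand the left-hand side using the standard $q$-exponential identity
$$
\prod_{d \ge 0}\frac{1}{1-zq^d} = \sum_{m \ge 0}\frac{z^m}{(1-q)^m \qfac{m}},
$$
which is one form of the $q$-binomial theorem, since $(q;q)_m := \prod_{i=1}^m(1-q^i) = (1-q)^m\qfac{m}$. Applying this with $z = x_iy_j$ for each pair $(i,j)$ yields
$$
\prod_{d\ge 0}\frac{1}{1-x_iy_jq^d} = \sum_{M_{i,j}\ge 0}\frac{x_i^{M_{i,j}}y_j^{M_{i,j}}}{(1-q)^{M_{i,j}}\qfac{M_{i,j}}}.
$$

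Next, I would multiply these series over all $i,j \ge 1$ and expand the product of sums as a single sum indexed by infinite matrices $M = (M_{i,j})_{i,j\ge 1}$ of nonnegative integers with finitely many nonzero entries. This gives
$$
\prod_{i,j \ge 1,\; d\ge 0}\frac{1}{1-x_iy_jq^d} = \sum_M \frac{(1-q)^{-\sum_{i,j}M_{i,j}}}{\prod_{i,j\ge 1}\qfac{M_{i,j}}}\prod_{i,j\ge 1}x_i^{M_{i,j}}y_j^{M_{i,j}}.
$$

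Finally, to extract $[\mathbf{x}^\alpha\mathbf{y}^\beta]$, I would note that the monomial coming from $M$ contributes to the coefficient precisely when $\sum_j M_{i,j} = \alpha_i$ for every $i\ge 1$ and $\sum_i M_{i,j} = \beta_j$ for every $j\ge 1$. Since $\alpha_i=0$ for $i>k$ and $\beta_j=0$ for $j>l$, this forces $M_{i,j}=0$ outside the rectangle $[k]\times[l]$, so $M$ is identified with an element of $\Mats{\alpha}{\beta}$; moreover $\sum_{i,j}M_{i,j} = |\alpha| = n$. Substituting these constraints reproduces the right-hand side of the claimed identity.

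There is no real obstacle here: the proof is a direct computation using a single standard $q$-series identity, and the only thing requiring care is the indexing convention (infinite matrices with finite support versus finite matrices in $\Mats{\alpha}{\beta}$), together with the observation that the row- and column-sum conditions automatically truncate the support of $M$.
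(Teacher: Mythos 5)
Your proof is correct and follows essentially the same route as the paper: both apply the $q$-exponential identity $\prod_{d\ge 0}(1-zq^d)^{-1} = \sum_{m\ge 0} z^m/\bigl((1-q)^m\qfac{m}\bigr)$ termwise with $z = x_iy_j$ and then extract the coefficient, identifying the finitely-supported infinite matrix with an element of $\Mats{\alpha}{\beta}$. The only difference is that the paper supplies a short proof of the $q$-exponential identity (via a partition-counting argument), whereas you invoke it as standard.
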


\begin{proof}
First we claim that
\begin{align}\label{lhs_equivalence_claim}
\prod_{d\ge 0}\frac{1}{1-zq^d} = \sum_{m\ge 0}\frac{z^m}{(1-q)^m\qfac{m}}.
\end{align}
To see this, let $m\in\mathbb{N}$, and note that
$$
[z^m]\prod_{d\ge 0}\frac{1}{1-zq^d} = [z^m]\prod_{d\ge 0}\hspace*{2pt}\sum_{a_d\ge 0}(zq^d)^{a_d} = \sum_{\lambda = (\lambda_1, \dots, \lambda_m)}q^{|\lambda|},
$$
where $\lambda$ denotes the partition with at most $m$ nonzero parts with exactly $a_i$ parts equal to $i$, for all $i\ge 1$. Letting $\mu$ denote $\transpose{\lambda}$, we have
$$
\sum_{\lambda = (\lambda_1, \dots, \lambda_m)}q^{|\lambda|} = \sum_{\substack{\mu, \\ \mu_1 \le m}}q^{|\mu|} = \prod_{i=1}^m\frac{1}{1-q^i} = \frac{1}{(1-q)^m\qfac{m}}.
$$
This proves \eqref{lhs_equivalence_claim}. Applying \eqref{lhs_equivalence_claim} with $z = x_iy_j$, we obtain
\begin{gather*}
[\mathbf{x}^\alpha\mathbf{y}^\beta]\prod_{\substack{i,j \ge 1,\\ d \ge 0\hphantom{,}}}\frac{1}{1-x_iy_jq^d} = [\mathbf{x}^\alpha\mathbf{y}^\beta]\prod_{1 \le i \le k,\hspace*{1pt} 1 \le j \le l}\hspace*{2pt}\sum_{M_{i,j}\ge 0}\frac{(x_iy_j)^{M_{i,j}}}{(1-q)^{M_{i,j}}\qfac{M_{i,j}}} \hspace*{108pt}\\
\hspace*{48pt}= \sum_{M\in\Mats{\alpha}{\beta}}\hspace*{2pt}\prod_{1 \le i \le k,\hspace*{1pt} 1 \le j \le l}\frac{1}{(1-q)^{M_{i,j}}\qfac{M_{i,j}}} = \sum_{M\in\Mats{\alpha}{\beta}}\frac{(1-q)^{-n}}{\prod_{1 \le i \le k,\hspace*{1pt} 1 \le j \le l}\qfac{M_{i,j}}}.\qedhere
\end{gather*}

\end{proof}

\begin{defn}\label{defn_probabilities}
Let $1/q$ be a prime power, and set $\field := \qinvF$. For all $M\in\Mats{\alpha}{\beta}$ and $(T,T')\in\Tabs{\alpha}{\beta}$, we define (as functions of $q$)
$$
\pr{M}{T,T'} := \frac{|\triples[T,T']{M}\hspace*{-1pt}|}{|\triples{M}|} \quad \text{ and } \quad \prdual{M}{T,T'} := \frac{|\triples[T,T']{M}\hspace*{-1pt}|}{|\triples[T,T']{\alpha,\beta}\hspace*{-1pt}|},
$$
which we call the {\itshape forward probabilities} and {\itshape backward probabilities}, respectively.
\end{defn}

Our goal is to show that $(\prnoarg,\prdualnoarg)$ gives a probabilistic bijection from $(\Mats{\alpha}{\beta},\wtf)$ to $(\Tabs{\alpha}{\beta},\wtfdual)$ whenever $1/q$ is a prime power (cf.\ \cref{remark_infinitely_many}). Note that by definition, conditions \ref{defn_probabilistic_bijection_forward} and \ref{defn_probabilistic_bijection_backward} of \cref{defn_probabilistic_bijection} hold. Therefore it only remains to show that condition \ref{defn_probabilistic_bijection_reversibility} holds, which we will do in \cref{sec_double_counting} by enumerating $\triples[T,T']{\alpha,\beta}$ in two ways. In the remainder of this subsection, we give alternative characterizations of the forward and backward probabilities, and consider some examples.

\begin{prop}\label{probabilities_interpretation}
Set $\field := \qinvF$. Let $\alpha,\beta\models n$, let $M\in\Mats{\alpha}{\beta}$, and let $(T,T')\in\Tabs{\alpha}{\beta}$.
\begin{enumerate}[label=(\roman*), leftmargin=*]
\item\label{probabilities_interpretation_forward} Let $F\in\PFl_\alpha$ and $F'\in\PFl_\beta$ with $F\relpos{M}F'$. Then $\pr{M}{T,T'}$ equals the probability that for a uniformly random nilpotent endomorphism $\nil$ strictly compatible with both $F$ and $F'$, we have
$$
\JF(\nil;F) = T \quad \text{ and } \quad \JF(\nil;F') = T'.
$$
\item\label{probabilities_interpretation_backward} Let $\nil\in\gl_n$ be nilpotent with $\JF(\nil) = \lambda$. Then $\prdual{M}{T,T'}$ equals the probability that for a uniformly random pair of partial flags $(F,F')\in\PFl_\alpha\times\PFl_\beta$ which are both strictly compatible with $\nil$ and satisfy $\JF(\nil;F) = T$ and $\JF(\nil;F') = T'$, we have $F\relpos{M}F'$.
\end{enumerate}

\end{prop}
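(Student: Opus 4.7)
The plan is to derive both parts from the $\GL_n$-equivariance of the constructions together with the transitivity statements already established, namely \cref{relative_position_invariance}\ref{relative_position_invariance_action} (transitivity on pairs of flags of a given relative position) and \cref{transitive_action} (transitivity on nilpotent endomorphisms of a given Jordan type).

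For part \ref{probabilities_interpretation_forward}, I would first introduce the forgetful map $\pi : \triples{M} \to \{(G,G') \in \PFl_\alpha \times \PFl_\beta : G \relpos{M} G'\}$, $(G,G',\nil) \mapsto (G,G')$. By \cref{compatible_matrices_adjoint}, this map is $\GL_n$-equivariant if we let $\GL_n$ act diagonally on the codomain and by $g\cdot(G,G',\nil) = (gG, gG', g\nil g^{-1})$ on the domain; \cref{relative_position_invariance}\ref{relative_position_invariance_action} then supplies transitivity on the codomain, so every fiber of $\pi$ has the same cardinality. The fiber over the prescribed $(F,F')$ is exactly the set of nilpotent endomorphisms strictly compatible with both $F$ and $F'$. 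The same argument applies to the restriction of $\pi$ to $\triples[T,T']{M}$, once one observes via \cref{compatible_matrices_adjoint} that the conditions $\JF(\nil;G) = T$ and $\JF(\nil;G') = T'$ are preserved under the $\GL_n$-action. Taking the ratio of the two fiber cardinalities over the fixed $(F,F')$ then yields $\pr{M}{T,T'} = |\triples[T,T']{M}|/|\triples{M}|$ as precisely the uniform probability claimed.

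For part \ref{probabilities_interpretation_backward}, I would run the dual argument with the other forgetful map $\rho : \triples[T,T']{\alpha,\beta} \to \{\tilde\nil \in \gl_n : \tilde\nil \text{ nilpotent},\ \JF(\tilde\nil) = \lambda\}$, $(G,G',\tilde\nil) \mapsto \tilde\nil$, where $\lambda$ is the common shape of $T$ and $T'$ (this $\lambda$ equals $\JF(\tilde\nil)$ since $G_k = \field^n$ forces $\tilde\nil|_{G_k} = \tilde\nil$). The action $g\cdot(G,G',\tilde\nil) = (gG, gG', g\tilde\nil g^{-1})$ renders $\rho$ equivariant, and \cref{transitive_action} gives transitivity on the codomain, hence equal fiber cardinalities. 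The restriction of $\rho$ to $\triples[T,T']{M}$ is likewise equivariant because the relative-position condition $G\relpos{M}G'$ is $\GL_n$-invariant by \cref{relative_position_invariance}\ref{relative_position_invariance_action}. Dividing the fiber sizes over the prescribed $\nil$ then produces $\prdual{M}{T,T'}$ as the stated probability.

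No substantive obstacle is anticipated beyond bookkeeping of the three $\GL_n$-invariant data---compatibility, Jordan data of the restrictions, and relative position---which are simultaneously invariant by \cref{compatible_matrices_adjoint,relative_position_invariance}; the two transitivity statements then reduce the proof to an orbit-stabilizer style computation in both directions.
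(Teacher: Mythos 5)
Your proof is correct and takes essentially the same approach as the paper, which cites exactly the three lemmas you invoke (\cref{compatible_matrices_adjoint}, \cref{relative_position_invariance}\ref{relative_position_invariance_action}, and \cref{transitive_action}) and leaves the fiber-counting argument implicit. You have simply filled in the equivariance and fiber-cardinality bookkeeping that the paper leaves to the reader.
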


\begin{proof}
This follows from \cref{transitive_action,compatible_matrices_adjoint,relative_position_invariance}\ref{relative_position_invariance_action}.
\end{proof}

We give an example of calculating the backward probabilities. We will give an example of calculating the forward probabilities in \cref{eg_probabilities_forward}, after establishing \cref{forward_probabilities_concrete}.
\begin{eg}\label{eg_probabilities_backward}
Set $\field := \qinvF$. We use \cref{probabilities_interpretation}\ref{probabilities_interpretation_backward} to calculate the backward probabilities $\prdual{\cdot}{T,T'}$, where
$$
T := \hspace*{4pt}\begin{ytableau}
1 & 1 & 2 & 2 \\
2
\end{ytableau}\hspace*{4pt} \quad \text{ and } \quad T' := \hspace*{4pt}\begin{ytableau}
1 & 1 & 1 & 2 \\
2
\end{ytableau}\hspace*{4pt}.
$$
Note that $T$ and $T'$ both have shape $\lambda := (4,1)$, and they have contents $\alpha := (2,3)$ and $\beta := (3,2)$, respectively. We take
$$
\nil := \begin{bmatrix}
0 & 0 & 0 & 0 & 1 \\
0 & 0 & 0 & 0 & 0 \\
0 & 0 & 0 & 0 & 0 \\
0 & 0 & 0 & 0 & 0 \\
0 & 0 & 0 & 0 & 0
\end{bmatrix},
$$
so that $\nil$ is nilpotent with $\JF(\nil) = \lambda$.

Now let $F\in\PFl_\alpha$ and $F'\in\PFl_\beta$ denote partial flags which are both strictly compatible with $\nil$, and satisfy $\JF(\nil;F) = T$ and $\JF(\nil;F') = T'$. That is, $F_1\in\Gr_{2,5}$ and $F'_1\in\Gr_{3,5}$ such that
$$
\spn{\unit{1}} \subseteq F_1, F'_1 \subseteq \spn{\unit{1},\unit{2},\unit{3},\unit{4}}.
$$
Let $M\in\Mats{\alpha}{\beta}$ denote the relative position of $(F,F')$. Note that $M$ is uniquely determined by its top-left entry:
$$
M = \begin{bmatrix}
M_{1,1} & 2 - M_{1,1} \\[2pt]
3 - M_{1,1} & M_{1,1}
\end{bmatrix}.
$$
By \cref{relative_position_properties}\ref{relative_position_properties3}, we have $M_{1,1} = \dim(F_1 \cap F'_1)$. In order to determine $M_{1,1}$, we consider two cases, depending on whether $F_1/\spn{\unit{1}} \subseteq F'_1/\spn{\unit{1}}$. 

{\itshape Case 1: $F_1/\spn{\unit{1}} \subseteq F'_1/\spn{\unit{1}}$.} By \cref{q_enumeration}\ref{q_enumeration_Gr}, this occurs with probability
$$
\frac{|\Gr_{1,2}|}{|\Gr_{1,3}|} = \frac{q^{-1}\qbinomsmall{2}{1}}{q^{-2}\qbinomsmall{3}{1}} = \frac{q + q^2}{1 + q + q^2}.
$$
In this case, we have $M_{1,1} = 2$, so
$$
\prdualdisplay{\scalebox{0.8}{$\begin{bmatrix}2 & 0 \\ 1 & 2\end{bmatrix}$}}{\ytableausmall{1 & 1 & 2 & 2 \\ 2},\hspace*{2pt}\ytableausmall{1 & 1 & 1 & 2 \\ 2}} = q(1+q)(1+q+q^2)^{-1}.
$$

{\itshape Case 2: $F_1/\spn{\unit{1}} \nsubseteq F'_1/\spn{\unit{1}}$.} In this case, we have $M_{1,1} = 1$, so
$$
\prdualdisplay{\scalebox{0.8}{$\begin{bmatrix}1 & 1 \\ 2 & 1\end{bmatrix}$}}{\ytableausmall{1 & 1 & 2 & 2 \\ 2},\hspace*{2pt}\ytableausmall{1 & 1 & 1 & 2 \\ 2}} = 1 - q(1+q)(1+q+q^2)^{-1} = (1 + q + q^2)^{-1}.
$$

The remaining backward probability of the form $\prdual{\cdot}{T,T'}$ is
\begin{gather*}
\prdualdisplay{\scalebox{0.8}{$\begin{bmatrix}0 & 2 \\ 3 & 0\end{bmatrix}$}}{\ytableausmall{1 & 1 & 2 & 2 \\ 2},\hspace*{2pt}\ytableausmall{1 & 1 & 1 & 2 \\ 2}} = 0.\qedhere
\end{gather*}

\end{eg}

\begin{rmk}\label{fixed_flag_remark}
\cref{probabilities_interpretation}\ref{probabilities_interpretation_backward} gives an interpretation for the backward probabilities $\prdualnoarg$, by fixing $\nil$ and considering $F$ and $F'$. It is natural to wonder whether we can also fix $F$; we give an example to show that this is not possible. Specifically, we show that for fixed $M$, $T$, $T'$, and $\nil$, and for a given $F\in\PFl_\alpha$ strictly compatible with $\nil$ satisfying $\JF(\nil;F) = T$, whether or not there exists $F'\in\PFl_\beta$ strictly compatible with $\nil$ satisfying $F\relpos{M}F'$ and $\JF(\nil;F') = T'$ can depend on the choice of $F$. A consequence is that $\Stab_{\GL_n}\hspace*{-1pt}(\nil)\subseteq\GL_n$ does not necessarily act transitively on the set of $F\in\PFl_\alpha$ such that $F$ is strictly compatible with $\nil$ and $\JF(\nil;F) = T$.

For our example, we let $\lambda := (2,1)$ and $\alpha,\beta := (1,1,1)$, and take
$$
M := \begin{bmatrix}
1 & 0 & 0 \\
0 & 0 & 1 \\
0 & 1 & 0
\end{bmatrix},\quad T := \hspace*{4pt}\begin{ytableau}
1 & 2 \\
3
\end{ytableau}\hspace*{4pt},\quad T' := \hspace*{4pt}\begin{ytableau}
1 & 3 \\
2
\end{ytableau}\hspace*{4pt},\quad \text{ and } \quad \nil := \begin{bmatrix}
0 & 0 & 1 \\
0 & 0 & 0 \\
0 & 0 & 0
\end{bmatrix}.
$$
Then a complete flag $F\in\Fl_3$ is strictly compatible with $\nil$ and satisfies $\JF(\nil;F) = T$ if and only if $F_2 = \spn{\unit{1},\unit{2}}$. Similarly, a complete flag $F'\in\Fl_3$ is strictly compatible with $\nil$ and satisfies $\JF(\nil;F') = T'$ if and only if $F'_1 = \spn{\unit{1}}$ and $F'_2 = \spn{\unit{1}, a\unit{2} + \unit{3}}$ for some $a\in\field$. If $F_1 = \spn{\unit{1}}$, then we have $F\relpos{M}F'$ for all such $F'$. However, if $F_1\neq\spn{\unit{1}}$, then there does not exist such an $F'$ with $F\relpos{M}F'$.
\end{rmk}

We now give a more concrete description of the forward probabilities, which we will also need to prove our probabilistic bijection.
\begin{defn}\label{defn_inversion_matrices}
Let $M\in\Mats{\alpha}{\beta}$. We define
\begin{gather*}
\Nil_M := \Nil_\alpha\cap\permdot{M}\Nil_\beta\permdot{M}^{-1},
\end{gather*}
where we recall that $\Nil_\alpha$ was defined in \cref{defn_lie}, and $\permdot{M}$ was defined in \cref{defn_canonical_perm}.
\end{defn}

\begin{eg}\label{eg_inversion_matrices}
Let $M := \scalebox{0.8}{$\begin{bmatrix}1 & 1 \\ 2 & 1\end{bmatrix}$} \in \Mats{\alpha}{\beta}$, where $\alpha := (2,3)$ and $\beta := (3,2)$. Recall from \cref{eg_canonical_perm} that $\perm{M} = 13425$. We calculate that $\permdot{M}\Nil_\beta\permdot{M}^{-1}$ equals
$$
\left\{\begin{bmatrix}
1 & 0 & 0 & 0 & 0 \\
0 & 0 & 0 & 1 & 0 \\
0 & 1 & 0 & 0 & 0 \\
0 & 0 & 1 & 0 & 0 \\
0 & 0 & 0 & 0 & 1
\end{bmatrix} \begin{bmatrix}
0 & 0 & 0 & \ast & \ast \\
0 & 0 & 0 & \ast & \ast \\
0 & 0 & 0 & \ast & \ast \\
0 & 0 & 0 & 0 & 0 \\
0 & 0 & 0 & 0 & 0
\end{bmatrix} \begin{bmatrix}
1 & 0 & 0 & 0 & 0 \\
0 & 0 & 0 & 1 & 0 \\
0 & 1 & 0 & 0 & 0 \\
0 & 0 & 1 & 0 & 0 \\
0 & 0 & 0 & 0 & 1
\end{bmatrix}^{-1}\right\} = \left\{\begin{bmatrix}
0 & \ast & 0 & 0 & \ast \\
0 & 0 & 0 & 0 & 0 \\
0 & \ast & 0 & 0 & \ast \\
0 & \ast & 0 & 0 & \ast \\
0 & 0 & 0 & 0 & 0
\end{bmatrix}\right\},
$$
and so
$$
\Nil_M = \left\{\begin{bmatrix}
0 & 0 & \ast & \ast & \ast \\
0 & 0 & \ast & \ast & \ast \\
0 & 0 & 0 & 0 & 0 \\
0 & 0 & 0 & 0 & 0 \\
0 & 0 & 0 & 0 & 0
\end{bmatrix}\right\} \cap \left\{\begin{bmatrix}
0 & \ast & 0 & 0 & \ast \\
0 & 0 & 0 & 0 & 0 \\
0 & \ast & 0 & 0 & \ast \\
0 & \ast & 0 & 0 & \ast \\
0 & 0 & 0 & 0 & 0
\end{bmatrix}\right\} = \left\{\begin{bmatrix}
0 & 0 & 0 & 0 & \ast \\
0 & 0 & 0 & 0 & 0 \\
0 & 0 & 0 & 0 & 0 \\
0 & 0 & 0 & 0 & 0 \\
0 & 0 & 0 & 0 & 0
\end{bmatrix}\right\},
$$
where $\ast$ denotes an arbitrary element of $\field$.
\end{eg}

\begin{prop}\label{compatible_matrices_pair}
Let $\alpha,\beta\models n$, and let $M\in\mathbb{N}^{k\times l}$ have type $(\alpha,\beta)$.
\begin{enumerate}[label=(\roman*), leftmargin=*]
\item\label{compatible_matrices_pair_enumeration} Let $\nil\in\gl_n$. Then $\nil\in\Nil_M$ if and only if $\nil_{r,s} = 0$ for all $1 \le r,s \le n$ satisfying at least one of the following two conditions:
\begin{itemize}[leftmargin=24pt, itemsep=2pt]
\item if $1 \le i,i' \le k$ are such that $r\in\block{\alpha}{i}$ and $s\in\block{\alpha}{i'}$, then $i \ge i'$; or
\item if $1 \le j,j' \le l$ are such that $\perm{M}^{-1}(r)\in\block{\beta}{j}$ and $\perm{M}^{-1}(s) \in \block{\beta}{j'}$, then $j \ge j'$.
\end{itemize}
In particular,
$$
\Nil_M\cong\prod_{1 \le i < i' \le k,\hspace*{2pt} 1 \le j < j' \le l}\field^{M_{i,j}M_{i',j'}}.
$$
\item\label{compatible_matrices_pair_standard} The set of nilpotent endomorphisms strictly compatible with both $\Unit{\id}\in\PFl_\alpha$ and $\Unit{\perm{M}}\in\PFl_\beta$ is $\Nil_M$.
\item\label{compatible_matrices_pair_flags}
Let $F\in\PFl_\alpha$ and $F'\in\PFl_\beta$ with $F\relpos{M}F'$. Then the set of nilpotent endomorphisms strictly compatible with both $F$ and $F'$ is conjugate to $\Nil_M$.
\end{enumerate}
\end{prop}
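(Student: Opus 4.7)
The plan is to handle the three parts in sequence, each reducing either to a direct entrywise computation or to earlier structural lemmas. For part~\ref{compatible_matrices_pair_enumeration}, I would first translate each constraint into an entrywise condition. By \cref{defn_lie}, $\nil \in \Nil_\alpha$ iff $\nil_{r,s} = 0$ whenever $r \in \block{\alpha}{i}$, $s \in \block{\alpha}{i'}$ with $i \ge i'$. The conjugation identity $(\permdot{M}\nil'\permdot{M}^{-1})_{r,s} = \nil'_{\perm{M}^{-1}(r),\,\perm{M}^{-1}(s)}$ then reduces membership in $\permdot{M}\Nil_\beta\permdot{M}^{-1}$ to the analogous vanishing condition with $\perm{M}^{-1}$ applied to both indices and $\beta$ in place of $\alpha$. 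Intersecting the two conditions gives the stated characterization of $\Nil_M$. The dimension count follows from the construction of $\perm{M}$ in \cref{defn_canonical_perm}: its permutation matrix restricted to rows $\block{\alpha}{i}$ and columns $\block{\beta}{j}$ contains exactly $M_{i,j}$ ones, so for each $(i,j)$ exactly $M_{i,j}$ indices $r \in \block{\alpha}{i}$ satisfy $\perm{M}^{-1}(r) \in \block{\beta}{j}$; summing over quadruples $(i, i', j, j')$ with $i < i'$ and $j < j'$ produces the product $\prod_{i<i',\, j<j'}\field^{M_{i,j}M_{i',j'}}$.

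For part~\ref{compatible_matrices_pair_standard}, I would apply \cref{compatible_matrices} twice. Since $\Unit{\id} \in \PFl_\alpha$ is represented by $I_n$, the nilpotent endomorphisms strictly compatible with $\Unit{\id}$ are exactly $\Nil_\alpha$; since $\Unit{\perm{M}} \in \PFl_\beta$ is represented by $\permdot{M}$, those strictly compatible with $\Unit{\perm{M}}$ are $\permdot{M}\Nil_\beta\permdot{M}^{-1}$. The intersection is $\Nil_M$ by definition.

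For part~\ref{compatible_matrices_pair_flags}, I would first check that $(\Unit{\id}, \Unit{\perm{M}})$ itself has relative position $M$. Since $I_n^{-1}\permdot{M} = \permdot{M}$ represents the flag $\Unit{\perm{M}}$, and $\perm{M} \in \Sym^\beta$ with $\mat{\perm{M}}{\alpha}{\beta} = M$ places $\Unit{\perm{M}} \in X_{\perm{M}} \subseteq X_M$, \cref{defn_relative_position} yields $\Unit{\id} \relpos{M} \Unit{\perm{M}}$. Then \cref{relative_position_invariance}\ref{relative_position_invariance_action} supplies $h \in \GL_n$ with $hF = \Unit{\id}$ and $hF' = \Unit{\perm{M}}$, and \cref{compatible_matrices_adjoint} transfers the compatibility structure: the set of nilpotent $\nil$ strictly compatible with both $F$ and $F'$ is $h^{-1}\Nil_M\hspace*{1pt} h$, which is conjugate to $\Nil_M$.

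No step presents a real obstacle; the only nontrivial combinatorial ingredient is the counting identity $|\{r \in \block{\alpha}{i} : \perm{M}^{-1}(r) \in \block{\beta}{j}\}| = M_{i,j}$, which is built into \cref{defn_canonical_perm}, and all remaining structure is already encapsulated in the compatibility and relative-position lemmas of the previous sections.
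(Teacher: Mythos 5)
Your proof is correct and follows the same route as the paper's (terse) proof: an entrywise computation for part (i), two applications of \cref{compatible_matrices} for part (ii), and a transfer to the representative pair $(\Unit{\id},\Unit{\perm{M}})$ via \cref{relative_position_invariance}\ref{relative_position_invariance_action} and \cref{compatible_matrices_adjoint} for part (iii). The only thing you fill in beyond the paper is the verification that $(\Unit{\id},\Unit{\perm{M}})$ indeed has relative position $M$ and the explicit index count behind the dimension formula, both of which are accurate.
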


\begin{proof}
Part \ref{compatible_matrices_pair_enumeration} follows by an explicit calculation. Part \ref{compatible_matrices_pair_standard} follows from \cref{compatible_matrices}. Part \ref{compatible_matrices_pair_flags} follows from \cref{compatible_matrices_adjoint,relative_position_invariance}\ref{relative_position_invariance_action}.
\end{proof}

\begin{cor}\label{forward_probabilities_concrete}
Set $\field := \qinvF$. Let $\alpha,\beta\models n$, let $M\in\Mats{\alpha}{\beta}$, and let $(T,T')\in\Tabs{\alpha}{\beta}$. Then
$$
\pr{M}{T,T'} = \frac{|\{\nil \in \Nil_M :  \JF(\nil;\Unit{\id}) = T \text{ and } \JF(\nil;\Unit{\perm{M}}) = T'\}|}{|\Nil_M|},
$$
where above, $\Unit{\id}\in\PFl_\alpha$ and $\Unit{\perm{M}}\in\PFl_\beta$.
\end{cor}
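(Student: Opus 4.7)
The plan is to apply \cref{probabilities_interpretation}\ref{probabilities_interpretation_forward} with the specific choice $F := \Unit{\id} \in \PFl_\alpha$ and $F' := \Unit{\perm{M}} \in \PFl_\beta$. For this to be valid, I need to verify two things: first, that $\Unit{\id} \relpos{M} \Unit{\perm{M}}$; and second, that the set of nilpotent endomorphisms strictly compatible with both of these partial flags is exactly $\Nil_M$.

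For the relative position claim, I would unfold \cref{defn_relative_position}: since $I_n$ represents $\Unit{\id}$ and $\permdot{M}$ represents $\Unit{\perm{M}}$, the relative position is the unique $M'$ with $X_{M'} \ni I_n^{-1}\permdot{M} = \permdot{M}$ (viewed in $\PFl_\beta$). By \cref{defn_schubert}, $\permdot{M} \in X_{\perm{M}}$, and by the definition of $\perm{M}$ in \cref{defn_canonical_perm} we have $\perm{M} \in \Sym^\beta$ with $\mat{\perm{M}}{\alpha}{\beta} = M$. Hence $X_{\perm{M}} \subseteq X_M$ by \cref{defn_orbits}, giving $\Unit{\id} \relpos{M} \Unit{\perm{M}}$.

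The compatibility claim is precisely \cref{compatible_matrices_pair}\ref{compatible_matrices_pair_standard}, so no further work is needed there. Combining these two observations with \cref{probabilities_interpretation}\ref{probabilities_interpretation_forward} immediately yields
\[
\pr{M}{T,T'} = \frac{|\{\nil \in \Nil_M : \JF(\nil;\Unit{\id}) = T \text{ and } \JF(\nil;\Unit{\perm{M}}) = T'\}|}{|\Nil_M|},
\]
as desired.

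There is no real obstacle here; the corollary is essentially an unpacking of \cref{probabilities_interpretation}\ref{probabilities_interpretation_forward} for the canonical choice of representatives of the $\GL_n$-orbit on $\PFl_\alpha \times \PFl_\beta$ labeled by $M$. The only subtlety worth double-checking is that $\perm{M}$ really is in $\Sym^\beta$ (which it is by construction as the minimal-length double coset representative), so that $\Unit{\perm{M}}$ lies in $X_{\perm{M}}$ rather than some other Schubert cell.
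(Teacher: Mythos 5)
Your proof is correct and follows the paper's own argument essentially verbatim: both invoke \cref{probabilities_interpretation}\ref{probabilities_interpretation_forward} with $(F,F') = (\Unit{\id},\Unit{\perm{M}})$ and appeal to \cref{compatible_matrices_pair}\ref{compatible_matrices_pair_standard}. The only difference is that you spell out the verification $\Unit{\id} \relpos{M} \Unit{\perm{M}}$, which the paper leaves implicit.
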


\begin{proof}
This follows from taking $F = \Unit{\id}$ and $F' = \Unit{\perm{M}}$ in \cref{probabilities_interpretation}\ref{probabilities_interpretation_forward}, and using \cref{compatible_matrices_pair}\ref{compatible_matrices_pair_standard}.
\end{proof}

\begin{eg}\label{eg_probabilities_forward}
Set $\field := \qinvF$. We use \cref{forward_probabilities_concrete} to calculate the forward probabilities $\pr{M}{\cdot}$, where $M := \scalebox{0.8}{$\begin{bmatrix}1 & 1 \\ 2 & 1\end{bmatrix}$}$, as in \cref{eg_inversion_matrices}. We have $\perm{M} = 13425$ and $|\Nil_M| = q^{-1}$. We can write an arbitrary element $\nil\in\Nil_M$ as
$$
\nil = \begin{bmatrix}
0 & 0 & 0 & 0 & a \\
0 & 0 & 0 & 0 & 0 \\
0 & 0 & 0 & 0 & 0 \\
0 & 0 & 0 & 0 & 0 \\
0 & 0 & 0 & 0 & 0
\end{bmatrix}, \quad \text{ where } a\in\field.
$$
Let $T$ and $T'$ denote $\JF(\nil;\Unit{\id})$ and $\JF(\nil;\Unit{\perm{M}})$, respectively. We consider two cases, depending on whether $a$ is zero.

{\itshape Case 1: $a\neq 0$.} The number of such $\nil$ is $q^{-1}-1$. In this case, we have $T = \ytableausmall{1 & 1 & 2 & 2 \\ 2}$ and $T' = \ytableausmall{1 & 1 & 1 & 2 \\ 2}$, so
$$
\prdisplay{M}{\ytableausmall{1 & 1 & 2 & 2 \\ 2},\ytableausmall{1 & 1 & 1 & 2 \\ 2}} = \frac{q^{-1}-1}{q^{-1}} = 1 - q.
$$

{\itshape Case 2: $a=0$.} The number of such $\nil$ is $1$. In this case, we have $T = \ytableausmall{1 & 1 & 2 & 2 & 2}$ and $T' = \ytableausmall{1 & 1 & 1 & 2 & 2}$, so
$$
\prdisplay{M}{\ytableausmall{1 & 1 & 2 & 2 & 2},\ytableausmall{1 & 1 & 1 & 2 & 2}} = \frac{1}{q^{-1}} = q.
$$

The remaining forward probability of the form $\pr{M}{\cdot}$ is
\begin{gather*}
\prdisplay{M}{\ytableausmall{1 & 1 & 2 \\ 2 & 2},\ytableausmall{1 & 1 & 1 \\ 2 & 2}} = 0.\qedhere
\end{gather*}

\end{eg}

\subsection{Proof of the \texorpdfstring{$q$}{q}-Burge correspondence}\label{sec_double_counting}
We now prove that the forward and backward probabilities defined in \cref{sec_probabilities} give a probabilistic bijection proving the Cauchy identity for $q$-Whittaker functions (\cref{q-cauchy_intro}), whenever $1/q$ is a prime power. (By \cref{remark_infinitely_many}, this implies the identity holds for all $q$.)

If $M$ is a nonnegative-integer matrix whose entries sum to $n$, we let $\qbinomsmall{n}{M}$ denote the $q$-multinomial coefficient from \cref{defn_q-analogue}, by regarding $M$ as a weak composition of $n$. We also recall the set of triples $\triples{M}$ from \cref{defn_triples}.
\begin{lem}\label{matrices_count}
Set $\field := \qinvF$. Let $\alpha,\beta\models n$, and let $M\in\Mats{\alpha}{\beta}$. Then
\begin{gather*}
|\triples{M}| = q^{-n^2+n+\ncon{\alpha}+\ncon{\beta}}\qbinom{n}{M}.
\end{gather*}

\end{lem}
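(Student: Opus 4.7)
The plan is to factor the count $|\triples{M}|$ as (number of pairs of flags in relative position $M$) $\times$ (number of nilpotent endomorphisms strictly compatible with such a pair), and then assemble the formula from results already proved in the paper.

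First, I would fix $F \in \PFl_\alpha$; by \cref{relative_position_invariance}\ref{relative_position_invariance_schubert}, the set of $F' \in \PFl_\beta$ with $F\relpos{M}F'$ has size exactly $|X_M|$. Hence the number of pairs $(F,F')$ with $F\relpos{M}F'$ equals $|\PFl_\alpha|\cdot|X_M|$. Next, for any such pair, \cref{compatible_matrices_pair}\ref{compatible_matrices_pair_flags} says the set of nilpotent endomorphisms strictly compatible with both $F$ and $F'$ is a $\GL_n$-conjugate of $\Nil_M$, so its cardinality is $|\Nil_M|$, independent of the choice of $(F,F')$. Therefore
\[
|\triples{M}| \;=\; |\PFl_\alpha|\cdot |X_M|\cdot |\Nil_M|.
\]

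Now I would substitute the three known enumerations over $\qinvF$: $|\PFl_\alpha| = q^{-e_2(\alpha)}\qbinomsmall{n}{\alpha}$ from \cref{q_enumeration}\ref{q_enumeration_P}; $|X_M| = q^{-\sum_{1\le i'\le i\le k,\hspace*{1pt}1\le j<j'\le l}M_{i,j}M_{i',j'}}\prod_i\qfac{\alpha_i}/\prod_{i,j}\qfac{M_{i,j}}$ from \cref{alternative_orbit_decomposition_q}; and $|\Nil_M| = q^{-\sum_{1\le i<i'\le k,\hspace*{1pt}1\le j<j'\le l}M_{i,j}M_{i',j'}}$ from \cref{compatible_matrices_pair}\ref{compatible_matrices_pair_enumeration}. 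The product of the $\qfac{\cdot}$ factors telescopes at once to $\qfac{n}/\prod_{i,j}\qfac{M_{i,j}} = \qbinomsmall{n}{M}$.

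What remains is the $q$-exponent bookkeeping. The two sums over matrix entries combine (allowing $i'\le i$ and $i'<i'$) exactly into the identity \eqref{beta_sum_matrix} used in the proof of \cref{double_coset_decomposition}, which gives $e_2(\beta)$; so the total exponent of $q$ is $-e_2(\alpha)-e_2(\beta)$. Applying \cref{n_formula} twice rewrites this as $\ncon{\alpha}+\ncon{\beta}-2\binom{n}{2} = \ncon{\alpha}+\ncon{\beta}-n^2+n$, matching the claimed formula. The only step requiring care is the exponent identification, but it reduces to the same elementary manipulation already carried out in \cref{double_coset_decomposition}, so no real obstacle arises.
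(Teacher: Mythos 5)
Your proof is correct and follows essentially the same route as the paper: both enumerate triples by first counting $F\in\PFl_\alpha$, then $F'$ with $F\relpos{M}F'$ via $|X_M|$, then compatible nilpotents via $|\Nil_M|$, and both finish with \eqref{beta_sum_matrix} and \cref{n_formula}. (Minor slip: ``$i'<i'$'' should read ``$i<i'$''.)
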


\begin{proof}
We enumerate all $(F,F',\nil)\in\triples{M}$. The number of $F\in\PFl_\alpha$ equals
$$
q^{-e_2(\alpha)}\qbinom{n}{\alpha},
$$
by \cref{q_enumeration}\ref{q_enumeration_P}. Given $F\in\PFl_\alpha$, the number of $F'\in\PFl_\beta$ with $F\relpos{M}F'$ equals
$$
q^{-\sum_{1 \le i' \le i \le k,\hspace*{1pt} 1 \le j < j' \le l}M_{i,j}M_{i',j'}}\frac{\prod_{i=1}^k\qfac{\alpha_i}}{\prod_{1 \le i \le k,\hspace*{1pt} 1 \le j \le l}\qfac{M_{i,j}}},
$$
by \cref{relative_position_invariance}\ref{relative_position_invariance_schubert} and \cref{alternative_orbit_decomposition_q}, where $M\in\mathbb{N}^{k\times l}$. Finally, given $F\in\PFl_\alpha$ and $F'\in\PFl_\beta$ with $F\relpos{M}F'$, the number of nilpotent endomorphisms $\nil$ strictly compatible with both $F$ and $F'$ equals
$$
q^{-\sum_{1 \le i < i' \le k,\hspace*{1pt} 1 \le j < j' \le l}M_{i,j}M_{i',j'}},
$$
by \cref{compatible_matrices_pair}. By \eqref{beta_sum_matrix}, the product of these three terms equals
$$
q^{-e_2(\alpha)-e_2(\beta)}\qbinom{n}{M}.
$$
Applying \cref{n_formula} gives the result.
\end{proof}

We will need the following consequence of \cref{jordan_stabilizer}:
\begin{lem}\label{jordan_form_count}
Set $\field := \qinvF$. For $\lambda\vdash n$, the number of nilpotent $\nil\in\gl_n$ with $\JF(\nil) = \lambda$ equals
\begin{gather*}
\frac{q^{-n^2+n+2\ncon{\lambda}}(1-q)^{n-\lambda_1}\qfac{n}}{\prod_{i\ge 1}\qfac{\lambda_i - \lambda_{i+1}}}.
\end{gather*}

\end{lem}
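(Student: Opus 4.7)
The plan is to combine the orbit-stabilizer theorem with the earlier enumerations of $\GL_n$ and of the stabilizer. By \cref{transitive_action}, the conjugation action of $\GL_n$ on $\gl_n$ acts transitively on the set of nilpotent $\nil$ with $\JF(\nil) = \lambda$, so I may fix any such $\nil$ (say $\nil := \nil_\lambda$) and compute $|\GL_n|/|\Stab_{\GL_n}(\nil)|$.

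For the numerator, \cref{q_enumeration}\ref{q_enumeration_GL} gives $|\GL_n| = q^{-n^2}(1-q)^n\qfac{n}$. For the denominator, \cref{jordan_stabilizer} expresses $\Stab_{\GL_n}(\nil)$ as a product of affine spaces and general linear factors, so its cardinality over $\qinvF$ is
$$
|\Stab_{\GL_n}(\nil)| = \prod_{i\ge 1}q^{-(\lambda_1 + \cdots + \lambda_{i-1} + \lambda_{i+1})(\lambda_i - \lambda_{i+1})}\cdot q^{-(\lambda_i - \lambda_{i+1})^2}(1-q)^{\lambda_i - \lambda_{i+1}}\qfac{\lambda_i - \lambda_{i+1}},
$$
again using \cref{q_enumeration}\ref{q_enumeration_GL}. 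The $(1{-}q)$-exponents telescope to $\sum_{i\ge 1}(\lambda_i - \lambda_{i+1}) = \lambda_1$, which already matches the desired power of $(1-q)$ in the quotient.

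It remains to show that the total exponent of $q$ in $|\Stab_{\GL_n}(\nil)|$ is $-n - 2\ncon{\lambda}$. Combining the two factors for each $i$, this exponent is
$$
-\sum_{i\ge 1}\bigl(\lambda_1 + \cdots + \lambda_{i-1} + \lambda_{i+1} + (\lambda_i - \lambda_{i+1})\bigr)(\lambda_i - \lambda_{i+1}) = -\sum_{i\ge 1}(\lambda_1 + \cdots + \lambda_i)(\lambda_i - \lambda_{i+1}),
$$
and a short Abel summation (reindex the $\lambda_{i+1}$ term and use $\lambda_1 + \cdots + \lambda_i - (\lambda_1 + \cdots + \lambda_{i-1}) = \lambda_i$) converts the right-hand side into $-\sum_i \lambda_i^2$. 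This is exactly $-n - 2\ncon{\lambda}$, since $\lambda_i + 2\binom{\lambda_i}{2} = \lambda_i^2$. Dividing $|\GL_n|$ by $|\Stab_{\GL_n}(\nil)|$ then yields the claimed formula.

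There is no real obstacle here: every ingredient is already in place, and the only work is the bookkeeping identity $\sum_{i\ge 1}(\lambda_1 + \cdots + \lambda_i)(\lambda_i - \lambda_{i+1}) = n + 2\ncon{\lambda}$, which one could alternatively recognize as an instance of \cref{maximum_degree_recursion} applied to the skew shape $\lambda/\varnothing$ (compare the exponent of $q$ there with $\binom{n}{2} - e_2(\lambda) = \ncon{\lambda}$ via \cref{n_formula}), giving a uniform derivation consistent with the other $q$-exponent manipulations in the paper.
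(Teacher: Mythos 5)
Your argument is correct and matches the paper's proof: both use \cref{transitive_action} plus the orbit-stabilizer theorem to reduce the count to $|\GL_n|/|\Stab_{\GL_n}(\nil)|$, then combine \cref{jordan_stabilizer}, \cref{q_enumeration}\ref{q_enumeration_GL}, and the identity $n + 2\ncon{\lambda} = \sum_{i\ge 1}\lambda_i^2$ to finish. The final parenthetical suggesting \cref{maximum_degree_recursion} applied to $\lambda/\varnothing$ is slightly off---that specialization only reproduces \cref{n_formula}, not the Abel-summation identity you actually need---but this aside is not load-bearing and the main derivation you give is exactly the right one.
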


\begin{proof}
By \cref{transitive_action} and the orbit-stabilizer theorem, the number of such $\nil\in\gl_n$ is $\frac{|\GL_n|}{|\Stab_{\GL_n}\hspace*{-1pt}(\nil)|}$. The result then follows from \cref{jordan_stabilizer,q_enumeration}\ref{q_enumeration_GL}, using the fact that $n+2\ncon{\lambda} = \sum_{i\ge 1}\lambda_i^2$.
\end{proof}

\begin{cor}\label{tableaux_count}
Set $\field := \qinvF$. Let $(T,T')\in\Tabs{\alpha}{\beta}$, where $\lambda\vdash n$ is the shape of both $T$ and $T'$, and let $\nil\in\gl_n$ be nilpotent with $\JF(\nil) = \lambda$.
\begin{enumerate}[label=(\roman*), leftmargin=*]
\item\label{tableaux_count_matrix} For $M\in\Mats{\alpha}{\beta}$, the number of pairs of partial flags $(F,F')\in\PFl_\alpha\times\PFl_\beta$ with relative position $M$ which are both strictly compatible with $\nil$ and satisfy $\JF(\nil;F) = T$ and $\JF(\nil;F') = T'$ equals
$$
\frac{q^{n^2-n-2\ncon{\lambda}}(1-q)^{-n+\lambda_1}\prod_{i\ge 1}\qfac{\lambda_i - \lambda_{i+1}}}{\qfac{n}}|\triples[T,T']{M}\hspace*{-1pt}|.
$$
\item\label{tableaux_count_all} The number of pairs of partial flags $(F,F')\in\PFl_\alpha\times\PFl_\beta$ which are both strictly compatible with $\nil$ and satisfy $\JF(\nil;F) = T$ and $\JF(\nil;F') = T'$ equals
\begin{gather*}
\frac{q^{n^2-n-2\ncon{\lambda}}(1-q)^{-n+\lambda_1}\prod_{i\ge 1}\qfac{\lambda_i - \lambda_{i+1}}}{\qfac{n}}|\triples[T,T']{\alpha,\beta}\hspace*{-1pt}| = q^{-2\ncon{\lambda}+\ncon{\alpha}+\ncon{\beta}}\qwt(T)\qwt(T').
\end{gather*}
\end{enumerate}

\end{cor}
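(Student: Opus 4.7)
The plan is to reduce both parts to the counts of triples $|\triples[T,T']{M}|$ and $|\triples[T,T']{\alpha,\beta}|$ by exploiting the transitive action of $\GL_n$ on nilpotent endomorphisms with a fixed Jordan form.

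For part \ref{tableaux_count_matrix}, I observe that every $(F,F',\nil)\in\triples[T,T']{M}$ satisfies $\JF(\nil)=\lambda$ (since $\JF(\nil;F)=T$ has shape $\lambda$), so the projection $(F,F',\nil)\mapsto\nil$ lands in the set $\{\nil\in\gl_n:\JF(\nil)=\lambda\}$. By \cref{transitive_action}, $\GL_n$ acts transitively on this base. The diagonal action $g\cdot(F,F',\nil):=(gF,gF',g\nil g^{-1})$ is a well-defined action on the total space: by \cref{compatible_matrices_adjoint} it preserves strict compatibility and both Jordan tableaux, and by \cref{relative_position_invariance}\ref{relative_position_invariance_action} it preserves the relative position $M$. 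Hence every fiber of the projection has the same cardinality, namely the number appearing on the left-hand side of part \ref{tableaux_count_matrix}. Dividing $|\triples[T,T']{M}|$ by the size of the base, which is given in closed form by \cref{jordan_form_count}, produces precisely the stated expression.

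For the first equality in part \ref{tableaux_count_all}, I would simply sum the identity of part \ref{tableaux_count_matrix} over $M\in\Mats{\alpha}{\beta}$, invoking the disjoint decomposition $\triples[T,T']{\alpha,\beta}=\bigsqcup_M\triples[T,T']{M}$ that follows from \cref{matrix_to_orbit}. For the second equality, I fix a single $\nil\in\gl_n$ with $\JF(\nil)=\lambda$ and note that the defining conditions on the left-hand side decouple: the choice of $F\in\PFl_\alpha$ (strictly compatible with $\nil$, with $\JF(\nil;F)=T$) is independent of the choice of $F'\in\PFl_\beta$ (strictly compatible with $\nil$, with $\JF(\nil;F')=T'$). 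Each factor is counted directly by \cref{whittaker_expansion}\ref{whittaker_expansion_tableau}, giving $q^{-\ncon{\lambda}+\ncon{\alpha}}\qwt(T)$ and $q^{-\ncon{\lambda}+\ncon{\beta}}\qwt(T')$ respectively; their product is precisely the right-hand side.

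The main obstacle is conceptual rather than computational: namely, recognizing that the $\GL_n$-orbit structure on triples partitions $\triples[T,T']{M}$ into equal-sized fibers over the nilpotent conjugacy class of $\lambda$, so that the desired count is simply $|\triples[T,T']{M}|$ divided by the number of matrices with Jordan form $\lambda$. Once that observation is in place, all the necessary enumeration has already been carried out, in \cref{jordan_form_count} and \cref{whittaker_expansion}, and only routine algebra of $q$-factorials remains.
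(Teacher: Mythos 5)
Your proof is correct and follows essentially the same route as the paper: for part (i), you use $\GL_n$-equivariance (via \cref{compatible_matrices_adjoint}, \cref{relative_position_invariance}\ref{relative_position_invariance_action}, \cref{transitive_action}) to show all fibers of $(F,F',\nil)\mapsto\nil$ over the conjugacy class have the same size, then divide by \cref{jordan_form_count}, and for part (ii) you sum over $M$ and apply \cref{whittaker_expansion}\ref{whittaker_expansion_tableau} to each factor in the independent product over $F$ and $F'$. Your write-up is merely a more explicit unpacking of the paper's terse proof, not a different argument.
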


\begin{proof}
For part \ref{tableaux_count_matrix}, note that the number of such $(F,F')$ does not depend on the given $\nil\in\gl_n$, by \cref{transitive_action,compatible_matrices_adjoint,relative_position_invariance}\ref{relative_position_invariance_action}. Therefore it equals
$$
\frac{|\triples[T,T']{M}\hspace*{-1pt}|}{|\{\nil'\in\gl_n : \nil' \text{ is nilpotent and } \JF(\nil') = \lambda\}|},
$$
and the result then follows from \cref{jordan_form_count}. Part \ref{tableaux_count_all} follows from part \ref{tableaux_count_matrix} and \cref{whittaker_expansion}\ref{whittaker_expansion_tableau}.
\end{proof}

We are now ready to prove our main result:
\begin{thm}\label{reversibility}
Set $\field := \qinvF$. Let $\alpha = (\alpha_1, \dots, \alpha_k),\beta = (\beta_1, \dots, \beta_l)\models n$. Then for all $M\in\Mats{\alpha}{\beta}$ and $(T,T')\in\Tabs{\alpha}{\beta}$, we have
\begin{align}\label{reversibility_equation}
\begin{gathered}
\frac{(1-q)^{-n}}{\prod_{1 \le i \le k,\hspace*{1pt} 1 \le j \le l}\qfac{M_{i,j}}}\hspace*{1pt}\pr{M}{T,T'} = \frac{q^{n^2-n-\ncon{\alpha}-\ncon{\beta}}(1-q)^{-n}}{\qfac{n}}|\triples[T,T']{M}\hspace*{-1pt}|\hspace*{96pt} \\
\hspace*{204pt}= \frac{(1-q)^{-\lambda_1}}{\prod_{i\ge 1}\qfac{\lambda_i - \lambda_{i+1}}}\qwt(T)\qwt(T')\hspace*{2pt}\prdual{M}{T,T'},
\end{gathered}
\end{align}
where $\lambda\vdash n$ is the shape of both $T$ and $T'$. Therefore $(\prnoarg,\prdualnoarg)$ gives a probabilistic bijection from $(\Mats{\alpha}{\beta},\wtf)$ to $(\Tabs{\alpha}{\beta},\wtfdual)$. Summing \eqref{reversibility_equation} over all $M\in\Mats{\alpha}{\beta}$ and $(T,T')\in\Tabs{\alpha}{\beta}$, we obtain \eqref{q-cauchy_coefficient_intro}, namely, the coefficient of $\mathbf{x}^\alpha\mathbf{y}^\beta$ in the Cauchy identity for $q$-Whittaker functions.
\end{thm}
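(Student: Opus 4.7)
The plan is to verify the reversibility condition by showing that both the left-hand and right-hand sides of \eqref{reversibility_equation} equal the middle expression, using the enumerations of $\triples{M}$ and $\triples[T,T']{\alpha,\beta}$ that are (or will be) available. Once \ref{defn_probabilistic_bijection_reversibility} is established, the probabilistic bijection statement follows immediately, since \ref{defn_probabilistic_bijection_forward} and \ref{defn_probabilistic_bijection_backward} are built into the definitions of $\prnoarg$ and $\prdualnoarg$.

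\textbf{First equality.} By definition, $\pr{M}{T,T'} = |\triples[T,T']{M}|/|\triples{M}|$. I would substitute this into the left-hand side, cancel the common factor $|\triples[T,T']{M}|\cdot(1-q)^{-n}$, and reduce the claim to
\[
|\triples{M}| \;=\; q^{-n^2+n+\ncon{\alpha}+\ncon{\beta}}\,\qbinom{n}{M},
\]
which is exactly \cref{matrices_count}.

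\textbf{Second equality.} By definition, $\prdual{M}{T,T'} = |\triples[T,T']{M}|/|\triples[T,T']{\alpha,\beta}|$. After substitution the claim reduces (again cancelling $|\triples[T,T']{M}|$) to the enumeration
\[
|\triples[T,T']{\alpha,\beta}| \;=\; \frac{q^{-n^2+n+\ncon{\alpha}+\ncon{\beta}}(1-q)^{n-\lambda_1}\qfac{n}}{\prod_{i\ge 1}\qfac{\lambda_i-\lambda_{i+1}}}\,\qwt(T)\qwt(T'),
\]
which is a direct rearrangement of \cref{tableaux_count}\ref{tableaux_count_all}. (In that corollary's notation, divide $|\triples[T,T']{\alpha,\beta}|\cdot \frac{q^{n^2-n-2\ncon{\lambda}}(1-q)^{-n+\lambda_1}\prod\qfac{\lambda_i-\lambda_{i+1}}}{\qfac{n}}$ equal to $q^{-2\ncon{\lambda}+\ncon{\alpha}+\ncon{\beta}}\qwt(T)\qwt(T')$ and solve.) This is the key nontrivial input, and it already rests on \cref{whittaker_expansion}\ref{whittaker_expansion_tableau}, \cref{jordan_stabilizer}, and \cref{transitive_action}; for that reason I expect no fresh obstacle here beyond careful bookkeeping of exponents of $q$ and $(1-q)$.

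\textbf{Probabilistic bijection and Cauchy identity.} Conditions \ref{defn_probabilistic_bijection_forward} and \ref{defn_probabilistic_bijection_backward} hold by construction: $\sum_{(T,T')}\pr{M}{T,T'}=1$ because the sets $\triples[T,T']{M}$ over $(T,T')\in\Tabs{\alpha}{\beta}$ partition $\triples{M}$, and similarly for $\prdualnoarg$. Together with the reversibility \eqref{reversibility_equation} proved above, this certifies that $(\prnoarg,\prdualnoarg)$ is a probabilistic bijection from $(\Mats{\alpha}{\beta},\wtf)$ to $(\Tabs{\alpha}{\beta},\wtfdual)$. Finally, summing \eqref{reversibility_equation} over all $M\in\Mats{\alpha}{\beta}$ and $(T,T')\in\Tabs{\alpha}{\beta}$ and applying \eqref{probabilistic_bijection_sum} recovers
\[
\sum_{M\in\Mats{\alpha}{\beta}}\wtf(M) \;=\; \sum_{(T,T')\in\Tabs{\alpha}{\beta}}\wtfdual(T,T'),
\]
which is precisely \eqref{q-cauchy_coefficient_intro}.

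\textbf{Main obstacle.} In truth there is no serious obstacle remaining at this point of the paper: both required enumerations (\cref{matrices_count} and \cref{tableaux_count}) have already been proved. The only step requiring some care is checking that the powers of $q$ and $(1-q)$ in the three expressions of \eqref{reversibility_equation} match up; concretely, one must verify that after substituting the definitions of $\pr{M}{\cdot}$ and $\prdual{M}{\cdot}$ the two equalities reduce exactly to the cited lemma and corollary, without spurious factors. This is a short algebraic verification.
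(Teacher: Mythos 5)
Your proposal is correct and takes essentially the same approach as the paper: both equalities in \eqref{reversibility_equation} are reduced, by substituting the definitions of $\prnoarg$ and $\prdualnoarg$ and cancelling $|\triples[T,T']{M}|$, to \cref{matrices_count} and \cref{tableaux_count}\ref{tableaux_count_all} respectively, and the probabilistic-bijection and Cauchy-identity conclusions follow exactly as you describe. The only difference is that you spell out the exponent bookkeeping a bit more explicitly than the paper does, but the underlying argument is identical.
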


\begin{proof}
The first equality in \eqref{reversibility_equation} follows from \cref{matrices_count}, and the second equality follows from \cref{tableaux_count}\ref{tableaux_count_all}. Now note that \eqref{reversibility_equation} implies condition \ref{defn_probabilistic_bijection_reversibility} of \cref{defn_probabilistic_bijection} for the data from \cref{defn_weights,defn_probabilities}. We have already observed that conditions \ref{defn_probabilistic_bijection_forward} and \ref{defn_probabilistic_bijection_backward} hold. Therefore $(\prnoarg,\prdualnoarg)$ gives a probabilistic bijection, as desired.
\end{proof}

\begin{eg}\label{eg_reversibility}
Set $\field := \qinvF$. Let $\alpha := (2,3)$ and $\beta := (3,2)$, and consider
$$
M := \begin{bmatrix}
1 & 1 \\
2 & 1
\end{bmatrix}, \quad T := \hspace*{4pt}\begin{ytableau}
1 & 1 & 2 & 2 \\
2
\end{ytableau}\hspace*{4pt}, \quad \text{ and } \quad T' := \hspace*{4pt}\begin{ytableau}
1 & 1 & 1 & 2 \\
2
\end{ytableau}\hspace*{4pt},
$$
so that $M\in\Mats{\alpha}{\beta}$ and $(T,T')\in\Tabs{\alpha}{\beta}$. Then \eqref{reversibility_equation} implies that
$$
\frac{(1-q)^{-5}}{\qfac{1}\qfac{1}\qfac{2}\qfac{1}}\pr{M}{T,T'} = \frac{(1-q)^{-4}}{\qfac{3}\qfac{1}}\qbinom{3}{2}\qbinom{3}{1}\prdual{M}{T,T'}.
$$
This agrees with \cref{eg_probabilities_forward,eg_probabilities_backward}, where we calculated that $\pr{M}{T,T'} = 1-q$ and $\prdual{M}{T,T'} = (1 + q + q^2)^{-1}$.
\end{eg}

We explain why \cref{reversibility} implies the Cauchy identity for $q$-Whittaker functions:
\begin{proof}[Proof of \cref{q-cauchy_intro}]
By \cref{lhs_equivalence}, it suffices to prove \eqref{q-cauchy_coefficient_intro}, which follows from \eqref{reversibility_equation} whenever $1/q$ is a prime power. Since \eqref{q-cauchy_coefficient_intro} is an identity of rational functions, it therefore holds for all $q$ (cf.\ \cref{remark_infinitely_many}).
\end{proof}

\begin{rmk}\label{other_bijections}
We point out that our $q$-Burge correspondence is not the first probabilistic bijection defined between $(\Mats{\alpha}{\beta},\wtf)$ and $(\Tabs{\alpha}{\beta},\wtfdual)$ in order to prove the Cauchy identity for $q$-Whittaker functions (\cref{q-cauchy_intro}). Namely, Matveev and Petrov \cite{matveev_petrov17} gave two such probabilistic bijections, one of which is a $q$-analogue of the Burge correspondence, and the other a $q$-analogue of the RSK correspondence (cf.\ \cref{sec_classical_burge}). They are based, respectively, on the $q$-column insertion of O'Connell and Pei \cite{o'connell_pei13}, and the $q$-row insertion of Borodin and Petrov \cite{borodin_petrov16}. Aigner and Frieden \cite{aigner_frieden} gave a common $(q,t)$-analogue of these two probabilistic bijections in the case of permutation matrices (i.e.\ when $\alpha = \beta = (1, \dots, 1)$), which is also based on a probabilistic insertion procedure. These probabilistic bijections are rather different from our $q$-Burge correspondence, because they are characterized by local growth rules, as we discuss further in \cref{sec_growth_diagrams}; in particular, our $q$-Burge correspondence is new. One implication of this discussion is that while insertion-based and flag variety-based algorithms coincide in the classical setting, their $q$-analogues are fundamentally different.

We also mention that Imamura, Mucciconi, and Sasamoto \cite[Theorem 1.5]{imamura_mucciconi_sasamoto} have recently given a proof of the Cauchy identity for $q$-Whittaker functions using a rather different combinatorial framework. It would be interesting to establish a connection with this work.
\end{rmk}

\begin{rmk}\label{probabilistic_interpretation}
Set $\field := \qinvF$. Given $\alpha,\beta\models n$, let $(F,F',\nil)$ denote a uniformly random element of $\triples{\alpha,\beta}$. Then the first equality of \eqref{reversibility_equation} has the following probabilistic interpretation: for all $M\in\Mats{\alpha}{\beta}$, we have
\begin{align}\label{probablistic_interpretation_equation}
\Pr\big((F,F') \text{ has relative position } M\big) = \frac{|\triples{M}|}{|\triples{\alpha,\beta}|} \hspace*{1pt}\propto\hspace*{1pt} \qbinom{n}{M}.
\end{align}
In particular, when $\alpha = \beta = (1, \dots, 1)$ (equivalently, $\PFl_\alpha = \PFl_\beta = \Fl_n$), the relative position of the pair of complete flags $(F,F')$ is uniformly distributed over all $n\times n$ permutation matrices. This can be seen directly, as follows. The dimension of the cell of complete flags in relative position $w\in \Sym_n$ to $\Unit{\id}$ is equal to the number of inversions of $w$, while the dimension of the affine space of nilpotent endomorphisms strictly compatible with both $\Unit{\id}$ and such a flag is equal to the number of non-inversions of $w$. The independence on the choice of $w$ then follows from the fact that the sum of the number of inversions and the number of non-inversions of $w$ is always $\binom{n}{2}$. Similarly, the second equality of \eqref{reversibility_equation} implies that for all $(T,T')\in\Tabs{\alpha}{\beta}$, we have
\begin{gather*}
\Pr\big(\hspace*{-2pt}\JF(\nil;F) = T \text{ and } \JF(\nil;F') = T'\big) = \frac{|\triples[T,T']{\alpha,\beta}|}{|\triples{\alpha,\beta}|} \hspace*{1pt}\propto\hspace*{1pt} \frac{(1-q)^{-\lambda_1}}{\prod_{i\ge 1}\qfac{\lambda_i - \lambda_{i+1}}}\qwt(T)\qwt(T'),
\end{gather*}
where $\lambda$ denotes the shape of both $T$ and $T'$.

Note that when $q=1$, the $q$-multinomial coefficient $\qbinomsmall{n}{M}$ in \eqref{probablistic_interpretation_equation} becomes the multinomial coefficient $\binom{n}{M}$. This is proportional to the probability \eqref{contingency_probability} of obtaining $M$ by applying the following procedure $n$ times, starting from the zero matrix: we choose a matrix entry uniformly at random, and add $1$ to it. The resulting distribution on $\Mats{\alpha}{\beta}$ (given by \eqref{contingency_probability}) has been extensively studied in the statistics literature, where such matrices are known as {\itshape contingency tables with margins $(\alpha,\beta)$}; see e.g.\ \cite{agresti92} for a survey. It would be interesting to find a more explicit connection between this body of work and \eqref{probablistic_interpretation_equation}.
\end{rmk}

\section{Combinatorics of the \texorpdfstring{$q$}{q}-Burge correspondence}\label{sec_combinatorics}

\noindent In this section, we explore the combinatorial aspects of the $q$-Burge correspondence (and the associated probabilities) introduced in \cref{sec_q_burge}. We begin by reviewing the classical Burge and RSK correspondences in \cref{sec_classical_burge}. In \cref{sec_burge_limit}, we explain how to obtain the classical Burge correspondence as the $q\to 0$ limit of the $q$-Burge correspondence. In \cref{sec_polynomiality}, we discuss polynomiality of the forward probabilities $\pr{M}{T,T'}$. In \cref{sec_growth_diagrams}, we define growth diagrams for the $q$-Burge correspondence, and show that they do not admit a characterization by local rules, in contrast to insertion-based probabilistic bijections.

\subsection{The classical Burge correspondence}\label{sec_classical_burge}

In this subsection we review the classical Burge correspondence (also known as the {\itshape column RSK correspondence}), first studied by Burge \cite{burge74}. Our exposition follows \cite[Appendix A]{fulton97}, to which we refer for further details.\footnote{We caution that our conventions are such that our matrix $M$ is the transpose of the matrix $A$ in \cite{fulton97}.}
\begin{defn}[{\cite[Section A.4.1]{fulton97}}]\label{defn_column_insertion}
Let $T$ be a semistandard tableau, and let $i\in\mathbb{Z}_{>0}$. We define the {\itshape column insertion of $i$ into $T$} as the semistandard tableau obtained from $T$ by performing the following steps. If $i$ is greater than all the entries of $T$ in column $1$, we place $i$ in a new box at the bottom of column $1$. Otherwise, we replace the smallest entry $j\ge i$ in column $1$ with $i$, and insert $j$ into column $2$ by the same procedure. We continue until we place a number in a new box at the bottom of some column.

Given $M\in\mathbb{N}^{k\times l}$, we define semistandard tableaux $\Ptab{M}$ and $\Qtab{M}$ of the same shape by the following procedure, starting with both $\Ptab{M}$ and $\Qtab{M}$ as the empty tableau. For $j = 1, \dots, l$ and $i = k, \dots, 1$ (i.e.\ we read $M$ column by column, left to right, and within each column from bottom to top), we column-insert $i$ into $\Ptab{M}$ and record $j$ in the new box of $\Qtab{M}$ a total of $M_{i,j}$ times, so that at each step $\Ptab{M}$ and $\Qtab{M}$ have the same shape. Note that if $M$ has type $(\alpha,\beta)$, then $\Ptab{M}$ has content $\alpha$ and $\Qtab{M}$ has content $\beta$. The map $M\mapsto (\Ptab{M},\Qtab{M})$ is called the {\itshape Burge correspondence}. For all $n\in\mathbb{N}$ and $\alpha,\beta\models n$, it restricts to a bijection
\begin{gather*}
\Mats{\alpha}{\beta} \to \Tabs{\alpha}{\beta}, \quad M\mapsto (\Ptab{M},\Qtab{M}).
\end{gather*}

\end{defn}

\begin{eg}\label{eg_burge}
Let $M := \scalebox{0.8}{$\begin{bmatrix}1 & 0 & 1 \\ 2 & 1 & 1 \\ 0 & 1 & 2\end{bmatrix}$}$. Then $\Ptab{M}$ is obtained by column-inserting $2$, $2$, $1$, $3$, $2$, $3$, $3$, $2$, and $1$ into the empty tableau, and $\Qtab{M}$ is obtained by recording $1$, $1$, $1$, $2$, $2$, $3$, $3$, $3$, and $3$. We get
\begin{gather*}
\Ptab{M} = \hspace*{4pt}\begin{ytableau}
1 & 1 & 2 & 2 & 2 \\
2 & 3 & 3 \\
3
\end{ytableau}\hspace*{4pt} \quad \text{ and } \quad \Qtab{M} = \hspace*{4pt}\begin{ytableau}
1 & 1 & 1 & 3 & 3 \\
2 & 2 & 3 \\
3
\end{ytableau}\hspace*{4pt}.\qedhere
\end{gather*}

\end{eg}

\begin{rmk}\label{RSK_to_burge}
The {\itshape Robinson--Schensted--Knuth (RSK) correspondence} is another family of bijections from $\Mats{\alpha}{\beta}$ to $\Tabs{\alpha}{\beta}$. It was introduced by Knuth \cite{knuth70}, based on work of Robinson \cite{robinson38} and Schensted \cite{schensted61}. It is defined similarly to the Burge correspondence, where we use {\itshape row insertion} instead of column insertion, and we read the columns of the given matrix $M\in\mathbb{N}^{k\times l}$ from top to bottom rather than bottom to top. (For this reason, the Burge correspondence is also known as the {\itshape column RSK correspondence}.)

A direct relationship between the RSK and Burge correspondences is explained in \cite[Section A.4.2]{fulton97}. Namely, suppose that $M\mapsto (T,T')$ under the RSK correspondence. If $M$ is a permutation matrix (i.e.\ $T$ and $T'$ are standard tableaux), then
\begin{align}\label{RSK_to_burge_standard}
T = \transpose{\Ptab{M}} \quad \text{ and } \quad T' = \transpose{\Qtab{M}}.
\end{align}
In general, let $M_{\text{rev}}$ (respectively, $M^{\text{rev}}$) denote the matrix obtained from $M$ by reversing the order of its rows (respectively, columns). Then
$$
T = \Ptab{M^\text{rev}} = \evac(\Ptab{M_\text{rev}}) \quad \text{ and } \quad T' = \Qtab{M_{\text{rev}}} = \evac(\Qtab{M^{\text{rev}}}),
$$
where $\evac$ is the {\itshape evacuation} map (also known as the {\itshape Sch\"{u}tzenberger involution}). For example, let $M$ be as in \cref{eg_burge}. Then $M_{\text{rev}} = \scalebox{0.8}{$\begin{bmatrix}0 & 1 & 2 \\ 2 & 1 & 1 \\ 1 & 0 & 1\end{bmatrix}$}$ and $M^{\text{rev}} = \scalebox{0.8}{$\begin{bmatrix}1 & 0 & 1 \\ 1 & 1 & 2 \\ 2 & 1 & 0\end{bmatrix}$}$, and
\begin{gather*}
T = \hspace*{4pt}\begin{ytableau}
1 & 1 & 2 & 2 & 2 & 3 & 3 \\
2 & 3
\end{ytableau}\hspace*{4pt} \quad \text{ and } \quad T' = \hspace*{4pt}\begin{ytableau}
1 & 1 & 1 & 2 & 2 & 3 & 3 \\
3 & 3
\end{ytableau}\hspace*{4pt}.
\end{gather*}

\end{rmk}

The Burge correspondence has the following remarkable transpose symmetry:
\begin{prop}[{\cite[Section A.4.1]{fulton97}}]\label{classical_burge_transpose}
Let $M$ be a nonnegative-integer matrix. Then $\Ptab{\transpose{M}} = \Qtab{M}$.
\end{prop}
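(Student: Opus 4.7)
The plan is to reduce the identity $\Ptab{\transpose{M}} = \Qtab{M}$ to a symmetry of column insertion applied to biwords, which in turn follows from the diagonal symmetry of the growth-diagram formulation of column RSK. First, reformulate the Burge correspondence in terms of biwords: to $M \in \mathbb{N}^{k \times l}$, associate the two-line array $\omega(M) = \binom{j_1\ \cdots\ j_n}{i_1\ \cdots\ i_n}$ obtained by listing each pair $(j, i)$ exactly $M_{i,j}$ times, sorted so that $j_1 \le \cdots \le j_n$, with ties broken by requiring the $i$'s to be weakly decreasing within each block of equal $j$'s. The reading order specified in \cref{defn_column_insertion} (column by column, left to right, bottom to top within each column) coincides with this ordering, so $\Ptab{M}$ is the column-insertion tableau of $i_1, \dots, i_n$ into the empty tableau, while $\Qtab{M}$ is the record tableau of $j_1, \dots, j_n$.

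Next, I would check directly from the definitions that if we swap the two rows of $\omega(M)$ and re-sort the resulting pairs so that the new top row is weakly increasing (with new bottom weakly decreasing within each block), we obtain exactly $\omega(\transpose{M})$. Here the primary sort key for $\transpose{M}$ becomes the old row index of $M$ and the secondary key the old column index; the combinatorial statement $\omega(\transpose{M}) = \omega(M)^{\mathrm{swap}}$ is immediate once one writes out both sides.

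The crux is then the following symmetry of column insertion on biwords: for any biword $\omega$, the pair $(P, Q)$ obtained by column-inserting its bottom row and recording in its top is sent to $(Q, P)$ under the swap-and-resort operation. I would establish this via the growth-diagram formulation of column RSK: $\omega$ determines a filling of the vertices of a $k \times l$ grid by partitions, growing from the empty partition on the bottom-left boundary via prescribed local rules at each unit square; the $P$-tableau is read off the right boundary and $Q$ off the top. The local rules for column insertion are invariant under reflection across the main diagonal, and this reflection corresponds precisely to the swap-and-resort of $\omega$, while interchanging the right and top boundaries of the grid. Applying this symmetry to $\omega(M)$ immediately gives $\Ptab{\transpose{M}} = \Qtab{M}$ (and simultaneously $\Qtab{\transpose{M}} = \Ptab{M}$).

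The main obstacle is formulating and verifying the correct local growth rules for column insertion and checking their diagonal symmetry. While the analogous machinery for row insertion is thoroughly documented, the column-insertion variant requires a careful adaptation; in practice one would cite an established treatment (such as the growth-diagram discussion referenced in \cref{sec_growth_diagrams}, or the appendix of Fulton's book) rather than redo the local-rule case analysis from scratch. An alternative that avoids growth diagrams altogether is a direct induction on $n$, analyzing how a single column insertion interacts with the swap-and-resort operation; this is more elementary but requires a somewhat delicate case analysis of bumping paths in column insertion, which is itself the content encoded by the local rules.
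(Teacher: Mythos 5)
The paper does not give its own proof of this proposition; it is cited directly from Fulton's book (Section A.4.1), so there is no paper argument to compare against. Your proposed route---reformulate the Burge correspondence on biwords, observe that transposing $M$ corresponds to swapping and re-sorting the biword, then invoke the diagonal symmetry of the growth-diagram local rules---is a standard and correct way to establish the symmetry. The biword reduction is accurate: the biword for $\transpose{M}$ has pair $(a,b)$ (top $a$, bottom $b$) with multiplicity $M_{a,b}$, sorted with top weakly increasing and bottom weakly decreasing within ties, which is exactly the swap-and-resort of $\omega(M)$. One small redundancy worth noting: if you work with the growth diagram directly on the matrix $M$ (as Fomin does, and as van Leeuwen does in the reference the paper itself cites in \cref{sec_growth_diagrams}), the biword reformulation is not needed at all---the diagram for $\transpose{M}$ is just the transpose of the diagram for $M$, and the last row and last column swap. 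Your biword step is only necessary if you insist on running the insertion algorithm sequentially rather than via growth rules, in which case you then need the separate (and standard, but not trivial) equivalence between the two formulations.

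The one genuine obligation you leave open is the verification that the local growth rules for column insertion are symmetric under reflection across the main diagonal. You flag this yourself, and deferring to an established reference (van Leeuwen, or Fulton A.4) is reasonable. Just be aware that this is precisely where the content of the proof lives: both the biword reduction and the boundary-reading are bookkeeping, while the symmetry of the local rule is the substantive fact. If you chose instead the alternative you mention---direct induction on the number of insertions with a case analysis of bumping paths---you would be reproving essentially the same symmetry in a more laborious form, which is what Fulton does via the matrix-ball construction for row RSK before transferring to the column version.
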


Rosso \cite{rosso12} gave a description of the Burge correspondence using partial flag varieties and Jordan forms. In order to state it, we recall work of Spaltenstein \cite{spaltenstein76}.
\begin{defn}[{Spaltenstein \cite{spaltenstein76}}]\label{defn_springer_fiber}
Let $\field$ be algebraically closed. Let $\alpha\models n$, and let $\nil\in\gl_n$ be nilpotent. We define the {\itshape Spaltenstein variety}
$$
\PFl_\alpha^\nil := \{F\in\PFl_\alpha : F \text{ is strictly compatible with } \nil\};
$$
this was denoted $\mathcal{X}^\lambda_\alpha$ in \cref{remark_perverse}, where $\lambda := \JF(\nil)$. For $T\in\SSYT(\lambda,\alpha)$, we define
$$
\PFl_\alpha^{\nil,T} := \{F\in\PFl_\alpha^\nil : \JF(\nil;F) = T\}.
$$
Then the irreducible components of $\PFl_\alpha^\nil$ are precisely the Zariski closures of $\PFl_\alpha^{\nil,T}$, for $T\in\SSYT(\lambda,\alpha)$.
\end{defn}

\begin{thm}[{Rosso \cite[Theorem 4.1]{rosso12}}]\label{classical_burge_dual}
Let $\field$ be algebraically closed. Let $\alpha,\beta\models n$, let $M\in\Mats{\alpha}{\beta}$, and let $\nil\in\gl_n$ be nilpotent such that $\JF(\nil)$ is the shape of both $\Ptab{M}$ and $\Qtab{M}$. Then for all $(F,F')$ in some nonempty Zariski-open subset of $\overline{\PFl_\alpha^{\nil,\Ptab{M}}}\times\overline{\PFl_\beta^{\nil,\Qtab{M}}}$, we have $F\relpos{M}F'$.
\end{thm}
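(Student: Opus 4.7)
The plan is to induct on $n = |M|$, with the trivial base case $n = 0$. For the inductive step, I would ``undo'' the last column-insertion step in the Burge correspondence and apply the inductive hypothesis to an auxiliary pair of flags inside a $\nil$-invariant codimension-one subspace.

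Let $i_0$ be the smallest row index with $M_{i_0, l} > 0$, so that the last step of the column insertion producing $(\Ptab M, \Qtab M)$ inserts $i_0$ into $\Ptab{\bar M}$, where $\bar M$ is $M$ with its $(i_0, l)$-entry decreased by $1$. This insertion terminates by placing a new box at the end of some row $r$ of $\Ptab M$, and the corresponding box carries the label $l$ in $\Qtab M$. Setting $\lambda := \JF(\nil)$ and $\bar\lambda$ to be the partition obtained from $\lambda$ by removing one box from column $r$, there exists a $\nil$-invariant codimension-one subspace $H \subset \field^n$ with $\JF(\nil|_H) = \bar\lambda$, and the locus of such $H$ is an irreducible variety of known dimension by Spaltenstein's classification. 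Given a Zariski-generic pair $(F, F') \in \overline{\PFl_\alpha^{\nil, \Ptab M}} \times \overline{\PFl_\beta^{\nil, \Qtab M}}$ and a generic such $H$ satisfying $F'_{l-1} \subseteq H$ and $F_{i_0} \not\subseteq H$, define
$$\bar F := (F_0, \dots, F_{i_0 - 1}, F_{i_0} \cap H, F_{i_0+1} \cap H, \dots, F_k \cap H), \qquad \bar F' := (F'_0, \dots, F'_{l-1}, H);$$
these are partial flags in $H$ of types $\bar\alpha := (\alpha_1, \dots, \alpha_{i_0} - 1, \dots, \alpha_k)$ and $\bar\beta := (\beta_1, \dots, \beta_{l-1}, \beta_l - 1)$, each strictly compatible with $\nil|_H$. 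Using the semistandardness of $\Ptab M$ (which ensures the bumping row $r$ corresponding to $i_0$ is uniquely identified), one checks that $\JF(\nil|_H; \bar F) = \Ptab{\bar M}$ and $\JF(\nil|_H; \bar F') = \Qtab{\bar M}$.

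By the inductive hypothesis, the relative position of $(\bar F, \bar F')$ equals $\bar M$. A direct calculation with \cref{relative_position_properties}\ref{relative_position_properties3} shows that $\dim(F_i \cap F'_j) = \dim(\bar F_i \cap \bar F'_j)$ for $(i, j) \neq (i_0, l)$, while the codimension-one change at the $(i_0, l)$ corner precisely increases that matrix entry by one, so the relative position of $(F, F')$ must equal $M$. The main obstacle will be verifying the genericity of the auxiliary construction: that the set of $(F, F')$ for which such an $H$ exists with the desired properties forms a Zariski-open subset, and that the resulting $(\bar F, \bar F')$ lies Zariski-densely in the corresponding product of Spaltenstein components for $\nil|_H$. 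This reduces to a delicate dimension count comparing Spaltenstein's formulas for the irreducible components of $\PFl_\alpha^\nil$ and $\PFl_{\bar\alpha}^{\nil|_H}$ against the fibers of the natural map $(F, H) \mapsto \bar F$, and a careful analysis of how the column-insertion bumping sequence interacts with the choice of $r$.
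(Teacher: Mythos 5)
This theorem is cited in the paper as Rosso's result \cite[Theorem 4.1]{rosso12} and the paper includes no proof of its own, so there is no internal argument to compare against. Evaluating your proposal as a new proof: the inductive strategy of undoing the final column insertion is a sensible framework and is in the same spirit as Rosso's argument, but as written there are several real gaps.

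The central unjustified step is the assertion that $\JF(\nil|_H;\bar F)=\Ptab{\bar M}$. For $i\ge i_0$, intersecting $F_i$ with $H$ is a codimension-one cut, so by \cref{flag_to_tableau} the new partition $\JF(\nil|_{F_i\cap H})$ is $T^{(i)}$ with a single box removed from its south border; but which box is removed depends on $H$, and you must show it is exactly the box of $T^{(i)}/\Ptab{\bar M}^{(i)}$ prescribed by the reverse bumping path. The parenthetical remark that ``semistandardness ensures the bumping row $r$ is uniquely identified'' does not engage with this --- it only pins down where the new corner box of $\lambda/\bar\lambda$ sits, not what happens to the intermediate shapes under intersection with $H$. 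Establishing this is the heart of the proof and would require either a fresh genericity argument (that for generic $(F,H)$ in the constrained locus one obtains exactly $\Ptab{\bar M}$, and that the resulting $\bar F$ is moreover dense in the relevant component of $\PFl_{\bar\alpha}^{\nil|_H}$, which is what the inductive hypothesis needs) or a combinatorial lemma relating intersections with hyperplanes to the column-insertion bumping sequence.

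There is also a smaller issue of well-definedness: for $\bar F$ to be a nested flag you need $F_{i_0-1}\subseteq H$, since $\bar F_{i_0-1}=F_{i_0-1}$ while $\bar F_{i_0}=F_{i_0}\cap H$. This is not among the conditions you impose on $H$ and is not automatic when $i_0>1$. (It would follow from $F\relpos{M}F'$ together with $F'_{l-1}\subseteq H$, but the relative position is the conclusion, not a hypothesis.) Finally, you explicitly defer the dimension counting and the openness of the locus of admissible $(F,F')$; these are not incidental and are tangled with the issue above, since you need the constrained $(F,H,F')$ to dominate the product of closures of Spaltenstein components on which the inductive hypothesis is invoked. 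A minor slip: you write ``removing one box from column $r$'' where you mean row $r$ (the box at position $(r,\lambda_r)$).
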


We will establish a dual version of \cref{classical_burge_dual} in \cref{classical_burge_rank}, in which we fix a pair of partial flags $(F,F')$ and consider a generic nilpotent endomorphism $\nil$, rather than fixing $\nil$ and considering a generic pair $(F,F')$.
\begin{rmk}\label{classical_burge_history}
In the case that $M$ is a permutation matrix (so that $\Ptab{M}$ and $\Qtab{M}$ are standard tableaux), \cref{classical_burge_dual,classical_burge_rank} also give a description of the RSK correspondence, using \eqref{RSK_to_burge_standard}. This special case was established in more or less equivalent forms by Gansner \cite[Theorem 2.1]{gansner81} (cf.\ \cite[Theorem 3.2]{saks80}), Spaltenstein \cite[Proposition II.9.8]{spaltenstein82}, and Steinberg \cite{steinberg88} (cf.\ \cite[Remark 5.7(c)]{steinberg76}). We refer to \cite[Section 6]{britz_fomin01} for further discussion. Analogues of \cref{classical_burge_dual,classical_burge_rank} for the general RSK correspondence (rather than the Burge correspondence) have not appeared in the literature.
\end{rmk}

\subsection{From the \texorpdfstring{$q$}{q}-Burge correspondence to the classical Burge correspondence}\label{sec_burge_limit}
In this subsection, we will show that the classical Burge correspondence is the $q\to 0$ limit of the $q$-Burge correspondence:
\begin{thm}\label{burge_limit}
Let $M\in\Mats{\alpha}{\beta}$ and $(T,T')\in\Tabs{\alpha}{\beta}$. Then
$$
\lim_{q\to 0}\pr{M}{T,T'} = \lim_{q\to 0}\prdual{M}{T,T'} = \begin{cases}
1, & \text{ if $(T,T') = (\Ptab{M}, \Qtab{M})$}; \\
0, & \text{ otherwise},
\end{cases}
$$
where the limit is taken over all $q > 0$ such that $1/q$ is a prime power.
\end{thm}

\begin{eg}\label{eg_burge_limit}
Let $M := \scalebox{0.8}{$\begin{bmatrix}1 & 1 \\ 2 & 1\end{bmatrix}$}$, $T := \ytableausmall{1 & 1 & 2 & 2 \\ 2}$, and $T' := \ytableausmall{1 & 1 & 1 & 2 \\ 2}$, as in \cref{eg_reversibility}, and recall that $\pr{M}{T,T'} = 1-q$ and $\prdual{M}{T,T'} = (1 + q + q^2)^{-1}$. Note that
$$
\lim_{q\to 0}\pr{M}{T,T'} = \lim_{q\to 0}\prdual{M}{T,T'} = 1,
$$
so \cref{burge_limit} implies that $(\Ptab{M},\Qtab{M}) = (T,T')$ (which we can verify directly).
\end{eg}

Along the way to proving \cref{burge_limit}, we obtain a dual formulation of Rosso's description of the classical Burge correspondence (where a {\itshape minor of order $r$} of a matrix is the determinant of an $r\times r$ submatrix):
\begin{thm}\label{classical_burge_rank}
Let $M\in\mathbb{N}^{k\times l}$ have type $(\alpha,\beta)$, and let $\nil\in\Nil_M$. Recall that certain entries of $\nil$ are zero according to \cref{compatible_matrices_pair}\ref{compatible_matrices_pair_enumeration}; regard the remaining entries as indeterminates $x_1, \dots, x_m$, where $\Nil_M \cong \field^m$. Let $\mathcal{F} \subseteq \mathbb{Z}[x_1, \dots, x_m]$ denote the finite set of all nonzero minors of all powers of all submatrices of $\nil$ of the form $\nil|_{\Unit{\id}_i}$ (for $1 \le i \le k$) or $\nil|_{\Unit{\perm{M}}_j}$ (for $1 \le j \le l$), in which some monomial appears with coefficient $\pm 1$. Then if $x_1, \dots, x_m$ are evaluated at elements of $\field$ such that $f(x_1, \dots, x_m)\neq 0$ (in $\field$) for all $f\in\mathcal{F}$, we have
\begin{gather*}
\JF(\nil;\Unit{\id}) = \Ptab{M} \quad \text{ and } \quad \JF(\nil;\Unit{\perm{M}}) = \Qtab{M}.
\end{gather*}

\end{thm}

We now turn to proving \cref{burge_limit,classical_burge_rank}. In \cref{classical_burge_rank}, the point of considering polynomials in which some coefficient is $\pm 1$ is that such polynomials are nonzero modulo every prime. This will allow us to give a proof of \cref{burge_limit} which is independent of the characteristic of $\qinvF$ (and hence to take limits over all $q > 0$ such that $1/q$ is a prime power, rather than a power of a fixed prime). This is all made possible by the following result of Poljak \cite{poljak89}:
\begin{thm}[{Poljak \cite{poljak89}}]\label{combinatorial_rank}
Let $n\in\mathbb{N}$, and let $S\subseteq [n]^2$. Define the $n\times n$ matrix $A$ by
$$
A_{i,j} := \begin{cases}
x_{i,j}, & \text{ if $(i,j)\in S$}; \\
0, & \text{ otherwise}
\end{cases} \quad \text{ for } 1 \le i,j \le n,
$$
where $x_{i,j}$ is an indeterminate. Suppose that $d\in\mathbb{N}$ and $r\in [n]$ such that $A^d$ has a minor of order $r$ which is nonzero, regarded as an element of $\mathbb{Z}[x_{i,j} : (i,j) \in S]$. Then there exists a nonzero minor of $A^d$ of order at least $r$ in which some monomial appears with coefficient $\pm 1$.
\end{thm}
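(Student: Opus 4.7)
My plan is to track monomials by interpreting entries of $A^d$ combinatorially. Let $G = ([n], S)$ denote the directed graph with edge set $S$. Then
$$(A^d)_{i,j} = \sum_{v_0 = i,\hspace*{2pt} v_d = j} x_{v_0, v_1} x_{v_1, v_2} \cdots x_{v_{d-1}, v_d},$$
summed over all walks of length $d$ in $G$ from $i$ to $j$. Consequently, for $I = \{i_1 < \cdots < i_s\}$ and $J = \{j_1 < \cdots < j_s\}$ in $[n]$, the minor $\det((A^d)_{I,J})$ expands as a signed sum $\sum_{(\sigma, \mathcal{W})} \sgn(\sigma)\hspace*{2pt} m(\mathcal{W})$ over bijections $\sigma$ from $[s]$ to $[s]$ and walk systems $\mathcal{W} = (W_1, \ldots, W_s)$, where $W_\ell$ is a length-$d$ walk from $i_\ell$ to $j_{\sigma(\ell)}$ and $m(\mathcal{W})$ is the product of the edge variables traversed, with multiplicity. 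The coefficient of a fixed monomial $m$ in the minor is thus a signed count of walk systems realizing $m$.

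The key tool I would invoke is Poljak's identification of the generic rank of $A^d$, taken over the fraction field of $\mathbb{Z}[x_e : e \in S]$, with the \emph{combinatorial rank} $r^*$, defined as the largest $s$ such that some $s \times s$ minor of $A^d$ contains a monomial with coefficient $\pm 1$. Granting this identity, the hypothesis furnishes a minor of order $r$ that is nonzero as a polynomial, so the generic rank of $A^d$ is at least $r$; hence $r^* \geq r$, and by definition some minor of order $r^* \geq r$ carries a monomial of coefficient $\pm 1$, which is the desired conclusion.

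The real content is therefore proving Poljak's identity. One direction is easy: any minor containing a monomial with coefficient $\pm 1$ is a nonzero polynomial, so the combinatorial rank is at most the generic rank. The reverse direction, and the main obstacle, is to start with a nonzero minor of maximum order and produce a monomial — possibly in a different minor of the same order — whose coefficient survives all cancellations. The difficulty is that cancellations between permutations $\sigma$ and between distinct walk systems with the same edge multiset can, in principle, wipe out every candidate monomial. My approach would be to select an extremal walk system, say one using the fewest distinct edges or lexicographically minimal with respect to a fixed ordering on vertices, and then deploy an exchange argument showing that no other walk system with the same edge multiset pairs with it to yield an opposite sign. Poljak's original proof implements this strategy via a careful induction on the pattern $S$.
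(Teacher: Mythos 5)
The paper states this theorem as Poljak's result and cites \cite{poljak89} without reproducing a proof, so your proposal — which reduces the claim to Poljak's identity between generic rank and combinatorial rank and then defers to Poljak's original argument for the hard direction — matches the paper's treatment. Your combinatorial setup via walk systems, the minor expansion, and the easy inequality are all correct, but the crucial exchange/induction step for the reverse inequality is sketched rather than carried out, leaving your proposal in the same state as the paper: a correct reduction plus a citation to Poljak for the substance.
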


We will also need two technical results about counting points over finite fields:
\begin{lem}\label{nonzero_polynomials}
Let $f\in\field[x_1, \dots, x_m]$ be a nonzero polynomial, and for $1 \le i \le m$, let $d_i$ denote the degree of $f$ in the variable $x_i$. Suppose that $\field$ is finite, and $|\field| \ge d_i$ for all $1 \le i \le m$. Then
\begin{gather*}
|\{y\in\field^m : f(y)\neq 0\}| \ge \prod_{i=1}^m(|\field| - d_i).
\end{gather*}

\end{lem}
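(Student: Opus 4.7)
The plan is to prove this by induction on $m$, following the standard Schwartz--Zippel style argument. The key observation is that $\prod_{i=1}^m(|\field| - d_i)$ naturally factors as the product of one univariate count and an $(m-1)$-variable count, which matches the recursive structure of polynomial nonvanishing.

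For the base case $m = 1$, any nonzero polynomial of degree $d_1$ over a field has at most $d_1$ roots, so it has at least $|\field| - d_1$ non-roots, giving the claim. For the inductive step, I would write $f$ as a polynomial in $x_m$ with coefficients in $\field[x_1, \dots, x_{m-1}]$, namely $f = \sum_{j=0}^{d_m} g_j(x_1, \dots, x_{m-1})\hspace*{1pt} x_m^j$, where $g_{d_m}$ is nonzero (by definition of $d_m$ as the degree in $x_m$). Since the degree of $g_{d_m}$ in each variable $x_i$ is at most $d_i$, and $|\field| \ge d_i$ for all $i$, the inductive hypothesis (applied in $m-1$ variables) yields
\[
|\{(y_1, \dots, y_{m-1})\in\field^{m-1} : g_{d_m}(y_1, \dots, y_{m-1})\neq 0\}| \ge \prod_{i=1}^{m-1}(|\field| - d_i).
\]
For each such tuple $(y_1, \dots, y_{m-1})$, the specialization $f(y_1, \dots, y_{m-1}, x_m)$ is a univariate polynomial in $x_m$ of degree exactly $d_m$ (since its leading coefficient $g_{d_m}(y_1, \dots, y_{m-1})$ is nonzero), so it has at most $d_m$ roots, hence at least $|\field| - d_m$ values $y_m\in\field$ with $f(y_1, \dots, y_m)\neq 0$. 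Multiplying the two counts gives the desired bound.

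There is no real obstacle here: the argument is elementary and the hypothesis $|\field| \ge d_i$ is used only to ensure that $|\field| - d_i \ge 0$ so that the product on the right is nonnegative (when equality holds for some $i$, the bound is vacuous). The only thing to be careful about is that the degree of $g_{d_m}$ in $x_i$ could be strictly less than $d_i$, but this only strengthens the inductive hypothesis and does not affect the bound. So a short, two-paragraph proof by induction suffices.
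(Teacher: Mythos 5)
Your proof is correct and takes the same approach as the paper, which simply states ``This follows by induction on $m$'' without elaboration; your Schwartz--Zippel-style induction on the last variable, isolating the leading coefficient $g_{d_m}$ and multiplying the two counts, is the standard and evidently intended way to fill in that induction.
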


\begin{proof}
This follows by induction on $m$.
\end{proof}

\begin{lem}\label{lang_weil}
Let $p$ be a prime, let $X$ be an irreducible variety defined over $\overline{\pF{p}}$, and let $Y$ be a nonempty Zariski-open subset of $X$. Then
\begin{align}\label{lang_weil_limit}
\lim_{q\to 0}\frac{|Y(\qinvF)|}{|X(\qinvF)|} = 1,
\end{align}
where the limit is taken over all $q > 0$ (of which there are infinitely many) such that $1/q$ is a power of $p$, and $X$ and $Y$ are both defined over $\qinvF$.
\end{lem}

\begin{proof}
Note that since $\overline{\pF{p}} = \bigcup_{i\ge 1}\pF{p^i}$, there indeed exist infinitely-many powers $1/q$ of $p$ such that $X$ and $Y$ are both defined over $\qinvF$. We assume $1/q$ is of this form. Let $d$ denote the dimension of $X$. Since $X$ is irreducible, $X\setminus Y$ has dimension at most $d-1$. By the Lang--Weil estimate \cite{lang_weil54}, we get
$$
|X(\qinvF)| = q^{-d} + O(q^{-d + \frac{1}{2}}) \quad \text{ and } \quad |(X\setminus Y)(\qinvF)| = O(q^{-d+1}) \quad \text{ as } q \to 0.
$$
Hence
$$
\frac{|Y(\qinvF)|}{|X(\qinvF)|} = 1 - \frac{|(X\setminus Y)(\qinvF)|}{|X(\qinvF)|} \to 1 \quad \text{ as } q \to 0,
$$
as desired.
\end{proof}

\begin{rmk}\label{lang_weil_remark}
We will apply \cref{lang_weil} in the setting of \cref{classical_burge_dual}. In this case, one can obtain the limit \eqref{lang_weil_limit} more directly (without relying on the Lang--Weil estimate), by using \cref{nonzero_polynomials} and the fact that  $\PFl_\alpha^\nil$ admits an affine paving \cite[Theorem 1]{fresse16}.
\end{rmk}

\begin{proof}[Proof of \cref{burge_limit,classical_burge_rank}]
We begin by showing that \cref{classical_burge_rank,burge_limit} hold with $(\Ptab{M},\Qtab{M})$ replaced by some pair of tableaux $(T_M,T_M')$. We will then show that $(T_M,T_M') = (\Ptab{M},\Qtab{M})$.

We adopt the setup of \cref{classical_burge_rank}. Note that $\JF(\nil;\Unit{\id})$ and $\JF(\nil;\Unit{\perm{M}})$ are determined by the Jordan form partitions of $\nil|_{\Unit{\id}_i}$ (for $1 \le i \le k$) and $\nil|_{\Unit{\perm{M}}_j}$ (for $1 \le j \le l$), respectively. In turn, by \cref{matrix_to_partition}, the Jordan form partition of a nilpotent matrix is determined by the ranks of its powers. Finally, the rank of a matrix is the maximum order of a nonzero minor. Therefore by \cref{combinatorial_rank}, $(\JF(\nil;\Unit{\id}),\JF(\nil;\Unit{\perm{M}}))$ is the same (for all fields $\field$) for all evaluations of $\nil$ over $\field$ such that every polynomial in $\mathcal{F}$ is nonzero; we denote it by $(T_M,T_M') \in \Tabs{\alpha}{\beta}$.

For \cref{burge_limit}, note that by \eqref{reversibility_equation}, we have $\lim_{q\to 0}\pr{M}{T,T'} = \lim_{q\to 0}\prdual{M}{T,T'}$ (assuming at least one of the limits exists). By the previous paragraph and \cref{nonzero_polynomials} (using the fact that every polynomial in the set $\mathcal{F}$ defined in \cref{classical_burge_rank} is nonzero modulo every prime), we obtain
\begin{align}\label{burge_limit_proof}
\lim_{q\to 0}\pr{M}{T,T'} = \lim_{q\to 0}\prdual{M}{T,T'} = \begin{cases}
1, & \text{ if $(T,T') = (T_M, T_M')$}; \\
0, & \text{ otherwise},
\end{cases}
\end{align}
where the limit is taken over all $q > 0$ such that $1/q$ is a prime power.

It remains to show that $(T_M,T_M') = (\Ptab{M},\Qtab{M})$. To this end, fix a prime $p$, and let $Y$ denote the nonempty Zariski-open subset of $X := \overline{\PFl_\alpha^{\nil,\Ptab{M}}}\times\overline{\PFl_\beta^{\nil,\Qtab{M}}}$ provided by \cref{classical_burge_dual} with $\field = \overline{\pF{p}}$. Then by \cref{probabilities_interpretation}\ref{probabilities_interpretation_backward}, we have
$$
\prdual{M}{\Ptab{M},\Qtab{M}} \ge \frac{|Y(\qinvF)|}{|X(\qinvF)|} \quad \text{ for all powers $1/q$ of $p$}.
$$
By \cref{lang_weil}, we get $\lim_{q\to 0}\prdual{M}{\Ptab{M},\Qtab{M}} = 1$, where the limit is taken over infinitely-many powers $1/q$ of $p$. Comparing this with \eqref{burge_limit_proof} gives $(T_M,T_M') = (\Ptab{M},\Qtab{M})$.
\end{proof}

We also point out that the transpose symmetry of the classical Burge correspondence (\cref{classical_burge_transpose}) extends to the $q$-Burge correspondence:
\begin{prop}\label{transpose_symmetry}
Let $M\in\Mats{\alpha}{\beta}$ and $(T,T')\in\Tabs{\alpha}{\beta}$. Then
\begin{gather*}
\pr{M}{T,T'} = \pr{\transpose{M}}{T',T} \quad \text{ and } \quad \prdual{M}{T,T'} = \prdual{\transpose{M}}{T',T}.
\end{gather*}

\end{prop}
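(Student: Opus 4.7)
The key observation is that swapping the two partial flags in a triple gives a natural involution. Specifically, I plan to consider the map
\begin{equation*}
\sigma : \triples{\alpha,\beta} \to \triples{\beta,\alpha}, \quad (F,F',\nil) \mapsto (F',F,\nil),
\end{equation*}
which is clearly a bijection (in fact an involution, after identifying $\triples{\alpha,\beta}$ and $\triples{\beta,\alpha}$). By \cref{relative_position_invariance}\ref{relative_position_invariance_inverse}, we have $F\relpos{M}F'$ if and only if $F'\relpos{\transpose{M}}F$, so $\sigma$ restricts to a bijection $\triples{M}\to\triples{\transpose{M}}$. Moreover, since $\JF(\nil;F)=T$ and $\JF(\nil;F')=T'$ for the original triple becomes $\JF(\nil;F')=T'$ and $\JF(\nil;F)=T$ for the swapped triple, $\sigma$ restricts further to a bijection
\begin{equation*}
\triples[T,T']{M} \to \triples[T',T]{\transpose{M}}.
\end{equation*}

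The first step is to record the three resulting equalities of cardinalities:
\begin{equation*}
|\triples{M}| = |\triples{\transpose{M}}|, \quad |\triples[T,T']{M}| = |\triples[T',T]{\transpose{M}}|, \quad |\triples[T,T']{\alpha,\beta}| = |\triples[T',T]{\beta,\alpha}|.
\end{equation*}
Each of these is simply an application of $\sigma$ with the appropriate constraints preserved.

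The second step is to substitute directly into the definitions from \cref{defn_probabilities}. For the forward probabilities,
\begin{equation*}
\pr{M}{T,T'} = \frac{|\triples[T,T']{M}|}{|\triples{M}|} = \frac{|\triples[T',T]{\transpose{M}}|}{|\triples{\transpose{M}}|} = \pr{\transpose{M}}{T',T},
\end{equation*}
and analogously for $\prdualnoarg$ using the second and third cardinality equalities. There is no real obstacle here; the proposition is essentially a bookkeeping consequence of the fact that the definition of a compatible triple is symmetric in the two flags, together with the symmetry $M\leftrightarrow\transpose{M}$ of relative position already established in \cref{relative_position_invariance}\ref{relative_position_invariance_inverse}. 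The only subtlety to double-check is that the definitions of $\pr{\cdot}{\cdot,\cdot}$ and $\prdual{\cdot}{\cdot,\cdot}$ do not secretly distinguish the two flag positions (e.g.\ through an asymmetric choice such as using $\Unit{\id}\in\PFl_\alpha$ rather than $\Unit{\perm{M}}\in\PFl_\beta$); but the intrinsic formulations in \cref{defn_probabilities} (as ratios of cardinalities of compatibility sets) are manifestly symmetric under $\sigma$, so nothing more is needed.
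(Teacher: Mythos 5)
Your proof is correct and is exactly the argument the paper has in mind: the paper's own proof is the one-liner ``This follows from \cref{relative_position_invariance}\ref{relative_position_invariance_inverse},'' and what you have written is precisely the unpacking of that statement via the flag-swapping involution $\sigma$ and the resulting cardinality equalities.
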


\begin{proof}
This follows from \cref{relative_position_invariance}\ref{relative_position_invariance_inverse}.
\end{proof}

\subsection{Polynomiality of the \texorpdfstring{$q$}{q}-Burge probabilities}\label{sec_polynomiality}
In this subsection, we discuss the properties of the forward probabilities $\pr{M}{T,T'}$, regarded as functions of $q$. We have analogous considerations for the backward probabilities $\prdual{M}{T,T'}$. Given the direct relation \eqref{reversibility_equation}, it suffices to consider only the forward probabilities, and we make this choice because the backward probabilities are not always polynomials in $q$ (see \cref{eg_probabilities_backward}).

We present the following significant open problems. We anticipate that parts \ref{probabilities_problem_polynomial} and \ref{probabilities_problem_nonnegative} have positive answers.
\begin{prob}\label{probabilities_problem}
Let $M\in\Mats{\alpha}{\beta}$ and $(T,T')\in\Tabs{\alpha}{\beta}$.
\begin{enumerate}[label=(\roman*), leftmargin=*]
\item\label{probabilities_problem_polynomial} Is $\pr{M}{T,T'}$ a polynomial function of $q$, i.e., does there exist $f\in\mathbb{Q}[x]$ such that whenever $1/q$ is a prime power and $\field = \qinvF$, we have $\pr{M}{T,T'} = f(q)$? In this case, does $f$ have integer coefficients?
\item\label{probabilities_problem_nonnegative} In the case that $\pr{M}{T,T'}$ is a polynomial function of $q$, is this polynomial either identically zero, or positive for all $q\in (0,1)$?
\item\label{probabilities_problem_combinatorics} Find a combinatorial interpretation of $\pr{M}{T,T'}$.
\end{enumerate}
\end{prob}

Using an argument of Reineke \cite{reineke06}, we show that if $\pr{M}{T,T'}$ is a rational function of $q$, then it is a polynomial with rational coefficients.
\begin{lem}[{cf.\ \cite[Proposition 6.1]{reineke06}}]\label{rational_to_polynomial}
Let $M\in\Mats{\alpha}{\beta}$ and $(T,T')\in\Tabs{\alpha}{\beta}$. Suppose that there exists a rational function $f\in\mathbb{Q}(x)$ such that whenever $1/q$ is a prime power and $\field = \qinvF$, we have $\pr{M}{T,T'} = f(q)$. Then $f\in\mathbb{Q}[x]$.
\end{lem}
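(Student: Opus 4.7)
The plan is to reduce the statement to a classical fact about rational functions taking integer values. By Corollary \ref{forward_probabilities_concrete} together with Proposition \ref{compatible_matrices_pair}\ref{compatible_matrices_pair_enumeration}, if we set
\[
d \;:=\; \sum_{1 \le i < i' \le k,\; 1 \le j < j' \le l} M_{i,j}M_{i',j'},
\]
then $|\Nil_M| = q^{-d}$, and so
\[
q^{-d}\,\pr{M}{T,T'} \;=\; \bigl|\{\nil \in \Nil_M : \JF(\nil;\Unit{\id}) = T \text{ and } \JF(\nil;\Unit{\perm{M}}) = T'\}\bigr|
\]
is a nonnegative integer whenever $1/q$ is a prime power.

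Substituting $v := 1/q$, I would set $g(v) := v^d f(1/v)$, so that $g \in \mathbb{Q}(v)$ and, by the display above, $g$ takes values in $\mathbb{Z}_{\ge 0}$ at every prime-power value of $v$. The key input is then the classical lemma that a rational function in $\mathbb{Q}(v)$ that takes integer values on an infinite set of positive integers must lie in $\mathbb{Q}[v]$. To prove it, perform polynomial division to write $g = R + S/Q$ with $R, Q, S \in \mathbb{Q}[v]$ and $\deg S < \deg Q$, choose a positive integer $D$ clearing the denominators of the coefficients of $R$, and note that $D\,S(v)/Q(v) = D\,g(v) - D\,R(v) \in \mathbb{Z}$ at infinitely many positive integer $v$; since $D\,S(v)/Q(v) \to 0$ as $v \to \infty$, it must vanish for all sufficiently large such $v$, forcing $S \equiv 0$ and hence $g \in \mathbb{Q}[v]$.

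Applying this lemma to our $g$ yields $g \in \mathbb{Q}[v]$. The remaining step is a degree bound: since $\pr{M}{T,T'} \in [0,1]$ at every prime power $1/q$, we have $0 \le g(v) \le v^d$ at every prime-power value of $v$, and letting $v \to \infty$ through prime powers forces $\deg g \le d$. Writing $g(v) = \sum_{i=0}^{d} a_i v^i$ with $a_i \in \mathbb{Q}$ then gives
\[
f(q) \;=\; q^d\, g(1/q) \;=\; \sum_{i=0}^{d} a_i\, q^{d-i} \;\in\; \mathbb{Q}[q],
\]
which is the desired conclusion. There is no serious obstacle in this plan; the only nontrivial ingredient is the classical rational-function lemma, and the degree bound coming from $\pr{M}{\cdot} \le 1$ is precisely what turns what would a priori be a Laurent polynomial in $q$ into an honest polynomial.
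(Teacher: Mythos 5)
Your proof is correct and is essentially the paper's argument, recast. The paper also defines $g(x) := x^d f(1/x)$ with $d = \sum_{1 \le i < i' \le k,\, 1 \le j < j' \le l} M_{i,j}M_{i',j'}$, observes via \cref{forward_probabilities_concrete} and \cref{compatible_matrices_pair}\ref{compatible_matrices_pair_enumeration} that $g(1/q)$ is a nonnegative-integer count, and then proves $g \in \mathbb{Q}[x]$ before using $\pr{M}{T,T'} \in [0,1]$ to rule out negative powers in $f$. The only real difference is in how one shows $g \in \mathbb{Q}[x]$: the paper follows Reineke's phrasing, writing $g = \frac{1}{m}a + \frac{b}{c}$ with $a,b,c \in \mathbb{Z}[x]$, $\deg b < \deg c$, and deriving a contradiction from $0 < |mb(1/q)/c(1/q)| < 1$ for $1/q$ large; you instead invoke the standard division-plus-decay argument ($g = R + S/Q$, clear denominators of $R$, note $DS(v)/Q(v)$ is an integer tending to $0$, hence eventually $0$, hence $S \equiv 0$). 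These are the same idea, and your version is perhaps slightly cleaner in that it isolates the classical lemma as a black box. Your closing degree bound $\deg g \le d$ via $0 \le g(v) \le v^d$ is a perfectly good substitute for the paper's observation that $f$ is a Laurent polynomial bounded on $[0,1]$ as $q \to 0$, so no negative powers of $q$ can appear; both hinge on the same input, $\pr{M}{T,T'} \in [0,1]$.
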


\begin{proof}
Define $g\in\mathbb{Q}(x)$ by
$$
g(x) := x^{\sum_{1 \le i < i' \le k,\hspace*{1pt} 1 \le j < j' \le l}M_{i,j}M_{i',j'}}f(1/x),
$$
where $M\in\mathbb{N}^{k\times l}$. Note that by \cref{forward_probabilities_concrete,compatible_matrices_pair}\ref{compatible_matrices_pair_enumeration}, we have
\begin{align}\label{rational_to_polynomial_equation}
g(1/q) = |\{\nil \in \Nil_M :  \JF(\nil;\Unit{\id}) = T \text{ and } \JF(\nil;\Unit{\perm{M}}) = T'\}|
\end{align}
whenever $1/q$ is a prime power and $\field = \qinvF$. We claim that it suffices to show that $g(x)\in\mathbb{Q}[x]$. Indeed, in this case, $f(x)$ is a Laurent polynomial in $x$. Since $f(q) \in [0,1]$ whenever $1/q$ is a prime power, by taking $1/q$ sufficiently large, we see that $f(x)$ is a polynomial in $x$.

Now we show that $g(x)\in\mathbb{Q}[x]$, following \cite[Proof of Proposition 6.1]{reineke06}. Write $g(x) = \frac{a(x)}{b(x)}$, where $a(x), b(x)\in\mathbb{Q}[x]$. By the Euclidean algorithm, we can write $a(x) = b(x)c(x) + d(x)$, where $c(x),d(x)\in\mathbb{Q}[x]$ and $\deg(d) < \deg(b)$. Then 
$$
g(x) = c(x) + \frac{d(x)}{b(x)}.
$$
We must show that $d = 0$. Suppose otherwise that $d\neq 0$. Take $m\in\mathbb{Z}_{>0}$ so that $mc(x)\in\mathbb{Z}[x]$, and take $1/q$ to be a prime power sufficiently large that
$$
0 < m\left|\frac{d(1/q)}{b(1/q)}\right| < 1.
$$
Then $m(g(1/q) - c(1/q))$ is not an integer, contradicting \eqref{rational_to_polynomial_equation}.
\end{proof}

\begin{rmk}\label{rank_conditions_remark}
By a similar argument as in the proof of \cref{classical_burge_rank}, we can express $\pr{M}{T,T'}$ (up to a power of $q$) in terms of sizes of affine algebraic sets over $\qinvF$. However, this is not sufficient to conclude that $\pr{M}{T,T'}$ is a polynomial in $q$. For example, the equation $x^2 - y^2 = 0$ has $2q^{-1}-1$ solutions in $\qinvF^2$ if $1/q$ is odd, and $q^{-1}$ solutions if $1/q$ is even.
\end{rmk}

\begin{rmk}\label{polynomial_extension}
Let $\alpha,\beta\models n$. We point out that if the forward probability $\pr{M}{T,T'}$ is a polynomial function of $q$ for all $M\in\Mats{\alpha}{\beta}$ and $(T,T')\in\Tabs{\alpha}{\beta}$ (whence the backward probability $\prdual{M}{T,T'}$ is a rational function of $q$ by \eqref{reversibility_equation}), then $(\prnoarg,\prdualnoarg)$ gives a probabilistic bijection from $(\Mats{\alpha}{\beta},\wtf)$ to $(\Tabs{\alpha}{\beta},\wtfdual)$ for all values of $q$. This follows from \cref{reversibility,defn_probabilistic_bijection}, using \cref{remark_infinitely_many}. (The polynomiality assumption is necessary in order to define $(\prnoarg,\prdualnoarg)$ when $1/q$ is not a prime power.)
\end{rmk}

\begin{rmk}\label{decomposition_remark}
We point out that \cref{probabilities_problem}\ref{probabilities_problem_nonnegative} has an interesting reformulation, due to the following result of Bernstein \cite{bernstein15}. Namely, let $f\in\mathbb{Q}[x]$. Then either $f$ is identically zero or $f(q)$ is positive for all $q\in (0,1)$ if and only if $f(x)$ is a sum of terms of the form $cx^a(1-x)^b$, where $a,b\in\mathbb{N}$ and $c \in \mathbb{Q}_{>0}$. Moreover, if $f\in\mathbb{Z}[x]$, then we may require that $c\in\mathbb{Z}_{>0}$ above. This suggests an approach to solving \cref{probabilities_problem}, by constructing a decomposition
$$
\{\nil \in \Nil_M :  \JF(\nil;\Unit{\id}) = T \text{ and } \JF(\nil;\Unit{\perm{M}}) = T'\} \cong \bigsqcup_{i=1}^d \big(\field^{a_i}\times (\field^\times)^{b_i}\big)
$$
for some $a_1, \dots, a_d, b_1, \dots, b_d\in\mathbb{N}$.
\end{rmk}

\begin{rmk}\label{limit_1}
We have described the $q\to 0$ limit of the $q$-Burge correspondence in \cref{sec_burge_limit}. Here we consider the $q\to 1$ limit. Let $M\in\Mats{\alpha}{\beta}$ and $(T,T')\in\Tabs{\alpha}{\beta}$, where $\alpha,\beta\models n$, and suppose that $\pr{M}{T,T'}$ is a polynomial function of $q$. Then we anticipate that the evaluation of this polynomial at $q=1$ equals $1$ if $(T,T') = (\tilde{T}, \tilde{T}')$, and equals zero otherwise, where $\tilde{T}$ and $\tilde{T}'$ denote the unique tableaux of shape $(n)$ with contents $\alpha$ and $\beta$, respectively.

We point out that this would follow from the probabilities $\pr{M}{T,T'}$ being nonnegative at $q=1$. To see this, note that by \cref{polynomial_extension}, $(\prnoarg,\prdualnoarg)$ gives a probabilistic bijection from $(\Mats{\alpha}{\beta},(1-q)^n\wtf)$ to $(\Tabs{\alpha}{\beta},(1-q)^n\wtfdual)$ at $q=1$ (where we have rescaled the weights by $(1-q)^n$ so that they are defined at $q=1$). Observe that $(1-q)^n\wtf(M)$ is nonzero at $q=1$ for all $M\in\Mats{\alpha}{\beta}$, while $(1-q)^n\wtfdual(T,T')$ is zero at $q=1$ for all $(T,T')\in\Tabs{\alpha}{\beta}$ with $(T,T') \neq (\tilde{T}, \tilde{T}')$. By \ref{defn_probabilistic_bijection_extra}, we see that if $\prnoarg$ is always nonnegative at $q=1$, then $\pr{M}{T,T'}$ is always zero for all $(T,T')\in\Tabs{\alpha}{\beta}$ with $(T,T') \neq (\tilde{T}, \tilde{T}')$. Then \ref{defn_probabilistic_bijection_forward} implies that $\pr{M}{\tilde{T}, \tilde{T}'}$ is always $1$.
\end{rmk}

We are able to address \cref{probabilities_problem} when $\alpha = \beta$ and $M$ is a diagonal matrix. This case is tractable because it reduces to enumerating pairs of partial flags $(F,F')$ with $F = F'$. We then use this result to prove \cref{whittaker_expansion_dual}; in fact, the two results are equivalent. See \cref{whittaker_dual_remark} for a discussion of related work.
\begin{prop}\label{probabilities_diagonal}
Set $\field := \qinvF$. Let $\lambda\vdash n$ and $\alpha = (\alpha_1, \dots, \alpha_k)\models n$, and let $M\in\mathbb{N}^{k\times k}$ be the diagonal matrix with diagonal entries $\alpha_1, \dots, \alpha_k$. Then for $T,T'\in\SSYT(\lambda,\alpha)$, we have $\pr{M}{T,T'} = \prdual{M}{T,T'} = 0$ if $T\neq T'$, and
\begin{gather*}
\pr{M}{T,T} =  \frac{q^{\ncon{\lambda} - \ncon{\alpha}}(1-q)^{n-\lambda_1}\prod_{i\ge 1}\qfac{\alpha_i}}{\prod_{i\ge 1}\qfac{\lambda_i - \lambda_{i+1}}}\qwt(T) \quad \text{ and } \quad \prdual{M}{T,T} = \frac{q^{\ncon{\lambda}-\ncon{\alpha}}}{\qwt(T)}.
\end{gather*}
In particular, $\pr{M}{T,T'}$ is a polynomial in $q$ with integer coefficients which is nonnegative for all $q\in [0,1]$.
\end{prop}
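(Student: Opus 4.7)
The plan is to exploit the fact that when $M$ is the diagonal matrix with entries $\alpha_1, \ldots, \alpha_k$, the relative-position condition $F \relpos{M} F'$ forces $F = F'$, reducing the double enumeration to a single partial flag variety. Specifically, I would apply Proposition \ref{relative_position_properties}\ref{relative_position_properties3}: for any $F, F' \in \PFl_\alpha$ with $F \relpos{M} F'$, we get $\dim(F_i \cap F'_i) = \alpha_1 + \cdots + \alpha_i = \dim F_i = \dim F'_i$ for every $i$, hence $F_i = F'_i$ and $F = F'$. Consequently every $(F, F', \nil) \in \triples{M}$ satisfies $F = F'$, so $\JF(\nil; F) = \JF(\nil; F')$, and thus $\triples[T,T']{M} = \emptyset$ when $T \ne T'$. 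This immediately gives $\pr{M}{T,T'} = 0$, and the reversibility equation in Theorem \ref{reversibility} then yields $\prdual{M}{T,T'} = 0$ as well.

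For $T = T'$, I would compute $\pr{M}{T,T} = |\triples[T,T]{M}|/|\triples{M}|$ by double-counting. Lemma \ref{matrices_count} gives $|\triples{M}| = q^{-n^2 + n + 2\ncon{\alpha}} \qbinom{n}{\alpha}$ (noting $\qbinom{n}{M} = \qbinom{n}{\alpha}$ because $M$ has one nonzero entry per row and column). For the numerator, I would group triples by $\nil$: by Theorem \ref{whittaker_expansion}\ref{whittaker_expansion_tableau}, for each fixed nilpotent $\nil$ with $\JF(\nil) = \lambda$ there are $q^{-\ncon{\lambda} + \ncon{\alpha}} \qwt(T)$ strictly compatible partial flags $F \in \PFl_\alpha$ with $\JF(\nil; F) = T$, and by Lemma \ref{jordan_form_count} the total number of such $\nil$ is $\frac{q^{-n^2+n+2\ncon{\lambda}}(1-q)^{n-\lambda_1}\qfac{n}}{\prod_i \qfac{\lambda_i - \lambda_{i+1}}}$. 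Taking the ratio and simplifying via $\qbinom{n}{\alpha} = \qfac{n}/\prod_i \qfac{\alpha_i}$ yields the stated formula for $\pr{M}{T,T}$. The backward probability $\prdual{M}{T,T} = q^{\ncon{\lambda}-\ncon{\alpha}}/\qwt(T)$ then drops out of Theorem \ref{reversibility} by substituting $\wtf(M) = (1-q)^{-n}/\prod_i \qfac{\alpha_i}$ and $\wtfdual(T,T) = (1-q)^{-\lambda_1}\qwt(T)^2/\prod_i \qfac{\lambda_i-\lambda_{i+1}}$ into the reversibility identity.

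For the polynomiality assertion, I would combine the formula with Proposition \ref{whittaker_weight_dual}. Peeling off the $i = 1$ factors (which involve $\qbinom{\infty}{\cdot}$ and contribute a cancelling $(1-q)^{-\lambda_1}$) produces
\begin{equation*}
\pr{M}{T,T} = q^{\ncon{\lambda}-\ncon{\alpha}}(1-q)^{n-\lambda_1} \cdot \frac{\prod_i \qfac{\alpha_i}}{\prod_{j \ge 1} \qfac{T^{(j)}_1 - T^{(j-1)}_1}} \cdot \prod_{i \ge 2,\hspace*{1pt} j \ge 1} \qbinomdisplay{T^{(j-1)}_{i-1} - T^{(j-1)}_i}{T^{(j)}_i - T^{(j-1)}_i}.
\end{equation*}
Existence of $T \in \SSYT(\lambda, \alpha)$ forces $\lambda$ to dominate the sorted rearrangement of $\alpha$, so $\ncon{\lambda} \ge \ncon{\alpha}$ by Schur-convexity of $\sum \binom{x_i}{2}$; the inequality $T^{(j)}_1 - T^{(j-1)}_1 \le \alpha_j$ makes each ratio of $q$-factorials a product of $q$-numbers $\qn{m}$; and $n \ge \lambda_1$. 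Every factor is then a polynomial in $q$ with integer coefficients that is nonnegative on $[0,1]$, so the product is too. I expect the main obstacle to be the bookkeeping in the telescoping simplification of the second paragraph, which requires carefully tracking exponents of $q$ and $(1-q)$ through several applications of Lemma \ref{n_formula}; once the clean closed form is in hand, the polynomiality verification is a routine factor-by-factor check.
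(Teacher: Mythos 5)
Your proposal is correct, and the underlying ideas match the paper's proof exactly: the key observation that the diagonal $M$ forces $F = F'$ (via \cref{relative_position_properties}\ref{relative_position_properties3}), the use of \cref{whittaker_expansion}\ref{whittaker_expansion_tableau} to count flags $F$ with $\JF(\nil;F)=T$, and passage between $\prnoarg$ and $\prdualnoarg$ via the reversibility identity \eqref{reversibility_equation}. The only meaningful difference is the order: the paper computes $\prdual{M}{T,T}$ first via \cref{probabilities_interpretation}\ref{probabilities_interpretation_backward}, where the fixed $\nil$ makes the answer immediate (it is one over the number of $F'$ with $\JF(\nil;F')=T$, i.e.\ $q^{\ncon{\lambda}-\ncon{\alpha}}/\qwt(T)$, with no need to count nilpotents or use \cref{matrices_count}); you instead compute $\pr{M}{T,T}$ first, which requires additionally invoking \cref{matrices_count} and \cref{jordan_form_count}. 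Your route is correct but slightly longer. For polynomiality, the paper cites \cref{rational_to_polynomial} plus Gauss's lemma, noting the \cref{whittaker_weight_dual}-based argument as an alternative; you flesh out that alternative explicitly, and your three inequalities ($\ncon{\lambda}\ge\ncon{\alpha}$ by dominance and Schur-convexity, $T^{(j)}_1 - T^{(j-1)}_1 \le \alpha_j$, and $n\ge\lambda_1$) are the right ones and do carry the argument.
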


\begin{proof}
First we prove the formulas for $\prdual{M}{T,T'}$. Let $\nil\in\gl_n$ be nilpotent with $\JF(\nil) = \lambda$. By \cref{probabilities_interpretation}\ref{probabilities_interpretation_backward}, $\prdual{M}{T,T'}$ equals the probability that for a uniformly random pair of partial flags $(F,F')\in\PFl_\alpha\times\PFl_\alpha$ which are both strictly compatible with $\nil$ and satisfy $\JF(\nil;F) = T$ and $\JF(\nil;F') = T'$, we have $F=F'$. This probability equals $0$ if $T\neq T'$, and by \cref{whittaker_expansion}\ref{whittaker_expansion_tableau}, it equals $\frac{q^{\ncon{\lambda}-\ncon{\alpha}}}{\qwt(T)}$ if $T = T'$. The formulas for $\pr{M}{T,T'}$ then follow from \eqref{reversibility_equation}. Finally, the fact that $\pr{M}{T,T'}$ is a polynomial in $q$ with integer coefficients follows from \cref{rational_to_polynomial} and Gauss's lemma (or alternatively, by \cref{whittaker_weight_dual}), and it is manifestly nonnegative for all $q\in [0,1]$.
\end{proof}

\begin{proof}[Proof of \cref{whittaker_expansion_dual}]
We prove part \ref{whittaker_expansion_dual_tableau}, whence part \ref{whittaker_expansion_dual_coefficient} follows by summing over all $T\in\SSYT(\lambda,\alpha)$. Let $M$ be the diagonal matrix of type $(\alpha,\alpha)$. Then $\Nil_M = \Nil_\alpha$, and by \cref{forward_probabilities_concrete}, we have
$$
\pr{M}{T,T} = \frac{|\{\nil \in \Nil_\alpha :  \JF(\nil;\Unit{\id}) = T\}|}{|\Nil_\alpha|}.
$$
We have $|\Nil_\alpha| = q^{-e_2(\alpha)} = q^{-\binom{n}{2} + \ncon{\alpha}}$, by \cref{n_formula}. The result then follows from \cref{probabilities_diagonal}.
\end{proof}

\subsection{Growth diagrams and the absence of local rules}\label{sec_growth_diagrams}
In this subsection we define growth diagrams for the $q$-Burge correspondence, and give an example to demonstrate that they are {\itshape not} characterized by local growth rules.\footnote{We thank Gabriel Frieden for providing us with this example.} This property distinguishes our $q$-Burge correspondence from insertion-based generalizations of the RSK and Burge correspondences which have been studied in the literature (cf.\ \cref{other_bijections}), which are all characterized by local growth rules.
\begin{defn}\label{defn_growth_diagram}
Let $\alpha = (\alpha_1, \dots, \alpha_k), \beta = (\beta_1, \dots, \beta_l) \models n$. Given $(F,F',\nil)\in\triples{\alpha,\beta}$, we define the {\itshape growth diagram} $\growth{F,F',\nil}$ to be the $(k+1) \times (l+1)$ matrix of partitions defined by
$$
\growth{F,F',\nil}_{i,j} := \JF(\nil|_{F_i\cap F'_j}) \quad \text{ for all } 0 \le i \le k \text{ and } 0 \le j \le l.
$$
Note that the last column of $\growth{F,F',\nil}$ precisely determines $\JF(\nil;F)$, and the last row precisely determines $\JF(\nil;F')$. Now set $\field := \qinvF$, and let $M\in\Mats{\alpha}{\beta}$. We regard $\growth{F,F',\nil}$ for $(F,F',\nil)\in\triples{M}$ as a random variable, denoted $\growth{M}$, where $\triples{M}$ has the uniform distribution. Equivalently, by \cref{compatible_matrices_adjoint}, \cref{relative_position_invariance}\ref{relative_position_invariance_action}, and \cref{compatible_matrices_pair}\ref{compatible_matrices_pair_standard}, the distribution of $\growth{M}$ is that of $\growth{\Unit{\id},\Unit{\perm{M}},\nil}$ for $\nil\in\Nil_M$, where $\Nil_M$ has the uniform distribution.
\end{defn}

\begin{eg}\label{eg_growth_diagram}
Let $M\in\mathbb{N}^{4\times 4}$ be the permutation matrix of $3412\in\Sym_4$. We can write an arbitrary element $\nil\in\Nil_M$ as
$$
\nil = \begin{bmatrix}
0 & a & 0 & 0 \\
0 & 0 & 0 & 0 \\
0 & 0 & 0 & b \\
0 & 0 & 0 & 0
\end{bmatrix}, \quad \text{ where } a,b\in\field.
$$
Then the growth diagram $\growth{\Unit{\id},\Unit{3412},\nil}$ is
$$
\hspace*{4pt}\begin{blockarray}{c>{\hspace*{2pt}}ccccc<{\hspace*{2pt}}}
& 0 & \spn{\unit{3}} & \spn{\unit{3}, \unit{4}} & \spn{\unit{3}, \unit{4}, \unit{1}} & \field^4 \\[8pt]
\begin{block}{c[>{\hspace*{2pt}}ccccc<{\hspace*{2pt}}]}
\centering 0 & \emptytab & \emptytab & \emptytab & \emptytab & \emptytab \bigstrut[t]\\[10pt]
\spn{\unit{1}} & \emptytab & \emptytab & \emptytab & \ydiagramsmall{1} & \ydiagramsmall{1} \\[12pt]
\spn{\unit{1}, \unit{2}} & \emptytab & \emptytab & \emptytab & \ydiagramsmall{1} & \JF\hspace*{-2pt}\Big(\scalebox{0.8}{$\begin{bmatrix}0 & a \\ 0 & 0\end{bmatrix}$}\Big) \\[16pt]
\hspace*{6pt}\spn{\unit{1}, \unit{2}, \unit{3}}\hspace*{6pt} & \emptytab & \ydiagramsmall{1} & \ydiagramsmall{1} & \ydiagramsmall{2} & \JF\hspace*{-2pt}\bigg(\scalebox{0.8}{$\begin{bmatrix}0 & a & 0 \\ 0 & 0 & 0 \\ 0 & 0 & 0\end{bmatrix}$}\bigg) \\[24pt]
\field^4 & \emptytab & \ydiagramsmall{1} & \JF\hspace*{-2pt}\Big(\scalebox{0.8}{$\begin{bmatrix}0 & b \\ 0 & 0\end{bmatrix}$}\Big) & \JF\hspace*{-2pt}\bigg(\scalebox{0.8}{$\begin{bmatrix}0 & 0 & 0 \\ 0 & 0 & b \\ 0 & 0 & 0\end{bmatrix}$}\bigg) & \JF(\nil) \bigstrut[b]\\
\end{block}
\end{blockarray}\hspace*{4pt},\vspace*{-4pt}
$$
where we have labeled row $i$ by $\Unit{\id}_i$ and column $j$ by $\Unit{3412}_j$. We obtain $\growth{M}$ by taking $a$ and $b$ to be uniformly random elements of $\field := \qinvF$.
\end{eg}

Growth diagrams were introduced by Fomin \cite{fomin86,fomin95} in order to study the RSK correspondence. Namely, to each $M\in\mathbb{N}^{k\times l}$, he associated a (deterministic) $(k+1)\times (l+1)$ array of partitions, from which we can read off the pair of tableaux obtained under the RSK correspondence. An important feature of Fomin's growth diagrams is that they are characterized by {\itshape local growth rules}; this means that for all $i,j\ge 1$, the partition indexed by $(i,j)$ only depends on the partitions indexed by $(i-1,j-1)$, $(i-1,j)$, and $(i,j-1)$, along with the entry $M_{i,j}$ (and not on $(i,j)$ or the other entries of $M$). The combinatorial growth rules for the RSK and Burge correspondences are described by van Leeuwen \cite[Sections 3.1 and 3.2]{van_leeuwen05}.

Recall the insertion-based probabilistic bijections of Matveev and Petrov \cite{matveev_petrov17} and Aigner and Frieden \cite{aigner_frieden}, discussed in \cref{other_bijections}. They also have associated growth diagrams, and they are characterized by local growth rules (see the cited references, and also \cite{pei17}). This means that for all $i,j\ge 1$, the distribution of the partition indexed by $(i,j)$, conditioned on the values of the partitions indexed by $(i-1,j-1)$, $(i-1,j)$, and $(i,j-1)$, only depends on the entry $M_{i,j}$. We show that this property does {\itshape not} hold for our $q$-Burge correspondence:
\begin{prop}\label{non-local_growth}
The growth diagrams for the $q$-Burge correspondence do not admit local growth rules. Explicitly, let $M$ and $M'$ be the permutation matrices of $3412$ and $1432$, respectively. Note that $M_{4,4} = M'_{4,4} = 0$. Then conditioned on
$$
\growth{M}_{3,3},\hspace*{2pt} \growth{M'}_{3,3} = (2) \quad \text{ and } \quad \growth{M}_{3,4},\hspace*{2pt} \growth{M'}_{3,4},\hspace*{2pt} \growth{M}_{4,3},\hspace*{2pt} \growth{M'}_{4,3} = (2,1),
$$
we have that $\growth{M}_{4,4}$ equals $(2,2)$ with probability $1$, and $\growth{M'}_{4,4}$ equals $(3,1)$ with probability $1$.
\end{prop}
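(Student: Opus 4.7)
The plan is a direct matrix computation: for each of $M$ and $M'$, I would parametrize $\Nil_M$ (resp.\ $\Nil_{M'}$) using \cref{compatible_matrices_pair}\ref{compatible_matrices_pair_enumeration}, then read off each growth-diagram entry as the conjugated Jordan form of a restriction of a generic $\nil$ to an explicit coordinate subspace, computed via \cref{matrix_to_partition}.

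First I would write down a generic element of each of the two spaces. For the permutation matrix $M$ of $3412\in\Sym_4$, this has already been done in \cref{eg_growth_diagram}: $\nil\in\Nil_M$ has exactly two free parameters, in positions $(1,2)$ and $(3,4)$. For the permutation matrix $M'$ of $1432\in\Sym_4$, which is an involution (so $\perm{M'}^{-1} = 1432$), applying the criterion of \cref{compatible_matrices_pair}\ref{compatible_matrices_pair_enumeration} shows that the only index pairs $(r,s)$ with $r<s$ and $\perm{M'}^{-1}(r)<\perm{M'}^{-1}(s)$ are $(1,2),(1,3),(1,4)$; so a generic $\nil\in\Nil_{M'}$ has three free parameters, all in the first row.

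Next I would translate the conditioning on the three corner partitions into conditions on these parameters. The intersections $\Unit{\id}_i\cap\Unit{\perm{M}}_j$ and $\Unit{\id}_i\cap\Unit{\perm{M'}}_j$ for $(i,j)\in\{(3,3),(3,4),(4,3)\}$ are in all six cases $2$- or $3$-dimensional coordinate subspaces, so each restricted matrix is tiny and \cref{matrix_to_partition} immediately gives the conjugated Jordan form from the kernel dimensions of its powers. One checks that in the $M$-model the conditioning pins down both free parameters to be nonzero, whereas in the $M'$-model it forces the $(1,3)$-parameter to be zero and the other two to be nonzero.

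The final step is to compute $\growth{\cdot}_{4,4}=\JF(\nil)$ in each conditioned situation, once again via \cref{matrix_to_partition}: for $M$ the conditioned matrix has rank $2$ and squares to zero, so its conjugated Jordan form is $(2,2)$; for $M'$ it has rank $1$ and squares to zero, so its conjugated Jordan form is $(3,1)$. The non-locality conclusion then follows at once, because $M_{4,4}=M'_{4,4}=0$ and the three surrounding conditional partitions agree, yet the two conditional distributions of $\growth{\cdot}_{4,4}$ are concentrated on distinct partitions. The whole argument is elementary linear algebra and I expect no real obstacle; the only thing requiring care is the bookkeeping of which coordinate subspaces and which parameters appear in the two parallel computations.
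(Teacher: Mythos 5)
Your proposal is correct and follows essentially the same route as the paper: parametrize $\Nil_M$ and $\Nil_{M'}$ by their free entries, translate the conditioning into nonvanishing/vanishing conditions on those entries via \cref{matrix_to_partition}, and then read off $\JF(\nil)$. The only difference is one of exposition: the paper explicitly carries out only the $M$ computation (building on \cref{eg_growth_diagram}) and says the $M'$ computation is similar, whereas you sketch both, including the key observation that for $M'$ the three free parameters sit in positions $(1,2),(1,3),(1,4)$ and the conditioning forces the $(1,3)$-entry to vanish while the other two are nonzero, yielding $\JF(\nil)=(3,1)$.
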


\begin{proof}
We verify the statement for $\growth{M}$; we can verify the statement for $\growth{M'}$ by a similar calculation. Let us adopt the setup of \cref{eg_growth_diagram}. Then the condition $\growth{M}_{3,3} = (2)$ is satisfied; the condition $\growth{M}_{3,4} = (2,1)$ means precisely that $a\neq 0$; and the condition $\growth{M}_{4,3} = (2,1)$ means precisely that $b\neq 0$. Therefore $\JF(\nil) = (2,2)$, and so $\growth{M}_{4,4}$ equals $(2,2)$ with probability $1$.
\end{proof}

We pose the following open problem, which may lead to an approach to solving \cref{probabilities_problem}:
\begin{prob}\label{growth_problem}
Describe growth diagrams for the $q$-Burge correspondence using (necessarily non-local) growth rules.
\end{prob}

\section{Socle filtrations for type \texorpdfstring{$A$}{A} preprojective algebras}\label{sec_quiver}

\noindent In this section, we study certain modules over the preprojective algebra of a type $A$ quiver (i.e.\ a path). We show that the algebra underlying these modules is closely related to that of pairs of partial flags and strictly compatible nilpotent endomorphisms; see \cref{dictionary} for a high-level dictionary between the two settings. This allows us to apply the $q$-Burge correspondence of \cref{sec_q_burge} to enumerate isomorphism classes of such modules, refined according to their socle filtrations. This develops a connection between the combinatorics of symmetric functions and tableaux on the one hand, and the representation theory of preprojective algebras on the other hand, which we believe is a promising topic for future work; see \cref{dynkin_remark}. We mention that a different connection between tableau combinatorics and quiver representations was introduced by Garver, Patrias, and Thomas \cite{garver_patrias_thomas}, and some of the ideas in this section appear independently in work of Dranowski, Elek, Kamnitzer, and Morton-Ferguson \cite{dranowski_elek_kamnitzer_morton-ferguson}.
\begin{table}[ht]
\begin{center}
\setlength{\tabcolsep}{6pt}
\begin{tabular}{| >{\vspace*{4pt}\centering\arraybackslash}m{3in}<{\vspace*{4pt}} | >{\vspace*{4pt}\centering\arraybackslash}m{3in}<{\vspace*{4pt}} |}
\cline{1-2}
{\bfseries partial flag varieties and nilpotent endomorphisms} & \rule[12pt]{0pt}{0pt}{\bfseries modules over the preprojective algebra $\ppa{Q(k,l)}$} \\ \hline
pair of partial flags $(F,F')$ & \rule[12pt]{0pt}{0pt}module $V$ over the path algebra $\pa{Q(k,l)}$ such that all maps are injective \\ \hline
relative position of $(F,F')$ & \rule[12pt]{0pt}{0pt}isomorphism class of $V$ \\ \hline
nilpotent endomorphism $\nil$ strictly compatible with both $F$ and $F'$ & \rule[12pt]{0pt}{0pt}$\ppa{Q(k,l)}$-module $\double{V}$ which restricts to $V$ as a $\pa{Q(k,l)}$-module \\ \hline
pair of tableaux $(\JF(\nil;F),\JF(\nil;F'))$ & \rule[12pt]{0pt}{0pt}socle filtration of $\double{V}$ \\ \hline
\end{tabular}\vspace*{8pt}
\caption{A high-level dictionary between the objects of \cref{sec_q_burge,sec_quiver}. See \cref{module_relative_position}, \cref{preprojective_bijection}, and \cref{preprojective_jordan} for precise statements.}
\label{dictionary}
\end{center}
\end{table}

\subsection{Background on quiver representations}\label{sec_quiver_background}
We give some background on quiver representations and preprojective algebras. We refer to \cite{kirillov16} for further details.

\begin{defn}\label{defn_quiver}
A {\itshape (finite) quiver} (or {\itshape directed graph}) is pair $Q = (Q_0, Q_1)$, where $Q_0$ is a finite set, and $Q_1$ is a finite multisubset of  $Q_0\times Q_0$. We call the elements of $Q_0$ {\itshape vertices}, and the elements of $Q_1$ {\itshape arrows}. For $a = (i,j)\in Q_1$, we call $i$ the {\itshape source} of $a$, denoted $\source{a}$, and $j$ the {\itshape target} of $a$, denoted $\target{a}$. By definition, we only work with finite quivers.

A {\itshape path} in $Q$ is a directed walk. (We emphasize that unlike the standard use of the term {\itshape path} in graph theory, we do not require non-repetition of vertices and arrows.) We regard the vertices $Q_0$ as paths which have no arrows. The {\itshape path algebra} $\pa{Q}$ of $Q$ is the $\field$-algebra defined as follows. As a vector space, $\pa{Q}$ has a basis indexed by the paths in $Q$; the product $p'p$ of two paths $p$ and $p'$ is the concatenation of $p$ and $p'$ if the last vertex of $p$ is the first vertex of $p'$, and zero otherwise. (Note that our convention is to read paths from right to left.) As a $\field$-algebra, $\pa{Q}$ is generated by $Q_0\sqcup Q_1$. Note that $\pa{Q}$ is finite-dimensional if and only if it has no directed cycles.
\end{defn}

\begin{eg}\label{eg_quiver}
Consider the quiver $Q = (Q_0,Q_1)$ with $Q_0 := \{0,1,2\}$ and $Q_1 := \{a,b\}$, where $a := (2,1)$ and $b := (1,0)$. We may depict $Q$ as follows:
$$
Q = \hspace*{4pt}\begin{tikzpicture}[baseline=(current bounding box.center)]
\tikzstyle{out1}=[inner sep=0,minimum size=1.2mm,circle,draw=black,fill=black,semithick]
\pgfmathsetmacro{\s}{2.40};
\foreach \x in {0,...,2}{
\node[out1](v\x)at(-\x*\s,0)[label={[below=3pt]$\x$}]{};}
\path[-latex',thick](v2)edge node[below=-2pt]{\vphantom{$b$}$a$}(v1) (v1)edge node[below=-2pt]{$b$}(v0);
\end{tikzpicture}\hspace*{4pt}.
$$
In the path algebra $\pa{Q}$, we have that $ba$ is the path from $2$ to $0$, and $ab$ is zero. In fact, $\pa{Q}$ has the basis $\{0,1,2,a,b,ba\}$.
\end{eg}

We will consider (left) $\pa{Q}$-modules, which we assume are finite-dimensional. Such modules are also known as representations of $Q$:
\begin{defn}\label{defn_quiver_representation}
Let $Q = (Q_0,Q_1)$ be a quiver. A {\itshape representation} $V$ of $Q$ is a tuple indexed by $Q_0\sqcup Q_1$, where $V(i)$ (for $i\in Q_0$) is a (finite-dimensional) vector space, and $V(a)$ (for $a\in Q_0$) is a linear map from $V(\source{a})$ to $V(\target{a})$. We define the {\itshape dimension vector} of $V$ as
$$
\dim(V) := (\dim(V(i)))_{i\in Q_0} \in \mathbb{N}^{Q_0}.
$$
We may equivalently regard $V$ as the $\pa{Q}$-module
$$
\bigoplus_{i\in Q_0}V(i),
$$
where $i\in Q_0\subseteq\pa{Q}$ acts as projection onto $V(i)$, and $a\in Q_1\subseteq\pa{Q}$ acts as projection onto $V(\source{a})$ followed by $V(a)$. (Recall that $\pa{Q}$ is generated by $Q_0\sqcup Q_1$, so this defines a $\pa{Q}$-module structure.) Every (finite-dimensional) $\pa{Q}$-module arises in this way, uniquely up to isomorphism \cite[Theorem 1.7]{kirillov16}. We will freely identify $\pa{Q}$-modules with representations.

We call a $\pa{Q}$-module {\itshape indecomposable} if it is not a direct sum of two nonzero submodules. By the Krull--Schmidt theorem (cf.\ \cite[Theorem 1.11]{kirillov16}), every $\pa{Q}$-module is a direct sum of indecomposable modules, and the direct sum is unique up to reordering and isomorphism.
\end{defn}

\begin{eg}\label{eg_quiver_representation}
Let $Q$ be as in \cref{eg_quiver}. Consider the representation
$$
V := \hspace*{4pt}\begin{tikzpicture}[baseline=(current bounding box.base)]
\tikzstyle{out1}=[inner sep=0,minimum size=1.2mm,circle,draw=black,fill=black,semithick]
\pgfmathsetmacro{\s}{2.40};
\node[out1](v2)at(0*\s,0)[label={[below=3pt]$2$},label={[above=-1pt]$\field$}]{};
\node[out1](v1)at(1*\s,0)[label={[below=3pt]$1$},label={[above=-1pt]$\field^2$}]{};
\node[out1](v0)at(2*\s,0)[label={[below=3pt]$0$},label={[above=-1pt]$\field$}]{};
\path[-latex',thick](v2)edge node[above=0pt]{\scalebox{0.8}{$\begin{bmatrix}1 \\ -1\end{bmatrix}$}}(v1) (v1)edge node[above=0pt]{\scalebox{0.8}{$\begin{bmatrix}1 & 1\end{bmatrix}$}}(v0);
\end{tikzpicture}\hspace*{4pt}
$$
of $Q$, which we also regard as a $\pa{Q}$-module. Then $V$ is isomorphic to the direct sum of indecomposable $\pa{Q}$-modules
\begin{gather*}
\hspace*{4pt}\begin{tikzpicture}[baseline=(current bounding box.center)]
\tikzstyle{out1}=[inner sep=0,minimum size=1.2mm,circle,draw=black,fill=black,semithick]
\pgfmathsetmacro{\s}{2.40};
\node[out1](v2)at(0*\s,0)[label={[below=3pt]$2$},label={[above=-1pt]$\field$}]{};
\node[out1](v1)at(1*\s,0)[label={[below=3pt]$1$},label={[above=-1pt]$\field$}]{};
\node[out1](v0)at(2*\s,0)[label={[below=3pt]$0$},label={[above=-1pt]$0$}]{};
\path[-latex',thick](v2)edge node[above=-1pt]{$\id$}(v1) (v1)edge node[above=-1pt]{$0$}(v0);
\end{tikzpicture}\hspace*{4pt} \quad\oplus\quad \hspace*{4pt}\begin{tikzpicture}[baseline=(current bounding box.center)]
\tikzstyle{out1}=[inner sep=0,minimum size=1.2mm,circle,draw=black,fill=black,semithick]
\pgfmathsetmacro{\s}{2.40};
\node[out1](v2)at(0*\s,0)[label={[below=3pt]$2$},label={[above=-1pt]$0$}]{};
\node[out1](v1)at(1*\s,0)[label={[below=3pt]$1$},label={[above=-1pt]$\field$}]{};
\node[out1](v0)at(2*\s,0)[label={[below=3pt]$0$},label={[above=-1pt]$\field$}]{};
\path[-latex',thick](v2)edge node[above=-1pt]{$0$}(v1) (v1)edge node[above=-1pt]{$\id$}(v0);
\end{tikzpicture}\hspace*{4pt}.\qedhere
\end{gather*}

\end{eg}

\begin{defn}\label{defn_preprojective_algebra}
Let $Q = (Q_0, Q_1)$ be a quiver. For every arrow $a\in Q_1$, let $\rev{a}$ denote a reverse arrow, from $\target{a}$ to $\source{a}$. We define the {\itshape double} $\double{Q}$ of $Q$ as the quiver with vertices $Q_0$ and arrows $Q_1 \sqcup \rev{Q_1}$. Note that $\double{Q}$ only depends on the underlying undirected graph of $Q$.

Let $\epsilon: Q_1 \to \field^\times$ be any function. The {\itshape preprojective algebra} $\ppa{Q}$ of $Q$ is defined to be the path algebra $\pa{\double{Q}}$ of $\double{Q}$ modulo the two-sided ideal generated by
$$
\sum_{a\in Q_1}\epsilon(a)(\rev{a}a - a\rev{a}).
$$
This construction does not depend on $\epsilon$, up to isomorphism \cite[Lemma 5.4]{kirillov16}, and therefore $\ppa{Q}$ only depends on the underlying undirected graph of $Q$. We prefer to specify $Q$ and require that $\epsilon$ is the constant function $1$, since we will consider $\ppa{Q}$-modules with additional constraints depending on $Q$. Note that in this case, a $\ppa{Q}$-module is precisely a representation $V$ of $\double{Q}$ satisfying
\begin{align}\label{preprojective_conditions}
\sum_{\substack{a\in Q_1,\\ \source{a}=i}}V(\rev{a})\circ V(a) = \sum_{\substack{a\in Q_1,\\ \target{a}=i}}V(a)\circ V(\rev{a}) \quad \text{ for all } i\in Q_0.
\end{align}
Also, since $\pa{Q}$ is a subalgebra of $\ppa{Q}$, every $\ppa{Q}$-module is also a $\pa{Q}$-module. The associated representation of $Q$ is obtained by forgetting the linear maps $V(\rev{a})$ for all $a\in Q_1$.
\end{defn}

\begin{eg}\label{eg_preprojective_algebra}
We adopt the setup of \cref{eg_quiver_representation}. Let $\double{V}$ denote an arbitrary representation of $\double{Q}$ which restricts to $V$ as a representation of $Q$. Then we can write
$$
\double{V} = \hspace*{4pt}\begin{tikzpicture}[baseline=(current bounding box.base)]
\tikzstyle{out1}=[inner sep=0,minimum size=1.2mm,circle,draw=black,fill=black,semithick]
\pgfmathsetmacro{\s}{2.40};
\node[out1](v2)at(0*\s,0)[label={[below=3pt]$2$},label={[above=-1pt]$\field$}]{};
\node[out1](v1)at(1*\s,0)[label={[below=3pt]$1$},label={[above=-1pt]$\field^2$}]{};
\node[out1](v0)at(2*\s,0)[label={[below=3pt]$0$},label={[above=-1pt]$\field$}]{};
\path[-latex',thick](v2)edge[bend left=15] node[above=0pt]{\scalebox{0.8}{$\begin{bmatrix}1 \\ -1\end{bmatrix}$}}(v1) (v1)edge[bend left=15] node[above=0pt]{\scalebox{0.8}{$\begin{bmatrix}1 & 1\end{bmatrix}$}}(v0) (v1)edge[bend left=15] node[below=0pt]{\scalebox{0.8}{$\begin{bmatrix}a & b\end{bmatrix}$}}(v2) (v0)edge[bend left=15] node[below=0pt]{\scalebox{0.8}{$\begin{bmatrix}c \\ d\end{bmatrix}$}}(v1);
\end{tikzpicture}\hspace*{4pt}\quad \text{ for some } a,b,c,d\in\field.
$$
Such a $\double{V}$ satisfies \eqref{preprojective_conditions} (i.e.\ $\double{V}$ is a $\ppa{Q}$-module) if and only if $a = b = c = -d$.
\end{eg}

\begin{defn}\label{defn_socle}
Let $A$ be a (possibly noncommutative) ring. We call an $A$-module {\itshape simple} if it is nonzero and its only submodules are $0$ and itself. Given an $A$-module $V$, we define its {\itshape socle} $\soc(V)$ to be the sum of all simple submodules of $V$. We define the {\itshape socle filtration} (or {\itshape Loewy series}) of $V$ to be the nested sequence of $A$-modules
$$
\socf{V}{0} = 0 \subseteq \socf{V}{1} = \soc(V) \subseteq \socf{V}{2} \subseteq \socf{V}{3} \subseteq \cdots \subseteq V,
$$
determined by
$$
\socf{V}{j}/\socf{V}{j-1} = \soc(V/\socf{V}{j-1}) \quad \text{ for } j = 1, 2, 3, \dots.
$$

Now suppose that $A = \ppa{Q}$ is the preprojective algebra of a Dynkin quiver $Q$. (More generally, we can take $A$ to be the quotient of the path algebra of any quiver by an {\itshape admissible ideal}; cf.\ \cite[Section III.2]{assem_simson_skowronski06}.) Then we can explicitly describe the simple $A$-modules, and hence the socle filtration of $V$, as follows. For $i\in Q_0$, let $S_i$ be the $A$-module given as a representation by
$$
S_i(i) := \field, \quad S_i(j) := 0 \text{ for all } j\in Q_0\setminus\{i\}, \quad\text{and}\quad S_i(a) := 0 \text{ for all } a\in \double{Q}_1.
$$
Then the simple $A$-modules (up to isomorphism) are precisely $S_i$ for $i\in Q_0$ \cite[Corollary 5.10]{kirillov16}. Letting $\arrowsum$ be the two-sided ideal of $A$ generated by $\double{Q}_1$, we have
$$
\socf{V}{j} = \{v\in V : \arrowsum^j(v) = 0\} \quad \text{ for all } j \in\mathbb{N}.
$$
Equivalently, $\socf{V}{j}$ consists of all vectors which are sent to zero after applying any $j$ arrows of $\double{Q}$.
\end{defn}

\begin{eg}\label{eg_socle}
Let $\double{V}$ be the $\ppa{Q}$-module from \cref{eg_preprojective_algebra}, with $a = b = c = -d$ nonzero. Then the socle filtration of $\double{V}$ is
\begin{center}
\begin{tabular}{rcccl}
\rule[14pt]{0pt}{0pt} & $\vdots$ & & $\vdots$ \\
\rule[16pt]{0pt}{0pt} & $\soc^3(\double{V})$ & $=$ & $\double{V}$ \\
\rule[28pt]{0pt}{0pt} & $\soc^2(\double{V})$ & $=$ & $\hspace*{4pt}\begin{tikzpicture}[baseline=(current bounding box.base)]
\tikzstyle{out1}=[inner sep=0,minimum size=1.2mm,circle,draw=black,fill=black,semithick]
\pgfmathsetmacro{\s}{2.40};
\node[out1](v2)at(0*\s,0)[label={[below=3pt]$2$},label={[above=-1pt]$\field$}]{};
\node[out1](v1)at(1*\s,0)[label={[below=3pt]$1$},label={[above=-1pt]$\spn{\unit{1}-\unit{2}}$}]{};
\node[out1](v0)at(2*\s,0)[label={[below=3pt]$0$},label={[above=-1pt]$\field$}]{};
\path[-latex',thick](v2)edge[bend left=15] (v1) (v1)edge[bend left=15] (v0) (v1)edge[bend left=15] (v2) (v0)edge[bend left=15] (v1);
\end{tikzpicture}\hspace*{4pt}$ \\
\rule[30pt]{0pt}{0pt} & $\soc^1(\double{V})$ & $=$ & $\hspace*{4pt}\begin{tikzpicture}[baseline=(current bounding box.base)]
\tikzstyle{out1}=[inner sep=0,minimum size=1.2mm,circle,draw=black,fill=black,semithick]
\pgfmathsetmacro{\s}{2.40};
\node[out1](v2)at(0*\s,0)[label={[below=3pt]$2$},label={[above=-1pt]$0$}]{};
\node[out1](v1)at(1*\s,0)[label={[below=3pt]$1$},label={[above=-1pt]$\spn{\unit{1}-\unit{2}}$}]{};
\node[out1](v0)at(2*\s,0)[label={[below=3pt]$0$},label={[above=-1pt]$0$}]{};
\path[-latex',thick](v2)edge[bend left=15] (v1) (v1)edge[bend left=15] (v0) (v1)edge[bend left=15] (v2) (v0)edge[bend left=15] (v1);
\end{tikzpicture}\hspace*{4pt}$ \\
\rule[18pt]{0pt}{0pt} & $\soc^0(\double{V})$ & $=$ & $0$ & ,
\end{tabular}
\end{center}\vspace*{4pt}
where the linear map for each arrow above is given by restricting the corresponding linear map in $\double{V}$.
\end{eg}

\subsection{Path quivers}\label{sec_path_quivers}
We now focus our attention on path quivers with a unique sink.
\begin{defn}\label{defn_quiver_A}
Given $k,l\in\mathbb{Z}_{>0}$, let $Q(k,l)$ denote the path quiver on $k+l-1$ vertices with vertices labeled and arrows oriented as follows:
$$
\hspace*{4pt}\begin{tikzpicture}[baseline=(current bounding box.center)]
\tikzstyle{out1}=[inner sep=0,minimum size=1.2mm,circle,draw=black,fill=black,semithick]
\pgfmathsetmacro{\s}{1.80};
\node[out1](v1)at(0*\s,0)[label={[below=3pt]\vphantom{$k-3$}$k-1$}]{};
\node[out1](v2)at(1*\s,0)[label={[below=3pt]\vphantom{$k-3$}$k-2$}]{};
\node[out1](v3)at(2*\s,0)[label={[below=3pt]\vphantom{$k-3$}$k-3$}]{};
\node[out1](v4)at(3*\s,0)[label={[below=3pt]\vphantom{$k-3$}$1$}]{};
\node[out1](v5)at(4*\s,0)[label={[below=3pt]\vphantom{$k-3$}$0$}]{};
\node[out1](v6)at(5*\s,0)[label={[below=3pt]\vphantom{$k-3$}$-1$}]{};
\node[out1](v7)at(6*\s,0)[label={[below=3pt]\vphantom{$k-3$}$-l+3$}]{};
\node[out1](v8)at(7*\s,0)[label={[below=3pt]\vphantom{$k-3$}$-l+2$}]{};
\node[out1](v9)at(8*\s,0)[label={[below=3pt]\vphantom{$k-3$}$-l+1$}]{};
\path[-latex',thick](v1)edge(v2) (v2)edge(v3) (v4)edge(v5) (v9)edge(v8) (v8)edge(v7) (v6)edge(v5);
\node[inner sep=0]at(2.5*\s,0){$\cdots$};
\node[inner sep=0]at(5.5*\s,0){$\cdots$};
\end{tikzpicture}\hspace*{4pt}.
$$
In particular, $Q(k,l)$ is of Dynkin type $A_{k+l-1}$.

The indecomposable $\pa{Q(k,l)}$-modules are indexed (up to isomorphism) by subsets $S\subseteq Q(k,l)_0$ of consecutive vertices (see e.g.\ \cite[Theorem 3.18]{kirillov16}); the corresponding representation $V$ is defined by
$$
V(i) := \begin{cases}
\field, & \text{ if $i\in S$}; \\
0, & \text{ otherwise}
\end{cases} \quad \text{ and } \quad V(a) := \begin{cases}
\id, & \text{ if $s(a),t(a)\in S$}; \\
0, & \text{ otherwise},
\end{cases}
$$
for all $i\in Q(k,l)_0$ and $a\in Q(k,l)_1$. Given $1 \le i \le k$ and $1 \le j \le l$, we let $\indec{i}{j}$ denote the module $V$ corresponding to the subset $S = \{k-i, k-i-1, \dots, -l+j\}$:
$$
\indec{i}{j} = \hspace*{4pt}\begin{tikzpicture}[baseline=(current bounding box.base)]
\tikzstyle{out1}=[inner sep=0,minimum size=1.2mm,circle,draw=black,fill=black,semithick]
\pgfmathsetmacro{\s}{1.80};
\node[out1](v1)at(0*\s,0)[label={[below=3pt]\vphantom{$k-3$}$k-i+1$},label={[above=-1pt]\vphantom{$0\field$}$0$}]{};
\node[out1](v2)at(1*\s,0)[label={[below=3pt]\vphantom{$k-3$}$k-i$},label={[above=-1pt]\vphantom{$0\field$}$\field$}]{};
\node[out1](v3)at(2*\s,0)[label={[below=3pt]\vphantom{$k-3$}$k-i-1$},label={[above=-1pt]\vphantom{$0\field$}$\field$}]{};
\node[out1](v7)at(3.5*\s,0)[label={[below=3pt]\vphantom{$k-3$}$-l+j+1$},label={[above=-1pt]\vphantom{$0\field$}$\field$}]{};
\node[out1](v8)at(4.5*\s,0)[label={[below=3pt]\vphantom{$k-3$}$-l+j$},label={[above=-1pt]\vphantom{$0\field$}$\field$}]{};
\node[out1](v9)at(5.5*\s,0)[label={[below=3pt]\vphantom{$k-3$}$-l+j-1$},label={[above=-1pt]\vphantom{$0\field$}$0$}]{};
\path[-latex',thick](v1)edge node[above=-1pt]{$0$}(v2) (v2)edge node[above=-1pt]{$\id$}(v3) (v9)edge node[above=-1pt]{$0$}(v8) (v8)edge node[above=-1pt]{$\id$}(v7);
\node[inner sep=0]at(-0.5*\s,0){$\cdots$};
\node[inner sep=0]at(2.75*\s,0){$\cdots$};
\node[inner sep=0]at(6*\s,0){$\cdots$};
\end{tikzpicture}\hspace*{4pt}.
$$
Note that the modules $V_{i,j}$ are precisely the indecomposable $\pa{Q(k,l)}$-modules $V$ (up to isomorphism) with $V(0)\neq 0$, or equivalently, such that $V(a)$ is injective for all $a\in Q(k,l)_1$.

Given $M\in\mathbb{N}^{k\times l}$, we define the $\pa{Q(k,l)}$-module
$$
\dec{M} := \bigoplus_{1 \le i \le k,\hspace*{2pt} 1 \le j \le l}\indec{i}{j}^{\oplus M_{i,j}}.
$$
Note that the modules $\dec{M}$ are precisely the (finite-dimensional) $\pa{Q(k,l)}$-modules $V$ (up to isomorphism) such that $V(a)$ is injective for all $a\in Q(k,l)_1$. In fact, $\dec{M}(a)$ is the identity map onto its image, for all $a\in Q(k,l)_1$.
\end{defn}

We relate the modules $\dec{M}$ to pairs of partial flags and relative position.
\begin{defn}\label{defn_module_to_flags}
Let $V$ be a (finite-dimensional) $\pa{Q(k,l)}$-module such that $V(a)$ is injective for all $a\in Q(k,l)_1$. Let $F = (F_0, \dots, F_k)$ be the partial flag of subspaces of $V(0)\cong\field^n$ such that $F_0 := 0$, and $F_i$ (for $1 \le i \le k$) is the image of $V(k-i)$ under the path from vertex $k-i$ to vertex $0$. We similarly define $F' = (F'_0, \dots, F'_l)$ as the partial flag of subspaces of $V(0)\cong\field^n$ such that $F'_0 := 0$, and $F'_j$ (for $1 \le j \le l$) is the image of $V(-l+j)$ under the path from vertex $-l+j$ to vertex $0$. We denote the pair of partial flags $(F,F')$ by $\flags{V}$.
\end{defn}

\begin{lem}\label{module_relative_position}
Let $V$ be a $\pa{Q(k,l)}$-module such that $V(a)$ is injective for all $a\in Q(k,l)_1$. Then $V\cong\dec{M}$, where $M\in\mathbb{N}^{k\times l}$ is the relative position of $\flags{V}$.
\end{lem}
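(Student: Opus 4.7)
The plan is to combine the Krull--Schmidt theorem with an explicit computation of $\flags{V_{M'}}$ for a hypothetical matrix $M' \in \mathbb{N}^{k \times l}$.

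First, I would apply Krull--Schmidt: $V$ decomposes uniquely (up to reordering) as a direct sum of indecomposable $\pa{Q(k,l)}$-modules. The indecomposables of the type $A$ quiver $Q(k,l)$ are the interval modules $V_S$ supported on consecutive sets $S$ of vertices. Since $V(a)$ is injective for every $a \in Q(k,l)_1$, the same injectivity holds for each indecomposable summand; as observed in \cref{defn_quiver_A}, this forces each summand to be of the form $\indec{i'}{j'}$, since the support of an interval module closed under the arrow orientation toward $0$ must contain $0$. Therefore $V \cong \dec{M'}$ for some matrix $M' \in \mathbb{N}^{k \times l}$, and it remains to show $M' = M$.

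Second, I would verify that the relative position of $\flags{\dec{M'}}$ is $M'$. Fix a decomposition $\dec{M'} = \bigoplus_{i',j'} \indec{i'}{j'}^{\oplus M'_{i',j'}}$ and let $U_{i',j'} \subseteq \dec{M'}(0)$ denote the $M'_{i',j'}$-dimensional subspace consisting of the contributions at vertex $0$ from the summands isomorphic to $\indec{i'}{j'}$. Then $\dec{M'}(0) = \bigoplus_{i',j'} U_{i',j'}$. Since the support of $\indec{i'}{j'}$ is $\{k-i', k-i'-1, \dots, -l+j'\}$ and every interior arrow of this support acts as the identity, the summand $\indec{i'}{j'}$ contributes to the image $F_i$ of $V(k-i)$ in $V(0)$ precisely when $k-i \le k-i'$, i.e.\ when $i \ge i'$; an analogous statement holds for $F'_j$. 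Hence
$$
F_i = \bigoplus_{i' \le i,\hspace*{2pt} j'} U_{i',j'} \quad \text{ and } \quad F'_j = \bigoplus_{i',\hspace*{2pt} j' \le j} U_{i',j'},
$$
so by \cref{relative_position_properties}\ref{relative_position_properties2} the relative position of $\flags{\dec{M'}}$ is exactly $M'$.

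Finally, since $\flags{V}$ is defined up to the action of $\GL(V(0))$ on $V(0) \cong \field^n$ arising from the choice of identification, and relative position is invariant under this action by \cref{relative_position_invariance}\ref{relative_position_invariance_action}, the relative position of $\flags{V}$ coincides with that of $\flags{\dec{M'}}$, namely $M'$. By definition this is $M$, so $M = M'$ and $V \cong \dec{M}$. The main obstacle here is bookkeeping rather than conceptual: one must correctly track how the support of each indecomposable contributes to each stage of the two flags. The structural inputs---Krull--Schmidt and the classification of indecomposables for the type $A$ path quiver---are both standard and already invoked in \cref{defn_quiver_A}.
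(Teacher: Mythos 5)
Your proof is correct and takes essentially the same route as the paper, whose proof simply cites \cref{relative_position_properties}: your decomposition of $V(0)$ into the subspaces $U_{i',j'}$ coming from the indecomposable summands supplies exactly the $V_{i,j}$ required by \cref{relative_position_properties}\ref{relative_position_properties2}, and the $\GL_n$-invariance step is the intended closing remark. The Krull--Schmidt preamble is redundant with what is already noted in \cref{defn_quiver_A} (every such $V$ is isomorphic to some $\dec{M'}$), but it does no harm.
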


\begin{proof}
This follows from \cref{relative_position_properties}.
\end{proof}

\begin{eg}\label{eg_module_to_flags}
Let $(k,l) := (2,3)$, and let $M := \scalebox{0.8}{$\begin{bmatrix}1 & 0 & 1 \\ 1 & 1 & 1\end{bmatrix}$}\in\mathbb{N}^{k\times l}$. Then by choosing suitable bases, we can write
$$
\dec{M} = \hspace*{4pt}\begin{tikzpicture}[baseline=(current bounding box.base)]
\tikzstyle{out1}=[inner sep=0,minimum size=1.2mm,circle,draw=black,fill=black,semithick]
\pgfmathsetmacro{\s}{3.00};
\node[out1](v1)at(0*\s,0)[label={[below=3pt]$1$},label={[above=-2pt]$\spn{\unit{1},\unit{2}}$}]{};
\node[out1](v0)at(1*\s,0)[label={[below=3pt]$0$},label={[above=-1pt]$\field^5$}]{};
\node[out1](v-1)at(2*\s,0)[label={[below=3pt]$-1$},label={[above=-2pt]$\spn{\unit{1},\unit{3},\unit{4}}$}]{};
\node[out1](v-2)at(3*\s,0)[label={[below=3pt]$-2$},label={[above=-2pt]$\spn{\unit{1},\unit{3}}$}]{};
\path[-latex',thick](v1)edge node[above=-1pt]{$\id$}(v0) (v-2)edge node[above=-1pt]{$\id$}(v-1) (v-1)edge node[above=-1pt]{$\id$}(v0);
\end{tikzpicture}\hspace*{4pt}.
$$
We have $\flags{\dec{M}} = (F,F')$, where
$$
F_0 = 0,\hspace*{2pt} F_1 = \spn{\unit{1},\unit{2}},\hspace*{2pt} F_2 = \field^5 \quad \text{ and } \quad F'_0 = 0,\hspace*{2pt} F'_1 = \spn{\unit{1}, \unit{3}},\hspace*{2pt} F'_2 = \spn{\unit{1},\unit{3},\unit{4}},\hspace*{2pt} F'_3 = \field^5.
$$
Note that $M$ is the relative position of $(F,F')$, in agreement with \cref{module_relative_position}. In fact, we have labeled the basis vectors of $\field^5$ so that $F = \Unit{\id}$ and $F' = \Unit{\perm{M}}$. 
\end{eg}

We introduce notation for the dimension vector of $\dec{M}$:
\begin{defn}\label{defn_compositions_to_dimension}
Given $\alpha = (\alpha_1, \dots, \alpha_k), \beta = (\beta_1, \dots, \beta_l)\models n$, we define $\dimcomp{\alpha}{\beta}\in\mathbb{N}^{\{k-1, k-2, \dots, -l+1\}}$ by
$$
\dimcomp{\alpha}{\beta}_{k-i} := \alpha_1 + \cdots + \alpha_i \text{ for } 1 \le i \le k, \quad \dimcomp{\alpha}{\beta}_{-l+j} := \beta_1 + \cdots + \beta_j \text{ for } 1 \le j \le l.
$$
That is,
$$
\dimcomp{\alpha}{\beta} = (\alpha_1, \alpha_1 + \alpha_2, \dots, \alpha_1 + \cdots + \alpha_k = n = \beta_1 + \cdots + \beta_l, \dots, \beta_1 + \beta_2, \beta_1).
$$
Note that $\dimcomp{\alpha}{\beta} = \dim(\dec{M})$ for all $M\in\mathbb{N}^{k\times l}$ of type $(\alpha,\beta)$.
\end{defn}

\begin{eg}\label{eg_compositions_to_dimension}
Let $M := \scalebox{0.8}{$\begin{bmatrix}1 & 0 & 1 \\ 1 & 1 & 1\end{bmatrix}$}$, as in \cref{eg_module_to_flags}. Then $M$ has type $(\alpha,\beta)$, where $\alpha := (2,3)$ and $\beta := (2,1,2)$. We have
\begin{gather*}
\dimcomp{\alpha}{\beta} = (2,5,3,2) = \dim(\dec{M})\in\mathbb{N}^{\{1,0,-1,-2\}}.\qedhere
\end{gather*}

\end{eg}

We now relate modules over the preprojective algebra of $Q(k,l)$ to nilpotent endomorphisms.
\begin{defn}\label{defn_nilpotent_to_preprojective}
Let $M\in\mathbb{N}^{k\times l}$, and let $\nil$ be a nilpotent endomorphism of $\dec{M}(0)$ strictly compatible with both partial flags of $\flags{\dec{M}}$. We define the $\ppa{Q(k,l)}$-module $\decnil{M}{\nil}$ to be the representation of $\double{Q(k,l)}$ which restricts to $\dec{M}$ as a representation of $Q(k,l)$, and
$$
\decnil{M}{\nil}(\rev{a}) := \begin{cases}
\nil|_{\dec{M}(\target{a})}, & \text{ if $s(a) > 0$}; \\
-\nil|_{\dec{M}(\target{a})}, & \text{ if $s(a) < 0$}
\end{cases} \quad \text{ for } a\in Q(k,l)_1.
$$
We can verify that \eqref{preprojective_conditions} indeed holds.
\end{defn}

\begin{eg}\label{eg_nilpotent_to_preprojective}
Let $M\in\mathbb{N}^{2\times 3}$, and let $(F,F')$ denote $\flags{\dec{M}}$. Then $\decnil{M}{\nil}$ is given by the following representation of $\double{Q(2,3)}$:
\begin{gather*}
\hspace*{4pt}\begin{tikzpicture}[baseline=(current bounding box.center)]
\tikzstyle{out1}=[inner sep=0,minimum size=1.2mm,circle,draw=black,fill=black,semithick]
\pgfmathsetmacro{\s}{3.00};
\node[out1](v1)at(0*\s,0)[label={[below=3pt]$1$},label={[above=-1pt]\vphantom{$F'_1$}$F_1$}]{};
\node[out1](v0)at(1*\s,0)[label={[below=3pt]$0$},label={[above=-1pt]\vphantom{$F'_1$}$F'_3 = F_2$}]{};
\node[out1](v-1)at(2*\s,0)[label={[below=3pt]$-1$},label={[above=-1pt]\vphantom{$F'_1$}$F'_2$}]{};
\node[out1](v-2)at(3*\s,0)[label={[below=3pt]$-2$},label={[above=-1pt]\vphantom{$F'_1$}$F'_1$}]{};
\path[-latex',thick](v1)edge[bend left=15] node[above=-1pt]{$\id$}(v0) (v0)edge[bend left=15] node[below=-1pt]{\vphantom{$-\nil$}$\nil$}(v1) (v-1)edge[bend right=15] node[above=-1pt]{$\id$}(v0) (v0)edge[bend right=15] node[below=-1pt]{$-\nil$}(v-1) (v-2)edge[bend right=15] node[above=-1pt]{$\id$}(v-1) (v-1)edge[bend right=15] node[below=-1pt]{$-\nil$}(v-2);
\end{tikzpicture}\hspace*{4pt}.\qedhere
\end{gather*}

\end{eg}

\begin{thm}\label{preprojective_bijection}
Let $M\in\mathbb{N}^{k\times l}$. Then the map
$$
\nil \mapsto \decnil{M}{\nil}
$$
is a bijection from the set of nilpotent endomorphisms of $\dec{M}(0)$ strictly compatible with both partial flags of $\flags{\dec{M}}$, to the set of $\ppa{Q(k,l)}$-modules (not considered up to isomorphism) which restrict to $\dec{M}$ as a $\pa{Q(k,l)}$-module. Each such set is also in bijection with $\Nil_M$, and can be regarded as an affine space over $\field$ of dimension $\sum_{1 \le i < i' \le k,\hspace*{1pt} 1 \le j < j' \le l}M_{i,j}M_{i',j'}$.
\end{thm}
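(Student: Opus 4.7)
The plan is to establish well-definedness, injectivity, and surjectivity of $\nil \mapsto \decnil{M}{\nil}$, and then invoke \cref{compatible_matrices_pair} for the affine-space structure and the bijection with $\Nil_M$. Throughout, write $(F,F') := \flags{\dec{M}}$.

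For \textbf{well-definedness}, I will verify that $\decnil{M}{\nil}$ satisfies the preprojective relations \eqref{preprojective_conditions}. Strict compatibility of $\nil$ with $(F,F')$ is exactly what makes each $\nil|_{\dec{M}(\target{a})}$ land in $\dec{M}(\source{a})$, so the reverse arrow maps are well-defined. At an internal vertex of the upper branch both sides of the relation reduce to $\nil|_{F_i}$ for the appropriate $i$, and similarly both sides reduce to $-\nil|_{F'_j}$ at an internal vertex of the lower branch. At the sink vertex $0$, the relation $V(a_+)V(\rev{a_+}) + V(a_-)V(\rev{a_-}) = 0$ (with $a_+$ and $a_-$ the two arrows into $0$) produces $\nil$ and $-\nil$ thanks to the sign convention on the lower branch, and these cancel. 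At each leaf the relation $V(\rev{a})V(a) = 0$ reduces to the top-level vanishing $\nil(F_1) = 0$ or $\nil(F'_1) = 0$, which is the outermost strict-compatibility condition.

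\textbf{Injectivity} is immediate: $\nil$ is recovered from $\decnil{M}{\nil}$ as the composition of the reverse arrow $\dec{M}(0) \to \dec{M}(1)$ with the inclusion $\dec{M}(1) = F_{k-1} \hookrightarrow \dec{M}(0) = F_k$. For \textbf{surjectivity}, given any $\ppa{Q(k,l)}$-module $\double{V}$ restricting to $\dec{M}$, I define $\nil$ by that same formula and propagate inductively along the upper branch: assuming that a given reverse arrow equals $\nil|_{F_{i+1}}$ as a map into $F_i$, the preprojective relation at the next internal vertex forces the next reverse arrow to equal $\nil|_{F_i}$ as a map into $F_{i-1}$, and in particular forces $\nil(F_i) \subseteq F_{i-1}$. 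The relation at the leftmost leaf then yields $\nil(F_1) = 0$. A symmetric induction on the lower branch (carrying the sign) produces strict compatibility of $\nil$ with $F'$ and determines every reverse arrow there. The relation at the sink vertex is automatic from the construction, and nilpotence of $\nil$ follows because $\nil$ strictly decreases the $F$-level, so $\nil^k = 0$.

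Finally, by \cref{module_relative_position} the relative position of $(F,F')$ is $M$, so parts \ref{compatible_matrices_pair_enumeration} and \ref{compatible_matrices_pair_flags} of \cref{compatible_matrices_pair} identify the set of nilpotent endomorphisms of $\dec{M}(0)$ strictly compatible with both flags as an affine space over $\field$ of dimension $\sum_{1 \le i < i' \le k,\, 1 \le j < j' \le l} M_{i,j} M_{i',j'}$, in bijection with $\Nil_M$. The main difficulty I anticipate is careful bookkeeping of signs and of the identification of each $\dec{M}(v)$ with the appropriate subspace of $\dec{M}(0)$ while unwinding the preprojective relations on both branches; this is not a conceptual obstacle but a calculation that must be executed precisely.
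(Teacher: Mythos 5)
Your proposal is correct and follows essentially the same approach as the paper: define $\nil$ from the reverse arrow into vertex $1$, and propagate the preprojective relations inductively outward along each branch to show strict compatibility and that every reverse arrow is determined, then cite \cref{compatible_matrices_pair} for the affine-space structure. The paper takes well-definedness and injectivity as immediate and only spells out surjectivity, whereas you detail all three, but the underlying argument is identical.
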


\begin{proof}
We prove that the map $\nil \mapsto \decnil{M}{\nil}$ is a bijection; the last statement then follows from \cref{compatible_matrices_pair}. The map is well-defined and injective, so it suffices to prove that it is surjective. Let $V$ be a $\ppa{Q(k,l)}$-module which restricts to $\dec{M}$ as a $\pa{Q(k,l)}$-module. Define $\nil := V(\rev{(1,0)})$ if $k\ge 2$, and $\nil := 0$ otherwise. We will show that $V = \decnil{M}{\nil}$.

Write $\flags{\dec{M}} = (F,F')$. Applying \eqref{preprojective_conditions} for $i = 1, \dots, k-1$, we find that $\nil$ is nilpotent and strictly compatible with $F$, and $V(\rev{i+1,i}) = \nil|_{F_{k-i}}$. Similarly, applying \eqref{preprojective_conditions} for $i = 0, -1, \dots, -l+1$, we find that $\nil$ is strictly compatible with $F'$, and $V(\rev{i-1,i}) = -\nil|_{F'_{l+i}}$. Hence $V = \decnil{M}{\nil}$, as desired.
\end{proof}

\subsection{Reverse plane partitions}\label{sec_RPP}
We recall some background on reverse plane partitions of rectangles, which we will use to study socle filtrations of modules of the form $\decnil{M}{\nil}$. We refer to \cite{hopkins14} for further details on reverse plane partitions.
\begin{defn}\label{defn_RPP}
Given $k,l\in\mathbb{N}$, let $\rect{k}{l}\subseteq\mathbb{Z}^2$ denote the set of $(i,j)\in\mathbb{Z}^2$ satisfying
$$
k-1 \ge i \ge -l+1, \quad |i|+1 \le j \le \min(2k-i-1,2l+i-1), \quad j \equiv |i|+1 \hspace*{-4pt}\pmod{2}.
$$
We depict $\rect{k}{l}$ in the plane as the following $k\times l$ rectangle:
$$
\rect{k}{l} = \hspace*{2pt}\begin{tikzpicture}[baseline=(current bounding box.center)]
\tikzstyle{out1}=[inner sep=0,minimum size=1.2mm,circle,draw=black,fill=black,semithick]
\pgfmathsetmacro{\s}{0.96};
\node[out1](vS)at(0*\s,1*\s)[label={[below=4pt]\scalebox{0.8}{$(0,1)$}}]{};
\node[out1](vW)at(-1*\s,2*\s)[label={[left=1pt]\scalebox{0.8}{$(k-1,k)$}}]{};
\node[out1](vE)at(2*\s,3*\s)[label={[right=1pt]\scalebox{0.8}{$(-l+1,l)$}}]{};
\node[out1](vN)at($(vW)+(vE)-(vS)$)[label={[above=-2pt]\scalebox{0.8}{$(k-l,k+l-1)$}}]{};
\path[thick](vS)edge node[below left=-3pt]{\scalebox{0.8}{$k$}}(vW) edge node[below right=-3pt]{\scalebox{0.8}{$l$}}(vE) (vN)edge(vW) edge(vE);
\end{tikzpicture}\hspace*{2pt}.
$$
We emphasize that in our depiction, the horizontal coordinate decreases from left to right. (Our conventions are consistent with \cref{defn_quiver_A,defn_socle}, which will become apparent in \cref{sec_socle}.)

A {\itshape reverse plane partition} $R = (R(i,j))_{(i,j)\in\rect{k}{l}}$ is a filling of $\rect{k}{l}$ with nonnegative integers, whose entries weakly decrease from southeast to northwest and from southwest to northeast:
$$
R(i,j) \ge R(i+1,j+1) \text{ and } R(i,j) \ge R(i-1,j+1) \quad \text{ for all } (i,j)\in\rect{k}{l}.
$$
For $i\in\mathbb{Z}$ with $k-1 \ge i \ge -l+1$, let $d_i\in\mathbb{N}$ be the sum of all entries of $R$ of the form $R(i,\cdot)$. Note that
$$
d_{k-1} \le \cdots \le d_1 \le d_0 \ge d_{-1} \ge \cdots \ge d_{-l+1},
$$ 
so there exist $\alpha,\beta\models n$ such that $\dimcomp{\alpha}{\beta} = d$. We call $(\alpha,\beta)$ the {\itshape type} of $R$, denoted $\type(R)$. We call the partition
$$
(R(0,1), R(0,3), R(0,5), \dots, R(0,\min(2k-1,2l-1)))\vdash d_0
$$
the {\itshape central partition} of $R$.
\end{defn}

\begin{eg}\label{eg_RPP}
Let $(k,l) := (2,3)$, $\lambda := (4,1)$, $\alpha := (2,3)$, and $\beta := (2,1,2)$. Then
$$
\rect{k}{l} = \hspace*{2pt}\begin{tikzpicture}[baseline=(current bounding box.center)]
\tikzstyle{out1}=[inner sep=0,minimum size=1.2mm,circle,draw=black,fill=black,semithick]
\pgfmathsetmacro{\s}{0.84};
\node[out1](v01)at(0*\s,1*\s)[label={[below=3pt]\scalebox{0.8}{$(0,1)$}}]{};
\node[out1](v-12)at(1*\s,2*\s)[label={[below=3pt]\scalebox{0.8}{$(-1,2)$}}]{};
\node[out1](v-23)at(2*\s,3*\s)[label={[below=3pt]\scalebox{0.8}{$(-2,3)$}}]{};
\node[out1](v12)at(-1*\s,2*\s)[label={[above=-3pt]\scalebox{0.8}{$(1,2)$}}]{};
\node[out1](v03)at(0*\s,3*\s)[label={[above=-3pt]\scalebox{0.8}{$(0,3)$}}]{};
\node[out1](v-14)at(1*\s,4*\s)[label={[above=-3pt]\scalebox{0.8}{$(-1,4)$}}]{};
\end{tikzpicture}\hspace*{2pt}.
$$
There are precisely two reverse plane partitions with central partition $\lambda$ and type $(\alpha,\beta)$:
\begin{gather*}
\scalebox{0.8}{\begin{tabular}{cccc}
& & $0$ & \\
& $1$ & & $2$ \\
$2$ & & $3$ & \\
& $4$ & &
\end{tabular}} \quad \text{ and } \quad \scalebox{0.8}{\begin{tabular}{cccc}
& & $1$ & \\
& $1$ & & $2$ \\
$2$ & & $2$ & \\
& $4$ & &
\end{tabular}}.\qedhere
\end{gather*}

\end{eg}

We now associate a reverse plane partition to a pair of semistandard tableaux of the same shape.
\begin{defn}[{\cite[Section 1]{hopkins14}}]\label{defn_tableaux_to_RPP}
Let $k\in\mathbb{N}$, and let $T\in\SSYT(\lambda)$ have entries bounded above by $k$. The {\itshape Gelfand--Tsetlin pattern} corresponding to $T$ is the array of numbers
$$
\hspace*{4pt}\begin{tabular}{ccccccccc}
& & & & $T^{(1)}_1$ & & & & \\
& & & $T^{(2)}_1$ & & $T^{(2)}_2$ & & & \\
& & $T^{(3)}_1$ & & $T^{(3)}_2$ & & $T^{(3)}_3$ & & \\
& \reflectbox{$\ddots$} & & \hspace*{-8pt}$\ddots$\hspace*{8pt}\reflectbox{$\ddots$}\hspace*{-8pt} & & \hspace*{-8pt}$\ddots$\hspace*{8pt}\reflectbox{$\ddots$}\hspace*{-8pt} & & $\ddots$ & \\
$T^{(k)}_1$ & & $T^{(k)}_2$ & & $\cdots$ & & $T^{(k)}_{k-1}$ & & $T^{(k)}_k$
\end{tabular}\hspace*{4pt}.
$$
Now let $l\in\mathbb{N}$, and let $T'\in\SSYT(\lambda)$ have entries bounded above by $l$. Note that the Gelfand--Tsetlin patterns corresponding to $T$ and $T'$ have the same last row (up to trailing zeros), namely $\lambda$. We rotate the two arrays by $90^\circ$ counterclockwise, reflect the array corresponding to $T'$ over a vertical line, and identify its leftmost column with the rightmost column of the (rotated) array corresponding to $T$. We view the resulting array as being indexed by the rectangle $\rect{k}{l}$, by deleting the entries outside of the rectangle; since the Young diagram of $\lambda$ has at most $\min(k,l)$ rows, such entries are zeros. This produces a reverse plane partition, denoted $\RPP{T}{T'}$.
\end{defn}

\begin{prop}\label{bijection_tableaux_to_RPP}
Let $\lambda\vdash n$, and let $\alpha, \beta\models n$. Then
$$
(T,T') \mapsto \RPP{T}{T'}
$$
is a bijection from $\SSYT(\lambda,\alpha)\times\SSYT(\lambda,\beta)$ to the set of reverse plane partitions with central partition $\lambda$ and type $(\alpha,\beta)$.
\end{prop}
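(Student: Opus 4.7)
The plan is to reduce the claim to the classical bijection between semistandard tableaux and Gelfand--Tsetlin patterns. Recall that for any $k\in\mathbb{N}$, tableaux $T\in\SSYT(\lambda,\alpha)$ with entries at most $k$ are in bijection with Gelfand--Tsetlin patterns having bottom row $\lambda$ (padded with trailing zeros up to length $k$) and satisfying the interlacing condition between consecutive rows. Under this bijection, the $i$-th row of the pattern (counted from the top) is $T^{(i)}$, the interlacing between rows $i-1$ and $i$ corresponds exactly to the fact that $T^{(i)}/T^{(i-1)}$ is a horizontal strip, and the row-sum differences recover the content $\alpha$ via $|T^{(i)}|-|T^{(i-1)}|=\alpha_i$.

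Applying this classical bijection separately to $T$ and $T'$ yields two Gelfand--Tsetlin patterns sharing the common bottom row $\lambda$. The construction in \cref{defn_tableaux_to_RPP} glues these two patterns along that common row, after the prescribed rotations and reflection, producing an array indexed by $\rect{k}{l}$. I would then verify four points: (a) every entry falling outside $\rect{k}{l}$ after the gluing is zero, which holds because $\lambda$ has at most $\min(k,l)$ nonzero parts, so all positions beyond this depth in either Gelfand--Tsetlin pattern are trailing zeros; (b) the interlacing between consecutive rows within each Gelfand--Tsetlin pattern is equivalent, after the rotation, to the reverse-plane-partition conditions $R(i,j)\ge R(i+1,j+1)$ and $R(i,j)\ge R(i-1,j+1)$ along $\rect{k}{l}$; (c) the row sums of $\RPP{T}{T'}$ satisfy $d_{k-i}=|T^{(i)}|=\alpha_1+\cdots+\alpha_i$ on the left half and $d_{-l+j}=|(T')^{(j)}|=\beta_1+\cdots+\beta_j$ on the right half, which is exactly the condition $d=\dimcomp{\alpha}{\beta}$ that $\RPP{T}{T'}$ has type $(\alpha,\beta)$; and (d) the central row of $\RPP{T}{T'}$ is exactly the common bottom row $\lambda$ of the two Gelfand--Tsetlin patterns, so the central partition is $\lambda$.

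For the inverse direction, given a reverse plane partition $R$ on $\rect{k}{l}$ with central partition $\lambda$ and type $(\alpha,\beta)$, I would split $R$ along its central row into a ``left'' half and a ``right'' half, rotate each half back to the standard triangular orientation, reintroduce trailing zeros to obtain two Gelfand--Tsetlin patterns with common bottom row $\lambda$ of lengths $k$ and $l$, respectively, and apply the inverse of the classical bijection to each half. The monotonicity of $R$ gives the required interlacing of each Gelfand--Tsetlin pattern, and the type condition on $R$ fixes the contents to be $\alpha$ and $\beta$, yielding $T\in\SSYT(\lambda,\alpha)$ and $T'\in\SSYT(\lambda,\beta)$. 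This construction is manifestly inverse to $(T,T')\mapsto\RPP{T}{T'}$.

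There is no serious obstacle: the proof is essentially bookkeeping, since the classical semistandard-tableau/Gelfand--Tsetlin correspondence does all the real work. The only delicate points are the rotation and reflection conventions in \cref{defn_tableaux_to_RPP}, which must be tracked carefully to ensure that (b) and (c) are stated with the correct indexing.
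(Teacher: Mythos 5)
Your proof is correct and follows essentially the same route as the paper, which dispatches this proposition with ``This follows from the definitions; see \cite[Section 1]{hopkins14} for further details.'' You have simply spelled out the bookkeeping that the paper (and Hopkins's survey) leaves implicit: the reduction to the classical SSYT/Gelfand--Tsetlin bijection, the verification that gluing along the shared bottom row $\lambda$ produces a reverse plane partition on $\rect{k}{l}$, and the identification of the central partition and type.
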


\begin{proof}
This follows from the definitions; see \cite[Section 1]{hopkins14} for further details.
\end{proof}

\begin{eg}
Let $\lambda := (4,1)$, $\alpha := (2,3)$, and $\beta := (2,1,2)$, as in \cref{eg_RPP}. Then the bijection in \cref{bijection_tableaux_to_RPP} is given as follows:
\begin{gather*}
\RPPbig{\ytableausmall{1 & 1 & 2 & 2 \\ 2}}{\ytableausmall{1 & 1 & 2 & 3 \\ 3}} = \scalebox{0.8}{\begin{tabular}{cccc}
& & $0$ & \\
& $1$ & & $2$ \\
$2$ & & $3$ & \\
& $4$ & &
\end{tabular}}, \quad \RPPbig{\ytableausmall{1 & 1 & 2 & 2 \\ 2}}{\ytableausmall{1 & 1 & 3 & 3 \\ 2}} = \scalebox{0.8}{\begin{tabular}{cccc}
& & $1$ & \\
& $1$ & & $2$ \\
$2$ & & $2$ & \\
& $4$ & &
\end{tabular}}.\qedhere
\end{gather*}

\end{eg}

\subsection{Socle filtrations}\label{sec_socle}
In this subsection, we refine \cref{preprojective_bijection} by showing that the bijection $\nil\mapsto\decnil{M}{\nil}$ preserves associated reverse plane partitions.
\begin{lem}\label{socle_facts}
Let $M\in\mathbb{N}^{k\times l}$, let $(F,F') := \flags{\dec{M}}$, let $\nil$ be a nilpotent endomorphism of $\dec{M}(0)$ strictly compatible with both $F$ and $F'$, and let $i\in Q(k,l)_0$.
\begin{enumerate}[label=(\roman*), leftmargin=*]
\item\label{socle_facts_beginning} We have $\socf{\decnil{M}{\nil}}{j}(i) = 0$ for all $0 \le j \le |i|$.
\item\label{socle_facts_parity} We have
$$
\socf{\decnil{M}{\nil}}{j+1}(i) = \socf{\decnil{M}{\nil}}{j}(i) = \begin{cases}
\ker((\nil|_{F_{k-i}})^{\frac{j-i+1}{2}}), & \text{ if $i \ge 0$}; \\
\ker((\nil|_{F'_{l+i}})^{\frac{j+i+1}{2}}), & \text{ if $i \le 0$}
\end{cases}
$$
for all $j \ge |i|+1$ such $j\equiv |i|+1\hspace*{-2pt}\pmod{2}$.
\item\label{socle_facts_end} We have $\socf{\decnil{M}{\nil}}{j}(i) = \decnil{M}{\nil}(i)$ for all $j \ge \min(2k-i-1,2l+i-1)$.
\end{enumerate}
\end{lem}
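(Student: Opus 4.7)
Write $V := \decnil{M}{\nil}$. The key observation is that for every arrow in $\double{Q(k,l)}$, the corresponding map in $V$ is either an inclusion (for the forward arrows of $Q(k,l)$, which all point toward $0$) or $\pm\nil$ restricted to the appropriate subspace (for the reverse arrows, which point away from $0$). Consequently, every length-$j$ path in $\double{Q(k,l)}$ starting at a vertex $i$ acts on $V(i)$ as $\pm\nil^{m}|_{V(i)}$, where $m$ counts the reverse arrows in the path---equivalently, the steps moving away from $0$ in the associated walk on $Q(k,l)_0$.

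Since $\socf{V}{j}(i) = \{v\in V(i) : \arrowsum^{j}\cdot v = 0\}$ and $\arrowsum^{j}$ is spanned by length-$j$ paths, combining the above with the inclusions $\ker(\nil^m)\subseteq\ker(\nil^{m+1})$ gives
\[
\socf{V}{j}(i) \;=\; \ker\!\bigl(\nil^{m_{\min}(j)}\bigr)\cap V(i),
\]
where $m_{\min}(j)$ is the minimum of $m$ over all valid length-$j$ walks starting at $i$. The optimal walk steps toward $0$ whenever possible, and alternates between $\{-1,0,1\}$ once the origin is reached; this bouncing strategy stays well within the quiver boundaries (assuming $k,l\ge 2$; the edge cases $k=1$ or $l=1$ force $\nil=0$, in which every claim reduces to a routine check). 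This yields
\[
m_{\min}(j) \;=\; \begin{cases} 0, & j\le|i|;\\[2pt] \lceil(j-|i|)/2\rceil, & j\ge|i|. \end{cases}
\]

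The three parts now follow by direct substitution. Part \ref{socle_facts_beginning} uses $m_{\min}(j)=0$ to get $\ker(\id)\cap V(i)=0$. For part \ref{socle_facts_parity}, the parity condition $j\equiv|i|+1\pmod 2$ forces $j-|i|$ to be odd, so $m_{\min}(j)=m_{\min}(j+1)=(j-|i|+1)/2$; together with $V(i)=F_{k-i}$ for $i\ge 0$ (resp.\ $V(i)=F'_{l+i}$ for $i\le 0$), this recovers the stated kernel. For part \ref{socle_facts_end}, strict compatibility of $\nil$ with $F$ and $F'$ gives $F_{k-i}\subseteq\ker(\nil^{k-i})$ and $F_{k-i}\subseteq F'_l\subseteq\ker(\nil^{l})$ for $i\ge 0$ (analogously for $i\le 0$), so $V(i)\subseteq\ker(\nil^{\min(k-i,l)})$; requiring $m_{\min}(j)\ge\min(k-i,l)$ (resp.\ $\min(l+i,k)$) rearranges to $j\ge\min(2k-i-1,\,2l+i-1)$.

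The only substantive step is the walk analysis underlying the formula for $m_{\min}(j)$, but this is straightforward since the bouncing strategy never leaves $\{-1,0,1\}$ and is therefore unobstructed by quiver boundaries. I expect no other significant difficulty.
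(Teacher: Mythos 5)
Your proof is correct and is, in essence, a careful unwinding of the definitions---which is precisely what the paper intends, as its proof of this lemma reads in full: ``This follows from the definitions.'' The walk analysis (every length-$j$ path acts as $\pm\nil^{m}$ with $m$ the number of reverse arrows, so only the minimal $m$ over valid length-$j$ walks matters) is the right way to make that precise.

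One caveat your argument passes over: the optimal bouncing walk must not only stay inside the quiver but must also avoid vertices $i'$ at which $\decnil{M}{\nil}(i')=0$, since any path through such a vertex is automatically the zero map and hence imposes no constraint on the socle, making the lower bound on $\socf{\decnil{M}{\nil}}{j}(i)$ potentially strict. This is benign, but it is a different kind of obstruction than the geometric quiver boundary you mention. For $i\neq 0$ the bouncing walk visits only vertices $i'$ between $0$ and $i$, where $\decnil{M}{\nil}(i')\supseteq\decnil{M}{\nil}(i)$, so nothing vanishes unless the statement is vacuous. For $i=0$ the only problematic case is $F_{k-1}=F'_{l-1}=0$; but then strict compatibility forces $\nil=0$ and $\ker(\nil^{m})=\field^n$ for $m\geq 1$, so the identity holds trivially. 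This belongs in the same ``routine check'' bucket as your $k=1$ or $l=1$ edge cases, but it is worth naming explicitly.
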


\begin{proof}
This follows from the definitions.
\end{proof}

\begin{defn}\label{defn_socle_to_RPP}
Let $V$ be a $\ppa{Q(k,l)}$-module such that $V(a)$ is injective for all $a\in Q(k,l)_1$. We let $\socRPP{V}$ denote the filling of the rectangle $\rect{k}{l}$ defined by
$$
\socRPP{V}(i,j) := \dim(\socf{V}{j}(i)/\socf{V}{j-1}(i)) \quad \text{ for all } (i,j)\in\rect{k}{l}.
$$
(We will show in \cref{preprojective_jordan} that $\socRPP{V}$ is a reverse plane partition.) By \cref{preprojective_bijection}, $V$ is isomorphic to $\decnil{M}{\nil}$ for some $M$ and $\nil$. Then by \cref{socle_facts}, we can recover the dimension vectors for the socle filtration of $V$ from $\socRPP{V}$:
$$
\dim(\socf{V}{j}(i)) = \sum_{\substack{1 \le j' \le j, \\ (i,j')\in\rect{k}{l}}}\socRPP{V}(i,j') \quad \text{ for all } i\in Q(k,l)_0 \text{ and } j\in\mathbb{N}.
$$
In particular, $\socRPP{V}$ has type $(\alpha,\beta)$, where $\alpha$ and $\beta$ are such that $\dim(V) = \dimcomp{\alpha}{\beta}$.
\end{defn}

\begin{eg}\label{eg_socle_to_RPP}
We adopt the setup of \cref{eg_module_to_flags}. Let $\nil\in\gl_5$ denote an arbitrary nilpotent endomorphism strictly compatible with both $F$ and $F'$; equivalently, $\nil\in\Nil_M$, by \cref{compatible_matrices_pair}\ref{compatible_matrices_pair_standard}. Then we can write
$$
\nil = \begin{bmatrix}
0 & 0 & 0 & a & b \\
0 & 0 & 0 & 0 & 0 \\
0 & 0 & 0 & 0 & 0 \\
0 & 0 & 0 & 0 & 0 \\
0 & 0 & 0 & 0 & 0
\end{bmatrix}, \quad \text{ where } a,b\in\field.
$$
Using \cref{eg_nilpotent_to_preprojective}, we find that $\socRPP{\decnil{M}{\nil}}$ is one of three reverse plane partitions, depending on the values of $a$ and $b$:
$$
\scalebox{0.8}{\begin{tabular}{cccc}
& & $1$ & \\
& $1$ & & $2$ \\
$2$ & & $2$ & \\
& $4$ & &
\end{tabular}}\hspace*{4pt} \text{ if $a\neq 0$}; \hspace*{16pt} \scalebox{0.8}{\begin{tabular}{cccc}
& & $0$ & \\
& $1$ & & $2$ \\
$2$ & & $3$ & \\
& $4$ & &
\end{tabular}}\hspace*{4pt} \text{ if $a=0$ and $b\neq 0$}; \hspace*{16pt}
\scalebox{0.8}{\begin{tabular}{cccc}
& & $0$ & \\
& $0$ & & $2$ \\
$2$ & & $3$ & \\
& $5$ & &
\end{tabular}}\hspace*{4pt} \text{ if $a=b=0$}.
$$
We see that $\socRPP{\decnil{M}{\nil}}$ has type $(\alpha,\beta)$, where $\alpha := (2,3)$ and $\beta := (2,1,2)$, as in \cref{eg_compositions_to_dimension}.
\end{eg}

\begin{rmk}\label{minuscule_poset}
In the context of \cref{defn_socle_to_RPP}, it is natural to regard $\rect{k}{l}$ as the Hasse diagram of the {\itshape minuscule poset} associated to vertex $0$ of $Q(k,l)$; see \cite{proctor84} and \cite[Section 8.3]{green13} for further details.
\end{rmk}

\begin{thm}\label{preprojective_jordan}
Let $M\in\mathbb{N}^{k\times l}$, let $(F,F') := \flags{\dec{M}}$, and let $\nil$ be a nilpotent endomorphism of $\dec{M}(0)$ strictly compatible with both $F$ and $F'$. Then
$$
\RPP{\JF(\nil;F)}{\JF(\nil;F')} = \socRPP{\decnil{M}{\nil}}.
$$
In particular, $\socRPP{\decnil{M}{\nil}}$ is a reverse plane partition.
\end{thm}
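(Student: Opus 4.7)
The plan is to compute $\socRPP{V}(i,j)$ cell-by-cell using \cref{socle_facts} combined with \cref{matrix_to_partition}, and then to identify the resulting expression with the entry of $\RPP{T}{T'}$ at $(i,j)$ given by the Gelfand--Tsetlin construction of \cref{defn_tableaux_to_RPP}. Throughout I write $V := \decnil{M}{\nil}$, $T := \JF(\nil;F)$, and $T' := \JF(\nil;F')$, where $(F,F') = \flags{\dec{M}}$.

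First I would fix a cell $(i,j)\in\rect{k}{l}$, for which $j \equiv |i|+1 \pmod{2}$ and $|i|+1 \le j \le \min(2k-i-1,2l+i-1)$. Assuming $i \ge 0$, \cref{socle_facts}\ref{socle_facts_parity} gives $\socf{V}{j}(i) = \ker((\nil|_{F_{k-i}})^{(j-i+1)/2})$. The same lemma applied at level $j-2$ in the generic case, or \cref{socle_facts}\ref{socle_facts_beginning} in the boundary case $j = i+1$ (with the convention $(\nil|_{F_{k-i}})^0 = \id$), together with the first equality in \cref{socle_facts}\ref{socle_facts_parity}, yields $\socf{V}{j-1}(i) = \socf{V}{j-2}(i) = \ker((\nil|_{F_{k-i}})^{(j-i-1)/2})$. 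The nilpotent endomorphism $\nil|_{F_{k-i}}$ has conjugated Jordan form partition $T^{(k-i)}$ by the definition of $\JF(\nil;F)$, so applying \cref{matrix_to_partition} produces
\[\socRPP{V}(i,j) \;=\; \dim(\socf{V}{j}(i)) - \dim(\socf{V}{j-1}(i)) \;=\; T^{(k-i)}_{(j-i+1)/2}.\]
A symmetric argument, with $F$ replaced by $F'$ and $T$ by $T'$, handles $i \le 0$ and yields $\socRPP{V}(i,j) = T'^{(l+i)}_{(j+i+1)/2}$; the two formulas agree at $i=0$ because $T^{(k)} = T'^{(l)} = \JF(\nil)$.

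Next I would verify that $\RPP{T}{T'}(i,j)$ is given by these same two formulas. Unpacking \cref{defn_tableaux_to_RPP}, in the Gelfand--Tsetlin pattern of $T$ the entry $T^{(m)}_s$ lies in row $m$ (from the top) at horizontal offset $2s - m - 1$; rotating $90^\circ$ counterclockwise moves this entry to the cell of $\rect{k}{l}$ with $i = k - m$ and $j = 2s - 1 + (k - m)$, equivalently $m = k-i$ and $s = (j-i+1)/2$. The analogous computation for $T'$, after the additional vertical reflection and the identification of the common column $\lambda = T^{(k)} = T'^{(l)}$, places $T'^{(m)}_s$ at the cell with $m = l+i$ and $s = (j+i+1)/2$. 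Entries falling outside $\rect{k}{l}$ correspond to zero parts of $\lambda$ and are discarded as specified in \cref{defn_tableaux_to_RPP}. Matching coordinates, $\RPP{T}{T'}(i,j)$ equals the formulas derived above from the socle filtration, so $\RPP{T}{T'} = \socRPP{V}$; in particular, $\socRPP{V}$ is a reverse plane partition.

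The hard part is bookkeeping rather than substance: one must carefully track the parity condition $j \equiv |i|+1 \pmod 2$ (which is exactly what selects the ``jumps'' of the socle filtration to coincide with the cells of $\rect{k}{l}$), handle the boundary case $j = |i|+1$, and match the coordinate conventions of \cref{defn_tableaux_to_RPP} against those of \cref{defn_RPP}. The conceptual content lies entirely in \cref{socle_facts}, which already packages the relevant kernels, together with \cref{matrix_to_partition}, which converts dimensions of kernels of powers of a nilpotent endomorphism into partial sums of its Jordan partition.
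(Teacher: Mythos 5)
Your proof is correct and follows essentially the same route as the paper's: reduce to $i\ge 0$, use \cref{socle_facts} to express $\socRPP{\decnil{M}{\nil}}(i,j)$ as $\dim(\ker((\nil|_{F_{k-i}})^s)/\ker((\nil|_{F_{k-i}})^{s-1}))$ with $s=(j-i+1)/2$, and invoke \cref{matrix_to_partition} to identify this with $T^{(k-i)}_s$. The only difference is that you unpack the Gelfand--Tsetlin coordinate matching of \cref{defn_tableaux_to_RPP} explicitly, whereas the paper treats the identification $\RPP{T}{T'}(i,i+2s-1)=T^{(k-i)}_s$ as immediate from the definition.
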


\begin{proof}
We prove the equality of fillings of $\rect{k}{l}$ at all indices of the form $(i,\cdot)$ with $i\ge 0$; the equality at indices with $i\le 0$ follows by a similar argument. Let $T := \JF(\nil;F)$. It suffices to show that
$$
T^{(k-i)}_j = \dim(\socf{\decnil{M}{\nil}}{i+2j-1}(i)/\socf{\decnil{M}{\nil}}{i+2j-2}(i)) \quad \text{ for all } j\in\mathbb{Z}_{>0}.
$$
Both sides equal $\dim(\ker((\nil|_{F_{k-i}})^j)/\ker((\nil|_{F_{k-i}})^{j-1}))$, by \cref{matrix_to_partition,socle_facts}.
\end{proof}

\subsection{Enumerating modules over \texorpdfstring{$\ppa{Q(k,l)}$}{the preprojective algebra}}\label{sec_socle_enumeration}
Using \cref{preprojective_jordan}, we now apply the results of \cref{sec_q_burge} to enumerate $\ppa{Q(k,l)}$-modules $V$ such that $V(a)$ is injective for all $a\in Q(k,l)_1$. We recall that we are counting isomorphism classes $[V]$ of modules, each weighted by $\frac{1}{|\!\Aut(V)|}$.

We make the following definition in analogy with \cref{defn_triples}:
\begin{defn}\label{defn_isomorphism_classes}
Let $\alpha = (\alpha_1, \dots, \alpha_k), \beta = (\beta_1, \dots, \beta_l)\models n$. We let $\ppareps{\alpha,\beta}$ denote the set of all isomorphism classes $[V]$ of $\ppa{Q(k,l)}$-modules $V$, such that $\dim(V) = \dimcomp{\alpha}{\beta}$ and $V(a)$ is injective for all $a\in Q(k,l)_1$. Given $M\in\Mats{\alpha}{\beta}$, we let $\ppareps{M}$ denote the subset of $\ppareps{\alpha,\beta}$ of all $[V]$ such that $V$ is isomorphic to $\dec{M}$ as a $\pa{Q(k,l)}$-module.

Now let $\RPPs{\alpha}{\beta}$ denote the set of all reverse plane partitions of type $(\alpha,\beta)$. Given $R\in\RPPs{\alpha}{\beta}$, we let $\ppareps[R]{\alpha,\beta}$ and $\ppareps[R]{M}$ denote the subsets of $\ppareps{\alpha,\beta}$ and $\ppareps{M}$, respectively, of all $[V]$ such that $\socRPP{V} = R$.
\end{defn}

Note that an endomorphism of a $\pa{Q}$- or $\ppa{Q}$-module $V$ is given by a tuple $(f_i)_{i\in Q_0}$  (satisfying appropriate compatibility conditions), where each $f_i$ is a linear endomorphism of $V(i)$. We can explicitly describe such tuples for $\dec{M}$ and $\decnil{M}{\nil}$:
\begin{lem}\label{module_stabilizer}
Let $M\in\mathbb{N}^{k\times l}$, and let $(F,F') := \flags{\dec{M}}$.
\begin{enumerate}[label=(\roman*), leftmargin=*]
\item\label{module_stabilizer_path_algebra} We have the group isomorphism
$$
\Stab_{\GL_n}\hspace*{-1pt}(F,F') \xrightarrow{\cong} \Aut(\dec{M}),\quad g \mapsto (g|_{\dec{M}(i)})_{i\in Q(k,l)_0}.
$$
\item\label{module_stabilizer_preprojective} Let $\nil$ be a nilpotent endomorphism of $\dec{M}(0)$ strictly compatible with both $F$ and $F'$. Then we have the group isomorphism
\begin{gather*}
\Stab_{\GL_n}\hspace*{-1pt}(F,F',\nil) \xrightarrow{\cong} \Aut(\decnil{M}{\nil}),\quad g \mapsto (g|_{\decnil{M}{\nil}(i)})_{i\in Q(k,l)_0}.
\end{gather*}
\end{enumerate}

\end{lem}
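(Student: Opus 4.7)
The plan is to prove both parts in parallel, by unpacking the definition of a module automorphism and translating each compatibility condition into a concrete constraint on an element of $\GL_n$. Throughout, I use the identifications built into \cref{defn_quiver_A} and \cref{defn_module_to_flags}: for $1 \le i \le k$ the vertex space $\dec{M}(k-i)$ is identified with the subspace $F_i \subseteq \dec{M}(0) = \field^n$, and for $1 \le j \le l$ the vertex space $\dec{M}(-l+j)$ is identified with $F'_j \subseteq \field^n$, in such a way that each structure map $\dec{M}(a)$ becomes the corresponding subspace inclusion.

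For part \ref{module_stabilizer_path_algebra}, I would first recall that an automorphism of $\dec{M}$ is a tuple $(f_i)_{i \in Q(k,l)_0}$ with $f_i \in \GL(\dec{M}(i))$, satisfying $f_{\target{a}} \circ \dec{M}(a) = \dec{M}(a) \circ f_{\source{a}}$ for every $a \in Q(k,l)_1$. Under the identification above, this compatibility says precisely that $f_{\target{a}}$ preserves the subspace $\dec{M}(\source{a})$ and restricts there to $f_{\source{a}}$. Iterating along the left arm of $Q(k,l)$ from vertex $k-1$ to vertex $0$, this forces $f_{k-i} = f_0|_{F_i}$ and requires $f_0$ to preserve each $F_i$ setwise; iterating along the right arm does the same for the $F'_j$. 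Hence $f_0$ determines the whole tuple, and the map $g \mapsto (g|_{\dec{M}(i)})_{i \in Q(k,l)_0}$ defines an injective group homomorphism $\Stab_{\GL_n}(F,F') \to \Aut(\dec{M})$. Conversely, given $g \in \Stab_{\GL_n}(F,F')$ the restrictions $g|_{F_i}$ and $g|_{F'_j}$ automatically commute with the inclusion maps, so the tuple they define is an automorphism, giving surjectivity.

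For part \ref{module_stabilizer_preprojective}, I would observe that $\decnil{M}{\nil}$ has the same underlying $\pa{Q(k,l)}$-module as $\dec{M}$, so part \ref{module_stabilizer_path_algebra} already characterizes how $g \in \GL_n$ can induce module components that are compatible with the forward arrows. The only new data in $\decnil{M}{\nil}$ are the reverse arrows $\rev{a}$, whose structure maps are $\pm\nil$ restricted to the appropriate flag subspace (with the sign depending on the arm of the quiver). The compatibility condition at each $\rev{a}$ thus reads $g \circ \nil = \nil \circ g$ on the relevant subspace, which by \cref{compatible_matrices_adjoint} and the fact that $g$ already preserves $F$ and $F'$ is equivalent to the single relation $g\nil = \nil g$ on $\field^n$. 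Consequently the homomorphism of part \ref{module_stabilizer_path_algebra} restricts to the isomorphism $\Stab_{\GL_n}(F,F',\nil) \xrightarrow{\cong} \Aut(\decnil{M}{\nil})$ asserted.

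The only potential obstacle is purely organizational: keeping track of the signs attached to the reverse arrows on the two arms of $Q(k,l)$, and making sure the identification $\dec{M}(k-i) \cong F_i$, $\dec{M}(-l+j) \cong F'_j$ is used consistently so that the compatibility conditions at all arrows translate cleanly into preservation of $F$, of $F'$, and (in part \ref{module_stabilizer_preprojective}) commutation with $\nil$. No substantive representation-theoretic input is needed beyond this bookkeeping.
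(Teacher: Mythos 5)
Your proof is correct and follows the same route as the paper, which states only that the lemma ``follows from the definitions''; you have simply spelled out the definitional unpacking that the paper leaves implicit. One minor quibble: in part~(ii) the reference to \cref{compatible_matrices_adjoint} is not really what is needed — the relevant fact is strict compatibility itself, i.e.\ $\nil(F_i)\subseteq F_{i-1}$ (and in particular $\nil(\field^n)\subseteq F_{k-1}$), which ensures that the compatibility condition at the single reverse arrow $\rev{(1,0)}$ already forces $g\nil=\nil g$ on all of $\field^n$, and that the conditions at the remaining reverse arrows then hold automatically.
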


\begin{proof}
This follows from the definitions.
\end{proof}

\cref{module_stabilizer} leads to the following concrete description of $\ppareps{M}$, which is useful in performing calculations. We recall that the subspace $\Nil_M$ of $\gl_n$ was defined in \cref{defn_inversion_matrices}, and that $\GL_n$ acts on $\gl_n$ by conjugation (see \cref{defn_GL_gl}).
\begin{cor}\label{ppareps_concrete_description}
Let $M$ be a nonnegative-integer matrix. Then the distinct elements of $\ppareps{M}$ are precisely the isomorphism classes
$$
[\decnil{M}{\nil}] \quad \text{ for } \nil\in\Nil_M/\Stab_{\GL_n}\hspace*{-1pt}(\Unit{\id}, \Unit{\perm{M}}),
$$
where $\flags{\dec{M}} = (\Unit{\id}, \Unit{\perm{M}})$.
\end{cor}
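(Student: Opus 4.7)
The plan is to show the two containments making up the corollary: first, that every isomorphism class $[V] \in \ppareps{M}$ can be represented in the form $\decnil{M}{\nil}$ for some $\nil \in \Nil_M$; and second, that for $\nil, \nil' \in \Nil_M$, we have $\decnil{M}{\nil}\cong\decnil{M}{\nil'}$ as $\ppa{Q(k,l)}$-modules precisely when $\nil$ and $\nil'$ lie in the same orbit of $\Stab_{\GL_n}\hspace*{-1pt}(\Unit{\id}, \Unit{\perm{M}})$ acting on $\Nil_M$ by conjugation.

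For the first part, I would start with $[V]\in\ppareps{M}$ and choose a representative $V$ which equals $\dec{M}$ as a $\pa{Q(k,l)}$-module with the underlying vector space of $V(0)$ identified with $\field^n$ via a specific basis. By \cref{module_relative_position}, the pair $\flags{V}$ has relative position $M$; by \cref{relative_position_invariance}\ref{relative_position_invariance_action}, applying a suitable element of $\GL_n$ to this basis produces an isomorphic representative for which $\flags{V} = (\Unit{\id}, \Unit{\perm{M}})$. Then \cref{preprojective_bijection} shows that $V = \decnil{M}{\nil}$ for some nilpotent endomorphism $\nil$ strictly compatible with both $\Unit{\id}$ and $\Unit{\perm{M}}$, and by \cref{compatible_matrices_pair}\ref{compatible_matrices_pair_standard} such $\nil$ belongs to $\Nil_M$.

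For the second part, let $\nil,\nil'\in\Nil_M$ and write $F := \Unit{\id}$, $F' := \Unit{\perm{M}}$. An isomorphism $\varphi : \decnil{M}{\nil} \to \decnil{M}{\nil'}$ of $\ppa{Q(k,l)}$-modules is, in particular, an isomorphism of the underlying $\pa{Q(k,l)}$-modules, which agree with $\dec{M}$; by \cref{module_stabilizer}\ref{module_stabilizer_path_algebra}, such an isomorphism is given by a single element $g\in\Stab_{\GL_n}\hspace*{-1pt}(F,F')$, via $\varphi = (g|_{\dec{M}(i)})_{i\in Q(k,l)_0}$. The remaining requirement that $\varphi$ commutes with the maps assigned to the reverse arrows in $\decnil{M}{\nil}$ and $\decnil{M}{\nil'}$ translates, after tracing through \cref{defn_nilpotent_to_preprojective} (the $\pm$ signs on the two sides of the quiver cancel), to the single identity $g\nil = \nil' g$, i.e., $\nil' = g\nil g^{-1}$. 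Conversely, any $g\in\Stab_{\GL_n}\hspace*{-1pt}(F,F')$ satisfying $\nil' = g\nil g^{-1}$ produces such a $\varphi$. Hence the map $\nil\mapsto [\decnil{M}{\nil}]$ descends to a bijection from $\Nil_M/\Stab_{\GL_n}\hspace*{-1pt}(F,F')$ to $\ppareps{M}$, which is exactly the claim.

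The only step that requires care, rather than pure bookkeeping, is verifying the reverse-arrow compatibility in the second part: one must check for all arrows $a \in Q(k,l)_1$ that the condition imposed on $g$ by $\varphi$ commuting with $\decnil{M}{\nil}(\rev{a})$ and $\decnil{M}{\nil'}(\rev{a})$ reduces to the same global identity $g\nil = \nil' g$ on all of $\field^n$, using the fact that $g$ already preserves both flags, so that restrictions to $F_{k-i}$ and $F'_{l+j}$ assemble consistently. This is the expected main (and mild) obstacle; everything else is a direct application of the results collected in \cref{sec_quiver}.
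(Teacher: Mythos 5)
Your proof is correct and takes essentially the same route as the paper: the paper's proof is a one-line citation of \cref{compatible_matrices_pair}\ref{compatible_matrices_pair_standard}, \cref{preprojective_bijection}, and \cref{module_stabilizer}\ref{module_stabilizer_path_algebra}, which are exactly the results you invoke, and you have correctly spelled out the remaining bookkeeping (the identification of $\ppa{Q(k,l)}$-module isomorphisms with elements of $\Stab_{\GL_n}(\Unit{\id},\Unit{\perm{M}})$ intertwining $\nil$ and $\nil'$, with the $\pm$ signs on the reverse arrows cancelling). The small detour in Part 1 through \cref{module_relative_position} and \cref{relative_position_invariance}\ref{relative_position_invariance_action} is unnecessary once you pick a representative restricting exactly to $\dec{M}$ with the chosen basis, but it is harmless.
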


\begin{proof}
This follows from \cref{compatible_matrices_pair}\ref{compatible_matrices_pair_standard}, \cref{preprojective_bijection}, and \cref{module_stabilizer}\ref{module_stabilizer_path_algebra}.
\end{proof}

\begin{eg}\label{eg_isomorphism_classes}
Let us describe $\ppareps[R]{M}$, where
$$
M := \begin{bmatrix}
0 & 1 & 0 & 0 \\
1 & 0 & 0 & 0 \\
0 & 0 & 0 & 1 \\
0 & 0 & 1 & 0
\end{bmatrix} \quad \text{ and } \quad R := \scalebox{0.8}{\begin{tabular}{ccccccc}
& & & $0$ & & & \\
& & $0$ & & $0$ & & \\
& $0$ & & $0$ & & $0$ & \\
$1$ & & $1$ & & $1$ & & $1$ \\
& $2$ & & $2$ & & $2$ & \\
& & $2$ & & $2$ & & \\
& & & $2$ & & &
\end{tabular}}.
$$
We have
$$
\dec{M} = \hspace*{4pt}\begin{tikzpicture}[baseline=(current bounding box.base)]
\tikzstyle{out1}=[inner sep=0,minimum size=1.2mm,circle,draw=black,fill=black,semithick]
\pgfmathsetmacro{\s}{2.04};
\node[out1](v3)at(0*\s,0)[label={[below=3pt]$3$},label={[above=-2pt]$\spn{\unit{1}}$}]{};
\node[out1](v2)at(1*\s,0)[label={[below=3pt]$2$},label={[above=-2pt]$\spn{\unit{1},\unit{2}}$}]{};
\node[out1](v1)at(2*\s,0)[label={[below=3pt]$1$},label={[above=-2pt]$\spn{\unit{1},\unit{2},\unit{3}}$}]{};
\node[out1](v0)at(3*\s,0)[label={[below=3pt]$0$},label={[above=-1pt]$\field^4$}]{};
\node[out1](v-1)at(4*\s,0)[label={[below=3pt]$-1$},label={[above=-2pt]$\spn{\unit{2},\unit{1},\unit{4}}$}]{};
\node[out1](v-2)at(5*\s,0)[label={[below=3pt]$-2$},label={[above=-2pt]$\spn{\unit{2},\unit{1}}$}]{};
\node[out1](v-3)at(6*\s,0)[label={[below=3pt]$-3$},label={[above=-2pt]$\spn{\unit{2}}$}]{};
\path[-latex',thick](v3)edge(v2) (v2)edge(v1) (v1)edge(v0) (v-3)edge(v-2) (v-2)edge(v-1) (v-1)edge(v0);
\end{tikzpicture}\hspace*{4pt},
$$
where each map is the identity onto its image. By \cref{module_stabilizer}\ref{module_stabilizer_path_algebra}, we have
$$
\Aut(\dec{M}) \cong \Stab_{\GL_4}\hspace*{-1pt}(\Unit{\id}, \Unit{\perm{M}}) = \left\{\begin{bmatrix}
\ast & 0 & \ast & \ast \\
0 & \ast & \ast & \ast \\
0 & 0 & \ast & 0 \\
0 & 0 & 0 & \ast
\end{bmatrix}\right\}\subseteq\GL_4,
$$
where $\ast$ denotes an arbitrary element of $\field$.

We can write an arbitrary element $\nil\in\Nil_M$ as
$$
\nil = \begin{bmatrix}
0 & 0 & a & b \\
0 & 0 & c & d \\
0 & 0 & 0 & 0 \\
0 & 0 & 0 & 0
\end{bmatrix}, \quad \text{ where } a,b,c,d\in\field.
$$
Then we calculate that $\socRPP{\decnil{M}{\nil}} = R$ if and only if $ad \neq bc$. By \cref{ppareps_concrete_description}, the distinct elements of $\ppareps[R]{M}$ are precisely $[\decnil{M}{\nil}]$, for such $\nil$ modulo the action of $\Stab_{\GL_4}\hspace*{-1pt}(\Unit{\id}, \Unit{\perm{M}})$:
\begin{align}\label{eg_isomorphism_classes_equation}
\nil = \scalebox{0.8}{$\begin{bmatrix}
0 & 0 & 1 & 0 \\
0 & 0 & 0 & 1 \\
0 & 0 & 0 & 0 \\
0 & 0 & 0 & 0
\end{bmatrix}$},\hspace*{4pt}
\scalebox{0.8}{$\begin{bmatrix}
0 & 0 & 1 & 1 \\
0 & 0 & 0 & 1 \\
0 & 0 & 0 & 0 \\
0 & 0 & 0 & 0
\end{bmatrix}$},\hspace*{4pt}
\scalebox{0.8}{$\begin{bmatrix}
0 & 0 & 0 & 1 \\
0 & 0 & 1 & 0 \\
0 & 0 & 0 & 0 \\
0 & 0 & 0 & 0
\end{bmatrix}$},\hspace*{4pt}
\scalebox{0.8}{$\begin{bmatrix}
0 & 0 & 0 & 1 \\
0 & 0 & 1 & 1 \\
0 & 0 & 0 & 0 \\
0 & 0 & 0 & 0
\end{bmatrix}$},\hspace*{4pt}
\scalebox{0.8}{$\begin{bmatrix}
0 & 0 & 1 & 0 \\
0 & 0 & 1 & 1 \\
0 & 0 & 0 & 0 \\
0 & 0 & 0 & 0
\end{bmatrix}$},\hspace*{4pt}
\scalebox{0.8}{$\begin{bmatrix}
0 & 0 & 1 & 1 \\
0 & 0 & 1 & a \\
0 & 0 & 0 & 0 \\
0 & 0 & 0 & 0
\end{bmatrix}$},
\end{align}
where $a\in\field\setminus\{1\}$. This shows that $\ppareps[R]{M}$ may have size at least $|\field|$. In particular, we cannot necessarily recover a $\ppa{Q(k,l)}$-module up to isomorphism from $M$ and $R$. In fact, the general situation is even worse than in this example. When $k+l\ge 7$, the preprojective algebra $\ppa{Q(k,l)}$ is {\itshape wild}, meaning that its indecomposable representations occur in families requiring an arbitrarily high number of parameters to index them \cite[Lemma 3.5 and Theorem 3.7]{erdmann_skowronski06}.
\end{eg}

We have the following enumerative consequence of \cref{module_stabilizer}\ref{module_stabilizer_preprojective}:
\begin{lem}\label{module_to_triple_count}
Let $M$ be a nonnegative-integer matrix, and let $R$ be a reverse plane partition, such that $\type(M) = \type(R)$. Write $R = \RPP{T}{T'}$, as in \cref{bijection_tableaux_to_RPP}. Then if $\field$ is finite, we have
\begin{gather*}
\sum_{[V]\in\ppareps[R]{M}}\frac{1}{|\!\Aut(V)|} = \frac{|\triples[T,T']{M}\hspace*{-1pt}|}{|\GL_n|}.
\end{gather*}

\end{lem}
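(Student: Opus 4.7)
The plan is to apply the orbit-stabilizer theorem to the natural $\GL_n$-action on $\triples[T,T']{M}$ defined by $g\cdot(F,F',\nil) := (gF, gF', g\nil g^{-1})$, which preserves $\triples[T,T']{M}$ by \cref{compatible_matrices_adjoint} and \cref{relative_position_invariance}\ref{relative_position_invariance_action}. The key is to identify the $\GL_n$-orbits with the isomorphism classes in $\ppareps[R]{M}$ and the stabilizers with the corresponding automorphism groups, following the dictionary in \cref{dictionary}.

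First, I would construct a surjection $\Phi\colon\triples[T,T']{M}\to\ppareps[R]{M}$ as follows. Given $(F,F',\nil)\in\triples[T,T']{M}$, choose $h\in\GL_n$ with $hF=\Unit{\id}$ and $hF'=\Unit{\perm{M}}$ (which exists by \cref{relative_position_invariance}\ref{relative_position_invariance_action}); then $h\nil h^{-1}\in\Nil_M$ by \cref{compatible_matrices_pair}\ref{compatible_matrices_pair_standard}, and I define $\Phi(F,F',\nil):=[\decnil{M}{h\nil h^{-1}}]$. By \cref{compatible_matrices_adjoint} this does not depend on $h$ up to isomorphism, and by \cref{preprojective_jordan} together with \cref{compatible_matrices_adjoint} the resulting module has associated reverse plane partition $\RPP{\JF(h\nil h^{-1};\Unit{\id})}{\JF(h\nil h^{-1};\Unit{\perm{M}})}=\RPP{T}{T'}=R$, so the image lies in $\ppareps[R]{M}$. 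Surjectivity follows from \cref{ppareps_concrete_description}: any $[V]\in\ppareps[R]{M}$ equals $[\decnil{M}{\nil'}]$ for some $\nil'\in\Nil_M$, and the triple $(\Unit{\id},\Unit{\perm{M}},\nil')$ maps to $[V]$.

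Next, I claim the fibers of $\Phi$ coincide with the $\GL_n$-orbits. One direction is clear: triples in the same orbit manifestly have the same image. For the converse, suppose $\Phi$ maps two triples to the same class. After conjugating each by a suitable choice of $h_i$, the problem reduces to showing that an isomorphism of $\ppa{Q(k,l)}$-modules $\decnil{M}{\nil_1^*}\cong\decnil{M}{\nil_2^*}$ (with $\nil_1^*,\nil_2^*\in\Nil_M$) arises from some $g\in\Stab_{\GL_n}(\Unit{\id},\Unit{\perm{M}})$ satisfying $g\nil_1^*g^{-1}=\nil_2^*$. This is the natural extension of \cref{module_stabilizer}\ref{module_stabilizer_preprojective} from automorphisms to isomorphisms between two such modules: the component at vertex $0$ of the isomorphism is an element of $\GL_n$ which, by compatibility with the forward inclusions and the reverse arrows, must stabilize the canonical pair of flags and conjugate $\nil_1^*$ to $\nil_2^*$. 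Combining the chosen $h_i$'s with $g$ then produces a $\GL_n$-element taking the first original triple to the second.

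Finally, by the orbit-stabilizer theorem, each fiber $\Phi^{-1}([V])$ has size $|\GL_n|/|\Stab_{\GL_n}(F,F',\nil)|$ for any representative $(F,F',\nil)$, and \cref{module_stabilizer}\ref{module_stabilizer_preprojective} (applied after conjugating to standard form) identifies this stabilizer with $\Aut(V)$. Summing over orbits gives $|\triples[T,T']{M}|=\sum_{[V]\in\ppareps[R]{M}}|\GL_n|/|\Aut(V)|$, and dividing by $|\GL_n|$ yields the lemma. The main obstacle is the converse half of the fiber-versus-orbit identification, which requires extending \cref{module_stabilizer}\ref{module_stabilizer_preprojective} from a statement about stabilizers of a single module to one about isomorphisms between two such modules; fortunately the argument is a direct analogue of the reasoning already implicit in the proof of that lemma.
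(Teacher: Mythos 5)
Your proposal is correct and, after unfolding the definitions, is essentially the same argument as the paper's. The paper goes left-to-right: it uses \cref{ppareps_concrete_description} to re-index the sum over $[V]\in\ppareps[R]{M}$ by $\Stab_{\GL_n}(\Unit{\id},\Unit{\perm{M}})$-orbits in $\Nil_M$, then applies \cref{module_stabilizer}\ref{module_stabilizer_preprojective} and orbit-stabilizer to convert each $\frac{1}{|\Aut(V)|}$ into $\frac{|\GL_n\cdot(\Unit{\id},\Unit{\perm{M}},\nil)|}{|\GL_n|}$, and finally observes that these orbits partition $\triples[T,T']{M}$. You go right-to-left by constructing the surjection $\Phi$ whose fibers are the $\GL_n$-orbits, then summing $\frac{|\GL_n|}{|\Stab_{\GL_n}(F,F',\nil)|}$ over orbits. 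Both arguments hinge on exactly the same two ingredients: the identification of $\ppareps[R]{M}$ with $\GL_n$-orbits on $\triples[T,T']{M}$, and the stabilizer-automorphism isomorphism of \cref{module_stabilizer}\ref{module_stabilizer_preprojective}.

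One small slip in the attributions: where you justify well-definedness of $\Phi$ you cite \cref{compatible_matrices_adjoint}, but that lemma only concerns $\JF$ and compatibility of conjugation, not module isomorphisms. What you actually need there (and also for the converse half of fibers-equal-orbits) is that an element of $\Stab_{\GL_n}(\Unit{\id},\Unit{\perm{M}})$ conjugating $\nil_1^*$ to $\nil_2^*$ induces an isomorphism $\decnil{M}{\nil_1^*}\cong\decnil{M}{\nil_2^*}$, and conversely. This is precisely the content packaged by \cref{ppareps_concrete_description} (which itself rests on \cref{module_stabilizer}\ref{module_stabilizer_path_algebra}), so you could simply cite that result in both places rather than re-deriving the extension of \cref{module_stabilizer}\ref{module_stabilizer_preprojective} to isomorphisms; but your sketch of that extension is correct.
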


\begin{proof}
By \cref{ppareps_concrete_description,preprojective_jordan}, the sum on the left-hand side equals
$$
\sum_\nil \frac{1}{|\!\Aut(\decnil{M}{\nil})|},
$$
where the sum is over all $\nil\in\Nil_M/\Stab_{\GL_n}\hspace*{-1pt}(\Unit{\id}, \Unit{\perm{M}})$ such that $\JF(\nil;\Unit{\id}) = T$ and $\JF(\nil;\Unit{\perm{M}}) = T'$. By \cref{module_stabilizer}\ref{module_stabilizer_preprojective} and the orbit-stabilizer theorem, this equals
$$
\sum_\nil \frac{|\GL_n \cdot (\Unit{\id}, \Unit{\perm{M}}, \nil)|}{|\GL_n|},
$$
which equals the right-hand side by \cref{compatible_matrices_adjoint,relative_position_invariance}\ref{relative_position_invariance_action}.
\end{proof}

We now prove the main result of this section. We recall that $\ncon{\cdot}$ was defined in \cref{defn_n}, $\qwt(\cdot)$ was defined in \cref{defn_whittaker}, $\Tabs{\alpha}{\beta}$ was defined in \cref{defn_triples}, and $\prnoarg$ and $\prdualnoarg$ were defined in \cref{defn_probabilities}:
\begin{thm}\label{preprojective_enumeration}
Set $\field := \qinvF$. Let $\alpha = (\alpha_1, \dots, \alpha_k),\beta = (\beta_1, \dots, \beta_l)\models n$.
\begin{enumerate}[label=(\roman*), leftmargin=*]
\item\label{preprojective_enumeration_term} For all $M\in\Mats{\alpha}{\beta}$ and $R\in\RPPs{\alpha}{\beta}$, we have
\begin{multline*}
\sum_{[V]\in\ppareps[R]{M}}\frac{1}{|\!\Aut(V)|} = \frac{q^{n+\ncon{\alpha}+\ncon{\beta}}(1-q)^{-n}}{\prod_{1 \le i \le k,\hspace*{1pt} 1 \le j \le l}\qfac{M_{i,j}}}\hspace*{1pt}\pr{M}{T,T'} \\
= \frac{q^{n+\ncon{\alpha}+\ncon{\beta}}(1-q)^{-\lambda_1}}{\prod_{i\ge 1}\qfac{\lambda_i - \lambda_{i+1}}}\qwt(T)\qwt(T')\hspace*{2pt}\prdual{M}{T,T'},
\end{multline*}
where $R = \RPP{T}{T'}$, as in \cref{bijection_tableaux_to_RPP}.
\item\label{preprojective_enumeration_relative_position} For all $M\in\Mats{\alpha}{\beta}$, we have
$$
\sum_{[V]\in\ppareps{M}}\frac{1}{|\!\Aut(V)|} = \frac{q^{n+\ncon{\alpha}+\ncon{\beta}}(1-q)^{-n}}{\prod_{1 \le i \le k,\hspace*{1pt} 1 \le j \le l}\qfac{M_{i,j}}}.
$$
\item\label{preprojective_enumeration_RPP} For all $R\in\RPPs{\alpha}{\beta}$, we have
$$
\sum_{[V]\in\ppareps[R]{\alpha,\beta}}\frac{1}{|\!\Aut(V)|} = \frac{q^{n+\ncon{\alpha}+\ncon{\beta}}(1-q)^{-\lambda_1}}{\prod_{i\ge 1}\qfac{\lambda_i - \lambda_{i+1}}}\qwt(T)\qwt(T'),
$$
where $R = \RPP{T}{T'}$, as in \cref{bijection_tableaux_to_RPP}.
\item\label{preprojective_enumeration_sum} We have
\begin{multline*}
\sum_{[V]\in\ppareps{\alpha,\beta}}\frac{1}{|\!\Aut(V)|} = \sum_{M\in\Mats{\alpha}{\beta}}\frac{q^{n+\ncon{\alpha}+\ncon{\beta}}(1-q)^{-n}}{\prod_{1 \le i \le k,\hspace*{1pt} 1 \le j \le l}\qfac{M_{i,j}}} \\
= q^{n+\ncon{\alpha}+\ncon{\beta}}\sum_{(T,T')\in\Tabs{\alpha}{\beta}}\frac{(1-q)^{-\lambda_1}}{\prod_{i\ge 1}\qfac{\lambda_i - \lambda_{i+1}}}\qwt(T)\qwt(T'),
\end{multline*}
which equals $q^{n+\ncon{\alpha}+\ncon{\beta}}$ times \eqref{q-cauchy_coefficient_intro}, where above, $\lambda\vdash n$ denotes the shape of both $T$ and $T'$.
\end{enumerate}

\end{thm}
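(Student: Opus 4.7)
The plan is to prove part \ref{preprojective_enumeration_term} first, which is the most refined statement, and then obtain parts \ref{preprojective_enumeration_relative_position}, \ref{preprojective_enumeration_RPP}, and \ref{preprojective_enumeration_sum} as consequences by summing part \ref{preprojective_enumeration_term} over appropriate index sets and exploiting conditions \ref{defn_probabilistic_bijection_forward} and \ref{defn_probabilistic_bijection_backward} of the probabilistic bijection established in \cref{reversibility}.

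For part \ref{preprojective_enumeration_term}, the starting point is \cref{module_to_triple_count}, which converts the automorphism-weighted sum on the left-hand side into an enumeration of triples:
$$
\sum_{[V]\in\ppareps[R]{M}}\frac{1}{|\!\Aut(V)|} = \frac{|\triples[T,T']{M}\hspace*{-1pt}|}{|\GL_n|},
$$
where $R = \RPP{T}{T'}$ under the bijection of \cref{bijection_tableaux_to_RPP}. I would then insert the value $|\GL_n| = q^{-n^2}(1-q)^n\qfac{n}$ from \cref{q_enumeration}\ref{q_enumeration_GL} and use the first equality of \eqref{reversibility_equation} to solve for $|\triples[T,T']{M}\hspace*{-1pt}|$ in terms of $\pr{M}{T,T'}$. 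Combining gives the first equality of part \ref{preprojective_enumeration_term} after a routine simplification of $q$-exponents (the factors $q^{-n^2}$ cancel, leaving $q^{n+\ncon{\alpha}+\ncon{\beta}}$). The second equality of part \ref{preprojective_enumeration_term} is then immediate from the second equality of \eqref{reversibility_equation}.

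For part \ref{preprojective_enumeration_relative_position}, I would sum part \ref{preprojective_enumeration_term} over all $R\in\RPPs{\alpha}{\beta}$; by \cref{bijection_tableaux_to_RPP}, this is equivalent to summing over all $(T,T')\in\Tabs{\alpha}{\beta}$. The left-hand side becomes the sum over all $[V]\in\ppareps{M}$ (each such $V$ determines a unique $R = \socRPP{V}$ by \cref{preprojective_jordan}), while condition \ref{defn_probabilistic_bijection_forward} in \cref{defn_probabilistic_bijection} gives $\sum_{(T,T')}\pr{M}{T,T'} = 1$. Symmetrically, for part \ref{preprojective_enumeration_RPP}, I would sum part \ref{preprojective_enumeration_term} over all $M\in\Mats{\alpha}{\beta}$, and use condition \ref{defn_probabilistic_bijection_backward} to obtain $\sum_M \prdual{M}{T,T'} = 1$. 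Finally, part \ref{preprojective_enumeration_sum} follows by summing either part \ref{preprojective_enumeration_relative_position} over $M$ or part \ref{preprojective_enumeration_RPP} over $R$, and the resulting identity is recognized as $q^{n+\ncon{\alpha}+\ncon{\beta}}$ times the Cauchy identity coefficient \eqref{q-cauchy_coefficient_intro}.

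There is essentially no hard step: the substantive work was already done in the proofs of \cref{reversibility} (double counting triples) and \cref{preprojective_bijection}/\cref{preprojective_jordan} (identifying $\ppa{Q(k,l)}$-modules with triples $(F,F',\nil)$ and matching socle filtrations with the pair $(\JF(\nil;F),\JF(\nil;F'))$). The only mild obstacle is bookkeeping the $q$-exponents correctly and verifying that the appearance of the factor $q^{n+\ncon{\alpha}+\ncon{\beta}}$ (which arises from the ratio $\frac{q^{n^2-n-\ncon{\alpha}-\ncon{\beta}}/\qfac{n}}{q^{-n^2}(1-q)^n\qfac{n}}$ rescaled by $(1-q)^n$) is consistent across the three equivalent forms.
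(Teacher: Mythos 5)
Your proposal is correct and follows essentially the same route as the paper: apply \cref{module_to_triple_count} and \cref{q_enumeration}\ref{q_enumeration_GL} together with \eqref{reversibility_equation} to get part \ref{preprojective_enumeration_term}, then obtain the remaining parts by summation using \cref{bijection_tableaux_to_RPP} and conditions \ref{defn_probabilistic_bijection_forward}--\ref{defn_probabilistic_bijection_backward}. The only quibble is that your parenthetical description of where $q^{n+\ncon{\alpha}+\ncon{\beta}}$ comes from is slightly garbled (it is $q^{n^2}$ divided by $q^{n^2-n-\ncon{\alpha}-\ncon{\beta}}$, after the $(1-q)$ and $\qfac{n}$ factors cancel), but the underlying computation you describe is correct.
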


\begin{proof}
Part \ref{preprojective_enumeration_term} follows from \cref{module_to_triple_count}, \cref{q_enumeration}\ref{q_enumeration_GL}, and \eqref{reversibility_equation}. The remaining parts then follow by summation, using \cref{bijection_tableaux_to_RPP}.
\end{proof}

We observe that the forward and backward probabilities $\prnoarg$ and $\prdualnoarg$ defining the $q$-Burge correspondence can be described directly in terms of $\ppa{Q(k,l)}$-modules and reverse plane partitions. To state this result, when $\field$ is finite, we endow $\ppareps{\alpha,\beta}$ (and its subsets) with the measure in which the probability of obtaining the isomorphism class $[V]\in\ppareps{\alpha,\beta}$ is proportional to $\frac{1}{|\!\Aut(V)|}$.
\begin{prop}\label{probabilities_quiver}
Set $\field := \qinvF$. Let $\alpha = (\alpha_1, \dots, \alpha_k),\beta = (\beta_1, \dots, \beta_l)\models n$, let $M\in\Mats{\alpha}{\beta}$, let $R\in\RPPs{\alpha}{\beta}$, and write $R = \RPP{T}{T'}$, as in \cref{bijection_tableaux_to_RPP}.
\begin{enumerate}[label=(\roman*), leftmargin=*]
\item\label{probabilities_quiver_forward} Let $V$ denote a random element of $\ppareps{M}$. Then
$$
\pr{M}{T,T'} = \Pr\big(\socRPP{V} = R\big).
$$
\item\label{probabilities_quiver_backward} Let $V$ denote a random element of $\ppareps[R]{\alpha,\beta}$. Then
\begin{gather*}
\prdual{M}{T,T'} = \Pr\big(V \textnormal{ is isomorphic to $\dec{M}$ as a $\pa{Q(k,l)}$-module}).
\end{gather*}
\end{enumerate}

\end{prop}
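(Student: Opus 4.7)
The plan is to recognize that both assertions follow by writing the stated probabilities as ratios of weighted counts of isomorphism classes, and then applying the enumeration results from \cref{preprojective_enumeration} to evaluate numerator and denominator separately. The measure on $\ppareps{\alpha,\beta}$ (and its subsets) is defined so that the probability mass of a class $[V]$ is $\frac{1/|\!\Aut(V)|}{Z}$ for the appropriate normalizing constant $Z$, so every probability we need is just a quotient of two such weighted sums, and the $q$-Burge probabilities should pop out once the prefactors cancel.

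For part \ref{probabilities_quiver_forward}, first I would note that the decomposition $\ppareps{M} = \bigsqcup_{R\in\RPPs{\alpha}{\beta}} \ppareps[R]{M}$ holds, since by \cref{preprojective_jordan} the invariant $\socRPP{V}$ only depends on the isomorphism class of $V$ and is a reverse plane partition, with type determined by $\dim(V) = \dimcomp{\alpha}{\beta}$. Thus
\begin{gather*}
\Pr\big(\socRPP{V} = R\big) = \frac{\sum_{[V]\in\ppareps[R]{M}}1/|\!\Aut(V)|}{\sum_{[V]\in\ppareps{M}}1/|\!\Aut(V)|}.
\end{gather*}
I then apply \cref{preprojective_enumeration}\ref{preprojective_enumeration_term} to the numerator and \cref{preprojective_enumeration}\ref{preprojective_enumeration_relative_position} to the denominator. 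The common prefactor $\frac{q^{n+\ncon{\alpha}+\ncon{\beta}}(1-q)^{-n}}{\prod_{i,j}\qfac{M_{i,j}}}$ cancels exactly, yielding $\pr{M}{T,T'}$.

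For part \ref{probabilities_quiver_backward}, I would analogously use the decomposition $\ppareps[R]{\alpha,\beta} = \bigsqcup_{M\in\Mats{\alpha}{\beta}} \ppareps[R]{M}$, which holds because the pullback of $V$ along $\pa{Q(k,l)}\hookrightarrow\ppa{Q(k,l)}$ is isomorphic to $\dec{M}$ for a unique $M\in\Mats{\alpha}{\beta}$ (by \cref{module_relative_position}), and this $M$ is an isomorphism invariant of $V$. Then
\begin{gather*}
\Pr\big(V \textnormal{ is isomorphic to } \dec{M}\big) = \frac{\sum_{[V]\in\ppareps[R]{M}}1/|\!\Aut(V)|}{\sum_{[V]\in\ppareps[R]{\alpha,\beta}}1/|\!\Aut(V)|}.
\end{gather*}
Applying \cref{preprojective_enumeration}\ref{preprojective_enumeration_term} (the second equality) to the numerator and \cref{preprojective_enumeration}\ref{preprojective_enumeration_RPP} to the denominator, the common prefactor $\frac{q^{n+\ncon{\alpha}+\ncon{\beta}}(1-q)^{-\lambda_1}}{\prod_{i}\qfac{\lambda_i-\lambda_{i+1}}}\qwt(T)\qwt(T')$ cancels, leaving $\prdual{M}{T,T'}$.

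There is no real obstacle: the work was done in establishing \cref{preprojective_enumeration}, and the present proposition is a direct probabilistic repackaging. The only things to be careful about are the two disjoint-union decompositions (for which one uses that $\socRPP{V}$ and the underlying $\pa{Q(k,l)}$-module are isomorphism invariants) and checking that the prefactors in \cref{preprojective_enumeration} match up so that the cancellations are clean; both are routine.
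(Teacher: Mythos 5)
Your proposal is correct and follows essentially the same path as the paper: the paper's proof cites \cref{module_to_triple_count} and \cref{probabilities_interpretation} directly, while you instead pass through \cref{preprojective_enumeration} (which is itself derived from \cref{module_to_triple_count} together with \eqref{reversibility_equation}) and observe that the prefactors cancel in the ratio. Both routes rest on the same decompositions and the same underlying weighted count, so the difference is only a minor refactoring; your presentation is slightly less economical but equally valid.
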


\begin{proof}
This follows from \cref{probabilities_interpretation,module_to_triple_count}, using \cref{module_relative_position}, \cref{preprojective_bijection}, and \cref{preprojective_jordan}.
\end{proof}

While we find \cref{preprojective_enumeration} most significant for its enumerative implications, we can also use part \ref{preprojective_enumeration_term} to calculate the forward and backward probabilities $\prnoarg$ and $\prdualnoarg$, by enumerating $\ppareps[R]{M}$. We demonstrate this with two examples. It may also be possible to use part \ref{preprojective_enumeration_term} to address \cref{probabilities_problem}, though we ourselves have not been able to do this.
\begin{eg}\label{eg_preprojective_enumeration}
We adopt the setup of \cref{eg_reversibility}. Let $R := \RPP{T}{T'} = \scalebox{0.8}{\begin{tabular}{ccc}
& $1$ & \\
$2$ & & $3$ \\
& $4$ & 
\end{tabular}}$. By \cref{preprojective_enumeration}\ref{preprojective_enumeration_term}, we have
$$
\sum_{[V]\in\ppareps[R]{M}}\frac{1}{|\!\Aut(V)|} = \frac{q^{13}(1-q)^{-5}}{\qfac{1}\qfac{1}\qfac{2}\qfac{1}}\hspace*{1pt}\pr{M}{T,T'} = \frac{q^{13}(1-q)^{-4}}{\qfac{3}\qfac{1}}\qbinom{3}{2}\qbinom{3}{1}\prdual{M}{T,T'}.
$$
Let us show that the left-hand side above equals $q^{13}(1-q)^{-4}(1+q)^{-1}$, whence $\pr{M}{T,T'} = 1-q$ and $\prdual{M}{T,T'} = (1 + q + q^2)^{-1}$, in agreement with \cref{eg_reversibility}.

We have 
$$
\dec{M} = \hspace*{4pt}\begin{tikzpicture}[baseline=(current bounding box.base)]
\tikzstyle{out1}=[inner sep=0,minimum size=1.2mm,circle,draw=black,fill=black,semithick]
\pgfmathsetmacro{\s}{3.00};
\node[out1](v1)at(0*\s,0)[label={[below=3pt]$1$},label={[above=-2pt]$\spn{\unit{1},\unit{2}}$}]{};
\node[out1](v0)at(1*\s,0)[label={[below=3pt]$0$},label={[above=-1pt]$\field^5$}]{};
\node[out1](v-1)at(2*\s,0)[label={[below=3pt]$-1$},label={[above=-2pt]$\spn{\unit{1},\unit{3},\unit{4}}$}]{};
\path[-latex',thick](v1)edge node[above=-1pt]{$\id$}(v0) (v-1)edge node[above=-1pt]{$\id$}(v0);
\end{tikzpicture}\hspace*{4pt}.
$$
By \cref{module_stabilizer}\ref{module_stabilizer_path_algebra}, we have
$$
\Aut(\dec{M}) \cong \Stab_{\GL_5}\hspace*{-1pt}(\Unit{\id}, \Unit{\perm{M}}) = \left\{\begin{bmatrix}
\ast & \ast & \ast & \ast & \ast \\
0 & \ast & 0 & 0 & \ast \\
0 & 0 & \ast & \ast & \ast \\
0 & 0 & \ast & \ast & \ast \\
0 & 0 & 0 & 0 & \ast
\end{bmatrix}\right\}\subseteq\GL_5,
$$
where $\ast$ denotes an arbitrary element of $\field$. Also, recall the description of $\Nil_M$ from \cref{eg_inversion_matrices}. Then by \cref{ppareps_concrete_description}, we find that $\ppareps[R]{M}$ consists of a single isomorphism class, namely, $[\decnil{M}{\nil}]$ for
$$
\nil := \begin{bmatrix}
0 & 0 & 0 & 0 & 1 \\
0 & 0 & 0 & 0 & 0 \\
0 & 0 & 0 & 0 & 0 \\
0 & 0 & 0 & 0 & 0 \\
0 & 0 & 0 & 0 & 0
\end{bmatrix}.
$$
By \cref{module_stabilizer}\ref{module_stabilizer_preprojective}, we have
\begin{multline*}
\Aut(\decnil{M}{\nil}) \cong \Stab_{\GL_5}\hspace*{-1pt}(\Unit{\id}, \Unit{\perm{M}}, \nil) \\
= \{g\in\Stab_{\GL_5}\hspace*{-1pt}(\Unit{\id}, \Unit{\perm{M}}) : g_{1,1} = g_{5,5}\} \cong \GL_1^2 \times \GL_2 \times \field^7.
\end{multline*}
By \cref{q_enumeration}\ref{q_enumeration_GL}, we obtain
$$
\sum_{[V]\in\ppareps[R]{M}}\frac{1}{|\!\Aut(V)|} = \frac{1}{|\!\Aut(\decnil{M}{\nil})|} = q^{13}(1-q)^{-4}(1+q)^{-1},
$$
as desired.
\end{eg}

\begin{eg}\label{eg_isomorphism_classes_probability}
Set $\field := \qinvF$, and adopt the setup of \cref{eg_isomorphism_classes}. Write $R = \RPP{T}{T'}$, as in \cref{bijection_tableaux_to_RPP}; we have $T = T' = \ytableausmall{1 & 2 \\ 3 & 4}$. Let us use \cref{preprojective_enumeration}\ref{preprojective_enumeration_term} to calculate $\pr{M}{T,T'}$ and $\prdual{M}{T,T'}$:
$$
\sum_{[V]\in\ppareps[R]{M}}\frac{1}{|\!\Aut(V)|} = q^4(1-q)^{-4}\hspace*{1pt}\pr{M}{T,T'} = q^4(1-q)^{-2}(1+q)^3\hspace*{1pt}\prdual{M}{T,T'}.
$$

Let $\nil$ denote one of the matrices in \eqref{eg_isomorphism_classes_equation}. If $\nil = \scalebox{0.8}{$\begin{bmatrix}
0 & 0 & 1 & 0 \\
0 & 0 & 0 & 1 \\
0 & 0 & 0 & 0 \\
0 & 0 & 0 & 0
\end{bmatrix}$}$, then
$$
\Aut(\decnil{M}{\nil}) \cong \{g\in\Stab_{\GL_4}\hspace*{-1pt}(\Unit{\id}, \Unit{\perm{M}}) : g_{1,1} = g_{3,3} \text{ and } g_{2,2} = g_{4,4}\} \cong \GL_1^2 \times \field^4,
$$
and so $\frac{1}{|\!\Aut(\decnil{M}{\nil})|} = q^6(1-q)^{-2}$. Similarly, if $\nil = \scalebox{0.8}{$\begin{bmatrix}
0 & 0 & 0 & 1 \\
0 & 0 & 1 & 0 \\
0 & 0 & 0 & 0 \\
0 & 0 & 0 & 0
\end{bmatrix}$}$, then $\frac{1}{|\!\Aut(\decnil{M}{\nil})|} = q^6(1-q)^{-2}$. If $\nil$ is any other matrix in \eqref{eg_isomorphism_classes_equation}, then
$$
\Aut(\decnil{M}{\nil}) \cong \{g\in\Stab_{\GL_4}\hspace*{-1pt}(\Unit{\id}, \Unit{\perm{M}}) : g_{1,1} = g_{2,2} = g_{3,3} = g_{4,4}\} \cong \GL_1 \times \field^4,
$$
and so $\frac{1}{|\!\Aut(\decnil{M}{\nil})|} = q^5(1-q)^{-1}$. Therefore
$$
\sum_{[V]\in\ppareps[R]{M}}\frac{1}{|\!\Aut(V)|} = 2q^6(1-q)^{-2} + (q^{-1}+2)q^5(1-q)^{-1} = q^4(1-q)^{-2}(1+q).
$$
We obtain
\begin{gather*}
\pr{M}{T,T'} = (1-q)^2(1+q) \quad \text{ and } \quad \prdual{M}{T,T'} = (1+q)^{-2}.\qedhere
\end{gather*}

\end{eg}

\subsection{The classical Burge correspondence from socle filtrations}\label{sec_socle_to_burge}
We conclude this section by rephrasing the flag-variety based description of the Burge correspondence (\cref{classical_burge_rank}) in terms of $\ppa{Q(k,l)}$-modules and their socle filtrations.

\begin{thm}\label{classical_burge_socle}
Let $\field$ be algebraically closed. Let $M\in\mathbb{N}^{k\times l}$, and let $\mathcal{V}$ denote the set of all $\ppa{Q(k,l)}$-modules $V$ which restrict to $V_M$ as a $\pa{Q(k,l)}$-module. Recall from \cref{preprojective_bijection} that $\mathcal{V}$ is an affine space over $\field$. Then for all $V$ in some nonempty Zariski-open subset of $\mathcal{V}$, we have
\begin{gather*}
\socRPP{V} = \RPP{\Ptab{M}}{\Qtab{M}}.
\end{gather*}

\end{thm}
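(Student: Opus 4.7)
The plan is to combine three results already established in the paper: the parametrization of $\mathcal{V}$ by $\Nil_M$ from \cref{preprojective_bijection}, the identification of socle filtrations with Jordan form tableaux from \cref{preprojective_jordan}, and the characterization of the classical Burge correspondence via generic Jordan forms from \cref{classical_burge_rank}. Together these immediately yield the statement; the only minor subtlety is tracking compatibility of bases so that the Zariski-open subset of $\Nil_M$ produced by \cref{classical_burge_rank} corresponds to a Zariski-open subset of $\mathcal{V}$.

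Concretely, I would first fix a basis of $\dec{M}(0) \cong \field^n$ so that $\flags{\dec{M}} = (\Unit{\id}, \Unit{\perm{M}})$, as is possible by the construction illustrated in \cref{eg_module_to_flags}. Under this choice, \cref{compatible_matrices_pair}\ref{compatible_matrices_pair_standard} identifies the set of nilpotent endomorphisms strictly compatible with both flags with $\Nil_M$, and \cref{preprojective_bijection} gives an isomorphism of affine spaces $\Nil_M \xrightarrow{\sim} \mathcal{V}$ sending $\nil \mapsto \decnil{M}{\nil}$. In particular, a Zariski-open subset of $\Nil_M$ pulls back to a Zariski-open subset of $\mathcal{V}$.

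Next, I would apply \cref{classical_burge_rank}: there exists a nonempty Zariski-open subset $U \subseteq \Nil_M$ (the complement of the vanishing locus of finitely many nonzero minor polynomials) on which every $\nil \in U$ satisfies
\begin{equation*}
\JF(\nil;\Unit{\id}) = \Ptab{M} \quad \text{and} \quad \JF(\nil;\Unit{\perm{M}}) = \Qtab{M}.
\end{equation*}
Since $\field$ is algebraically closed (and in particular infinite), $U$ is indeed nonempty in the Zariski sense. Then for $\nil \in U$, \cref{preprojective_jordan} gives
\begin{equation*}
\socRPP{\decnil{M}{\nil}} = \RPP{\JF(\nil;\Unit{\id})}{\JF(\nil;\Unit{\perm{M}})} = \RPP{\Ptab{M}}{\Qtab{M}},
\end{equation*}
which is exactly the desired equality on the corresponding Zariski-open subset of $\mathcal{V}$.

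There is no real obstacle here, since all the heavy lifting was done in earlier sections; the only point requiring care is the bookkeeping that the bijection of \cref{preprojective_bijection} is algebraic (in fact, affine-linear once a basis is fixed), so that Zariski-openness is preserved by the transport from $\Nil_M$ to $\mathcal{V}$. If one wished to avoid fixing a basis, one could instead phrase $\mathcal{V}$ as a torsor for $\Nil_M$ under the group $\Stab_{\GL_n}(\flags{\dec{M}})$ and note that the conditions defining $U$ are invariant under this group action, but the basis-fixing argument is cleaner.
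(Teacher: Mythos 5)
Your proposal is correct and takes essentially the same route as the paper: the paper's proof is a one-line citation of \cref{classical_burge_rank}, \cref{preprojective_bijection}, and \cref{preprojective_jordan}, and you have simply spelled out the intermediate bookkeeping (choice of basis, identification with $\Nil_M$, transport of Zariski-openness) that the paper leaves implicit.
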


\begin{proof}
This follows from \cref{classical_burge_rank}, using \cref{preprojective_bijection,preprojective_jordan}.
\end{proof}

\begin{rmk}\label{dynkin_remark}
The description of the Burge correspondence in \cref{classical_burge_socle} suggests replacing $Q(k,l)$ with any Dynkin quiver $Q$ with a distinguished minuscule vertex, and examining the combinatorics of socle filtrations of certain generic $\ppa{Q}$-modules. We believe this would be a fruitful topic for further study.
\end{rmk}

\section{\texorpdfstring{$q$}{q}-Whittaker functions from Nakajima quiver varieties}\label{sec_nakajima}

\noindent In \cref{sec_quiver}, we showed that the $q$-Burge correspondence can be reinterpreted in terms of modules over the preprojective algebra of a path quiver. In this section, we show that the description of the $q$-Whittaker function in \cref{whittaker_expansion} can similarly be rephrased in terms of the Nakajima quiver variety of a path quiver. The exposition is entirely analogous, where we replace the quiver $Q(k,l)$ with the quiver on $k$ vertices obtained by deleting vertices $-1, \dots, -l+1$.

Quiver varieties were introduced by Nakajima \cite{nakajima94,nakajima98}; for the general definition and further background, see \cite[Chapter 10]{kirillov16}. We will study them only in the special case of interest to us, following \cite[Section 10.7]{kirillov16}.
\begin{defn}[{\cite[Section 10.7]{kirillov16}}]\label{defn_quiver_variety}
Let $\alpha = (\alpha_1, \dots, \alpha_k) \models n$. Define the path quivers on $k-1$ and $k$ vertices, respectively, as follows:
\begin{align*}
& Q := \hspace*{4pt}\begin{tikzpicture}[baseline=(current bounding box.center)]
\tikzstyle{out1}=[inner sep=0,minimum size=1.2mm,circle,draw=black,fill=black,semithick]
\pgfmathsetmacro{\s}{3.00};
\node[out1](v1)at(0*\s,0)[label={[below=3pt]\vphantom{$k-3$}$k-1$}]{};
\node[out1](v2)at(1*\s,0)[label={[below=3pt]\vphantom{$k-3$}$k-2$}]{};
\node[out1](v3)at(2*\s,0)[label={[below=3pt]\vphantom{$k-3$}$k-3$}]{};
\node[out1](v4)at(3*\s,0)[label={[below=3pt]\vphantom{$k-3$}$2$}]{};
\node[out1](v5)at(4*\s,0)[label={[below=3pt]\vphantom{$k-3$}$1$}]{};
\path[-latex',thick](v1)edge(v2) (v2)edge(v3) (v4)edge(v5);
\node[inner sep=0]at(2.5*\s,0){$\cdots$};
\end{tikzpicture}\hspace*{4pt}, \\[8pt]
& \tilde{Q} := \hspace*{4pt}\begin{tikzpicture}[baseline=(current bounding box.center)]
\tikzstyle{out1}=[inner sep=0,minimum size=1.2mm,circle,draw=black,fill=black,semithick]
\pgfmathsetmacro{\s}{3.00};
\node[out1](v1)at(0*\s,0)[label={[below=3pt]\vphantom{$k-3$}$k-1$}]{};
\node[out1](v2)at(1*\s,0)[label={[below=3pt]\vphantom{$k-3$}$k-2$}]{};
\node[out1](v3)at(2*\s,0)[label={[below=3pt]\vphantom{$k-3$}$k-3$}]{};
\node[out1](v4)at(3*\s,0)[label={[below=3pt]\vphantom{$k-3$}$1$}]{};
\node[out1](v5)at(4*\s,0)[label={[below=3pt]\vphantom{$k-3$}$0$}]{};
\path[-latex',thick](v1)edge(v2) (v2)edge(v3) (v4)edge(v5);
\node[inner sep=0]at(2.5*\s,0){$\cdots$};
\end{tikzpicture}\hspace*{4pt}.
\end{align*}
Given $F\in\PFl_\alpha$ and a nilpotent endomorphism $\nil\in\gl_n$ which are strictly compatible, define the $\pa{\double{\tilde{Q}}}$-module
$$
\qvpoint{F}{\nil} := \hspace*{4pt}\begin{tikzpicture}[baseline=(current bounding box.base)]
\tikzstyle{out1}=[inner sep=0,minimum size=1.2mm,circle,draw=black,fill=black,semithick]
\pgfmathsetmacro{\s}{3.00};
\node[out1](v1)at(0*\s,0)[label={[below=3pt]\vphantom{$k-3$}$k-1$},label={[above=-1pt]\vphantom{$F_{k-1}$}$F_1$}]{};
\node[out1](v2)at(1*\s,0)[label={[below=3pt]\vphantom{$k-3$}$k-2$},label={[above=-1pt]\vphantom{$F_{k-1}$}$F_2$}]{};
\node[out1](v3)at(2*\s,0)[label={[below=3pt]\vphantom{$k-3$}$k-3$},label={[above=-1pt]\vphantom{$F_{k-1}$}$F_3$}]{};
\node[out1](v4)at(3*\s,0)[label={[below=3pt]\vphantom{$k-3$}$1$},label={[above=-1pt]\vphantom{$F_{k-1}$}$F_{k-1}$}]{};
\node[out1](v5)at(4*\s,0)[label={[below=3pt]\vphantom{$k-3$}$0$},label={[above=-1pt]\vphantom{$F_{k-1}$}$F_k = \field^n$}]{};
\path[-latex',thick](v1)edge[bend left=15] node[above=-1pt]{$\id$}(v2) (v2)edge[bend left=15] node[below=-1pt]{\vphantom{$\nil$}$\nil$}(v1) (v2)edge[bend left=15] node[above=-1pt]{$\id$}(v3) (v3)edge[bend left=15] node[below=-1pt]{\vphantom{$\nil$}$\nil$}(v2) (v4)edge[bend left=15] node[above=-1pt]{$\id$}(v5) (v5)edge[bend left=15] node[below=-1pt]{\vphantom{$\nil$}$\nil$}(v4);
\node[inner sep=0]at(2.5*\s,0){$\cdots$};
\end{tikzpicture}\hspace*{4pt},
$$
and let $\qv{\alpha}$ denote the set of all such $\qvpoint{F}{\nil}$. Then $\qv{\alpha}$ is the {\itshape Nakajima quiver variety} associated to $Q$ with the parameters $\mathbf{v} := (\alpha_1, \alpha_1 + \alpha_2, \dots, \alpha_1 + \cdots + \alpha_{k-1})$ and $\mathbf{w} := (0, \dots, 0, n)$.
\end{defn}

\begin{rmk}\label{quiver_vs_preprojective_remark}
We point out that the elements of the Nakajima quiver variety $\qv{\alpha}$ satisfy the relation \eqref{preprojective_conditions} defining $\ppa{Q(k,0)}$, for all vertices $i$ except possibly $i=0$. In fact, $\qvpoint{F}{\nil}$ defines a $\ppa{Q(k,0)}$-module if and only if it satisfies \eqref{preprojective_conditions} at $i=0$, if and only if $\nil = 0$. This distinction is precisely the reason that in this section we work with the Nakajima quiver variety, rather than $\ppa{Q(k,0)}$-modules.
\end{rmk}

\begin{eg}\label{eg_quiver_variety}
Let $\alpha := (2,3)$. Then $\qv{\alpha}$ is the set of all representations of the form
$$
\qvpoint{F}{\nil} = \hspace*{4pt}\begin{tikzpicture}[baseline=(current bounding box.center)]
\tikzstyle{out1}=[inner sep=0,minimum size=1.2mm,circle,draw=black,fill=black,semithick]
\pgfmathsetmacro{\s}{3.00};
\node[out1](v1)at(0*\s,0)[label={[below=3pt]$1$},label={[above=-1pt]\vphantom{$V$}$V$}]{};
\node[out1](v0)at(1*\s,0)[label={[below=3pt]$0$},label={[above=-1pt]\vphantom{$V$}$\field^5$}]{};
\path[-latex',thick](v1)edge[bend left=15] node[above=-1pt]{$\id$}(v0) (v0)edge[bend left=15] node[below=-1pt]{\vphantom{$\nil$}$\nil$}(v1);
\end{tikzpicture}\hspace*{4pt},
$$
where $F = (0, V, \field^5)$, $V\in\Gr_{2,5}$, and $\nil\in\gl_5$ satisfies $\nil(\field^5) \subseteq V \subseteq \ker(\nil)$.
\end{eg}

We observe that the dimension vectors for the socle filtration of $\qvpoint{F}{\nil}$ are uniquely determined by $\JF(\nil;F)$:
\begin{lem}\label{quiver_variety_socle}
Let $\alpha = (\alpha_1, \dots, \alpha_k)\models n$, let $\qvpoint{F}{\nil}\in\qv{\alpha}$, and let $0 \le i \le k-1$.
\begin{enumerate}[label=(\roman*), leftmargin=*]
\item\label{quiver_variety_socle_beginning} We have $\socf{\qvpoint{F}{\nil}}{j}(i) = 0$ for all $0 \le j \le i$.
\item\label{quiver_variety_socle_parity} We have
$$
\socf{\qvpoint{F}{\nil}}{j+1}(i) = \socf{\qvpoint{F}{\nil}}{j}(i) = \ker((\nil|_{F_{k-i}})^{\frac{j-i+1}{2}})
$$
for all $j \ge i+1$ such $j\equiv i+1\hspace*{-2pt}\pmod{2}$.
\item\label{quiver_variety_socle_end} We have $\socf{\qvpoint{F}{\nil}}{j}(i) = \qvpoint{F}{\nil}(i)$ for all $j \ge 2k-i-1$.
\end{enumerate}
In particular, 
$$
\dim(\socf{\qvpoint{F}{\nil}}{j}(i)) = \sum_{1 \le j' \le \frac{j-i+1}{2}}T^{(k-i)}_{j'} \quad \text{ for all } j\in\mathbb{N},
$$
where $T := \JF(\nil;F)$.
\end{lem}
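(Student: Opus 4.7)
The plan is to identify $\socf{\qvpoint{F}{\nil}}{j}(i)$ explicitly as a kernel of a power of $\nil|_{F_{k-i}}$ via a path-counting argument in the doubled quiver $\double{\tilde{Q}}$, from which all three parts and the concluding dimension formula will follow. First I would unpack \cref{defn_socle}: a vector $v \in \qvpoint{F}{\nil}(i) = F_{k-i}$ lies in $\socf{\qvpoint{F}{\nil}}{j}(i)$ if and only if every path of length $j$ in $\double{\tilde{Q}}$ starting at vertex $i$ annihilates $v$. Each arrow of $\double{\tilde{Q}}$ is either a ``down'' from $s$ to $s-1$ acting as $\id$, or a reverse ``up'' from $s$ to $s+1$ acting as $\nil$. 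Consequently, a path of length $j$ with $u$ ups and $d = j - u$ downs acts on $v$ as $\nil^u(v)$ (the down-steps being trivial) and terminates at vertex $i + u - d$. Since the kernels of successive powers of $\nil$ are nested, the socle condition collapses to
\[
\socf{\qvpoint{F}{\nil}}{j}(i) = \ker\bigl((\nil|_{F_{k-i}})^{u_{\min}(i,j)}\bigr),
\]
where $u_{\min}(i,j)$ denotes the minimum of $u$ over all valid paths of length $j$ from $i$ (those whose intermediate vertices all lie in $\{0, \ldots, k-1\}$).

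Next I would compute $u_{\min}(i,j)$ by an elementary lattice-path argument. The lower bound $u \ge \max(0, (j-i)/2)$ is forced by the terminal vertex $i + u - d$ being nonnegative; the matching upper bound is realized by an explicit descend-then-bounce path (descend as far as possible, then alternate up-down), whose intermediate vertices never leave $\{0, \ldots, k-1\}$. This yields $u_{\min}(i,j) = \max\bigl(0, \lceil (j-i)/2 \rceil\bigr)$. Substituting into the kernel formula immediately gives part \ref{quiver_variety_socle_beginning} (when $j \le i$, $u_{\min} = 0$ and $\ker(\id|_{F_{k-i}}) = 0$); part \ref{quiver_variety_socle_parity} (for $j \ge i+1$ with $j \equiv i+1 \pmod{2}$, both $u_{\min}(i,j)$ and $u_{\min}(i,j+1)$ equal $(j-i+1)/2$); and part \ref{quiver_variety_socle_end} (for $j \ge 2k-i-1$, $u_{\min}(i,j) \ge k-i$, and iterating $\nil(F_s) \subseteq F_{s-1}$ forces $\nil^{k-i}(F_{k-i}) \subseteq F_0 = 0$, so $\socf{\qvpoint{F}{\nil}}{j}(i) = F_{k-i}$). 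The final dimension identity is then immediate from \cref{matrix_to_partition} applied to $\nil|_{F_{k-i}}$, whose Jordan form partition equals $T^{(k-i)}$ by the definition of $\JF(\nil;F)$, together with the elementary identity $\lceil (j-i)/2 \rceil = \lfloor (j-i+1)/2 \rfloor$.

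The main obstacle I anticipate is checking the $u_{\min}$ calculation: one must verify both that the descend-then-bounce construction is genuinely optimal and that the upper-boundary constraint ($\le k-1$) never binds. Both are elementary once phrased as a Dyck-path problem, and the degenerate case $k=1$ (where $\tilde{Q}$ has no arrows, $\nil = 0$, and the lemma holds vacuously) deserves only a brief mention.
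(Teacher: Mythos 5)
Your proof is correct, and it supplies in full the lattice-path unpacking that the paper compresses into ``this follows from the definitions and \cref{matrix_to_partition}, as in \cref{sec_socle}''; the key chain---identifying $\socf{\qvpoint{F}{\nil}}{j}(i)$ with $\ker\bigl((\nil|_{F_{k-i}})^{u_{\min}(i,j)}\bigr)$ via nestedness of kernels, computing $u_{\min}(i,j)=\max(0,\lceil(j-i)/2\rceil)$ from the nonnegativity of the terminal vertex together with the descend-then-bounce witness (whose heights never exceed $\max(i,1)\le k-1$ when $k\ge 2$), and invoking \cref{matrix_to_partition} for the dimension formula---is exactly right. One tiny quibble: the $k=1$ case is not ``vacuous'' (the path-counting argument genuinely has no paths to count there, so $u_{\min}$ is undefined), but it is trivial, since then $\arrowsum=0$ forces $\socf{V}{j}(0)=V(0)$ for $j\ge 1$, while strict compatibility with $F=(0,\field^n)$ forces $\nil=0$, and one checks the three parts directly.
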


\begin{proof}
This follows from the definitions and \cref{matrix_to_partition}, as in \cref{sec_socle}.
\end{proof}

\begin{defn}\label{defn_quiver_variety_refined}
Let $\alpha\models n$. Given a semistandard tableau $T$ with content $\alpha$, we let $\qv{\alpha}^T$ denote the subset of $\qv{\alpha}$ of all points $\qvpoint{F}{\nil}$ such that $\JF(\nil;F) = T$ (i.e.\ $\qvpoint{F}{\nil}$ has socle filtration corresponding to $T$, as in \cref{quiver_variety_socle}). We have the decomposition
\begin{gather*}
\qv{\alpha} = \bigsqcup_{\lambda\vdash n,\hspace*{2pt} T\in\SSYT(\lambda,\alpha)}\qv{\alpha}^T.
\end{gather*}

\end{defn}

In the following statement, we recall that $\ncon{\cdot}$ was defined in \cref{defn_n} and $\qwt(\cdot)$ was defined in \cref{defn_whittaker}.
\begin{thm}\label{quiver_variety_enumeration}
Set $\field := \qinvF$. Let $\alpha\models n$.
\begin{enumerate}[label=(\roman*), leftmargin=*]
\item\label{quiver_variety_enumeration_tableau} For all $T\in\SSYT(\lambda,\alpha)$, we have
$$
|\qv{\alpha}^T| = \frac{q^{-n^2+n+\ncon{\lambda}+\ncon{\alpha}}(1-q)^{n-\lambda_1}\qfac{n}}{\prod_{i\ge 1}\qfac{\lambda_i - \lambda_{i+1}}}\qwt(T).
$$
\item\label{quiver_variety_enumeration_total} We have
\begin{gather*}
|\qv{\alpha}| = \sum_{\lambda\vdash n}\frac{q^{-n^2+n+\ncon{\lambda}+\ncon{\alpha}}(1-q)^{n-\lambda_1}\qfac{n}}{\prod_{i\ge 1}\qfac{\lambda_i - \lambda_{i+1}}}[\mathbf{x}^\alpha]W_\lambda(\mathbf{x};q).
\end{gather*}
\end{enumerate}

\end{thm}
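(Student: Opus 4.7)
The plan is to establish part \ref{quiver_variety_enumeration_tableau} by double-counting the pairs $(F,\nil)$ that parametrize $\qv{\alpha}^T$, and then to deduce part \ref{quiver_variety_enumeration_total} by summing over all shapes and tableaux. By \cref{defn_quiver_variety}, a point of $\qv{\alpha}^T$ is given exactly by a partial flag $F \in \PFl_\alpha$ and a nilpotent $\nil \in \gl_n$ strictly compatible with $F$ such that $\JF(\nil;F) = T$. In particular, $\JF(\nil) = T^{(k)} = \lambda$, so we may enumerate such pairs by first choosing $\nil$ and then choosing $F$.

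First, I would apply \cref{jordan_form_count} to count the nilpotent endomorphisms $\nil \in \gl_n$ with $\JF(\nil) = \lambda$, which gives
\[
\frac{q^{-n^2+n+2\ncon{\lambda}}(1-q)^{n-\lambda_1}\qfac{n}}{\prod_{i\ge 1}\qfac{\lambda_i - \lambda_{i+1}}}.
\]
Next, for each such $\nil$, \cref{whittaker_expansion}\ref{whittaker_expansion_tableau} states that the number of $F \in \PFl_\alpha$ strictly compatible with $\nil$ satisfying $\JF(\nil;F) = T$ equals $q^{\ncon{\alpha} - \ncon{\lambda}}\qwt(T)$. Multiplying these two counts and simplifying the exponent of $q$ yields precisely the formula in \ref{quiver_variety_enumeration_tableau}. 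Note that this count is independent of the choice of $\nil$ with $\JF(\nil) = \lambda$, which is a consequence of \cref{compatible_matrices_adjoint} and \cref{transitive_action}.

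For part \ref{quiver_variety_enumeration_total}, I would use \cref{defn_quiver_variety_refined} to decompose $\qv{\alpha}$ as the disjoint union of $\qv{\alpha}^T$ over $\lambda \vdash n$ and $T \in \SSYT(\lambda,\alpha)$. Summing the formula in \ref{quiver_variety_enumeration_tableau} over all $T \in \SSYT(\lambda,\alpha)$ and recognizing that
\[
\sum_{T \in \SSYT(\lambda,\alpha)} \qwt(T) = [\mathbf{x}^\alpha]W_\lambda(\mathbf{x};q)
\]
by \cref{defn_whittaker}, then summing over $\lambda \vdash n$, gives the claimed expression.

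There is no real obstacle here: the result is essentially a repackaging of the key formula \cref{whittaker_expansion}\ref{whittaker_expansion_tableau} combined with the orbit enumeration of \cref{jordan_form_count}, exploiting the fact that by \cref{transitive_action} the count of strictly compatible flags depends on $\nil$ only through $\JF(\nil)$. The only mild bookkeeping step is the verification that the exponents of $q$ and $(1-q)$ combine correctly, which is immediate from $-n^2+n+2\ncon{\lambda} + (\ncon{\alpha} - \ncon{\lambda}) = -n^2+n+\ncon{\lambda}+\ncon{\alpha}$.
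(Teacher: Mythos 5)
Your proof is correct and matches the paper's argument exactly: part (i) follows by writing $|\qv{\alpha}^T|$ as the number of compatible pairs $(F,\nil)$, choosing $\nil$ via \cref{jordan_form_count} and then counting $F$ via \cref{whittaker_expansion}\ref{whittaker_expansion_tableau} (with the independence on $\nil$ already guaranteed by \cref{transitive_action} and \cref{compatible_matrices_adjoint}), and part (ii) follows by summing over $\lambda$ and $T$. Nothing to add.
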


\begin{proof}
Part \ref{quiver_variety_enumeration_tableau} follows from \cref{whittaker_expansion}\ref{whittaker_expansion_tableau}, using \cref{transitive_action,compatible_matrices_adjoint,jordan_form_count}. Part \ref{quiver_variety_enumeration_total} then follows by summation.
\end{proof}

\begin{eg}\label{eg_quiver_variety_enumeration}
Set $\field := \qinvF$, and let $\alpha := (2,3)$ and $T := \ytableausmall{1 & 1 & 2 & 2\\ 2}$. Then by \cref{quiver_variety_enumeration}\ref{quiver_variety_enumeration_tableau}, the size of $\qv{\alpha}^T$ is
$$
\frac{q^{-10}(1-q)^1\qfac{5}}{\qfac{3}\qfac{1}}\qbinom{3}{1} = q^{-10}(1-q)(1+q+q^2)(1+q+q^2+q^3)(1+q+q^2+q^3+q^4).
$$
We can also verify this directly using the setup of \cref{eg_quiver_variety}.
\end{eg}

\bibliographystyle{alpha}
\bibliography{ref}

\end{document}